\documentclass[11pt,oneside]{amsart}

\usepackage{amsmath,ifthen, amsfonts, amssymb,
srcltx, amsopn, color, enumerate} 
\usepackage[cmtip,arrow]{xy}
\usepackage{pb-diagram, pb-xy}
\usepackage{overpic}

\dgARROWLENGTH=1em

\def\CircleArrowright{\:\ensuremath{\reflectbox{\rotatebox[origin=c]{270}{$\circlearrowright$}}}}

 \def\notorth{\:\ensuremath{\reflectbox{\rotatebox[origin=c]{90}{$\nvdash$}}}}

\newcommand{\showcomments}{no}
\newsavebox{\commentbox}

\long\def\Restate#1#2#3#4{
\medskip\par\noindent
{\bf #1 \ref{#2} #3} {\it #4}\par\medskip }

\newcounter{ax}
\setcounter{ax}{-1}
\newtheorem{thm}{Theorem}[section]

\newtheorem{lem}[thm]{Lemma}

\newtheorem{cor}[thm]{Corollary}

\newtheorem{prop}[thm]{Proposition}

\newtheorem{thmi}{Theorem}

\newtheorem{questioni}[thmi]{Question}

\theoremstyle{definition}
\newtheorem{defn}[thm]{Definition}
\newtheorem{rem}[thm]{Remark}
\newtheorem*{remi}{Remark}

\newtheorem{notation}[thm]{Notation}

\newtheorem{claim}{Claim}

\newtheorem{claim*}{Claim}

\newtheorem{cons}[thm]{Construction}

\DeclareMathOperator{\dimension}{dim}

\DeclareMathOperator{\image}{im}

\DeclareMathOperator{\Aut}{Aut}
\DeclareMathOperator{\Out}{Out}

\DeclareMathOperator{\stabilizer}{Stab}
\DeclareMathOperator{\diam}{\textup{\textsf{diam}}}

\DeclareMathOperator{\hull}{hull}

\newcommand{\neb}{\mathcal N}
\def\MCG{\mathcal{MCG}}

\newcommand{\field}[1]{\mathbb{#1}}
\newcommand{\integers}{\ensuremath{\field{Z}}}

\newcommand{\naturals}{\ensuremath{\field{N}}}
\newcommand{\reals}{\ensuremath{\field{R}}}

\newcommand{\hyperbolic}{\ensuremath{\field{H}}}

\newcommand{\boundary}{{\ensuremath \partial}}

\makeatletter

\newcommand{\Rmnum}[1]{\mathbf{{\expandafter\@slowromancap\romannumeral #1@}}}

\newcommand{\contact}[1]{\ensuremath{\mathcal C#1}}

\makeatother

\newcommand{\tup}[1]{\vec{#1}}

\setlength{\marginparwidth}{1.2in}
\let\oldmarginpar\marginpar
\renewcommand\marginpar[1]{\-\oldmarginpar[\raggedleft\footnotesize #1]{\raggedright\footnotesize #1}}

\newcommand{\tsh}[1]{\left\{\kern-.7ex\left\{#1\right\}\kern-.7ex\right\}}
\newcommand{\Tsh}[2]{\tsh{#2}_{#1}}
\newcommand{\ignore}[2]{\Tsh{#2}{#1}}

\newcommand{\co}{\colon}

\newcounter{enumitemp}

\newcommand{\dist}{\textup{\textsf{d}}}

\newcommand{\cuco}[1]{{\mathcal #1}}

\newcommand{\fontact}{{\mathcal C}}

\newcommand{\calC}{\mathcal C}
\newcommand{\gate}{\mathfrak g}

\newcommand{\seq}[1]{\mbox{\boldmath$#1$}}

 \usepackage{mathabx}
\newcommand{\propnest}{\sqsubsetneq}

\newcommand{\nest}{\sqsubseteq}
\newcommand{\orth}{\bot}
\newcommand{\transverse}{\pitchfork}

\newcommand{\median}{\mathfrak m}
\newcommand{\relevant}{\mathbf{Rel}}
\setlength{\textwidth}{6.05in}
\setlength{\textheight}{9in}
\hoffset=-0.75 in
\voffset=-0.5 in

\setcounter{tocdepth}{1}

\newcommand{\induced}{^{*}}
\newcommand{\inducedS}{^{\tiny{\diamondsuit}}}

\begin{document}
\title[HHS II: Combination theorems and the 
distance formula]
{Hierarchically hyperbolic spaces II:\\ Combination theorems and the 
distance formula}

\author[J. Behrstock]{Jason Behrstock}
\address{Barnard College, Columbia University, New York, New York, USA}
\curraddr{Lehman College and The Graduate Center, CUNY, New York, New York, USA}
\email{jason@math.columbia.edu}
\thanks{\flushleft {Behrstock was supported as a Simons Fellow.}}

\author[M.F. Hagen]{Mark F. Hagen}
\address{Dept. of Pure Maths and Math. Stat., University of Cambridge, Cambridge, UK}
\curraddr{School of Mathematics, University of Bristol, Bristol, UK}
\email{markfhagen@gmail.com}
\thanks{\flushleft {Hagen was supported by NSF Grant Number 1045119 and by EPSRC}}

\author[A. Sisto]{Alessandro Sisto}
\address{ETH, Z\"{u}rich, Switzerland}
\email{sisto@math.ethz.ch}
\thanks{\flushleft{Sisto was supported by the Swiss National Science Foundation project 144373}}

\maketitle

\begin{abstract}
We introduce a number of tools for finding and studying \emph{hierarchically
hyperbolic spaces (HHS)}, a rich class of spaces including 
mapping class groups of surfaces, Teichm\"{u}ller space with either
the Teichm\"{u}ller or Weil-Petersson metrics,  
right-angled Artin groups, and the universal cover of any 
compact special cube complex.  We begin by introducing a streamlined set
of axioms defining an HHS. We prove that all HHS satisfy a 
Masur-Minsky-style distance formula, thereby obtaining a new
proof of the distance formula in the mapping class group without
relying on the Masur-Minsky hierarchy machinery.  We then study
examples of HHS; for instance, we prove that
when $M$ is a closed irreducible $3$--manifold then 
$\pi_1M$ is an
HHS if and only if it is neither $Nil$ nor $Sol$.
We establish this by proving a general combination theorem for trees
of HHS (and graphs of HH groups). 
We also introduce a notion of
``hierarchical quasiconvexity'', which in the
study of HHS is analogous to the role played by quasiconvexity in the
study of Gromov-hyperbolic spaces. 
\end{abstract}

\tableofcontents

\renewcommand{\qedsymbol}{$\Box$}

\section*{Introduction}\label{sec:introduction}
One of the most remarkable aspects of the theory of mapping class
groups of surfaces is that the coarse geometry of the
mapping class group, $\MCG(S)$, can be fully reconstructed from its
shadows on a collection of hyperbolic
spaces --- namely the curve graphs of subsurfaces of the underlying surface. Each
subsurface of the surface $S$ is equipped with a hyperbolic curve
graph and a projection, the \emph{subsurface
projection}, to this graph from $\MCG(S)$; 
there are also projections between the various curve
graphs.  The powerful Masur--Minsky distance
formula~\cite{MasurMinsky:II} shows that the distance between points
of $\MCG(S)$ is coarsely the sum over all subsurfaces of the distances
between the projections of these points to the various curve graphs.  Meanwhile, the
consistency/realization theorem~\cite{BKMM:consistency} tells us that
tuples with coordinates in the different curve graphs that obey
``consistency'' conditions characteristic of images of actual points
in $\MCG(S)$ are, coarsely, images of points in $\MCG(S)$.  Finally,
any two points in $\MCG(S)$ are joined by a uniform-quality
quasigeodesic projecting to a uniform unparameterized quasigeodesic in
each curve graph --- a \emph{hierarchy path}~\cite{MasurMinsky:II}.

It is perhaps surprising that analogous behavior should appear in
CAT(0) cube complexes, since the mapping class group cannot act
properly on such
complexes, c.f., \cite{Bridson:semisimple,Haglund:semisimple, KapovichLeeb:actions}.  However,
mapping class groups enjoy several properties reminiscent of
nonpositively/negatively curved spaces, including: automaticity (and, thus, 
quadratic Dehn function)
\cite{Mosher:automatic}, having many quasimorphisms 
\cite{BestvinaFujiwara:boundedcohom}, super-linear divergence 
\cite{Behrstock:asymptotic}, etc. Mapping class groups also exhibit 
coarse versions of some features of CAT(0) cube
complexes, including coarse centroids/medians \cite{BehrstockMinsky:RD} and, 
more generally, a local coarse structure of a cube complex as made 
precise in  
\cite{Bowditch:coarse_median}, applications to embeddings in trees, 
\cite{BehrstockDrutuSapir:MCGsubgroups}, etc. Accordingly, it 
is natural to seek a common thread joining these important classes of
groups and spaces.

In~\cite{Hagen:quasi_arb} it was shown that, for an arbitrary CAT(0)
cube complex $\cuco X$, the intersection-graph of the hyperplane
carriers --- the \emph{contact graph} --- is hyperbolic, and in fact
quasi-isometric to a tree.  This object seems at first glance quite
different from the curve graph (which records, after all,
\emph{non}-intersection), but there are a number of reasons this is 
quite natural, two of which we now mention.  First, the curve graph can be realized as a coarse
intersection graph of product regions in $\MCG$.  Second, the contact
graph is closely related to the intersection graph of the
hyperplanes themselves; when $\cuco X$ is the universal cover of the
Salvetti complex of a right-angled Artin group, the latter graph
records commutation of conjugates of generators, just as the curve
graph records commutation of Dehn twists.

The cube complex $\cuco X$ coarsely projects to its contact graph.
Moreover, using disc diagram techniques, it is not hard to show that
any two $0$--cubes in a CAT(0) cube complex are joined by a
combinatorial geodesic projecting to a geodesic in the contact
graph~\cite{BehrstockHagenSisto:HHS_I}.  This observation --- that
CAT(0) cube complexes have ``hierarchy paths'' with very strong
properties --- motivated a search for an analogue of the theory of
curve graphs and subsurface projections in the world of CAT(0) cube
complexes.  This was largely achieved
in~\cite{BehrstockHagenSisto:HHS_I}, where a theory completely
analogous to the mapping class group theory was constructed for a wide
class of CAT(0) cube complexes, with (a variant of) the contact graph
playing the role of the curve graph.  (Results of this type for right-angled Artin groups, using the \emph{extension 
graph}, were obtained by Kim-Koberda in~\cite{KimKoberda:curve_graph}; see~\cite{BehrstockHagenSisto:HHS_I} for a 
comparison of the two approaches.)  

These results motivated us to 
define a notion of ``spaces with distance formulae'', which we did
in~\cite{BehrstockHagenSisto:HHS_I}, by introducing the class of
\emph{hierarchically hyperbolic spaces (HHS)} to provide a framework
for studying many groups and spaces which arise naturally in geometric
group theory, including mapping class groups and virtually special
groups, and to provide a notion of ``coarse nonpositive curvature''
which is quasi-isometry invariant while still yielding some of those 
properties available via local geometry in the classical setting of nonpositively-curved 
spaces.

As mentioned above, the three most salient features of hierarchically
hyperbolic spaces are: the distance formula, the realization theorem,
and the existence of hierarchy paths.  In the treatment given
in~\cite{BehrstockHagenSisto:HHS_I}, these attributes are part of the
definition of a hierarchically hyperbolic space.  This is somewhat
unsatisfactory since, in the mapping class group and cubical settings,
proving these theorems requires serious work.

In this paper, we show that although the definition of hierarchically 
hyperbolic space previously introduced identifies the right class of 
spaces, there exist a streamlined set of axioms for that class of 
spaces which are much easier to verify in practice than those 
presented in~\cite[Section 13]{BehrstockHagenSisto:HHS_I} and which 
don't require assuming a distance formula, realization
theorem, or the existence of hierarchy paths. Thus, a significant 
portion of this paper is devoted to proving that those results can be 
derived from the simplified axioms we introduce here. 
Along the way, we obtain a new, simplified proof of the actual
Masur-Minsky distance formula for the mapping class group.  We then
examine various geometric properties of hierarchically hyperbolic
spaces and groups, including many reminiscent of the
world of CAT(0) spaces and groups; for example, we show, using an argument due to Bowditch, that
hierarchically hyperbolic groups have quadratic Dehn function.
Finally, taking advantage of the simpler set of axioms, we prove combination theorems enabling the construction of
new hierarchically hyperbolic spaces/groups from old.

The definition of a hierarchically hyperbolic space still has several
parts, the details of which we postpone to Section~\ref{sec:background}.
However, the idea is straightforward: a hierarchically hyperbolic
space is a pair $(\cuco X,\mathfrak S)$, where $\cuco X$ is a metric
space and $\mathfrak S$ indexes a set of $\delta$--hyperbolic spaces 
with several features (to each $U\in\mathfrak S$ the associated space 
is denoted $\fontact U$).  Most notably,
$\mathfrak S$ is endowed with $3$ mutually exclusive relations,
\emph{nesting, orthogonality}, and \emph{transversality}, respectively
generalizing nesting, disjointness, and overlapping of subsurfaces.
For each $U\in\mathfrak S$, we have a coarsely Lipschitz projection
$\pi_U\co \cuco X\to\fontact U$, and there are relative projections
$\fontact U\to\fontact V$ when $U,V\in\mathfrak S$ are non-orthogonal.
These projections are required to obey ``consistency'' conditions
modeled on the inequality identified by Behrstock in \cite{Behrstock:asymptotic}, as well as
a version of the bounded geodesic image theorem and large link lemma
of~\cite{MasurMinsky:II}, among other conditions.  A
finitely generated group $G$ is \emph{hierarchically hyperbolic} if it
can be realized as a group of HHS automorphisms (``hieromorphisms'',
as defined in Section~\ref{sec:background}) so that the
induced action on $\cuco X$ by uniform quasi-isometries is geometric
and the action on $\mathfrak S$ is cofinite.  Hierarchically
hyperbolic groups, endowed with word-metrics, are hierarchically
hyperbolic spaces, but the converse does not appear to be true.

\subsection*{Combination theorems}
One of the main contributions in this paper is to provide many new 
examples of hierarchically 
hyperbolic groups, thus showing that mapping 
class groups and various cubical complexes/groups are just two of many  
interesting families in this class of groups and spaces. 
We provide a number of combination theorems, which we 
will describe below. One consequence of these results 
is the following classification of exactly which $3$--manifold 
groups are hierarchically hyperbolic: 

\Restate{Theorem}{thm:3mflds} {($3$--manifolds are hierarchically 
hyperbolic).}
{Let $M$ be a closed $3$--manifold. 
    Then $\pi_{1}(M)$ is a hierarchically hyperbolic space if and 
    only if $M$ does not have a Sol or Nil component in its prime decomposition.}

This result has a number of applications to 
the many fundamental groups of $3$--manifolds which are HHS. For 
instance, in such cases, it follows from  results in 
\cite{BehrstockHagenSisto:HHS_I} that: except for 
$\integers^{3}$, the top dimension of a quasi-flat in such a group 
is $2$, and any such quasi-flat is locally close to a 
``standard flat'' (this generalizes one of the main results of 
\cite[Theorem~4.10]{KapovichLeeb:haken});  up to finite index, 
$\integers$ and $\integers^{2}$ are the 
only finitely generated nilpotent groups which admit quasi-isometric 
embeddings into $\pi_{1}(M)$; and, except in the degenerate case where 
$\pi_{1}(M)$ is virtually abelian, such groups are all acylindrically 
hyperbolic (as also shown in \cite{MinasyanOsin:acylindrical}).

\begin{remi}[Hierarchically hyperbolic spaces vs.\ hierarchically hyperbolic groups]
There is an important distinction to be made between a
\emph{hierarchically hyperbolic space}, which is a metric space $\cuco
X$ equipped with a collection $\mathfrak S$ of hyperbolic spaces with
certain properties, and a \emph{hierarchically hyperbolic group},
which is a group acting geometrically on a hierarchically hyperbolic
space in such a way that the induced action on $\mathfrak S$ is
cofinite.  The latter property is considerably stronger.  For example,
Theorem~\ref{thm:3mflds} shows that $\pi_1M$, with any word-metric, is
a hierarchically hyperbolic space, but, as we discuss in
Remark~\ref{rem:graph_HHS_cube}, $\pi_1M$ probably fails to be a
hierarchically hyperbolic group in general; for instance we conjecture  
this is the case for those graph manifolds which can not be cocompactly 
cubulated.
\end{remi}

In the course of proving Theorem~\ref{thm:3mflds}, we establish several 
general combination theorems, including one about relative hyperbolicity and one
about graphs of groups.  The first is:

\Restate{Theorem}{thm:rel_hyp}{(Hyperbolicity relative to HHGs).}
{Let the group $G$ be hyperbolic relative to a finite collection
$\mathcal P$ of peripheral subgroups.  If each $P\in\mathcal
P$ is a hierarchically hyperbolic space, then $G$ is a hierarchically
hyperbolic space. Further, if each $P\in\mathcal
P$ is a hierarchically hyperbolic group, then so is $G$.}

Another of our main results is a combination theorem, Theorem~\ref{thm:combination}, establishing when 
a tree of hierarchically hyperbolic spaces is again a hierarchically
hyperbolic space.  In the statement below, \emph{hierarchical quasiconvexity} is a natural generalization of both quasiconvexity in the hyperbolic setting and cubical convexity in the cubical setting, which we shall discuss in some detail shortly.  The remaining conditions are technical and explained in Section~\ref{sec:combination}, but are easily verified in practice.

\Restate{Theorem}{thm:combination}{(Combination theorem for HHS).}
{Let $\mathcal T$ be a tree of hierarchically hyperbolic spaces.  Suppose that:
 \begin{itemize}
  \item edge-spaces are uniformly \emph{hierarchically quasiconvex} in incident vertex spaces;
  \item each edge-map is \emph{full};
  \item $\mathcal T$ has \emph{bounded supports};
  \item if $e$ is an edge of $T$ and $S_e$ is 
  the $\nest$--maximal element of $\mathfrak S_e$, then for all
  $V\in\mathfrak S_{e^\pm}$, the elements $V$ and
  $\phi_{e^\pm}\inducedS (S_e)$ are not orthogonal in $\mathfrak
  S_{e^\pm}$.
 \end{itemize}
Then $\cuco X(\mathcal T)$ is hierarchically hyperbolic.}

As a consequence, we obtain a set of sufficient conditions guaranteeing that a graph of hierarchically hyperbolic groups is a hierarchically hyperbolic group.

\Restate{Corollary}{cor:combination_theorem_for_HHG}{(Combination 
theorem for HHG).}
{Let $\mathcal G=(\Gamma,\{G_v\},\{G_e\},\{\phi_{e}^\pm\})$ be a finite graph of hierarchically hyperbolic groups.  Suppose that $\mathcal G$ equivariantly satisfies the hypotheses of Theorem~\ref{thm:combination}.  Then the total group $G$ of $\mathcal G$ is a hierarchically hyperbolic group.}

Finally, we prove that products of hierarchically hyperbolic spaces
admit natural hierarchically hyperbolic structures.

As mentioned earlier, we will apply the combination theorems to fundamental groups of $3$--manifolds, but their applicability is broader. For example, they can be applied to fundamental groups of higher dimensional manifolds such as the ones considered in \cite{FrigerioLafontSisto:rigidity}.

\subsection*{The distance formula and realization.}
As defined in \cite{BehrstockHagenSisto:HHS_I}, the 
basic definition of a \emph{hierarchically hyperbolic space} is 
modeled on the essential properties underlying the ``hierarchy machinery'' of 
mapping class groups. In this paper, we revisit the basic definition 
and provide a new, refined set of axioms; the main changes are 
the removal of the ``distance formula'' and ``hierarchy path'' axioms and the replacement of the 
``realization'' axiom by a far simpler ``partial realization''.  These new 
axioms are both more fundamental and more readily verified. 

An important result in mapping class groups which provides a starting 
point for much recent research in the field is the celebrated 
``distance formula'' of Masur--Minsky~\cite{MasurMinsky:II} 
which provides a way to 
estimate distances in the mapping class group, up to uniformly 
bounded additive and multiplicative distortion, via distances in the 
curve graphs of subsurfaces. We give a new, elementary, proof of the 
distance formula in the mapping class group. The first step in doing 
so is verifying that mapping class groups satisfy the new axioms of a hierarchically 
hyperbolic space. We provide elementary, simple proofs of the axioms for which elementary proofs do not exist in the literature (most notably, the uniqueness axiom); this is done in 
Section~\ref{sec:MCG_HHS}. This then combines with our proof of the following result which states that any hierarchically 
hyperbolic space satisfies a ``distance formula'' (which in 
the case of the mapping class group provides a new proof of the Masur--Minsky distance formula):

\Restate{Theorem}{thm:distance_formula}{(Distance formula for HHS).}
{Let $(X,\mathfrak S)$ be hierarchically hyperbolic. Then there exists $s_0$ such that for all $s\geq s_0$ there exist constants $K,C$ such that for all $x,y\in\cuco X$, $$\dist_{\cuco X}(x,y)\asymp_{(K,C)}\sum_{W\in\mathfrak S}\ignore{\dist_{ W}(x,y)}{s},$$ where $\dist_W(x,y)$ denotes the distance in the hyperbolic space $\fontact W$ between the projections of $x,y$ and $\ignore{A}{B}=A$ if $A\geq B$ and $0$ otherwise.}

Moreover, we show in Theorem~\ref{thm:monotone_hierarchy_paths} that
any two points in $\cuco X$ are joined by a uniform quasigeodesic
$\gamma$ projecting to a uniform unparameterized quasigeodesic in
$\fontact U$ for each $U\in\mathfrak S$.  The existence of such
\emph{hierarchy paths} was hypothesized as 
part of the definition of a hierarchically
hyperbolic space in~\cite{BehrstockHagenSisto:HHS_I}, but now it is 
proven as a consequence of the other axioms.

The Realization Theorem for the mapping class group was established 
by Behrstock--Kleiner--Minsky--Mosher 
in \cite[Theorem~4.3]{BKMM:consistency}. This theorem 
states that given a surface $S$ and,  
for each subsurface $W\subseteq S$, a point in the 
curve complex of $W$: this sequence of points arises as the 
projection of a point in the mapping class group (up to bounded 
error), whenever the curve complex elements satisfy certain pairwise  
``consistency conditions.''  Thus the Realization Theorem provides another sense in which all of the quasi-isometry invariant geometry of the mapping class group is encoded by the projections onto the curve graphs of subsurfaces.\footnote{In \cite{BKMM:consistency},  
the name Consistency Theorem is used to refer to the necessary and 
sufficient conditions for realization; since we find it useful to 
break up these two aspects, we refer to this half as the 
Realization Theorem, since anything that satisfies the consistency 
conditions is realized.} 
In Section~\ref{sec:realization} we show 
that an arbitrary hierarchically hyperbolic space satisfies a 
realization theorem.  Given our elementary proof of the new axioms for mapping class 
groups in Section~\ref{sec:MCG_HHS}, we thus obtain a new proof 
of \cite[Theorem~4.3]{BKMM:consistency}. 

\subsection*{Hulls and the coarse median property}
Bowditch introduced a notion of \emph{coarse median space} to 
generalize some results about median spaces to a more general 
setting, and, in particular, to the mapping class group 
\cite{Bowditch:coarse_median}. Bowditch observed in 
\cite{Bowditch:large_scale} that any hierarchically hyperbolic space 
is a coarse median space; for completeness we provide a short proof 
of this result in Theorem~\ref{thm:hier_hyp_coarse_median}. Using 
Bowditch's results about coarse median spaces, we obtain a number of 
applications as corollaries. For instance, 
Corollary~\ref{cor:rapid_decay} is obtained from 
\cite[Theorem~9.1]{Bowditch:embeddings} and says that any 
hierarchically hyperbolic space satisfies the Rapid Decay 
Property and Corollary~\ref{cor:HHG_quadratic_Dehn_function} is 
obtained from \cite[Corollary~8.3]{Bowditch:coarse_median} to show 
that all hierarchically hyperbolic groups are finitely presented and have quadratic Dehn 
functions. This provides examples of groups that are not hierarchically hyperbolic, for example:

\Restate{Corollary}{cor:out_fn}{($\Out(F_n)$ is not an HHG).}
{For $n\geq 3$, the group $\Out(F_n)$ is not a hierarchically hyperbolic group.}

Indeed, $\Out(F_n)$ was shown
in~\cite{BridsonVogtmann:dehn_1,HandelMosher:dehn,BridsonVogtmann:dehn_2}
to have exponential Dehn function.  This result is interesting as a 
counter-point to 
the well-known and fairly robust analogy between $\Out(F_n)$ and
the mapping class group of a surface; especially in light of the fact 
that $\Out(F_n)$ is known to have a number of properties reminiscent of the axioms for an 
HHS, c.f., \cite{BestvinaFeighn:projections, BestvinaFeighn:hyperbolicCFF, 
HandelMosher:hyperbolicity, SabalkaSavchuk:projections}.

The coarse median property, via work of Bowditch, also implies that asymptotic cones of hierarchically hyperbolic spaces are contractible.  Moreover, in Corollary~\ref{cor:homologydimension}, we bound the homological dimension of any asymptotic cone of a hierarchically hyperbolic space.  This latter result relies on the use of \emph{hulls} of finite sets of points in the HHS $\cuco X$.  This construction generalizes the \emph{$\Sigma$--hull} of a finite set, constructed in the mapping class group context in~\cite{BKMM:consistency}.  (It also generalizes a special case of the ordinary combinatorial convex hull in a CAT(0) cube complex.)  A key feature of these hulls is that they are coarse retracts of $\cuco X$ (see Proposition~\ref{prop:coarse_retract}), and this plays an important role in the proof of the distance formula.

\subsection*{Hierarchical spaces}\label{subsec:intro_hier_space}
We also introduce the more general notion of a \emph{hierarchical space (HS)}.  
This is the same as a hierarchically hyperbolic space, except that we do not 
require the various associated spaces $\fontact U$, onto which we are 
projecting, to be hyperbolic.  Although we mostly focus on HHS in this paper, a 
few things are worth noting.  First, the realization theorem 
(Theorem~\ref{thm:realization}) actually makes no use of hyperbolicity of the 
$\fontact U$, and therefore holds in the more general context of HS; see 
Section~\ref{sec:realization}.

Second, an important subclass of the class of HS is the class of \emph{relatively hierarchically hyperbolic spaces}, which we introduce in Section~\ref{subsec:generalized_hull}.  These are hierarchical spaces where the spaces $\fontact U$ are uniformly hyperbolic except when $U$ is minimal with respect to the nesting relation.  As their name suggests, this class includes all metrically relatively hyperbolic spaces; see Theorem~\ref{thm:rel_hyp_space}.  With an eye to future applications, in Section~\ref{subsec:generalized_hull} we prove a distance formula analogous to Theorem~\ref{thm:distance_formula} for relatively hierarchically hyperbolic spaces, and also establish the existence of hierarchy paths.  The strategy is to build, for each pair of points $x,y$, in the relatively hierarchically hyperbolic space, a ``hull'' of $x,y$, which we show is hierarchically hyperbolic with uniform constants.  We then apply Theorems~\ref{thm:distance_formula} and~\ref{thm:monotone_hierarchy_paths}.

\subsection*{Standard product regions and hierarchical quasiconvexity}
In Section~\ref{sec:hier_conv_hier_hyp}, we introduce the notion of a
\emph{hierarchically quasiconvex} subspace of a hierarchically
hyperbolic space $(\cuco X,\mathfrak S)$.  In the case where $\cuco X$
is hyperbolic, this notion coincides with the usual notion of
quasiconvexity.  The main technically useful features of
hierarchically quasiconvex subspaces generalize key features of
quasiconvexity: they inherit the property of being hierarchically
hyperbolic (Proposition~\ref{prop:sub_hhs}) and one can coarsely
project onto them (Lemma~\ref{lem:gate}).

Along with the hulls discussed above, the most important examples of
hierarchically quasiconvex subspaces are \emph{standard product
regions}: for each $U\in\mathfrak S$, one can consider the set $P_U$
of points $x\in\cuco X$ whose projection to each $\fontact V$ is
allowed to vary only if $V$ is nested into or orthogonal to $U$;
otherwise, $x$ projects to the same place in $\fontact V$ as $\fontact
U$ does under the relative projection.  The space $P_U$ coarsely
decomposes as a product, with factors corresponding to the nested and
orthogonal parts.  Product regions play an important role in the study
of boundaries and automorphisms of hierarchically hyperbolic spaces in
the forthcoming paper~\cite{DurhamHagenSisto:HHS_III}, as well as in
the study of quasi-boxes and quasiflats in hierarchically hyperbolic
spaces carried out in~\cite{BehrstockHagenSisto:HHS_I}.

\subsection*{Some questions and future directions}
Before embarking on the discussion outlined above, we raise a few 
questions about hierarchically hyperbolic spaces and groups that we 
believe are of significant interest.

The first set of questions concern the scope of the theory, i.e., which
groups and spaces are hierarchically hyperbolic and which operations
preserve the class of HHS:

\begin{questioni}[Cubical groups]\label{question:ccc} 
Let $G$ act properly and cocompactly on a CAT(0) cube complex.  Is $G$
a hierarchically hyperbolic group?  Conversely, suppose that
$(G,\mathfrak S)$ is a hierarchically hyperbolic group; are there 
conditions on the elements of $\mathfrak S$ which imply that $G$ acts properly and
cocompactly on a CAT(0) cube complex?\footnote{The first question was partially answered positively 
in~\cite{HagenSusse} after this paper was first posted.}
\end{questioni}

Substantial evidence for this conjecture was provided in 
\cite{BehrstockHagenSisto:HHS_I} where we established that a CAT(0)
cube complex $\cuco X$ containing a \emph{factor system} is a
hierarchically hyperbolic space, and the associated hyperbolic spaces
are all uniform quasi-trees.  (Roughly speaking, $\cuco X$ contains a
factor-system if the following collection of subcomplexes has finite
multiplicity: the smallest collection of convex subcomplexes that
contains all combinatorial hyperplanes and is closed under collecting
images of closest-point projection maps between its elements.)  The
class of cube complexes that are HHS in this way contains all
universal covers of special cube complexes with finitely many immersed
hyperplanes, but the cube complexes containing factor systems have not
been completely characterized.  In the forthcoming
paper~\cite{DurhamHagenSisto:HHS_III}, we show that the
above question is closely related to a conjecture of the first two
authors on the simplicial boundary of cube
complexes~\cite[Conjecture~2.8]{BehrstockHagen:cubulated1}.

More generally, we ask the following:

\begin{questioni}[Factor systems in median spaces]\label{question:median_factor}
Is there a theory of factor systems in median spaces generalizing that in CAT(0) cube complexes, such that median spaces/groups admitting factor systems are hierarchically hyperbolic?
\end{questioni}

\noindent Presumably, a positive answer to Question~\ref{question:median_factor} would involve the measured wallspace structure on median spaces discussed in~\cite{ChatterjiDrutuHaglund:measured_walls}.  One would have to develop an analogue of the contact graph of a cube complex to serve as the underlying hyperbolic space.  One must be careful since, e.g., the Baumslag-Solitar group $BS(1,2)$ acts properly on a median space but has exponential Dehn function~\cite{Gersten:dehn} and is thus not a hierarchically hyperbolic space, by Corollary~\ref{cor:HHG_quadratic_Dehn_function}.  On the other hand, if the answer to Question~\ref{question:median_factor} is positive, one might try to do the same thing for coarse median spaces.  

There are a number of other groups and spaces where it is natural to 
inquire whether or not they are hierarchically hyperbolic.  For example:

\begin{questioni}[Handlebody group]\label{question:handlebody_group}
Let $H$ be a compact oriented $3$--dimensional genus $g$ handlebody, and let $G_g\leq\MCG(\boundary H)$ be the group of isotopy classes of diffeomorphisms of $H$.  Is $G_g$ a hierarchically hyperbolic group?
\end{questioni}

\begin{questioni}[Graph products]\label{question:graph_products}
 Let $G$ be a (finite) graph product of hierarchically hyperbolic groups. Is $G$ hierarchically hyperbolic?
\end{questioni}

The answer to Question \ref{question:handlebody_group} is presumably
no, while the answer to \ref{question:graph_products} is most likely
yes.  The positive answer to Question \ref{question:graph_products}
would follow from a strengthened version of Theorem
\ref{thm:combination}.

There are other candidate examples of hierarchically hyperbolic spaces.  For example, it is natural to ask whether a right-angled Artin group with the syllable-length metric, introduced in~\cite{KimKoberda:curve_graph}, which is analogous to Teichm\"{u}ller space with the Weil-Petersson metric, is hierarchically hyperbolic.

As far as the difference between hierarchically hyperbolic \emph{spaces} and \emph{groups} is concerned, we conjecture that the following question has a positive answer:

\begin{questioni}
 Is it true that the fundamental group $G$ of a non-geometric graph manifold is a hierarchically hyperbolic group if and only if $G$ is virtually compact special?
\end{questioni}

It is known that $G$ as above is virtually compact special if and only if it is chargeless in the sense of \cite{BuyaloSvetlov:homological}, see \cite{HagenPrzytycki:graph}.

There remain a number of open questions about the geometry of 
hierarchically hyperbolic spaces in general. 
Theorem~\ref{thm:hier_hyp_coarse_median} ensures, via work of
Bowditch, that every asymptotic cone of a hierarchically hyperbolic
space is a median space \cite{Bowditch:coarse_median}; further 
properties in this direction are established in
Section~\ref{sec:convex_hulls}.  Motivated by combining the main
result of~\cite{Sisto:unique_cones} on $3$--manifold groups with
Theorem~\ref{thm:3mflds}, we ask:

\begin{questioni}\label{question:unique_cones} Are
any two asymptotic cones of a given 
hierarchically hyperbolic space bi-Lipschitz equivalent?
\end{questioni}

The notion of hierarchical quasiconvexity of a subgroup of a
hierarchically hyperbolic group $(G,\mathfrak S)$ generalizes
quasiconvexity in word-hyperbolic groups and cubical
convex-cocompactness in groups acting geometrically on CAT(0) cube
complexes with factor-systems.  Another notion of quasiconvexity is
\emph{stability}, defined by Durham-Taylor
in~\cite{DurhamTaylor:stable}.  This is a quite different notion of
quasiconvexity, since stable subgroups are necessarily hyperbolic.
In~\cite{DurhamTaylor:stable}, the authors characterize stable
subgroups of the mapping class group; it is reasonable to ask for a
generalization of their results to hierarchically hyperbolic
groups.

Many hierarchically hyperbolic spaces admit multiple hierarchically
hyperbolic structures.
However, as discussed
in~\cite{BehrstockHagenSisto:HHS_I}, a CAT(0) cube complex with a
factor-system has a ``minimal'' factor-system, i.e., one that is
contained in all other factor systems.  In this direction, it is 
natural to ask whether a
hierarchically hyperbolic space $(\cuco X,\mathfrak S)$ admits a
hierarchically hyperbolic structure that is canonical in some way.

\subsection*{Recent developments}
Since we posted the first version of this paper, there has been further progress on the theory of HHS and its applications.

More examples of HHS/HHG are now available, including a large class of CAT(0) cubical 
groups~\cite{HagenSusse}, ``small-cancellation'' quotients of HHGs~\cite{HHS_3}, and separating curve graphs of 
surfaces~\cite{Vokes:separating}.  It was also recently shown by Spriano that hyperbolic spaces/groups admit alternate HHS 
structures that can be constructed from an arbitrary fixed collection of quasiconvex subspaces/subgroups~\cite{Spriano1}.  
Spriano has proven additional results on modifying hierarchically hyperbolic structures to include pre-specified subgroups, 
under natural conditions~\cite{Spriano2}.  This allows him to prove that a large class of graphs of hierarchically 
hyperbolic groups are hierarchically hyperbolic.  In the latter vein, Berlai-Robbio have, in forthcoming 
work~\cite{BerlaiRobbio}, generalised the combination theorem (Theorem~\ref{thm:combination}) in this paper, and used this 
to show that the class of hierarchically hyperbolic groups is closed under taking graph products.

Further developments of the theory include finiteness of the
asymptotic dimension (including a quadratic upper bound for mapping
class groups)~\cite{HHS_3}; a theory of boundaries generalizing the
Gromov boundary of hyperbolic
groups~\cite{DurhamHagenSisto:HHS_III,Mousley1,Mousley2}; proof of the 
existence 
of largest acylindrical actions  \cite{AbbottBehrstock}; and 
a theorem
controlling quasiflats (new in both mapping class groups and
cubical groups) with many applications including, 
for instance, a new proof of quasi-isometric
rigidity for mapping class groups~\cite{HHS_n}.  Mousley-Russell have recently studied Morse boundaries of hierarchically 
hyperbolic groups~\cite{MousleyRussell}, and Abbott and the first-named author have established a 
linear bound on conjugator lengths in hierarchically hyperbolic groups~\cite{AbbottBehrstock:linear}.

We stress that the present paper is foundational for almost all of the above developments; the results here are used as tools there.

\subsection*{Organization of the paper}
Section~\ref{sec:background} contains the full definition of a hierarchically hyperbolic space (and, more generally, a hierarchical space) and some discussion of background.  Section~\ref{sec:preliminary} contains various basic consequences of the definition, and some tricks that are used repeatedly.  In Section~\ref{sec:realization}, we prove the realization theorem (Theorem~\ref{thm:realization}).  In Section~\ref{sec:hier_path_df} we establish the existence of hierarchy paths (Theorem~\ref{thm:monotone_hierarchy_paths}) and the distance formula (Theorem~\ref{thm:distance_formula}).  Section~\ref{sec:hierarchical_quasiconvexity_and_gates} is devoted to hierarchical quasiconvexity and product regions, and Section~\ref{sec:convex_hulls} to coarse convex hulls and relatively hierarchically hyperbolic spaces.    The coarse median property and its consequences are detailed in Section~\ref{sec:coarse_media}.  The combination theorems for trees of spaces, graphs of groups, and products are proved in Section~\ref{sec:combination}, and groups hyperbolic relative to HHG are studied in Section~\ref{sec:hyperbolicity_rel_HHS}.  This is applied to 3-manifolds in Section~\ref{sec:hier_hyp_3_manifolds}.  Finally, in Section~\ref{sec:MCG_HHS}, we prove that mapping class groups are hierarchically hyperbolic.

\subsection*{Acknowledgments} MFH and AS thank the organizers of the
Ventotene 2015 conference, and JB and MFH thank CIRM and the
organizers of GAGTA 2015, where some of the work on this paper was
completed.  We thank Federico Berlai, Bruno Robbio, Jacob Russell and Davide Spriano for numerous comments/corrections, and 
anonymous referees for further helpful comments.

\section{The main definition and background on hierarchically hyperbolic spaces}\label{sec:background}

\subsection{The axioms}\label{subsec:axioms}
We begin by defining a hierarchically hyperbolic space.  We 
will work in the context of a \emph{$q$--quasigeodesic space},  $\cuco X$, 
i.e., a 
metric space where any two
points can be connected by a $(q,q)$-quasigeodesic.  Obviously, if
$\cuco X$ is a geodesic space, then it is a quasigeodesic space.  Most
of the examples we are interested in are geodesic spaces, but in order to
construct hierarchically hyperbolic structures on naturally-occurring
subspaces of hierarchically hyperbolic spaces, we must work in the
slightly more general setting of quasigeodesic spaces.

\begin{defn}[Hierarchically hyperbolic space]\label{defn:space_with_distance_formula}
The $q$--quasigeodesic space  $(\cuco X,\dist_{\cuco X})$ is a \emph{hierarchically hyperbolic space} if there exists $\delta\geq0$, an index set $\mathfrak S$, and a set $\{\fontact W:W\in\mathfrak S\}$ of $\delta$--hyperbolic spaces $(\fontact U,\dist_U)$,  such that the following conditions are satisfied:\begin{enumerate}
\item\textbf{(Projections.)}\label{item:dfs_curve_complexes} There is
a set $\{\pi_W\co \cuco X\rightarrow2^{\fontact W}\mid W\in\mathfrak S\}$
of \emph{projections} sending points in $\cuco X$ to sets of diameter
bounded by some $\xi\geq0$ in the various $\fontact W\in\mathfrak S$.
Moreover, there exists $K$ so that for all $W\in\mathfrak S$, the coarse map $\pi_W$ is $(K,K)$--coarsely
Lipschitz and $\pi_W(\cuco X)$ is $K$--quasiconvex in $\fontact W$.

 \item \textbf{(Nesting.)} \label{item:dfs_nesting} $\mathfrak S$ is
 equipped with a partial order $\nest$, and either $\mathfrak
 S=\emptyset$ or $\mathfrak S$ contains a unique $\nest$--maximal
 element; when $V\nest W$, we say $V$ is \emph{nested} in $W$.  (We
 emphasize that $W\nest W$ for all $W\in\mathfrak S$.)  For each
 $W\in\mathfrak S$, we denote by $\mathfrak S_W$ the set of
 $V\in\mathfrak S$ such that $V\nest W$.  Moreover, for all $V,W\in\mathfrak S$
 with $V\propnest W$ there is a specified subset
 $\rho^V_W\subset\fontact W$ with $\diam_{\fontact W}(\rho^V_W)\leq\xi$.
 There is also a \emph{projection} $\rho^W_V\colon \fontact
 W\rightarrow 2^{\fontact V}$.  (The similarity in 
notation is justified by viewing $\rho^V_W$ as a coarsely constant map $\fontact
 V\rightarrow 2^{\fontact W}$.)
 
 \item \textbf{(Orthogonality.)} 
 \label{item:dfs_orthogonal} $\mathfrak S$ has a symmetric and
 anti-reflexive relation called \emph{orthogonality}: we write $V\orth
 W$ when $V,W$ are orthogonal.  Also, whenever $V\nest W$ and $W\orth
 U$, we require that $V\orth U$.  We require that for each
 $T\in\mathfrak S$ and each $U\in\mathfrak S_T$ for which
 $\{V\in\mathfrak S_T\mid V\orth U\}\neq\emptyset$, there exists $W\in
 \mathfrak S_T-\{T\}$, so that whenever $V\orth U$ and $V\nest T$, we
 have $V\nest W$.  Finally, if $V\orth W$, then $V,W$ are not
 $\nest$--comparable.
 
 \item \textbf{(Transversality and consistency.)}
 \label{item:dfs_transversal} If $V,W\in\mathfrak S$ are not
 orthogonal and neither is nested in the other, then we say $V,W$ are
 \emph{transverse}, denoted $V\transverse W$.  There exists
 $\kappa_0\geq 0$ such that if $V\transverse W$, then there are
  sets $\rho^V_W\subseteq\fontact W$ and
 $\rho^W_V\subseteq\fontact V$ each of diameter at most $\xi$ and 
 satisfying: $$\min\left\{\dist_{
 W}(\pi_W(x),\rho^V_W),\dist_{
 V}(\pi_V(x),\rho^W_V)\right\}\leq\kappa_0$$ for all $x\in\cuco X$.
 
 For $V,W\in\mathfrak S$ satisfying $V\nest W$ and for all
 $x\in\cuco X$, we have: $$\min\left\{\dist_{
 W}(\pi_W(x),\rho^V_W),\diam_{\fontact
 V}(\pi_V(x)\cup\rho^W_V(\pi_W(x)))\right\}\leq\kappa_0.$$ 
 
 The preceding two inequalities are the \emph{consistency inequalities} for points in $\cuco X$.
 
 Finally, if $U\nest V$, then $\dist_W(\rho^U_W,\rho^V_W)\leq\kappa_0$ whenever $W\in\mathfrak S$ satisfies either $V\propnest W$ or $V\transverse W$ and $W\not\orth U$.
 
 \item \textbf{(Finite complexity.)} \label{item:dfs_complexity} There exists $n\geq0$, the \emph{complexity} of $\cuco X$ (with respect to $\mathfrak S$), so that any set of pairwise--$\nest$--comparable elements has cardinality at most $n$.
  
 \item \textbf{(Large links.)} \label{item:dfs_large_link_lemma} There
exist $\lambda\geq1$ and $E\geq\max\{\xi,\kappa_0\}$ such that the following holds.
Let $W\in\mathfrak S$ and let $x,x'\in\cuco X$.  Let
$N=\lambda\dist_{_W}(\pi_W(x),\pi_W(x'))+\lambda$.  Then there exists $\{T_i\}_{i=1,\dots,\lfloor
N\rfloor}\subseteq\mathfrak S_W-\{W\}$ such that for all $T\in\mathfrak
S_W-\{W\}$, either $T\in\mathfrak S_{T_i}$ for some $i$, or $\dist_{
T}(\pi_T(x),\pi_T(x'))<E$.  Also, $\dist_{
W}(\pi_W(x),\rho^{T_i}_W)\leq N$ for each $i$.

 \item \textbf{(Bounded geodesic image.)}
 \label{item:dfs:bounded_geodesic_image} There exists $E>0$ such that 
 for all $W\in\mathfrak S$,
 all $V\in\mathfrak S_W-\{W\}$, and all geodesics $\gamma$ of
 $\fontact W$, either $\diam_{\fontact V}(\rho^W_V(\gamma))\leq E$ or
 $\gamma\cap\neb_E(\rho^V_W)\neq\emptyset$.
 
 \item \textbf{(Partial Realization.)} \label{item:dfs_partial_realization} There exists a constant $\alpha$ with the following property. Let $\{V_j\}$ be a family of pairwise orthogonal elements of $\mathfrak S$, and let $p_j\in \pi_{V_j}(\cuco X)\subseteq \fontact V_j$. Then there exists $x\in \cuco X$ so that:
 \begin{itemize}
 \item $\dist_{V_j}(x,p_j)\leq \alpha$ for all $j$,
 \item for each $j$ and 
 each $V\in\mathfrak S$ with $V_j\nest V$, we have 
 $\dist_{V}(x,\rho^{V_j}_V)\leq\alpha$, and
 \item if $W\transverse V_j$ for some $j$, then $\dist_W(x,\rho^{V_j}_W)\leq\alpha$.
 \end{itemize}

\item\textbf{(Uniqueness.)} For each $\kappa\geq 0$, there exists
$\theta_u=\theta_u(\kappa)$ such that if $x,y\in\cuco X$ and
$\dist_{\cuco X}(x,y)\geq\theta_u$, then there exists $V\in\mathfrak S$ such
that $\dist_V(x,y)\geq \kappa$.\label{item:dfs_uniqueness}
\end{enumerate}
We say that the $q$--quasigeodesic metric spaces $\{\cuco X_i\}$ are \emph{uniformly
hierarchically hyperbolic} if each $\cuco X_i$ satisfies the axioms
above and all constants, including the complexities, can be chosen
uniformly.  We often refer to $\mathfrak S$, together with the nesting
and orthogonality relations, and the projections as a \emph{hierarchically hyperbolic structure} for the space $\cuco
X$.  Observe that $\cuco X$ is hierarchically hyperbolic with respect
to $\mathfrak S=\emptyset$, i.e., hierarchically hyperbolic of
complexity $0$, if and only if $\cuco X$ is bounded.  Similarly,
$\cuco X$ is hierarchically hyperbolic of complexity $1$ with respect
to $\mathfrak S=\{\cuco X\}$, if and only if $\cuco X$ is hyperbolic.
\end{defn}

\begin{notation}\label{notation:suppress_pi}
Where it will not cause confusion, given $U\in\mathfrak S$, we will often suppress the projection
map $\pi_U$ when writing distances in $\fontact U$, i.e., given $x,y\in\cuco X$ and
$p\in\fontact U$  we write
$\dist_U(x,y)$ for $\dist_U(\pi_U(x),\pi_U(y))$ and $\dist_U(x,p)$ for
$\dist_U(\pi_U(x),p)$. Note that when we measure distance between a 
pair of sets (typically both of bounded diameter) we are taking the minimum distance 
between the two sets. 
Given $A\subset \cuco X$ and $U\in\mathfrak S$ 
we let $\pi_{U}(A)$ denote $\cup_{a\in A}\pi_{U}(a)$.
\end{notation}

\begin{rem}[Surjectivity of projections]\label{rem:surjectivity}
In all of the motivating examples, and in most applications, the maps $\pi_U$ are uniformly coarsely surjective.

 One can always replace each $\fontact U$ with a thickening of $\pi_{U}(\cuco 
X)$, and hence make each $\pi_U$ coarsely surjective. This is first discussed in \cite{DurhamHagenSisto:HHS_III}, where this 
procedure gets used; the 
resulting spaces are termed \emph{normalized} hierarchically hyperbolic spaces.

More precisely, since each $\pi_U(\cuco X)$ is $K$--quasiconvex, the
subset $\fontact U_{norm}$ of $\fontact U$ consisting of all geodesics
that start and end in $\pi_U(\cuco X)$ is uniformly quasiconvex, is a
(uniformly) hyperbolic geodesic metric space, and uniformly coarsely
coincides with $\pi_U(\cuco X)$.  (This ``quasiconvex hull'' procedure
is discussed in more detail in Section~\ref{sec:convex_hulls}.)  Hence
we can endow $\cuco X$ with a slightly different, normalized,
hierarchically hyperbolic structure.  Indeed, the index set is still
$\mathfrak S$, each $\fontact U$ is replaced by $\fontact U_{norm}$,
and the maps $\pi_U$ remain unchanged (but are now coarsely
surjective).  Given $U,V\in\mathfrak S$ such that $\rho^U_V$ is
defined, we replace $\rho^U_V$ (viewed as a coarse map $\fontact
U\to\fontact V$) with the composition $p_V\circ\rho^U_V$, where
$p_V\co\fontact V\to\fontact U_{norm}$ is the coarse closest-point
projection.
\end{rem}

\begin{rem}[Surjectivity/quasiconvexity of projections in the extant applications]\label{rem:surjectivity_in_other_papers}
In the motivating examples (mapping class groups, Teichm\"uller space, virtually special groups, hyperbolic spaces, etc.), 
the projections $\pi_U$ are uniformly coarsely surjective, but it is convenient to relax that requirement.  As is evident 
from Theorem~\ref{thm:realization} and the key Lemma~\ref{lem:median_cons}, the appropriate relaxation of coarse 
surjectivity is the requirement, from Definition~\ref{defn:space_with_distance_formula}.\eqref{item:dfs_curve_complexes}, 
that each $\pi_U(\cuco X)$ be uniformly quasiconvex in $\fontact U$.  

In a few other places in the literature, this is not spelled out, but in each case where an issue arises, it does not affect 
the arguments in question.  In the interest of clarity, we now summarise this as follows:
\begin{itemize}
     \item In~\cite[p. 4, p. 19]{DurhamHagenSisto:HHS_III}, the authors establish a standing assumption that they are 
working with normalized HHSs --- each $\pi_U$ is uniformly coarsely surjective.  In view of Remark~\ref{rem:surjectivity} 
(or~\cite[Proposition 1.16]{DurhamHagenSisto:HHS_III}), the results about normalized HHSs can be promoted to corresponding 
statements about general HHSs.
\item In~\cite{HHS_3}, Remark~\ref{rem:surjectivity} allows one to assume that the HHSs in question are normalised.  
However, there are three places where a new HHS is constructed from an old one, and one must observe that in each of these 
cases, the new projections have quasiconvex image.  In~\cite[Proposition~2.4]{HHS_3}, this holds just because the 
projections used in the new HHS structure coincide with those used in the old HHS structure, so quasiconvexity persists.  In 
Proposition 6.14 and Theorem 6.2 of~\cite{HHS_3}, the projections in the new HHS structures are of two types: they either 
coincide with projections from the old HHS structures, and thus have quasiconvex images, or they are surjective by 
construction.
\end{itemize}
\end{rem}

\begin{rem}[Large link function]\label{rem:large_link}
It appears as though there is no actual need to require in Definition~\ref{defn:space_with_distance_formula}.\eqref{item:dfs_large_link_lemma} that $N$ depend linearly on $\dist_W(x,x')$.  Instead, we could have hypothesized that for any $C\geq0$, there exists $N(C)$ so that the statement of the axiom holds with $N=N(C)$ whenever $\dist_W(x,x')\leq C$.  However, one could deduce from this and the rest of the axioms that $N(C)$ grows linearly in $C$, so we have elected to simply build linearity into the definition.
\end{rem}

\begin{rem}[Summary of constants]\label{rem:constants}
Each hierarchically hyperbolic space $(\cuco X,\mathfrak S)$ is
associated with a collection of constants often, as above, denoted 
$\delta,\xi,n,\kappa_0,E,\theta_u,K$, where:
\begin{enumerate}
 \item $\fontact U$ is $\delta$--hyperbolic for each $U\in\mathfrak S$,
\item each $\pi_U$ has image of diameter at most $\xi$ and each
$\pi_U$ is $(K,K)$--coarsely Lipschitz, $\pi_U(\cuco X)$ is
$K$--quasiconvex in $\fontact U$, and each $\rho^U_V$ has (image of)
diameter at most $\xi$,
 \item for each $x\in\cuco X$, the tuple $(\pi_U(x))_{U\in\mathfrak S}$ is $\kappa_0$--consistent,
 \item $E$ is the larger of the constants from the bounded geodesic 
 image axiom and the large link axiom.
 \end{enumerate}
Whenever working in a fixed hierarchically hyperbolic space, we use the above notation freely.  We can, and shall, assume that $E\geq q,E\geq\delta,E\geq\xi,E\geq\kappa_0,E\geq K$, and $E\geq\alpha$.
\end{rem}

\begin{rem}
    We note that in 
    Definition~\ref{defn:space_with_distance_formula}.(\ref{item:dfs_curve_complexes}),
    the assumption that the projections are Lipschitz can be replaced
    by the weaker assumption that there is a proper function of the 
    projected distance which is a lower bound for the distance in the 
    space $\cuco X$. From this weaker assumption, the fact that the 
    projections are actually coarsely Lipschitz then follows from the fact that we assume $\cuco X$ to be quasi-geodesic.
Since the Lipschitz hypothesis is cleaner to state and, in practice, fairly easy to verify, we just remark on this for those that might find 
     this fact useful in proving that more exotic spaces are hierarchically hyperbolic. 
\end{rem}

\subsection{Comparison to the definition in~\cite{BehrstockHagenSisto:HHS_I}}\label{subsec:old_and_new_defs}

Definition~\ref{defn:space_with_distance_formula} is very similar to the definition of a hierarchically hyperbolic space given in~\cite{BehrstockHagenSisto:HHS_I}, with the following differences:
\begin{enumerate}
 \item The existence of \emph{hierarchy paths} and the \emph{distance formula} were stated as axioms in~\cite{BehrstockHagenSisto:HHS_I}; below, we deduce them from the other axioms.  Similarly, the below \emph{realization theorem} was formerly an axiom, but has been replaced by the (weaker) partial realization axiom.
 \item We now require $\cuco X$ to be a quasigeodesic space. In~\cite{BehrstockHagenSisto:HHS_I}, this follows from the existence of hierarchy paths, which was an axiom there.
 \item We now require the projections $\pi_U:\cuco X\to\fontact U$ to be coarsely Lipschitz; although this requirement was not imposed explicitly in~\cite{BehrstockHagenSisto:HHS_I}, it follows from the distance formula, which was an axiom there.
 \item In~\cite{BehrstockHagenSisto:HHS_I}, there were five consistency inequalities; in Definition~\ref{defn:space_with_distance_formula}.\eqref{item:dfs_transversal}, there are two.  The last three inequalities in the definition from~\cite{BehrstockHagenSisto:HHS_I} follow from Proposition~\ref{prop:rho_consistency} below.  (Essentially, the partial realization axiom has replaced part of the old consistency axiom.)
 \item In Definition~\ref{defn:space_with_distance_formula}.\eqref{item:dfs_transversal}, we require that, if $U\nest V$, 
then $\dist_W(\rho^U_W,\rho^V_W)\leq\kappa_0$ whenever $W\in\mathfrak S$ satisfies either $V\propnest W$ or $V\transverse W$ 
and $W\not\orth U$.  In the context of~\cite{BehrstockHagenSisto:HHS_I}, this follows by considering the standard product 
regions constructed using realization (see~\cite[Section 13.1]{BehrstockHagenSisto:HHS_I} and 
Section~\ref{subsec:product_regions} of the present paper).
\end{enumerate}

\begin{prop}[$\rho$--consistency]\label{prop:rho_consistency}
There exists $\kappa_1$ so that the following holds.  Suppose that $U,V,W\in\mathfrak S$ satisfy both of the following conditions: $U\propnest V$ or $U\transverse V$; and $U\propnest W$ or $U\transverse W$.  Then, if $V\transverse W$, then 
 $$\min\left\{\dist_{
 W}(\rho^U_W,\rho^V_W),\dist_{
 V}(\rho^U_V,\rho^W_V)\right\}\leq\kappa_1$$
 and if $V\propnest W$, then 
 $$\min\left\{\dist_{
 W}(\rho^U_W,\rho^V_W),\diam_{\fontact
 V}(\rho^U_V\cup\rho^W_V(\rho^U_W))\right\}\leq\kappa_1.$$
\end{prop}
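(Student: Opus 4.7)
The plan is to realize the ``$U$-data'' inside $\cuco X$ by a single point $x$, and then feed $x$ into the consistency inequalities for the pair $(V,W)$. This reduces the proposition, in the transverse case, to a pure triangle-inequality argument, and in the nesting case to a triangle-inequality argument combined with a bounded-geodesic-image argument.

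The first step is to apply the partial realization axiom (Axiom~\eqref{item:dfs_partial_realization}) to the singleton family $\{U\}$, with an arbitrary $p_U\in\pi_U(\cuco X)$. Since $U\propnest V$ or $U\transverse V$, the second or third bullet of partial realization gives $\dist_V(x,\rho^U_V)\leq\alpha$; symmetrically $\dist_W(x,\rho^U_W)\leq\alpha$. From this point on, $\pi_V(x)$ is an $\alpha$-proxy for $\rho^U_V$, and $\pi_W(x)$ an $\alpha$-proxy for $\rho^U_W$.

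If $V\transverse W$, the consistency axiom applied to $x$ gives $\min\{\dist_W(x,\rho^V_W),\dist_V(x,\rho^W_V)\}\leq\kappa_0$, and the triangle inequality immediately yields $\min\{\dist_W(\rho^U_W,\rho^V_W),\dist_V(\rho^U_V,\rho^W_V)\}\leq\alpha+\kappa_0$, so $\kappa_1=\alpha+\kappa_0$ suffices. If $V\propnest W$, consistency instead gives $\min\{\dist_W(x,\rho^V_W),\diam_V(\pi_V(x)\cup\rho^W_V(\pi_W(x)))\}\leq\kappa_0$. The first alternative collapses to the transverse case after the triangle inequality. In the second alternative, combining with $\dist_V(\pi_V(x),\rho^U_V)\leq\alpha$ controls $\dist_V(\rho^U_V,\rho^W_V(\pi_W(x)))$; to upgrade the point $\pi_W(x)$ to the set $\rho^U_W$ inside $\rho^W_V$, I would invoke bounded geodesic image (Axiom~\eqref{item:dfs:bounded_geodesic_image}) on a geodesic $\gamma\subset\fontact W$ from a point of $\pi_W(x)$ to a point of $\rho^U_W$, whose length is at most $\alpha+2\xi$. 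Either $\gamma$ avoids the $E$-neighborhood of $\rho^V_W$, in which case $\diam_V(\rho^W_V(\gamma))\leq E$ and so $\rho^W_V(\pi_W(x))$ and (a point of) $\rho^W_V(\rho^U_W)$ differ by at most $E$ in $\fontact V$; or $\gamma$ passes $E$-close to $\rho^V_W$, in which case $\dist_W(\rho^U_W,\rho^V_W)\leq\alpha+2\xi+E$ and we are back in the first alternative. Running the same dichotomy on a geodesic joining two points of $\rho^U_W$ (which has length at most $\xi$) controls $\diam_V(\rho^W_V(\rho^U_W))$ in terms of $E$ and $\xi$, and together these inequalities bound $\diam_V(\rho^U_V\cup\rho^W_V(\rho^U_W))$ by a constant depending only on $\alpha,\kappa_0,\xi,E$.

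The main obstacle is precisely this last bookkeeping step in the nesting case: the consistency inequality is naturally about the \emph{point} $\pi_W(x)$, while the proposition asks about the \emph{set} $\rho^U_W$ inside $\rho^W_V$. Bounded geodesic image is designed to handle exactly this kind of transfer between a short geodesic in $\fontact W$ and the diameter of its $\rho^W_V$-image in $\fontact V$, so once one recognizes BGI as the right tool, what remains is a careful application of the triangle inequality.
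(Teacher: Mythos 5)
Your approach is essentially the same as the paper's: apply partial realization to the singleton $\{U\}$ to obtain a point $x\in\cuco X$ with $\dist_V(x,\rho^U_V)\leq\alpha$ and $\dist_W(x,\rho^U_W)\leq\alpha$, then feed the consistency inequality for $x$ into the pair $(V,W)$ and clean up with the triangle inequality. In the transverse case this is immediate, and matches what the paper records (the paper concludes with $\kappa_1=\kappa_0+\alpha$).

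Where you diverge is instructive. In the nesting case $V\propnest W$, the consistency inequality for $x$ controls $\diam_V\bigl(\pi_V(x)\cup\rho^W_V(\pi_W(x))\bigr)$, but the proposition asks about $\diam_V\bigl(\rho^U_V\cup\rho^W_V(\rho^U_W)\bigr)$. The axioms impose no Lipschitz or continuity control on $\rho^W_V$, so the step from $\rho^W_V(\pi_W(x))$ to $\rho^W_V(\rho^U_W)$ is not a triangle inequality; you correctly identify this and close the gap using bounded geodesic image, running the dichotomy once for a geodesic joining $\pi_W(x)$ to $\rho^U_W$ and again within $\rho^U_W$ itself, falling back to the first alternative of the proposition whenever the geodesic enters $\neb_E(\rho^V_W)$. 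The paper's one-line ``the claim now follows $\ldots$ with $\kappa_1=\kappa_0+\alpha$'' elides exactly this point, so your proof is more complete than the one printed; the price is a larger constant $\kappa_1$ depending also on $E$ and $\xi$, which is harmless.
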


\begin{proof}
Suppose that $U\propnest V$ or $U\transverse V$ and the same holds for $U,W$.  Suppose that $V\transverse W$ or $V\nest W$.  Choose $p\in\pi_U(\cuco X)$.  There is a uniform $\alpha$ so that partial realization (Definition~\ref{defn:space_with_distance_formula}.\eqref{item:dfs_partial_realization}) provides $x\in\cuco X$ so that $\dist_U(x,p)\leq\alpha$ and $\dist_T(x,\rho^U_T)\leq\alpha$ whenever $\rho^U_T$ is defined and coarsely constant.  In particular, $\dist_V(x,\rho^U_V)\leq\alpha$ and $\dist_W(x,\rho^U_W)\leq\alpha$.  The claim now follows from Definition~\ref{defn:space_with_distance_formula}.\eqref{item:dfs_transversal}, with $\kappa_1=\kappa_0+\alpha$.
\end{proof}

In view of the discussion above, we have:

\begin{prop}\label{prop:same_definition}
 The pair $(\cuco X,\mathfrak S)$ satisfies Definition~\ref{defn:space_with_distance_formula} if and only if it is hierarchically hyperbolic in the sense of~\cite{BehrstockHagenSisto:HHS_I}.
\end{prop}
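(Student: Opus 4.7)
The plan is to verify the two directions of the equivalence separately, reading off each discrepancy from the numbered list in Section~\ref{subsec:old_and_new_defs}. The backward direction (old implies new) is essentially bookkeeping, while the forward direction is substantive and requires the main theorems proved later in the paper. I will not repeat the constants; instead I will appeal at each step to the correspondence identified in that list.

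For the backward direction, suppose $(\cuco X,\mathfrak S)$ satisfies the axioms of~\cite{BehrstockHagenSisto:HHS_I}, and check each clause of Definition~\ref{defn:space_with_distance_formula}. First, the quasigeodesic-space hypothesis is immediate because the existence of hierarchy paths joining any two points was itself an axiom there. Second, the coarsely Lipschitz condition on $\pi_U$ is a direct consequence of the old distance-formula axiom: if $\dist_{\cuco X}(x,y)$ is small then every term in the sum is below the cutoff, which forces $\dist_U(x,y)$ to be bounded linearly in $\dist_{\cuco X}(x,y)$. Third, partial realization is an immediate weakening of the full realization axiom, obtained by feeding in the tuple whose coordinate at $V_j$ is $p_j$ and whose coordinate at every $V$ with $V_j\nest V$ or $V\transverse V_j$ is $\rho^{V_j}_V$; one checks that this tuple is consistent and so realization produces the required point $x$. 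Finally, the new $\rho$-consistency clause for $U\nest V$ follows from the old consistency axioms via the same realization trick: apply realization to a tuple centered on $U$ to obtain $x\in\cuco X$ with $\dist_V(x,\rho^U_V)$ and $\dist_W(x,\rho^U_W)$ uniformly small, then apply the old transversal or nesting consistency inequality to the pair $(V,W)$ at the point $x$ to conclude $\dist_W(\rho^U_W,\rho^V_W)\leq\kappa_0$ up to a uniform additive error.

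For the forward direction, assume the new axioms and invoke the theorems proved later in this paper: Theorem~\ref{thm:monotone_hierarchy_paths} supplies the hierarchy paths that were axiomatic in~\cite{BehrstockHagenSisto:HHS_I}, Theorem~\ref{thm:distance_formula} supplies the distance formula, and Theorem~\ref{thm:realization} upgrades partial realization to the full realization axiom. The two missing consistency inequalities in the old definition---involving $\rho^U_W$ and $\rho^V_W$ for triples in which $U$ and $V$ are both $\nest$-related or transverse to a common $W$---are precisely the content of Proposition~\ref{prop:rho_consistency}, which has already been established above directly from partial realization and transversal/nesting consistency. Together, these recover every axiom of~\cite{BehrstockHagenSisto:HHS_I}.

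The main obstacle is logistical rather than conceptual: one must check that the proofs of Theorems~\ref{thm:realization}, \ref{thm:monotone_hierarchy_paths}, and~\ref{thm:distance_formula} in Sections~\ref{sec:realization}--\ref{sec:hier_path_df} use only the new axioms of Definition~\ref{defn:space_with_distance_formula} together with Proposition~\ref{prop:rho_consistency}, without circular appeal to any feature of the old definition that has been demoted from an axiom. Once that is confirmed, the proposition follows by quoting each item in the list of differences in turn.
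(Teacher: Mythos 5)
Your proposal is correct and takes essentially the same approach as the paper, which treats Proposition~\ref{prop:same_definition} as a direct consequence of the enumerated list of differences in Section~\ref{subsec:old_and_new_defs} rather than giving a self-contained proof. Your write-up fills in the same outline—backward direction from the old hierarchy-path, distance-formula, and realization axioms; forward direction from Theorems~\ref{thm:realization}, \ref{thm:monotone_hierarchy_paths}, \ref{thm:distance_formula} and Proposition~\ref{prop:rho_consistency}—with slightly more explicit bookkeeping, and your caution about checking that those theorems rely only on the new axioms is exactly the implicit burden the paper takes on.
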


\noindent In particular, as observed in~\cite{BehrstockHagenSisto:HHS_I}:

\begin{prop}\label{prop:qie}
If $(\cuco X,\mathfrak S)$ is a hierarchically hyperbolic space, and $\cuco X'$ is a quasigeodesic space quasi-isometric to $\cuco X$, then there is a hierarchically hyperbolic space $(\cuco X',\mathfrak S)$.
\end{prop}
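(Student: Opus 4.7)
The plan is to transport the hierarchically hyperbolic structure across the quasi-isometry. Let $f\colon\cuco X'\to\cuco X$ be a $(L,L)$--quasi-isometry with quasi-inverse $g\colon\cuco X\to\cuco X'$, so that $f\circ g$ and $g\circ f$ are each within uniformly bounded distance of the respective identity. I keep the same index set $\mathfrak S$, the same hyperbolic spaces $\{\fontact U\}$, the same nesting/orthogonality/transversality relations, and the same relative projections $\rho^V_W,\rho^W_V$. For each $U\in\mathfrak S$, I define the new projection by $\pi'_U=\pi_U\circ f\colon\cuco X'\to 2^{\fontact U}$. Since $f$ is coarsely Lipschitz and $\pi_U$ is $(K,K)$--coarsely Lipschitz, each $\pi'_U$ is coarsely Lipschitz, with constants depending only on $L,K$; this verifies Definition~\ref{defn:space_with_distance_formula}.\eqref{item:dfs_curve_complexes}.

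Next I verify the axioms that depend on $\cuco X$ only through the projections. Axioms \eqref{item:dfs_nesting}, \eqref{item:dfs_orthogonal}, \eqref{item:dfs_complexity}, and \eqref{item:dfs:bounded_geodesic_image} are intrinsic to $\mathfrak S$ and the hyperbolic spaces $\fontact U$, so they are immediate. For consistency \eqref{item:dfs_transversal}, given $x'\in\cuco X'$, apply the consistency inequalities in $(\cuco X,\mathfrak S)$ to $f(x')$; since $\pi'_U(x')=\pi_U(f(x'))$ by definition, the inequalities for $x'$ follow with the same constant $\kappa_0$. The large links axiom \eqref{item:dfs_large_link_lemma} for $x',y'\in\cuco X'$ is obtained by applying the large links axiom for $(\cuco X,\mathfrak S)$ at $f(x'),f(y')$; the $T_i$ are the same, and $\dist_W(\pi'_W(x'),\pi'_W(y'))=\dist_W(f(x'),f(y'))$, so the linear bound just acquires $L$--dependent multiplicative slop, which can be absorbed into the constant $\lambda$.

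Partial realization \eqref{item:dfs_partial_realization} and uniqueness \eqref{item:dfs_uniqueness} are the two axioms where one must pass information back from $\cuco X$ to $\cuco X'$, and this is where the quasi-inverse $g$ is used. For partial realization, given pairwise orthogonal $V_j$ and $p_j\in\pi'_{V_j}(\cuco X')\subseteq\pi_{V_j}(f(\cuco X'))\subseteq\fontact V_j$, each $p_j$ lies in $\pi_{V_j}(\cuco X)$ after adjusting by the projection diameter $\xi$, so partial realization in $(\cuco X,\mathfrak S)$ gives a point $x\in\cuco X$ with the required properties up to constant $\alpha$. Set $x'=g(x)\in\cuco X'$. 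Since $f\circ g$ is uniformly close to $\mathrm{id}_{\cuco X}$ and each $\pi_U$ is coarsely Lipschitz, $\pi'_{V_j}(x')=\pi_{V_j}(f(g(x)))$ is uniformly close to $\pi_{V_j}(x)$, and similarly for the $\rho$--conditions; adjusting $\alpha$ by an additive constant depending only on $L,K$ gives the axiom. For uniqueness, if $\dist_{\cuco X'}(x',y')\geq\theta'$ then $\dist_{\cuco X}(f(x'),f(y'))\geq L^{-1}\theta'-L$, so for $\theta'$ sufficiently large (depending on $\kappa$ and on $L$), uniqueness in $(\cuco X,\mathfrak S)$ yields $V$ with $\dist_V(f(x'),f(y'))\geq\kappa$, which is exactly $\dist_V(\pi'_V(x'),\pi'_V(y'))$.

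The only mildly subtle point is keeping track of uniformity of constants through the composition; the main potential obstacle is partial realization, where one must check that the point produced in $\cuco X$ can be pulled back to $\cuco X'$ without losing control of its projections, but this is handled by the coarse inverse $g$ together with the coarsely Lipschitz nature of all the $\pi_U$. Since all of the modifications replace constants by uniformly controlled multiples and additive errors depending only on $L,K$ and the original HHS constants, the result is a bona fide hierarchically hyperbolic structure $(\cuco X',\mathfrak S)$.
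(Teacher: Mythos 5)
Your argument is correct and is the natural one: transport the structure by setting $\pi'_U=\pi_U\circ f$ and check each axiom directly. The paper itself does not give a proof of this proposition — it defers to~\cite{BehrstockHagenSisto:HHS_I}, where the axiom list is different (distance formula and hierarchy paths were axioms there) — so your direct verification against the streamlined axioms of Definition~\ref{defn:space_with_distance_formula} is exactly what is needed to make the statement self-contained in the present setting. Two tiny points where you are more cautious than necessary: since $\pi'_U(x')=\pi_U(f(x'))$ \emph{by definition}, the distances $\dist_W(\pi'_W(x'),\pi'_W(y'))$ and $\dist_W(\pi_W(f(x')),\pi_W(f(y')))$ are literally equal, so the large links bound needs no $L$--dependent adjustment at all; and since $f(\cuco X')\subseteq\cuco X$, the containment $\pi'_{V_j}(\cuco X')\subseteq\pi_{V_j}(\cuco X)$ is exact, so no $\xi$--adjustment is required before invoking partial realization in $(\cuco X,\mathfrak S)$. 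Neither affects the validity of the argument.
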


\subsection{A variant on the axioms}\label{subsec:axiom_variant} Here 
we introduce two slightly simpler versions of the HHS axioms and show 
that in the case, as in most situations which arise naturally, that 
the projections are coarsely surjective, then it suffices to verify 
the simpler axioms.

The following is a subset of the nesting axiom; here we 
remove the definition of the projection map $\rho^W_V\colon
\fontact W\rightarrow 2^{\fontact V}$ in the case $V\propnest 
W$.

\newtheorem*{axiomNest}{Definition~\ref{defn:space_with_distance_formula}.(\ref{item:dfs_nesting})$'$}

\begin{axiomNest}[Nesting variant]\label{axiom:nesting_variant}
   $\mathfrak S$ is
   equipped with a partial order $\nest$, and either $\mathfrak
   S=\emptyset$ or $\mathfrak S$ contains a unique $\nest$--maximal
   element; when $V\nest W$, we say $V$ is \emph{nested} in $W$.  We
   require that $W\nest W$ for all $W\in\mathfrak S$.  For each
   $W\in\mathfrak S$, we denote by $\mathfrak S_W$ the set of
   $V\in\mathfrak S$ such that $V\nest W$.  Moreover, for all $V,W\in\mathfrak S$
   with $V\propnest W$ there is a specified subset
   $\rho^V_W\subset\fontact W$ with $\diam_{\fontact W}(\rho^V_W)\leq\xi$.
\end{axiomNest}

The following is a subset of the transversality and consistency axiom.

\newtheorem*{axiomTrans}{Definition~\ref{defn:space_with_distance_formula}.(\ref{item:dfs_transversal})$'$}

\begin{axiomTrans}[Transversality]\label{axiom:transversal_variant}
    If $V,W\in\mathfrak S$ are not
    orthogonal and neither is nested in the other, then we say $V,W$ are
    \emph{transverse}, denoted $V\transverse W$.  There exists
    $\kappa_0\geq 0$ such that if $V\transverse W$, then there are
     sets $\rho^V_W\subseteq\fontact W$ and
    $\rho^W_V\subseteq\fontact V$ each of diameter at most $\xi$ and 
    satisfying: $$\min\left\{\dist_{
    W}(\pi_W(x),\rho^V_W),\dist_{
    V}(\pi_V(x),\rho^W_V)\right\}\leq\kappa_0$$ for all $x\in\cuco X$.
      
    Finally, if $U\nest V$, then $\dist_W(\rho^U_W,\rho^V_W)\leq\kappa_0$ whenever $W\in\mathfrak S$ satisfies either $V\propnest W$ or $V\transverse W$ and $W\not\orth U$.  
\end{axiomTrans}

The following is a variant of the bounded geodesic image axiom:

\newtheorem*{axiomBGIv}{Definition~\ref{defn:space_with_distance_formula}.(\ref{item:dfs:bounded_geodesic_image})$'$}

\begin{axiomBGIv}[Bounded geodesic image variant]\label{axiom:BGI_variant}
    Suppose that $x,y\in X$ and $V\propnest W$ have the property that there exists a geodesic from $\pi_W(x)$ to $\pi_W(y)$ which stays $(E+2\delta)$-far from $\rho^V_W$. Then 
    $\dist_V(x,y)\leq E$.
\end{axiomBGIv}

\begin{prop}\label{prop:new_hhs_defn} Given a quasigeodesic space $\cuco X$ and an index set 
    $\mathfrak S$, then $(\cuco X,\mathfrak S)$ is an HHS if it 
    satisfies the axioms of 
    Definition~\ref{defn:space_with_distance_formula} with the 
    following changes:
    \begin{itemize}
	\item Replace 
	Definition~\ref{defn:space_with_distance_formula}.(\ref{item:dfs_nesting})
    by
    Definition~\ref{defn:space_with_distance_formula}.(\ref{item:dfs_nesting})$'$.

        \item Replace Definition~\ref{defn:space_with_distance_formula}.(\ref{item:dfs_transversal}) by 
    Definition~\ref{defn:space_with_distance_formula}.(\ref{item:dfs_transversal})$'$.
    
        \item Replace 
    Definition~\ref{defn:space_with_distance_formula}.(\ref{item:dfs:bounded_geodesic_image})
    by Definition~\ref{defn:space_with_distance_formula}.(\ref{item:dfs:bounded_geodesic_image})$'$. 

        \item Assume that for each $\fontact U$ the map 
	$\pi_{U}$ is uniformly coarsely surjective.
    \end{itemize}
\end{prop}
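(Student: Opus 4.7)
The plan is to show that coarse surjectivity of projections allows us to recover the three items that the variant axioms weaken relative to Definition~\ref{defn:space_with_distance_formula}: the ``downward'' projections $\rho^W_V\colon\fontact W\to 2^{\fontact V}$ for $V\propnest W$, the nested consistency inequality, and the full bounded geodesic image axiom. All other axioms coincide in the two lists, so no further verification is required.

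I will construct $\rho^W_V$ for $V\propnest W$ as follows. Let $C$ be a uniform constant such that $\pi_W(\cuco X)$ is $C$-dense in $\fontact W$ for every $W\in\mathfrak S$. For $p\in\fontact W$, choose any $x\in\cuco X$ with $\dist_W(\pi_W(x),p)\leq C$ and set $\rho^W_V(p):=\pi_V(x)$. Coarse well-definedness of this assignment only needs to be verified where it will be used, namely on the complement of a bounded neighborhood of $\rho^V_W$: if $x,x'$ are two such choices and $\dist_W(p,\rho^V_W)>2C+E+2\delta$, then any $\fontact W$-geodesic between $\pi_W(x)$ and $\pi_W(x')$ has length at most $2C$, so by $\delta$-hyperbolicity it stays in $B(p,2C)$ and hence $(E+2\delta)$-far from $\rho^V_W$. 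Variant bounded geodesic image then yields $\dist_V(x,x')\leq E$, so $\rho^W_V(p)$ is well-defined up to a uniform error depending only on $E,\delta,\xi,C$.

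The nested consistency inequality now follows by the same principle. Given $x\in\cuco X$ and $V\propnest W$ with $\dist_W(\pi_W(x),\rho^V_W)$ larger than the threshold above, by construction the set $\rho^W_V(\pi_W(x))$ contains $\pi_V(y)$ for some $y$ with $\dist_W(\pi_W(y),\pi_W(x))\leq C$; the same geodesic argument together with variant BGI yields $\dist_V(x,y)\leq E$, so $\diam_V(\pi_V(x)\cup\rho^W_V(\pi_W(x)))$ is uniformly bounded and we enlarge $\kappa_0$ accordingly. For the full BGI, given a geodesic $\gamma$ in $\fontact W$ disjoint from $\neb_{E'}(\rho^V_W)$ with $E'$ slightly larger than $E+2\delta+C+\delta$, for any $p,q\in\gamma$ and corresponding witnesses $x_p,x_q$, any geodesic $[\pi_W(x_p),\pi_W(x_q)]$ lies in the $(C+\delta)$-neighborhood of $\gamma$ by thinness of triangles, hence remains $(E+2\delta)$-far from $\rho^V_W$. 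Variant BGI then bounds $\dist_V(x_p,x_q)$ by $E$, yielding the desired bound on $\diam_V(\rho^W_V(\gamma))$ after enlarging the full-BGI constant to $\max(E',E+\xi)$.

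The main obstacle is careful bookkeeping of constants: the full-axiom constants $E$ and $\kappa_0$ must be enlarged beyond their variant counterparts to absorb $C$, $\delta$, and $\xi$. Conceptually, however, the argument is essentially forced. Coarse surjectivity is what lets us pull basepoints back from $\fontact W$ to $\cuco X$ so as to define $\rho^W_V$ through $\pi_V$, and the variant bounded geodesic image axiom is the only tool available to control the resulting ambiguity; fortunately, it controls it precisely in the regime where $\rho^W_V$ matters for the other axioms, namely far from $\rho^V_W$.
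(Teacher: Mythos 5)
Your proposal is correct and follows the same route as the paper's proof: use coarse surjectivity to pull a point $p\in\fontact W$ back to a witness $x\in\cuco X$ and declare $\rho^W_V(p):=\pi_V(x)$, then invoke the variant bounded geodesic image axiom to show this is coarsely well-defined away from $\rho^V_W$, and note that nested consistency and full BGI for the resulting map follow from the same mechanism. The paper's version is terser (it does not spell out the thin-quadrilateral argument you use to verify full BGI from the variant), so your write-up fills in the details the paper leaves implicit; the only quibble is a small constant slip (the geodesic between two preimages of $p$ lies within $3C$ of $p$, not $2C$, and $\delta$-hyperbolicity is not needed for that step), which does not affect the argument.
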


\begin{proof}
To verify Definition~\ref{defn:space_with_distance_formula}.\eqref{item:dfs_nesting}, for each $V,W\in\mathfrak S$ 
with $V\propnest W$, we define a map $\rho^W_V\colon\fontact W\to2^{\fontact V}$ as follows.    If $p\in\fontact 
W-\neb_E(\rho^V_W)$, then let $\rho^W_V(p)=\pi_V(x)$ for some $x\in\cuco X$ with $\pi_W(x)$ (uniformly) coarsely 
coinciding with $p$.  Since $p$ does not lie $E$--close to $\rho^V_W$, this definition is coarsely independent of $x$ 
by Definition~\ref{defn:space_with_distance_formula}.\eqref{item:dfs:bounded_geodesic_image}$'$.  On 
$\neb_E(\rho^V_W)$, we define $\rho^W_V$ arbitrarily.  By definition, the resulting map satisfies 
Definition~\ref{defn:space_with_distance_formula}.\eqref{item:dfs_transversal}.  Moreover, coarse surjectivity of 
$\pi_W$ and Definition~\ref{defn:space_with_distance_formula}.\eqref{item:dfs:bounded_geodesic_image}$'$ ensure that 
Definition~\ref{defn:space_with_distance_formula}.\eqref{item:dfs:bounded_geodesic_image} holds.  The rest of the 
axioms hold by hypothesis.
\end{proof}

\begin{rem}
The definition of an HHS provided by Proposition~\ref{prop:new_hhs_defn} is convenient because it does not require one to define certain maps between hyperbolic spaces: Definition~\ref{defn:space_with_distance_formula}.\eqref{item:dfs_nesting}$'$ is strictly weaker than Definition~\ref{defn:space_with_distance_formula}.\eqref{item:dfs_nesting}.  On the other hand, it is often convenient to work with HHS in which some of the projections $\pi_U$ are not coarsely surjective; for example, this simplifies the proof that hierarchically quasiconvex subspaces inherit HHS structures in Proposition~\ref{prop:sub_hhs}.  Hence we have included both definitions.
\end{rem}

In practice, we almost always apply consistency and bounded geodesic image in concert, which involves applying bounded geodesic image to geodesics of $\fontact W$ joining points in $\pi_W(\cuco X)$.  Accordingly, Definition~\ref{defn:space_with_distance_formula}.\eqref{item:dfs:bounded_geodesic_image}$'$ is motivated by the following easy observation:

\begin{prop}\label{prop:new_defn_local}
Let $(\cuco X,\mathfrak S)$ be an HHS.  Then the conclusion of  Definition~\ref{defn:space_with_distance_formula}.\eqref{item:dfs:bounded_geodesic_image}$'$ holds for all $x,y\in\cuco X$ and $V,W\in\mathfrak S$ with $V\propnest W$.
\end{prop}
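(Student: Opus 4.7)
The plan is to derive the variant bounded-geodesic-image conclusion directly from the standard BGI axiom (Definition~\ref{defn:space_with_distance_formula}.\eqref{item:dfs:bounded_geodesic_image}) together with the nested consistency inequality from Definition~\ref{defn:space_with_distance_formula}.\eqref{item:dfs_transversal}; the only work is in bookkeeping constants. Fix $x,y\in\cuco X$ and $V\propnest W$, and let $\gamma$ be a geodesic in $\fontact W$ from $\pi_W(x)$ to $\pi_W(y)$ satisfying $\gamma\cap\neb_{E+2\delta}(\rho^V_W)=\emptyset$.

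First I would apply the standard BGI: since $\gamma$ in particular avoids $\neb_E(\rho^V_W)$, the dichotomy forces $\diam_{\fontact V}(\rho^W_V(\gamma))\leq E$, and so $\dist_V(\rho^W_V(\pi_W(x)),\rho^W_V(\pi_W(y)))\leq E$. Next I would apply nested consistency to each endpoint: since $\pi_W(x)$ and $\pi_W(y)$ both lie at $\fontact W$--distance strictly greater than $E+2\delta\geq\kappa_0$ from $\rho^V_W$ (using the convention $E\geq\kappa_0$ from Remark~\ref{rem:constants}), the minimum in the consistency inequality for $V\nest W$ must be realized by the second term, giving $\dist_V(\pi_V(x),\rho^W_V(\pi_W(x)))\leq\kappa_0$ and similarly for $y$.

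Chaining these three estimates by the triangle inequality in $\fontact V$ yields
$\dist_V(\pi_V(x),\pi_V(y))\leq E+2\kappa_0$,
which has the form of the conclusion of Definition~\ref{defn:space_with_distance_formula}.\eqref{item:dfs:bounded_geodesic_image}$'$ after enlarging $E$ once at the outset to dominate $2\kappa_0$ (as Remark~\ref{rem:constants} permits). The only subtlety, and the closest thing to an obstacle, is this cosmetic mismatch between the $E$ appearing in the standard BGI bound and the $E$ appearing in the variant's conclusion; it is resolved purely by adjusting constants. No use of partial realization or hierarchy paths is needed.
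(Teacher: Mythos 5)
Your proposal is correct and is surely the argument the authors had in mind; the paper labels this an ``easy observation'' and supplies no proof, so there is nothing to compare against except the surrounding axioms, which you apply in the natural way. Combining standard bounded geodesic image with nested consistency at both endpoints, and then the triangle inequality in $\fontact V$, is exactly what is needed.

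One small thing worth tightening, of the same cosmetic nature as the one you already flag. You assert that $\pi_W(x)$ and $\pi_W(y)$ lie at distance greater than $E+2\delta\geq\kappa_0$ from $\rho^V_W$. Strictly speaking, the hypothesis of Definition~\ref{defn:space_with_distance_formula}.\eqref{item:dfs:bounded_geodesic_image}$'$ controls the distance from $\rho^V_W$ to the \emph{endpoints of $\gamma$}, which are particular points of the sets $\pi_W(x),\pi_W(y)$; since these sets have diameter at most $\xi$, what one gets directly for the full sets is $\dist_W(\pi_W(x),\rho^V_W)>E+2\delta-\xi$, and analogously for $y$. With the conventions $E\geq\xi$ and $E\geq\kappa_0$ this lower bound need not exceed $\kappa_0$ as written (it exceeds $2\delta$), so one should fold this into the same constants adjustment you already invoke for the conclusion: enlarging the $E$ in the hypothesis of the variant axiom simultaneously makes $\dist_W(\pi_W(x),\rho^V_W)>\kappa_0$ automatic and absorbs the extra $2\kappa_0$ in your final bound. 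Once that is said once, the argument goes through exactly as you wrote it, and no other axioms are needed.
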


\subsection{Hierarchical spaces}\label{subsec:hierarchical_space}
Although most of our focus in this paper is on hierarchically hyperbolic spaces, there are important contexts in which hyperbolicity of the spaces $\fontact U,U\in\mathfrak S$ is not used; notably, this is the case for the realization theorem (Theorem~\ref{thm:realization}).  Because of the utility of a more general definition in later applications, we now define the following more general notion of a \emph{hierarchical space}; the reader interested only in the applications to the mapping class group, $3$--manifolds, cube complexes, etc.\ may safely ignore this subsection.

\begin{defn}[Hierarchical space]\label{defn:hierarchical_space}
A \emph{hierarchical space} is a pair $(\cuco X,\mathfrak S)$ as in Definition~\ref{defn:space_with_distance_formula}, with $\cuco X$ a quasigeodesic space and $\mathfrak S$ an index set, where to each $U\in\mathfrak S$ we associate a geodesic metric space $\fontact U$, which we \emph{do not require to be hyperbolic}.  As before, there are coarsely Lipschitz projections $\pi_U\colon\cuco X\to\fontact U$ and relative projections $\rho^U_V\colon\fontact U\to\fontact V$ whenever $U,V$ are non-orthogonal.  We require all statements in the Definition~\ref{defn:space_with_distance_formula} to hold, except for hyperbolicity of the $\fontact U$.
\end{defn}

\begin{rem}\label{rem:rel_hyp_HS}
Let $\cuco X$ be a quasigeodesic space that is hyperbolic relative to a collection $\mathcal P$ of subspaces.  Then $\cuco X$ has a hierarchical space structure: the associated spaces onto which we project are the various $\mathcal P$, together with the space $\widehat{\mathcal X}$ obtained by coning off the elements of $\mathcal P$ in $\cuco X$.  When the elements of $\mathcal P$ are themselves hierarchically hyperbolic, we obtain a hierarchically hyperbolic structure on $\cuco X$ (see Section~\ref{sec:hyperbolicity_rel_HHS}).  Otherwise, the hierarchical structure need not be hierarchically hyperbolic since $\widehat{\cuco X}$ is the only one of the elements of $\mathfrak S$ known to be hyperbolic.
\end{rem}

\begin{rem}
Other than hierarchically hyperbolic spaces, we are mainly interested in hierarchical spaces $(\cuco X,\mathfrak S)$ where for all $U\in\mathfrak S$, except possibly when $U$ is $\nest$--minimal, we have that $\fontact U$ is hyperbolic. This is the case, for example, in relatively hyperbolic spaces.
\end{rem}

\subsection{Consistency and partial realization points}\label{subsec:consistency}
The following definitions, which abstract the consistency inequalities
from Definition~\ref{defn:space_with_distance_formula}.\eqref{item:dfs_transversal}
and the partial realization axiom, Definition~\ref{defn:space_with_distance_formula}.\eqref{item:dfs_partial_realization},
play important roles throughout our discussion. We will consider this topic in depth in Section~\ref{sec:realization}.

\begin{defn}[Consistent]\label{defn:consistent}
Fix $\kappa\geq0$ and let $\tup b\in\prod_{U\in\mathfrak S}2^{\fontact U}$ be a tuple such that for each $U\in\mathfrak S$, 
the coordinate $b_U$ is a subset of $\fontact U$ with $\diam_{\fontact U}(b_U)\leq\kappa$.  The tuple $\tup b$ is 
\emph{$\kappa$--admissible} if $\dist_U(b_U,\pi_U(\cuco X))\leq \kappa$ for all $U\in\mathfrak S$.  The 
$\kappa$--admissible tuple $\tup b$ is \emph{$\kappa$--consistent} if, whenever $V\transverse W$,
$$\min\left\{\dist_{ W}(b_W,\rho^V_W),\dist_{ V}(b_V,\rho^W_V)\right\}\leq\kappa$$
and whenever $V\nest W$, 
$$\min\left\{\dist_{ W}(b_W,\rho^V_W),\diam_{\fontact V}(b_V\cup\rho^W_V(b_W))\right\}\leq\kappa.$$
In typical situations, where the maps $\pi_U$ are uniformly coarsely surjective, up 
to a uniform enlargement of $E$, all tuples are admissible, so verifying consistency amounts to verifying the second 
condition.
\end{defn}

\begin{defn}[Partial realization point]\label{defn:partial_realization_point}
Given $\theta\geq0$ and a $\kappa$--consistent tuple $\tup b$, we say that $x\in\cuco X$ is a $\theta$--\emph{partial 
realization point} for $\{V_j\}\subseteq\mathfrak S$ if
\begin{enumerate}
 \item $\dist_{V_j}(x,b_{V_j})\leq \theta$ for all $j$,
 \item for all $j$, we have $\dist_{V}(x,\rho^{V_j}_V)\leq\theta$ for any $V\in\mathfrak S$ with $V_j\nest V$, and
 \item for all $W$ such that $W\transverse V_j$ for some $j$, we have $\dist_W(x,\rho^{V_j}_W)\leq\theta$.
\end{enumerate}
\end{defn}

Observe that if $\tup b$ is consistent and $\{V_j\}$ is a set of pairwise-orthogonal elements, then 
partial realisation (Definition~\ref{defn:space_with_distance_formula}.\eqref{item:dfs_partial_realization}) provides a 
partial realisation point, because of admissibility.

\subsection{Level}
The following definition is very useful for proving statements about
hierarchically hyperbolic spaces inductively. Although it is natural,
and sometimes useful, to induct on complexity, it is often better to
induct on level:

\begin{defn}[Level]\label{defn:level}
 Let $(X,\mathfrak S)$ be hierarchically hyperbolic. The \emph{level} $\ell_U$ of $U\in\mathfrak S$ is defined inductively as follows. If $U$ is $\nest$-minimal then we say that its level is $1$.  The element $U$ has level $k+1$ if $k$ is the maximal integer such that there exists $V\nest U$ with $\ell_V=k$ and $V\neq U$.  Given $U\in\mathfrak S$, for each $\ell\geq0$, let $\mathfrak S_U^\ell$ be the set of $V\nest U$ with $\ell_U-\ell_V\leq\ell$ and let $\mathfrak T_U^\ell=\mathfrak S_U^\ell-\mathfrak S_U^{\ell-1}$.
\end{defn}

\subsection{Maps between hierarchically hyperbolic spaces}\label{subsec:maps}
\begin{defn}[Hieromorphism]\label{defn:hieromorphism}
Let $(\cuco X,\mathfrak S)$ and $(\cuco X',\mathfrak S')$ be
hierarchically hyperbolic structures on the spaces $\cuco X,\cuco X'$
respectively.  A \emph{hieromorphism}, 
consists of a map $f\co\cuco
X\rightarrow\cuco X'$, an injective map $f\inducedS \co\mathfrak S\rightarrow\mathfrak
S'$ preserving nesting, transversality, and orthogonality, and, for 
each $U\in\mathfrak S$ a map 
$f\induced(U)\colon \fontact U\rightarrow \fontact(f\inducedS(U))$ 
which is a quasi-isometric embedding where the constants are uniform
over all elements of $\mathfrak S$ and for which 
the following two diagrams coarsely commute (with
uniform constants) for all nonorthogonal $U,V\in\mathfrak S$:
\begin{center}
$
\begin{diagram}
\node{\cuco 
X}\arrow[3]{e,t}{f}\arrow{s,l}{\pi_U}\node{}\node{}\node{\cuco 
X'}\arrow{s,r}{\pi_{f\inducedS(U)}}\\
\node{\fontact (U)}\arrow[3]{e,t}{f\induced(U)}\node{}\node{}\node{\fontact 
(f\inducedS(U))}
\end{diagram}
$
\end{center}
and
\begin{center}
$
\begin{diagram}
\node{\fontact 
U}\arrow[4]{s,l}{\rho^U_V}\arrow[7]{e,t}{f\induced(U)}\node{}\node{}\node{}\node{}\node{}\node{}\node{}\node{}\node{}\node{\fontact(f\inducedS(U))}\arrow[4]{s,r}{\rho^{f\inducedS(U)}_{f\inducedS(V)}}\\
\node{}\node{}\node{}\node{}\node{}\node{}\node{}\node{}\\
\node{}\node{}\node{}\node{}\node{}\node{}\node{}\node{}\\
\node{}\node{}\node{}\node{}\node{}\node{}\node{}\node{}\\
\node{\fontact 
V}\arrow[7]{e,t}{f\induced(V)}\node{}\node{}\node{}\node{}\node{}\node{}\node{}\node{}\node{}\node{\fontact (f\inducedS (V))}
\end{diagram}
$
\end{center}
where $\rho^U_V\colon\fontact U\to\fontact V$ is the projection from
Definition~\ref{defn:space_with_distance_formula}, which, by 
construction, is coarsely
constant if $U\transverse V$ or $U\nest V$. As the functions $f, 
f\induced(U),$ and $f\inducedS$ all have distinct domains, it is 
often clear from the 
context which is the relevant map; in that case we periodically abuse 
notation slightly by dropping the superscripts and just calling all of the maps $f$.
\end{defn}

\begin{defn}[Automorphism, hierarchically hyperbolic group]\label{defn:hhs_automorphism}
An \emph{automorphism} of the hierarchically hyperbolic space 
$(\cuco X,\mathfrak S)$ is a hieromorphism 
$f\co (\cuco X,\mathfrak S)\rightarrow(\cuco X,\mathfrak S)$ such   
that $f\inducedS$ is bijective and each $f\induced(U)$ is an 
isometry; hence $f\co \cuco X\to \cuco X$ is a uniform quasi-isometry by the distance formula (Theorem 
\ref{thm:distance_formula}).  

Note that the composition of two automorphisms is again an automorphism.  We say that the automorphisms $f,f'$ are 
\emph{equivalent} if $f\inducedS=(f')\inducedS$ and $f\induced(U)=(f')\induced(U)$ for each $U\in\mathfrak S$.  In 
particular, equivalent automorphisms give equivalent quasi-isometries.  Given an automorphism $f$, any quasi-inverse $\bar 
f$ of $f$ is an automorphism with $\bar f\inducedS=(f\inducedS)^{-1}$ and each $\bar f\induced(U)=f\induced(U)^{-1}$.  Hence 
the set of equivalence classes of automorphisms forms a group, the \emph{full automorphism group} of $(\cuco X,\mathfrak 
S)$, denoted $\Aut(\mathfrak S)$.

The finitely generated group $G$ is \emph{hierarchically hyperbolic}
if there exists a hierarchically hyperbolic space $(\cuco X,\mathfrak
S)$ and an action $G\rightarrow\Aut(\mathfrak S)$ so that the uniform quasi-action of $G$ on $\cuco X$ is metrically proper and cobounded
and $\mathfrak S$ contains finitely many $G$--orbits.  Note that if
$G$ is hierarchically hyperbolic by virtue of its action on the
hierarchically hyperbolic space $(\cuco X,\mathfrak S)$, then
$(G,\mathfrak S)$ is a hierarchically hyperbolic structure with
respect to any word-metric on $G$; for any $U\in\mathfrak S$ the 
projection is the
composition of the projection $\cuco X\rightarrow \fontact
U$ with a $G$--equivariant quasi-isometry
$G\rightarrow\cuco X$.  In this case, $(G,\mathfrak S)$ (with the
implicit hyperbolic spaces and projections) is a \emph{hierarchically
hyperbolic group structure}.
\end{defn}

\begin{defn}[Equivariant hieromorphism]\label{defn:homomorphism_of_HHGs}
Let $(\cuco X,\mathfrak S)$ and $(\cuco X',\mathfrak G')$ be
hierarchically hyperbolic spaces and consider actions
$G\rightarrow\Aut(\mathfrak S)$ and $G'\to\Aut(\mathfrak S')$.  For each $g\in G$, let
$(f_g,f_g\inducedS,\{f_g\induced(U)\})$  denote its image in
$\Aut(\mathfrak S)$ (resp., for $g'\in G'$ we obtain 
$(f_{g'},f_{g'}\inducedS,\{f_{g'}\induced(U)\})$ in  $\Aut(\mathfrak S')$).
Let $\phi\co G\rightarrow G'$ be a homomorphism.  The hieromorphism
$(f,f\inducedS,\{f\induced(U)\})\colon(\cuco X,\mathfrak S)\rightarrow(\cuco
X',\mathfrak S')$ is \emph{$\phi$--equivariant} if for all $g\in G$
and $U\in\mathfrak S$, we have 
$f\inducedS(f_{g}\inducedS(U))=f_{\phi(g)}\inducedS(f\inducedS(U))$  
and the following
diagram (uniformly) coarsely commutes:
\begin{center}
$
\begin{diagram}
\node{}\node{}\node{\fontact 
U}\arrow[4]{s,l}{f_g\induced(U)}\arrow[15]{e,t}{f\induced(U)}\node{}\node{}\node{}\node{}\node{}\node{}\node{}\node{}\node{}\node{}\node{}\node{}\node{}\node{}\node{}\node{}\node{}\node{\fontact (f\inducedS(U))}\arrow[3]{s,r}{f_{\phi(g)}\induced(U))}\\
\node{}\node{}\node{}\node{}\node{}\node{}\node{}\node{}\\
\node{}\node{}\node{}\node{}\node{}\node{}\node{}\node{}\\
\node{}\node{}\node{}\node{}\node{}\node{}\node{}\node{}\\
\node{}\node{}\node{\fontact(f_g\inducedS(U))}\arrow[14]{e,t}{f\induced(f_{g}\inducedS(U))}\node{}\node{}\node{}\node{}\node{}\node{}\node{}\node{}\node{}\node{}\node{}\node{}\node{}\node{}\node{}\node{}\node{}\node{}\node{\fontact (f\inducedS(f_{g}\inducedS(U)))}
\end{diagram}
$
\end{center}
In this case, $f\colon\cuco X\rightarrow\cuco X'$ is (uniformly) 
coarsely $\phi$-equivariant in the usual sense. Also, we note for the 
reader that $f_{g}\inducedS\co \mathfrak S \CircleArrowright$, while 
$f_{\phi(g)}\inducedS\co \mathfrak S'\CircleArrowright$, and  
$f\inducedS\co \mathfrak S \to \mathfrak S'$. 
\end{defn}

\section{Tools for studying hierarchically hyperbolic spaces}\label{sec:preliminary}
We now collect some basic consequences of the axioms that are used repeatedly throughout the paper. However, this section need not all be read in advance. Indeed, the 
 reader should feel free to skip this section on a first reading and 
 return to it later when necessary. Throughout this section, we work in a hierarchically hyperbolic space $(\cuco X,\mathfrak S)$.

\subsection{Handy basic consequences of the axioms}\label{lem:exportable}

\begin{lem}[``Finite dimension'']\label{lem:pairwise_orthogonal}
Let $(\cuco X,\mathfrak S)$ be a hierarchically hyperbolic space of complexity $n$ and let $U_1,\ldots,U_k\in\mathfrak S$ be pairwise-orthogonal.  Then $k\leq n$.
\end{lem}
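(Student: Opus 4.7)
My plan is to apply the orthogonality axiom, Definition~\ref{defn:space_with_distance_formula}.\eqref{item:dfs_orthogonal}, iteratively so as to build a strictly $\nest$-decreasing chain of length $k$ in $\mathfrak S$, and then invoke finite complexity, Definition~\ref{defn:space_with_distance_formula}.\eqref{item:dfs_complexity}, to conclude $k\le n$. First the degenerate cases: if $\mathfrak S=\emptyset$ then $n=0$ and $k=0$, and if $k=1$ there is nothing to check. Otherwise $k\geq 2$ and $\mathfrak S$ has a unique $\nest$-maximal element $T$, and every $U_i$ satisfies $U_i\nest T$. Moreover no $U_i$ equals $T$: since the last clause of Definition~\ref{defn:space_with_distance_formula}.\eqref{item:dfs_orthogonal} asserts that orthogonal elements are never $\nest$-comparable, $U_j\nest T=U_i$ would contradict $U_j\orth U_i$ for any $j\ne i$.

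The main step is the inductive construction. I set $W_0:=T$ and, proceeding by induction on $i$, I will show that there exist $W_1,\dots,W_{k-1}\in\mathfrak S$ with $W_i\in\mathfrak S_{W_{i-1}}-\{W_{i-1}\}$ such that $U_{i+1},\dots,U_k\nest W_i$ at every stage. Suppose the construction has been carried out through index $i-1$, so that $U_i,U_{i+1},\dots,U_k\nest W_{i-1}$. Because $U_{i+1}\orth U_i$ and $U_{i+1}\nest W_{i-1}$, the set $\{V\in\mathfrak S_{W_{i-1}}:V\orth U_i\}$ is nonempty. Applying Definition~\ref{defn:space_with_distance_formula}.\eqref{item:dfs_orthogonal} with $T$ replaced by $W_{i-1}$ and $U$ by $U_i$ yields $W_i\in\mathfrak S_{W_{i-1}}-\{W_{i-1}\}$ which contains every $V\in\mathfrak S_{W_{i-1}}$ with $V\orth U_i$; in particular $U_{i+1},\dots,U_k\nest W_i$, and $W_i\propnest W_{i-1}$ by the membership condition.

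After $k-1$ such steps I obtain a chain $T=W_0\propnest W_1\propnest\dots\propnest W_{k-1}$ of $k$ pairwise $\nest$-comparable elements of $\mathfrak S$, and Definition~\ref{defn:space_with_distance_formula}.\eqref{item:dfs_complexity} forces $k\le n$. I do not expect any real obstacle: the only thing to track carefully is that the hypothesis of the orthogonality axiom remains valid at each stage, and this is immediate from pairwise orthogonality of the $U_i$ combined with the inductive assertion that the remaining $U_j$ sit inside $W_{i-1}$.
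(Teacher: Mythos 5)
Your argument is correct and follows essentially the same route as the paper: repeatedly apply the orthogonality axiom to produce a strictly $\nest$-decreasing chain of length $k$, then invoke finite complexity. You are a bit more careful than the paper's terse three-sentence version in tracking the inductive hypothesis and the non-emptiness requirement. One cosmetic slip: in your conclusion you write ``$T=W_0\propnest W_1\propnest\dots\propnest W_{k-1}$,'' but by your own construction each $W_i$ is properly nested \emph{in} $W_{i-1}$, so the chain should read $W_{k-1}\propnest\dots\propnest W_1\propnest W_0=T$; the substance (a set of $k$ distinct pairwise $\nest$-comparable elements) is unaffected.
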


\begin{proof}
By Definition~\ref{defn:space_with_distance_formula}.\eqref{item:dfs_orthogonal}, there exists $W_1\in\mathfrak S$, 
not $\nest$--maximal, so that $U_2,\ldots,U_k\nest W_1$.  Applying 
Definition~\ref{defn:space_with_distance_formula}.\eqref{item:dfs_orthogonal} inductively yields a sequence 
$W_{k-1}\nest W_{k-2}\nest\ldots\nest W_1\nest S$ of distinct elements, where $S$ is $\nest$--maximal, so that 
$U_{i-1},\ldots,U_k\nest W_i$ for $1\leq i\leq k-1$.  Hence $k\leq n$ by 
Definition~\ref{defn:space_with_distance_formula}.\eqref{item:dfs_complexity}.
\end{proof}

\begin{lem}\label{lem:ramsey_corollary}
 There exists $\chi$ so that $|\mathfrak S'|\leq\chi$ whenever $\mathfrak S'\subseteq\mathfrak S$ does not contain a pair of transverse elements.
\end{lem}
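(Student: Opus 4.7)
The plan is to apply a Ramsey-type argument to the restricted relation structure on $\mathfrak S'$. Since by hypothesis no two elements of $\mathfrak S'$ are transverse, and by Definition~\ref{defn:space_with_distance_formula}.\eqref{item:dfs_transversal} the only three mutually exclusive relations between distinct elements of $\mathfrak S$ are nesting, orthogonality, and transversality, every unordered pair $\{U,V\}\subseteq\mathfrak S'$ with $U\neq V$ satisfies exactly one of: $U\orth V$, or $U$ and $V$ are $\nest$--comparable. This gives a natural $2$--coloring of the edges of the complete graph on the vertex set $\mathfrak S'$.

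Next I would invoke Ramsey's theorem. Setting $N=n+1$, where $n$ is the complexity from Definition~\ref{defn:space_with_distance_formula}.\eqref{item:dfs_complexity}, choose $\chi=R(N,N)$, the classical Ramsey number. If $|\mathfrak S'|\geq\chi$, then the $2$--coloring above admits a monochromatic complete subgraph on $N=n+1$ vertices, i.e.\ a subset $\mathfrak S''\subseteq\mathfrak S'$ of cardinality $n+1$ whose elements are either all pairwise orthogonal or all pairwise $\nest$--comparable.

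In the first case, $\mathfrak S''$ is a set of $n+1$ pairwise orthogonal elements, which directly contradicts Lemma~\ref{lem:pairwise_orthogonal}. In the second case, any collection of pairwise $\nest$--comparable elements is totally ordered by $\nest$ (since $\nest$ is a partial order), so $\mathfrak S''$ is a $\nest$--chain of length $n+1$, contradicting the finite complexity axiom, Definition~\ref{defn:space_with_distance_formula}.\eqref{item:dfs_complexity}. Hence $|\mathfrak S'|<\chi$, yielding the claimed bound.

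There is essentially no serious obstacle: the only thing to be careful about is that in the monochromatic "nesting" case one really does obtain a chain rather than merely a set of pairwise comparable elements, which is immediate from transitivity of the partial order $\nest$. The resulting constant $\chi$ depends only on the complexity $n$, as expected.
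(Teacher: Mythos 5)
Your proof is correct and follows essentially the same approach as the paper: a Ramsey argument on the $2$--coloring of pairs by orthogonality versus $\nest$--comparability, followed by appeals to Lemma~\ref{lem:pairwise_orthogonal} and the finite complexity axiom. The only minor difference is that you make explicit the observation that a set of pairwise $\nest$--comparable elements is a chain, which the paper leaves implicit.
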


\begin{proof}
 Let $\mathfrak S'\subseteq\mathfrak S$ be a collection of pairwise non-transverse elements, and let $n$ be large enough that any collection of pairwise orthogonal (resp. pairwise $\nest$-comparable) elements of $\mathfrak S$ has cardinality at most $n$; the complexity provides such an $n$, by Definition~\ref{defn:space_with_distance_formula}.\eqref{item:dfs_complexity} and Lemma \ref{lem:pairwise_orthogonal}.  By Ramsey's theorem, there exists $N$ so that if $|\mathfrak S'|> N$ then $\mathfrak S'$ contains a collection of elements, of cardinality at least $n+1$, whose elements are either pairwise orthogonal or pairwise $\nest$-comparable. Hence, $|\mathfrak S'|\leq N$.
\end{proof}

\begin{lem}[Consistency for pairs of points]\label{lem:order_1}
Let $x,y\in\cuco X$ and $V,W\in\mathfrak S$ satisfy $V\transverse W$ and $\dist_{ V}(x,y),\dist_{ W}(x,y)>10E$.  Then, up to exchanging $V$ and $W$, we have $\dist_{ V}(x,\rho^W_V)\leq E$ and $\dist_{ W}(y,\rho^V_W)\leq E$.  
\end{lem}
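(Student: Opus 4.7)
The plan is to apply the consistency inequality at both endpoints $x$ and $y$, and then argue by cases, using the lower bounds $\dist_V(x,y),\dist_W(x,y)>10E$ to eliminate the cases that would force small projected distance.

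Since $V \transverse W$, the transversality/consistency axiom (Definition~\ref{defn:space_with_distance_formula}.\eqref{item:dfs_transversal}) together with $E \geq \kappa_0$ gives, applied to the point $x$:
\[
\dist_V(x,\rho^W_V)\leq E \quad\text{or}\quad \dist_W(x,\rho^V_W)\leq E,
\]
and applied to the point $y$:
\[
\dist_V(y,\rho^W_V)\leq E \quad\text{or}\quad \dist_W(y,\rho^V_W)\leq E.
\]
This leaves four combinations; I want to show that only two survive, and that these two are exactly the alternatives in the statement (related by exchanging $V$ and $W$).

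Next I rule out the two ``pure'' combinations. Suppose both $\dist_V(x,\rho^W_V)\leq E$ and $\dist_V(y,\rho^W_V)\leq E$. Using $\diam_V(\rho^W_V)\leq \xi \leq E$ and the triangle inequality,
\[
\dist_V(x,y)\leq \dist_V(x,\rho^W_V)+\diam_V(\rho^W_V)+\dist_V(\rho^W_V,y)\leq 3E,
\]
contradicting $\dist_V(x,y)>10E$. The symmetric argument with $V$ replaced by $W$ rules out the combination where both $x$ and $y$ project close to $\rho^V_W$ in $\fontact W$.

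Hence exactly one of the remaining ``mixed'' cases must hold. In the first, $\dist_V(x,\rho^W_V)\leq E$ and $\dist_W(y,\rho^V_W)\leq E$, which is the conclusion as stated. In the second, $\dist_W(x,\rho^V_W)\leq E$ and $\dist_V(y,\rho^W_V)\leq E$, which is the conclusion after swapping the roles of $V$ and $W$. No step is really an obstacle; the only thing to be careful about is confirming that the constants $\kappa_0$ and $\xi$ are absorbed into $E$ as laid out in Remark~\ref{rem:constants}, so that the single constant $E$ suffices in both the hypotheses derived from consistency and the bound on $\diam(\rho^W_V)$.
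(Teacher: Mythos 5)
Your proof is correct and follows essentially the same strategy as the paper's: apply the transversality consistency inequality at both $x$ and $y$, and use the triangle inequality together with $\diam(\rho^W_V),\diam(\rho^V_W)\leq E$ to rule out the two combinations in which both endpoints project close to the same $\rho$-set. The paper phrases this as a single WLOG branch rather than explicitly enumerating all four combinations, but the underlying argument is identical.
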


\begin{proof}
If $\dist_V(x,\rho^W_V)>E$, 
then 
Definition~\ref{defn:space_with_distance_formula}.\eqref{item:dfs_transversal} 
implies $\dist_W(x,\rho^V_W)\leq E$.  Then, either $\dist_W(y,\rho^V_W)\leq 
9E$, in which case $\dist_W(x,y)\leq 10E$, a contradiction, or 
$\dist_W(y,\rho^V_W)>E$, in which case consistency implies that 
$\dist_V(y,\rho^W_V)\leq E$.
\end{proof}

\begin{cor}\label{cor:not_far_from_both}
For $x,y,V,W$ as in Lemma~\ref{lem:order_1}, and any $z\in\cuco X$, there exists $U\in\{V,W\}$ such that $\dist_{ U}(z,\{x,y\})\leq10E$.
\end{cor}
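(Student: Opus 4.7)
The plan is to combine Lemma \ref{lem:order_1} (which already pins down, up to swapping $V$ and $W$, the projections of $x$ to $V$ and $y$ to $W$ as being $E$-close to $\rho^W_V$ and $\rho^V_W$ respectively) with the consistency inequality for the transverse pair $V \transverse W$ applied to the third point $z$.

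More precisely, by Lemma \ref{lem:order_1} we may assume without loss of generality that
\[
\dist_V(x,\rho^W_V) \leq E \quad\text{and}\quad \dist_W(y,\rho^V_W) \leq E.
\]
Now apply Definition~\ref{defn:space_with_distance_formula}.\eqref{item:dfs_transversal} to the point $z$ with the transverse pair $V,W$: this yields
\[
\min\{\dist_W(z,\rho^V_W),\,\dist_V(z,\rho^W_V)\} \leq \kappa_0 \leq E.
\]
If the minimum is realized by the $V$-term, then the triangle inequality in $\fontact V$ combined with the bound on $\dist_V(x,\rho^W_V)$ (and the uniform diameter bound $\xi \leq E$ on $\rho^W_V$) gives $\dist_V(z,x) \leq 2E + \xi \leq 10E$, so $\dist_V(z,\{x,y\}) \leq 10E$ and we take $U=V$. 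Otherwise the $W$-term is small, and the symmetric argument combined with $\dist_W(y,\rho^V_W) \leq E$ gives $\dist_W(z,y) \leq 10E$, so we take $U=W$.

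There is no real obstacle here: the statement is essentially an immediate consequence of applying consistency to $z$ after Lemma \ref{lem:order_1} has already done the work of locating $x$ and $y$ near the relevant $\rho$-sets. The only bookkeeping is to absorb the diameter constant $\xi$ of the $\rho$-sets into the constant $10E$, which is comfortable since the axioms guarantee $\xi \leq E$.
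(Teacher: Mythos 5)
Your proof is correct and takes essentially the same approach as the paper's: invoke Lemma~\ref{lem:order_1} to put $x$ near $\rho^W_V$ and $y$ near $\rho^V_W$, then apply the consistency inequality to $z$ and finish by the triangle inequality. The only difference is cosmetic — you phrase it as a direct case split on which term of the consistency minimum is small, while the paper argues by contradiction (assume $\dist_W(z,\{x,y\})>10E$, deduce $\dist_W(z,\rho^V_W)>9E$, hence by consistency $\dist_V(z,\rho^W_V)\leq E$, and conclude).
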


\begin{proof}
By Lemma~\ref{lem:order_1}, we may assume that $\dist_V(x,\rho^W_V),\dist_W(y,\rho^V_W)\leq E$.  Suppose that $\dist_W(z,\{x,y\})>10E$.  Then $\dist_W(z,\rho^V_W)>9E$, so that, by consistency, $\dist_V(z,\rho^W_V)\leq E$, whence $\dist_V(z,x)\leq 2E$.
\end{proof}

The following is needed for Theorem~\ref{thm:realization} and in the forthcoming~\cite{DurhamHagenSisto:HHS_III}.

\begin{lem}[Passing large projections up the $\nest$--lattice]\label{lem:nest_progress}
 For every $C\geq0$ there exists $N$ with the following property.  Let
 $V\in\mathfrak S$, let $x,y\in\cuco X$, and let
 $\{S_i\}_{i=1}^{N}\subseteq \mathfrak S_V$ be distinct and satisfy
 $\dist_{S_i}(x,y)\geq E$.  Then there exists $S\in\mathfrak
 S_V$ and $i$ so that $S_i\propnest S$ and $\dist_{S}(x,y)\geq C$.
\end{lem}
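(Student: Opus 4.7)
The plan is to induct on the level of $V$, proving that for every $C\geq 0$ and every $k\geq 1$ there is a constant $N(C,k)$ such that the conclusion holds whenever $\ell_V\leq k$. Since every $V\in\mathfrak S$ has $\ell_V\leq n$ for $n$ the complexity, this suffices. The base case $k=1$ is vacuous for $N\geq 2$, since if $V$ is $\nest$--minimal then $\mathfrak S_V=\{V\}$ contains no two distinct elements.

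For the inductive step, suppose $N(C,k)$ has been chosen and let $V$ have level $k+1$. Given $\{S_i\}_{i=1}^{N}\subseteq\mathfrak S_V$ distinct with $\dist_{\fontact S_i}(x,y)\geq E$, at most one $S_i$ equals $V$, so we may discard it and assume all remaining $S_i$ satisfy $S_i\propnest V$. If $\dist_{\fontact V}(x,y)\geq C$, we are done: take $S=V$ and any remaining $S_i$. Otherwise $\dist_{\fontact V}(x,y)<C$, so the large links axiom (Definition~\ref{defn:space_with_distance_formula}.\eqref{item:dfs_large_link_lemma}) applied to $V$ and $x,y$ produces $T_1,\dots,T_m\in\mathfrak S_V\setminus\{V\}$, with $m\leq\lfloor \lambda C+\lambda\rfloor$, such that every $T\in\mathfrak S_V\setminus\{V\}$ with $\dist_{\fontact T}(x,y)\geq E$ is nested in some $T_j$. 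In particular each of our remaining $S_i$ is nested in some $T_j$.

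Now apply pigeonhole: if $N-1\geq m\cdot M$, some $T_{j_0}$ contains at least $M$ of the $S_i$'s. At most one of these equals $T_{j_0}$, so we obtain at least $M-1$ indices $i_1,\dots,i_{M-1}$ with $S_{i_\ell}\propnest T_{j_0}$ and $\dist_{\fontact S_{i_\ell}}(x,y)\geq E$. Since $T_{j_0}\propnest V$ we have $\ell_{T_{j_0}}\leq k$, so the inductive hypothesis applied to $T_{j_0}$, provided $M-1\geq N(C,k)$, yields $S\in\mathfrak S_{T_{j_0}}\subseteq\mathfrak S_V$ and some $\ell$ with $S_{i_\ell}\propnest S$ and $\dist_{\fontact S}(x,y)\geq C$. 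Setting $N(C,k+1):=\lfloor\lambda C+\lambda\rfloor\cdot(N(C,k)+1)+2$ makes the counts work.

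There is no serious obstacle; the only thing to watch is the bookkeeping around the possibility that one $S_i$ coincides with $V$ (or with $T_{j_0}$ at the next stage), which is handled by the $+1$ and $+2$ cushion in the recursion for $N(C,k+1)$. The essential mechanism is entirely concentrated in the large links axiom: it bounds, in terms of $\dist_{\fontact V}(x,y)$, the number of $\nest$--maximal ``witnesses'' in $\mathfrak S_V$ through which large projections must factor, which is precisely what allows pigeonholing into a single $T_{j_0}$ of strictly smaller level.
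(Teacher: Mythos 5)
Your proof is correct and follows essentially the same strategy as the paper's: induct on level and use the large links axiom to pigeonhole the $S_i$ into one of boundedly many $T_j$'s of strictly smaller level. The only cosmetic difference is that the paper inducts on the level of a $\nest$--minimal element containing all the $S_i$ rather than on $\ell_V$ directly, but the mechanism is identical and both bookkeepings close up via finite complexity.
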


\begin{proof}
The proof is by induction on the level $k$ of a $\nest$-minimal $S\in\mathfrak S_V$ into which each $S_i$ is nested. The base case $k=1$ is empty.
 
Suppose that the statement holds for a given $N=N(k)$ when the level
of $S$ is at most $k$.  Suppose further that $|\{S_i\}|\geq N(k+1)$
(where $N(k+1)$ is a constant much larger than $N(k)$ that will be
determined shortly) and there exists a $\nest$-minimal $S\in\mathfrak
S_V$ of level $k+1$ into which each $S_i$ is nested.  There are two
cases.

If $\dist_{\fontact S}(x,y)\geq C$, then we are done.  If not, then
the large link axiom
(Definition~\ref{defn:space_with_distance_formula}.\eqref{item:dfs_large_link_lemma})
yields $K=K(C)$ and $T_1,\dots,T_K$, each properly
nested into $S$ (and hence of level less than $k+1$), so that any
given $S_i$ is nested into some $T_j$.  In particular, if $N(k+1)\geq
KN(k)$, there exists a $j$ so that at least $N(k)$ elements of
$\{S_i\}$ are nested into $T_j$.  By the induction hypothesis and the 
finite complexity axiom
(Definition~\ref{defn:space_with_distance_formula}.\eqref{item:dfs_complexity}), 
we are  done. 
\end{proof}

The next lemma is used in the proof of 
Proposition~\ref{prop:monotone_paths}, on which the existence of 
hierarchy paths (Theorem~\ref{thm:monotone_hierarchy_paths}) relies.  It is used again in Section \ref{sec:coarse_media} to construct coarse media.

\begin{lem}[Centers are consistent]\label{lem:median_cons}
 There exists $\kappa$ with the following property.  Let
 $x,y,z\in\cuco X$.  Let $\tup b=(b_W)_{W\in\mathfrak S}$ be so that
 $b_W$ is a point in $\fontact W$ with the property that there exists
 a geodesic triangle in $\fontact W$ with vertices in
 $\pi_W(x),\pi_W(y),\pi_W(z)$ each of whose sides contains a point
 within distance $\delta$ of $b_W$.  Then $\tup b$ is
 $\kappa$--consistent.
\end{lem}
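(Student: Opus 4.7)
My plan is to treat the transverse and nested cases of the consistency inequality separately, in each case arguing by contradiction that both terms of the \(\min\) cannot simultaneously exceed a uniform constant \(\kappa\) depending only on the HHS constants \(E, \delta, \xi, \kappa_0\).

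In the transverse case (\(V\transverse W\)), I would first apply the pointwise consistency inequality (Definition~\ref{defn:space_with_distance_formula}.\eqref{item:dfs_transversal}) to each of \(x, y, z\): each of these three points projects either within \(\kappa_0\) of \(\rho^V_W\) in \(\fontact W\) (``\(W\)-close'') or within \(\kappa_0\) of \(\rho^W_V\) in \(\fontact V\) (``\(V\)-close''). By pigeonhole at least two of the three share a class; say \(\pi_W(y), \pi_W(z)\) both lie in \(\neb_{\kappa_0}(\rho^V_W)\). Since \(\fontact W\) is \(\delta\)-hyperbolic and \(\diam(\rho^V_W)\leq \xi\), the geodesic \([\pi_W(y), \pi_W(z)]\) stays in \(\neb_{\kappa_0+\xi+\delta}(\rho^V_W)\); because \(b_W\) is \(\delta\)-close to this side of the triangle, \(\dist_W(b_W,\rho^V_W)\) is bounded by \(\kappa_0+\xi+2\delta\). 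The \(V\)-close pigeonhole outcome is handled symmetrically in \(\fontact V\).

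The nested case (\(V\propnest W\)) is the main obstacle. Assuming \(\dist_W(b_W,\rho^V_W)>\kappa\), the goal is to bound \(\diam(b_V\cup \rho^W_V(b_W))\). The first step is to extract a side of the triangle in \(\fontact W\) with vertices \(\pi_W(x), \pi_W(y), \pi_W(z)\) that avoids \(\neb_E(\rho^V_W)\) entirely. Using the tripod model of a \(\delta\)-thin triangle: if all three sides entered \(\neb_E(\rho^V_W)\), then three points, one on each side, within \(E\) of the diameter-\(\xi\) set \(\rho^V_W\) would be pairwise \((E+\xi)\)-close, hence (by the tripod comparison) within \(O(E+\xi+\delta)\) of the incenter \(b_W\), forcing \(\dist_W(b_W,\rho^V_W)\) to be uniformly bounded. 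So for \(\kappa\) large, some side \(\gamma\) avoids \(\neb_E(\rho^V_W)\), and the bounded geodesic image axiom yields \(\diam_V(\rho^W_V(\gamma))\leq E\). A similar argument rules out two of \(x, y, z\) being simultaneously \(W\)-close (else the side between them would stay near \(\rho^V_W\), again forcing \(b_W\) close); and if exactly one---say \(x\)---is \(W\)-close, then \(\rho^V_W\) sits deep in the \(x\)-branch of the tripod (at distance \(\gtrsim\dist_W(b_W,\pi_W(x))>\kappa-\kappa_0\) from the center), so the unique side avoiding \(\neb_E(\rho^V_W)\) must be \([\pi_W(y),\pi_W(z)]\), joining the necessarily V-aligned vertices \(y, z\).

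With the right side \(\gamma=[\pi_W(v),\pi_W(w)]\) in hand (with \(v, w\) V-aligned, i.e., \(\diam_V(\pi_V(v)\cup \rho^W_V(\pi_W(v)))\leq \kappa_0\) and similarly for \(w\)), a further BGI application to the length-\(\leq\delta\) geodesic from \(b_W\) to its nearest point on \(\gamma\) (all far from \(\rho^V_W\)) shows that \(\rho^W_V(b_W)\) lies within a uniform distance of \(\rho^W_V(\gamma)\). Combining with V-alignment, both \(\pi_V(v)\) and \(\pi_V(w)\) sit within a uniform distance of \(\rho^W_V(b_W)\), so the geodesic between them in the \(\delta\)-hyperbolic space \(\fontact V\) lies in a bounded neighborhood of the diameter-\(\leq\xi\) set \(\rho^W_V(b_W)\); since \(b_V\) is \(\delta\)-close to this geodesic, the desired bound on \(\diam(b_V\cup \rho^W_V(b_W))\) follows. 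The crux of the whole argument is the simultaneous matching in the preceding paragraph: pinning down one side of the triangle in \(\fontact W\) that both avoids \(\neb_E(\rho^V_W)\) and joins V-aligned vertices, which is forced by the interplay between the tripod geometry and the pointwise consistency inequality.
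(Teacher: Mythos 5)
Your proof is correct and follows essentially the same approach as the paper's: pigeonhole plus $\delta$-hyperbolicity in the transverse case, and extracting a side of the triangle in $\fontact W$ that stays far from $\rho^V_W$ and then applying bounded geodesic image in the nested case. The only superfluous ingredient is the pigeonhole over which of $x,y,z$ is ``$W$-close'' in the nested case: once a side $\gamma$ avoids $\neb_E(\rho^V_W)$, its endpoints automatically lie at distance $\geq E\geq\kappa_0$ from $\rho^V_W$ and so are $V$-aligned by the nested consistency inequality, which means the simultaneous matching you describe as the crux needs no separate argument.
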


\begin{proof}
Recall that for $w\in\{x,y,z\}$ the tuple $(\pi_V(w))_{V\in\mathfrak
S}$ is $E$--consistent.  Let $U,V\in\mathfrak S$ be transverse.  Then,
by $E$--consistency, up to exchanging $U$ and $V$ and substituting $z$
for one of $x,y$, we have $\dist_V(x,\rho^U_V),\dist_V(y,\rho^U_V)\leq
E$, so $\dist_V(x,y)\leq 3E$ (recall that the diameter of $\rho^U_V$
is at most $E$).  Since $b_V$ lies at distance $\delta$ from the
geodesic joining $\pi_V(x),\pi_V(y)$, we have
$\dist_V(b_V,\rho^U_V)\leq 3E+\delta$, whence the lemma holds with
$\kappa=3E+\delta$.

Suppose now $U\propnest V$.  If $b_V$ is within distance $10E$ of
$\rho^U_V$, then we are done.  Otherwise, up to permuting $x,y,z$, any
geodesic $[\pi_V(x),\pi_V(y)]$ is $5E$--far from $\rho^U_V$.  By
consistency of $(\pi_W(x))$, $(\pi_W(y))$ and bounded geodesic image 
(Definition~\ref{defn:space_with_distance_formula}.\eqref{item:dfs:bounded_geodesic_image}),
we have $\dist_U(x,y)\leq 10E$, $\diam_U(\rho^V_U(\pi_V(y))\cup
\pi_U(y))\leq E$, and $\diam_U(\rho^V_U(b_V\cup \pi_V(y)))\leq 10E$.
The first inequality and the definition of $b_U$ imply
$\dist_U(b_U,y)\leq 20E$, and taking into account the other
inequalities we get $\diam_U(\rho^V_U(b_V)\cup b_U)\leq 100E$.

Moreover, since $\pi_W(\cuco X)$ is $K$--quasiconvex, and $b_W$ lies
$\delta$--close to a geodesic starting and ending in $\pi_W(\cuco X)$,
we see that $b_W$ lies $(K+\delta)$--close to a point in $\pi_W(\cuco
X)$.  Hence, provided our initial choice of $E$ was sufficiently large
in terms of the constants from
Definition~\ref{defn:space_with_distance_formula}, $\tup b$ is
admissible.
\end{proof}

\subsection{Partially ordering sets of maximal relevant elements of $\mathfrak S$}\label{subsec:partial_order}
In this subsection, we describe a construction used several times in 
this paper, including: in the proof of
realization (Theorem~\ref{thm:realization}), in the construction of
hierarchy paths (Theorem~\ref{thm:monotone_hierarchy_paths}), 
and in the proof of the distance formula
(Theorem~\ref{thm:distance_formula}). 
We expect that this construction will have numerous other applications, as 
is the case with the corresponding partial ordering in the case of the 
mapping class group, see for example 
\cite{BKMM:consistency, 
BehrstockMinsky:RD,ClayLeiningerMangahas:RAAGs}.

Fix $x\in\cuco X$ and a tuple $\tup b\in\prod_{U\in\mathfrak 
S}2^{\image(\pi_U)}$, where the $U$--coordinate $b_U$ is a set of
diameter at most some fixed $\xi\geq0$.  For example, $\tup b$ could
be the tuple $(\pi_U(y))$ for some $y\in\cuco X$.  

In the remainder of this section, we choose $\kappa\geq 0$ and require that $\tup b$ is $\kappa$--consistent.  (Recall that if $\tup b$ is the tuple of projections of a point in $\cuco X$, then $\tup b$ is $E$--consistent.)

\begin{defn}[Relevant]\label{defn:relevant}
First, fix $\theta\geq 100\max\{\kappa,E\}$.  Then $U\in\mathfrak S$
is \emph{relevant} (with respect to $x,\tup b,\theta$) if
$\dist_U(x,b_U)>\theta$.  Denote by $\relevant(x,\tup b,\theta)$ the
set of relevant elements.
\end{defn}

Let $\relevant_{max}(x,\tup b,\theta)$ be a subset of $\relevant(x,\tup b,\theta)$ whose elements are pairwise $\nest$--incomparable (for example, they could all be $\nest$--maximal in $\relevant(x,\tup b,\theta)$, or they could all have the same level).  Define a relation $\preceq$ on $\relevant_{max}(x,\tup b,\theta)$ as follows.  Given $U,V\in\relevant_{max}(x,\tup b,\theta)$, we have $U\preceq V$ if $U=V$ or if $U\transverse V$ and $\dist_U(\rho^V_U,b_U)\leq\kappa$. Figure~\ref{fig:order} illustrates $U\prec V$.

\begin{figure}[h]
 \includegraphics[width=0.7\textwidth]{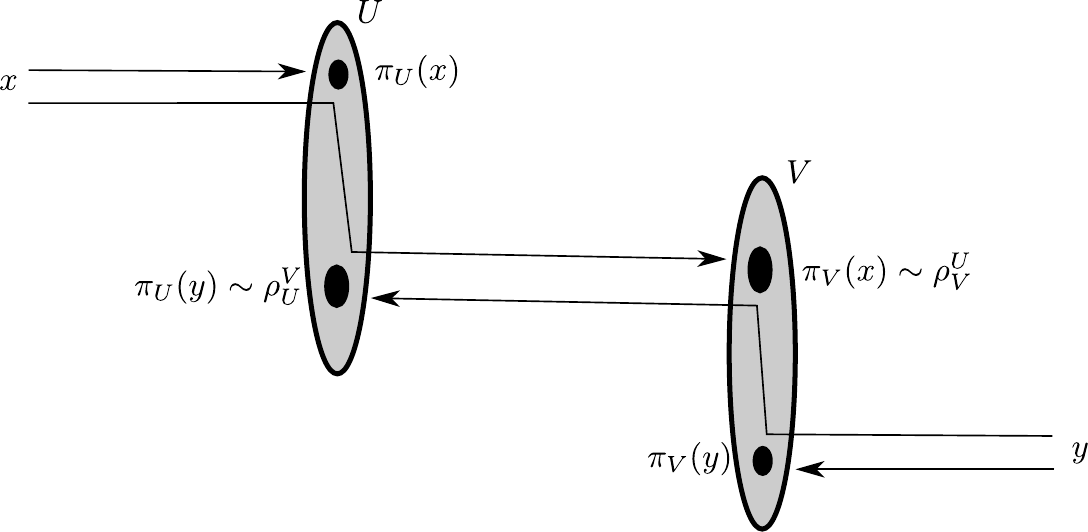}
 \caption{Heuristic picture of $U\prec V$ (for $\tup b$ the coordinates of $y\in\cuco X$, for concreteness). The idea is that ``on the way'' from $x$ to $y$ one ``first encounters'' $U$ and is forced to change the projection from $\pi_U(x)$ to $\pi_U(y)\sim \rho^V_U$. In doing so the projection to $V$ is not affected.}\label{fig:order}
\end{figure}

\begin{prop}\label{prop:partial_order}
The relation $\preceq$ is a partial order.  Moreover, either $U,V$ are $\preceq$--comparable or $U\orth V$.
\end{prop}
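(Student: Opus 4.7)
The plan is to verify reflexivity, antisymmetry, and transitivity of $\preceq$, together with the comparability dichotomy, by repeatedly exploiting three principles: the consistency inequality satisfied by the point $x$ (Definition~\ref{defn:space_with_distance_formula}.\eqref{item:dfs_transversal}), the $\kappa$-consistency of the tuple $\tup b$, and the relevance assumption $\dist_U(x,b_U) > \theta$. Because $\theta \geq 100\max\{\kappa,E\}$, each min-inequality is forced to land on a specific alternative, and these alternatives assemble into the desired relations. Reflexivity is immediate from the definition.

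For antisymmetry, suppose $U\neq V$ with $U\preceq V$ and $V\preceq U$, so $U\transverse V$ and $\dist_U(\rho^V_U,b_U),\,\dist_V(\rho^U_V,b_V)\leq\kappa$. The consistency inequality for $x$ makes either $\dist_U(x,\rho^V_U)\leq E$ or $\dist_V(x,\rho^U_V)\leq E$; composing with the relevant $\kappa$-bound gives $\dist_U(x,b_U)\leq E+\kappa$ or $\dist_V(x,b_V)\leq E+\kappa$, violating relevance of $U$ or $V$. The comparability dichotomy is proved in the same spirit: given transverse $U,V\in\relevant_{max}$, the consistency for $x$ forces (after relabelling) $\dist_V(x,\rho^U_V)\leq E$, whence $\dist_V(\rho^U_V,b_V)>\theta-E>\kappa$, and $\kappa$-consistency of $\tup b$ then produces $\dist_U(\rho^V_U,b_U)\leq\kappa$, i.e. $U\preceq V$.

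Transitivity is the main step and is where Proposition~\ref{prop:rho_consistency} enters. Given $U\preceq V\preceq W$ with $U\neq W$, I would first use consistency for $x$ together with the two $\preceq$-relations to derive $\dist_V(x,\rho^U_V)\leq E$ and $\dist_W(x,\rho^V_W)\leq E$ (in each case the opposite alternative would contradict relevance via the already-known $\kappa$-bound). Next, I would rule out $U\orth W$: partial realization applied to $\{U,W\}$ with $p_U=\pi_U(x)$, $p_W=\pi_W(x)$ produces $z\in\cuco X$ with $\dist_V(z,\rho^U_V),\dist_V(z,\rho^W_V)\leq\alpha$, and chaining through $\dist_V(x,\rho^U_V)\leq E$ and $\dist_V(\rho^W_V,b_V)\leq\kappa$ yields $\dist_V(x,b_V)\leq E+2\alpha+\kappa<\theta$, contradicting relevance of $V$. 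Hence $U\transverse W$, and Proposition~\ref{prop:rho_consistency} applied to the triple with $V$ playing the distinguished role gives $\min\{\dist_W(\rho^V_W,\rho^U_W),\dist_U(\rho^V_U,\rho^W_U)\}\leq\kappa_1$. If the minimum is realised on the $W$ side, then $\dist_W(x,\rho^U_W)\leq E+\kappa_1$, so $\dist_W(\rho^U_W,b_W)>\theta-E-\kappa_1>\kappa$, and $\kappa$-consistency of $\tup b$ delivers $\dist_U(\rho^W_U,b_U)\leq\kappa$; if instead the minimum is realised on the $U$ side, then consistency for $x$ on the pair $U\transverse W$ cannot send $\dist_U(x,\rho^W_U)\leq E$ (that would force $\dist_U(\rho^V_U,b_U)>\kappa$), so $\dist_W(x,\rho^U_W)\leq E$, and the same $\kappa$-consistency of $\tup b$ again yields $\dist_U(\rho^W_U,b_U)\leq\kappa$.

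The main obstacle is the bookkeeping in the transitivity step, since one must orchestrate the consistency inequalities for both $x$ and $\tup b$ alongside the $\rho$-consistency of Proposition~\ref{prop:rho_consistency}, while keeping every accumulated constant below $\theta$. This is painless once one notes that $\alpha$, $\kappa_0$, and $\kappa_1$ are all controlled by $E$ (cf. Remark~\ref{rem:constants} and the definition of $\kappa_1$), so the choice $\theta\geq 100\max\{\kappa,E\}$ comfortably absorbs every additive loss that appears.
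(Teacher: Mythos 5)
Your proof is correct, and its overall structure matches the paper's: reflexivity is trivial; antisymmetry and the comparability dichotomy follow from the consistency inequality for $x$ together with the threshold $\theta$; transitivity is the hard step and requires ruling out $U\orth W$ via partial realization before applying $\kappa$-consistency of $\tup b$ one last time. The genuine divergence is in how you control $\rho^V_U$ versus $\rho^W_U$ inside the transitivity argument. The paper (in Lemma~\ref{lem:transitivity}) first proves $\dist_V(\rho^U_V,\rho^W_V)>10E$ and routes the orthogonality exclusion through Lemma~\ref{lem:transverse_far}; it then constructs a \emph{tailored} partial-realization point $a$ whose $\fontact W$-coordinate is far from both $\rho^U_W$ and $\rho^V_W$, and uses consistency of $a$ to force the one-sided estimate $\dist_U(\rho^V_U,\rho^W_U)\leq 2E$. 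You instead invoke Proposition~\ref{prop:rho_consistency}, which only gives a $\min$ of the two $\rho$-distances, and so must handle both branches separately. The trade-off is that your route is more modular, reusing a proposition already established rather than running a bespoke partial-realization argument, but it costs you a case split; the paper's choice of $a$ breaks the symmetry in advance and avoids the split. Your orthogonality exclusion is also a direct inlining of the content of Lemma~\ref{lem:transverse_far} (with a slightly different chaining of the triangle inequality through the auxiliary point $z$), rather than a citation of that lemma. Both arguments converge on the same closing step — establish $\dist_W(b_W,\rho^U_W)>\kappa$ and conclude $U\preceq W$ from $\kappa$-consistency of $\tup b$ — and your accounting of how the threshold $\theta\geq 100\max\{\kappa,E\}$ absorbs the accumulated additive losses (including the $\alpha$, $\kappa_0$, $\kappa_1$ controlled by $E$) is in order, though you are slightly loose about the $\diam\leq\xi\leq E$ terms in a few triangle inequalities; those are harmless given the slack in $\theta$.
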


\begin{proof}
Clearly $\preceq$ is reflexive.  Antisymmetry follows from Lemma~\ref{lem:antisymmetric}.  Suppose that $U,V$ are $\preceq$--incomparable.  If $U\orth V$, we are done, and we cannot have $U\nest V$ or $V\nest U$, so suppose $U\transverse V$.  Then, by $\preceq$--incomparability of $U,V$, we have $\dist_U(\rho^V_U,b_U)>\kappa$ and $\dist_V(\rho^U_V,b_V)>\kappa$, contradicting $\kappa$--consistency of $\tup b$.  This proves the assertion that transverse elements of $\relevant_{max}(x,\tup b,\theta)$ are $\preceq$--comparable.  Finally, transitivity follows from Lemma~\ref{lem:transitivity}.   
\end{proof}

\begin{lem}\label{lem:antisymmetric}
The relation $\preceq$ is antisymmetric.
\end{lem}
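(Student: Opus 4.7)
The plan is a short contradiction argument exploiting the gap between the scale $\theta$ at which elements are declared relevant and the scale $\kappa$ at which consistency fails. Suppose $U \preceq V$ and $V \preceq U$ with $U \neq V$. Then, by definition of $\preceq$, we have $U \transverse V$ together with the two inequalities $\dist_U(\rho^V_U, b_U) \leq \kappa$ and $\dist_V(\rho^U_V, b_V) \leq \kappa$.

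Next I would use that both $U$ and $V$ are relevant, i.e., $\dist_U(x, b_U) > \theta$ and $\dist_V(x, b_V) > \theta$, together with $\theta \geq 100\max\{\kappa,E\}$. The triangle inequality then forces
\[
\dist_U(x, \rho^V_U) \geq \theta - \kappa \quad \text{and} \quad \dist_V(x, \rho^U_V) \geq \theta - \kappa,
\]
and in particular both of these quantities exceed $E$.

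The contradiction comes by applying the consistency inequality from Definition~\ref{defn:space_with_distance_formula}.\eqref{item:dfs_transversal} to the point $x \in \cuco X$ and the transverse pair $U \transverse V$: the tuple of projections of a point is $E$--consistent, so
\[
\min\bigl\{\dist_U(x,\rho^V_U),\, \dist_V(x,\rho^U_V)\bigr\} \leq E.
\]
This directly contradicts the two lower bounds just obtained, proving that $U = V$ and hence establishing antisymmetry.

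The argument is essentially immediate once the right inequalities are lined up; there is no serious obstacle. The only subtlety is keeping the two notions of consistency straight — the hypothesis that $\tup b$ is $\kappa$--consistent is not what produces the contradiction (that inequality only says the minimum of the two $b$--distances is $\leq \kappa$, which is compatible with both being small), but rather the $E$--consistency of the honest point $x$ itself, which is what forces one of the $\rho$'s to be close to $\pi_U(x)$ or $\pi_V(x)$ and clashes with $U, V$ being relevant.
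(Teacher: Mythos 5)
Your proof is correct and uses essentially the same ingredients as the paper's: relevance of both $U$ and $V$, the triangle inequality, and $E$--consistency of the actual point $x$ (not the $\kappa$--consistency of $\tup b$, which you rightly note is not what's needed). The only difference is presentation — you run a symmetric contradiction argument assuming both $U\preceq V$ and $V\preceq U$, while the paper argues directly that $U\preceq V$ with $U\neq V$ forces $\dist_V(b_V,\rho^U_V)>\kappa$ and hence $V\not\preceq U$; these are logically equivalent rearrangements of the same computation.
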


\begin{proof}
If $U\preceq V$ and $U\neq V$, then $\dist_U(b_U,\rho^V_U)\leq\kappa$,
so $\dist_U(x,\rho^V_U)>\theta-\kappa\geq 99\kappa>E$.  Then, 
$\dist_V(x,\rho^U_V)\leq E$, by consistency. Thus 
$\dist_V(b_V,\rho^U_V)>\kappa$, and so, by definition $V\not\preceq U$.
\end{proof}

\begin{lem}\label{lem:transitivity}
The relation $\preceq$ is transitive.
\end{lem}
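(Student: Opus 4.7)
The plan is to show that if $U \preceq V$ and $V \preceq W$ with all three elements distinct, then $U \preceq W$. Since the elements of $\relevant_{max}(x,\tup b,\theta)$ are pairwise $\nest$--incomparable, $U$ and $W$ must be equal, orthogonal, or transverse. I will first rule out $U \orth W$, and then, assuming $U \transverse W$, establish the projection bound $\dist_U(b_U,\rho^W_U)\leq\kappa$ by chaining together the $\kappa$--consistency of $\tup b$, the $E$--consistency of the coordinate tuple of $x$, and $\rho$--consistency (Proposition~\ref{prop:rho_consistency}). Throughout, I exploit that $\theta\geq 100\max\{\kappa,E\}$ is much larger than the other constants.

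To rule out $U \orth W$: from $U \preceq V$, we have $\dist_U(b_U,\rho^V_U)\leq\kappa$, so by relevance $\dist_U(x,\rho^V_U)>\theta-\kappa\gg E$; consistency of the $x$--tuple (Definition~\ref{defn:space_with_distance_formula}.\eqref{item:dfs_transversal}) then gives $\dist_V(x,\rho^U_V)\leq E$. Similarly, $V \preceq W$ yields $\dist_V(x,\rho^W_V)>\theta-\kappa$, so by the triangle inequality $\dist_V(\rho^U_V,\rho^W_V)>\theta-\kappa-E$. On the other hand, if $U \orth W$, the partial realization axiom, applied to the pair $\{U,W\}$ with any choice of $p_U,p_W$, produces a point $y\in\cuco X$ with $\dist_V(y,\rho^U_V)\leq\alpha$ and $\dist_V(y,\rho^W_V)\leq\alpha$ (using the third bullet of partial realization, since $V\transverse U$ and $V\transverse W$). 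Hence $\dist_V(\rho^U_V,\rho^W_V)\leq 2\alpha$, contradicting the previous lower bound for $\theta$ large enough.

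Now assume $U\transverse W$. The lower bound $\dist_V(\rho^U_V,\rho^W_V)>\theta-\kappa-E$ established above exceeds $\kappa_1$, so $\rho$--consistency applied to $U,V,W$ (each pair being transverse) forces $\dist_W(\rho^U_W,\rho^V_W)\leq\kappa_1$. From $V \preceq W$ and consistency of the $x$--tuple, $\dist_W(x,\rho^V_W)\leq E$, whence $\dist_W(x,\rho^U_W)\leq E+\kappa_1$. Since $W$ is relevant, $\dist_W(x,b_W)>\theta$, and the triangle inequality gives $\dist_W(b_W,\rho^U_W)>\theta-E-\kappa_1>\kappa$. Applying $\kappa$--consistency of $\tup b$ to the transverse pair $U,W$ then forces $\dist_U(b_U,\rho^W_U)\leq\kappa$, which is exactly $U \preceq W$.

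The main obstacle is not conceptual but organizational: one must carefully arrange the chain of consistency inequalities (the $x$--tuple's $E$--consistency, the $\rho$--consistency of Proposition~\ref{prop:rho_consistency}, and the $\kappa$--consistency of $\tup b$) so that at each stage the ``large'' side of the min dominates and the ``small'' side yields the desired bound; the choice $\theta\geq 100\max\{\kappa,E\}$ (along with $\kappa\geq\kappa_1,\alpha$) ensures that every such application succeeds.
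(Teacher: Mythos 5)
Your proof is correct, and it takes a genuinely different route through the argument than the paper. Both proofs begin the same way: establish the large lower bound on $\dist_V(\rho^U_V,\rho^W_V)$ by combining $\preceq$-hypotheses, relevance, and consistency of the $x$-tuple, then use partial realization to exclude $U\orth W$ (the paper packages this as Lemma~\ref{lem:transverse_far}; you re-derive it inline, which is fine since that lemma appears after this one). Where the arguments diverge is in how the bound $\dist_U(b_U,\rho^W_U)\leq\kappa$ is obtained once $U\transverse W$ is known. The paper invokes partial realization a second time to manufacture a point $a$ far from $\rho^U_W,\rho^V_W$ in $\fontact W$, uses consistency of $(\pi_T(a))$ to deduce $\dist_U(\rho^V_U,\rho^W_U)\leq 2E$ in $\fontact U$, and only then detours through $\fontact W$ and the consistency of $\tup b$. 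You instead apply Proposition~\ref{prop:rho_consistency} ($\rho$-consistency) directly to the transverse triple $U,V,W$: since $\dist_V(\rho^U_V,\rho^W_V)$ is large, that proposition forces $\dist_W(\rho^U_W,\rho^V_W)\leq\kappa_1$, after which the triangle inequality in $\fontact W$ plus $\kappa$-consistency of $\tup b$ finish. The effect is the same, but your route avoids the intermediate computation in $\fontact U$ and leans on the already-proved $\rho$-consistency lemma rather than re-running a partial-realization argument, so it is somewhat shorter and cleaner; the paper's version is more self-contained in that it reproves the relevant comparison from the axioms inline.
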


\begin{proof}
Suppose that $U\preceq V\preceq W$.  If $U=V$ or $V=W$, then $U\preceq W$, and by Lemma~\ref{lem:antisymmetric}, we cannot have $U=W$ unless $U=V=W$.  Hence suppose $U\transverse V$ and $\dist_U(\rho^V_U,b_U)\leq\kappa$, while $V\transverse W$ and $\dist_V(\rho^W_V,b_V)\leq\kappa$.  By the definition of $\relevant_{max}(x,\tup b,\theta)$, we have $\dist_T(x,b_T)>100\kappa$ for $T\in\{U,V,W\}$.

We first claim that $\dist_V(\rho^U_V,\rho^W_V)>10E$.  Indeed, $\dist_U(b_U,\rho^V_U)\leq\kappa$, so $\dist_U(\rho^V_U,x)\geq 90\kappa$, whence $\dist_V(\rho^U_V,x)\leq E\leq\kappa$ by $E$--consistency of the tuple $(\pi_T(x))_{T\in\mathfrak S}$.  On the other hand, $\dist_V(\rho^W_V,b_V)\leq\kappa$, so $\dist_V(\rho^U_V,\rho^W_V)>10E$ as claimed.  Hence, by Lemma~\ref{lem:transverse_far}, we have $U\transverse W$.

Since $\diam(\image(\pi_W))>100\kappa$ --- indeed, $\dist_W(x,b_W)>100\kappa$ and $b_W\in\image(\pi_W(\cuco X))$ --- partial realization (Definition~\ref{defn:space_with_distance_formula}.\eqref{item:dfs_partial_realization}) provides $a\in\cuco X$ satisfying $\dist_W(a,\{\rho^U_W,\rho^V_W\})\geq 10\kappa$.    

We thus have $\dist_U(a,\rho^W_U)\leq E$ by $E$--consistency of 
$(\pi_T(a))_{T\in\mathfrak S}$, and the same is true with $V$ replacing $U$.  Hence $\dist_V(\rho^U_V,a)>E$, so consistency implies $\dist_U(a,\rho^V_U)\leq E$.  Thus $\dist_U(\rho_U^V,\rho_U^W)\leq 2E$.  Thus $\dist_U(b_U,\rho^W_U)\leq 2E+\kappa<10\kappa$, whence $\dist_U(x,\rho^W_U)>50\kappa>E$, so $\dist_W(x,\rho^U_W)\leq E$ by consistency and the fact that $U\transverse W$.  It follows that $\dist_W(b_W,\rho^U_W)\geq 100\kappa-E>\kappa$, so, again by consistency, $\dist_U(b_U,\rho^U_W)\leq\kappa$, i.e., $U\preceq W$.
\end{proof}

\begin{lem}\label{lem:transverse_far}
Let $U,V,W\in\mathfrak S$ satisfy $\diam(\image(\pi_U)),\diam(\image(\pi_V)),\diam(\image(\pi_W))>10E$, and $U\transverse V,W\transverse V$, and $\dist_V(\rho^U_V,\rho^W_V)>10E$.  Suppose moreover that $U$ and $W$ are $\nest$--incomparable.  Then $U\transverse W$.
\end{lem}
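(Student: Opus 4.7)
The plan is to argue by contradiction. Since $U$ and $W$ are assumed to be $\nest$--incomparable, the only alternative to $U\transverse W$ permitted by the axioms is $U\orth W$, so it suffices to rule out the orthogonal case.

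Suppose for contradiction that $U\orth W$. The heart of the argument is a single application of partial realization (Definition~\ref{defn:space_with_distance_formula}.\eqref{item:dfs_partial_realization}) to the orthogonal pair $\{U,W\}$. First I would note that the diameter hypotheses on $\image(\pi_U)$ and $\image(\pi_W)$ guarantee these projection images are nonempty, so we may choose arbitrary points $p_U\in\pi_U(\cuco X)$ and $p_W\in\pi_W(\cuco X)$. Partial realization then supplies a point $x\in\cuco X$ with $\dist_U(x,p_U)\leq\alpha$ and $\dist_W(x,p_W)\leq\alpha$, and, crucially, the third clause of partial realization applied to $V$ (which by hypothesis satisfies $V\transverse U$ and $V\transverse W$) yields
\[
\dist_V(x,\rho^U_V)\leq\alpha\qquad\text{and}\qquad \dist_V(x,\rho^W_V)\leq\alpha.
\]

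The triangle inequality in $\fontact V$ then gives $\dist_V(\rho^U_V,\rho^W_V)\leq 2\alpha\leq 2E$, which directly contradicts the assumption $\dist_V(\rho^U_V,\rho^W_V)>10E$. Hence $U\orth W$ is impossible, and we conclude $U\transverse W$.

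There is essentially no obstacle here: the statement is really a structural observation about how orthogonality forces the relative projections to a common transverse element to coarsely coincide, and the whole content of the proof is packaged into the partial realization axiom. The only thing to be slightly careful about is confirming the applicability of the axiom (nonemptiness of the projections, and that $V$ falls under the transverse clause rather than some other case), both of which are immediate from the hypotheses.
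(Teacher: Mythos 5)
Your proof is correct, and it uses the same overall strategy as the paper: assume $U\orth W$, apply partial realization to the pair $\{U,W\}$, and derive that $\rho^U_V$ and $\rho^W_V$ lie within $2E$ of each other in $\fontact V$, contradicting the hypothesis. The difference is in \emph{how} you pin down $\pi_V(x)$: you invoke the third clause of partial realization directly (since $V\transverse U$ and $V\transverse W$, the axiom hands you $\dist_V(x,\rho^U_V)\leq\alpha$ and $\dist_V(x,\rho^W_V)\leq\alpha$ for free), whereas the paper instead uses the diameter hypothesis to \emph{choose} $p_U$ far from $\rho^V_U$ and $p_W$ far from $\rho^V_W$, and then invokes the transverse consistency inequality to conclude $\pi_V(x)$ is within $E$ of each of $\rho^U_V$ and $\rho^W_V$. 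Your route is a genuine streamlining: it bypasses the choice of special realization points and the consistency step, and — as you essentially observe — it makes no real use of the lower bounds on $\diam(\image(\pi_U))$ and $\diam(\image(\pi_W))$ beyond nonemptiness of the projection images (which holds automatically). So you have in effect proved the conclusion under slightly weaker hypotheses. The one small imprecision is describing the diameter bounds as what "guarantees these projection images are nonempty"; they are nonempty in any case, and the diameter bounds exist in the lemma statement to support the paper's alternative argument, not yours.
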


\begin{proof}
If $U\orth W$, then by the partial realization axiom  (Definition~\ref{defn:space_with_distance_formula}.\eqref{item:dfs_partial_realization}) and the lower bound on diameters, there exists an $E$--partial realization point $x$ for $\{U,W\}$ so that $$\dist_{  U}(\rho^V_U,x),\dist_{  W}(\rho^V_W,x)>E.$$  This contradicts consistency since $\dist_V(\rho^U_V,\rho^W_V)>10E$; indeed, by consistency $\dist_{  V}(\rho^U_V,x)\leq E,\dist_{  V}(\rho^W_V,x)\leq E$, i.e., $\dist_V(\rho^U_V,\rho^W_V)\leq2E$.  Hence $U\transverse W$.
\end{proof}

\subsection{Coloring relevant elements}\label{subsec:coloring}
In this subsection, the key result is Lemma~\ref{lem:coloring}, 
which we will apply in proving the existence of hierarchy paths
in Section~\ref{subsec:good_and_proper}.  

Fix $x,y\in\cuco X$.  As above, let $\relevant(x,y,100E)$ consist of those $V\in\mathfrak S$ for which $\dist_V(x,y)>100E$. Recall that, given $U\in\mathfrak S$, we denote by $\mathfrak T_U^\ell$ the set of $V\in\mathfrak S_U$ such that $\ell_U-\ell_V=\ell$.  In particular, if $V,V'\in\mathfrak T^\ell_U$ and $V\nest V'$, then $V=V'$.  Let $\relevant_U^\ell(x,y,100E)=\relevant(x,y,100E)\cap\mathfrak T^\ell_U$, the set of $V\nest U$ so that $\dist_V(x,y)>100E$ and $\ell_U-\ell_V=\ell$.

By Proposition~\ref{prop:partial_order}, the relation $\preceq$ on $\relevant^\ell_U(x,y,100E)$ defined as follows is a partial order: $V\preceq V'$ if either $V=V'$ or $\dist_V(y,\rho^{V'}_V)\leq E$.

\begin{defn}[Relevant graph]\label{defn:relevant_graph}
Denote by $\mathcal G$ the graph with vertex-set
$\relevant^\ell_U(x,y,100E)$, with two vertices adjacent if and only
if the corresponding elements of $\relevant^\ell_U(x,y,100E)$ are
orthogonal.  Let $\mathcal G^c$ denote the complementary graph of $\mathcal G$,
i.e., the graph with the same vertices and edges corresponding to
$\preceq$--comparability.
\end{defn}

The next lemma is an immediate consequence of Proposition~\ref{prop:partial_order}:

\begin{lem}\label{lem:incomparability_graph}
Elements of $V,V'\in\relevant^\ell_U(x,y,100E)$ are adjacent in $\mathcal G$ if and only if they are $\preceq$--incomparable. 
\end{lem}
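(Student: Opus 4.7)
The plan is to derive this as essentially a direct application of Proposition~\ref{prop:partial_order} to the subset $\relevant^\ell_U(x,y,100E)$, playing $\tup b = (\pi_W(y))_{W\in\mathfrak S}$ (which is $E$--consistent, hence fits the hypothesis after setting $\kappa = E$). The first thing I would note is that $\relevant^\ell_U(x,y,100E)$ is indeed a set of pairwise $\nest$--incomparable elements, since any two distinct elements of $\mathfrak T^\ell_U$ lie at the same level in $U$ and, by definition of level, cannot be properly nested into one another. This is what is required to legitimately invoke Proposition~\ref{prop:partial_order}.

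Given $V,V'\in\relevant^\ell_U(x,y,100E)$, the proposition asserts that either $V$ and $V'$ are $\preceq$--comparable or $V\orth V'$; moreover $\preceq$ is a partial order (in particular antisymmetric and reflexive). For the forward direction, suppose $V,V'$ are adjacent in $\mathcal G$, so $V\orth V'$. Since orthogonality is anti-reflexive we have $V\neq V'$, and since the relations nesting/orthogonality/transversality are mutually exclusive (Definition~\ref{defn:space_with_distance_formula}.\eqref{item:dfs_orthogonal} and \eqref{item:dfs_transversal}), we cannot have $V\transverse V'$. Hence neither clause of the definition $V\preceq V'$ (or $V'\preceq V$) can hold, so $V,V'$ are $\preceq$--incomparable.

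For the reverse direction, suppose $V,V'$ are $\preceq$--incomparable. Then by Proposition~\ref{prop:partial_order}, the only remaining possibility is $V\orth V'$, which is precisely the condition for adjacency in $\mathcal G$.

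There is no real obstacle here; the only thing to be slightly careful about is matching the ``generic'' consistency constant $\kappa$ used in the setup of Section~\ref{subsec:partial_order} with the value $E$ implicit in the current definition of $\preceq$ (which compares distances to $E$ rather than $\kappa$), and verifying that the set $\relevant^\ell_U(x,y,100E)$ satisfies the hypothesis of pairwise $\nest$--incomparability required by Proposition~\ref{prop:partial_order}. Both are immediate from the definitions.
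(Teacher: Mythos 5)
Your proof is correct and follows exactly the route the paper intends: the paper states that Lemma~\ref{lem:incomparability_graph} is ``an immediate consequence of Proposition~\ref{prop:partial_order},'' and you have simply spelled out the details — verifying pairwise $\nest$--incomparability of $\relevant^\ell_U(x,y,100E)$, unwinding the forward direction from mutual exclusivity of the three relations, and deducing the reverse direction from the dichotomy in Proposition~\ref{prop:partial_order}. Nothing is missing; this is the same argument the paper leaves implicit.
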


\begin{lem}[Coloring relevant elements]\label{lem:coloring}
Let $\chi$ be the maximal cardinality of a set of pairwise orthogonal elements of $\mathfrak T_U^{\ell}$. Then there exists a $\chi$--coloring of the set of relevant elements of $\mathfrak T_U^{\ell}$ such that non--transverse elements have different colors.
\end{lem}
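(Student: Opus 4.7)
The plan is to deduce the coloring from Dilworth's theorem applied to the partial order $\preceq$ on the set $\relevant_U^\ell(x,y,100E)$.

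First I would observe that within $\mathfrak T_U^\ell$, any two distinct elements $V,V'$ have the same level $\ell_U-\ell$, so if $V\nest V'$ then $V=V'$, a contradiction. Hence distinct elements of $\mathfrak T_U^\ell$ are never $\nest$--comparable, and by Proposition~\ref{prop:partial_order} applied to distinct relevant elements $V,V'\in\relevant_U^\ell(x,y,100E)$, we have that either $V\orth V'$ or $V\transverse V'$. Moreover, Lemma~\ref{lem:incomparability_graph} says that $V\transverse V'$ corresponds precisely to $V,V'$ being $\preceq$--comparable, while $V\orth V'$ corresponds to being $\preceq$--incomparable. In particular, chains in $(\relevant_U^\ell(x,y,100E),\preceq)$ consist of pairwise transverse elements, and antichains consist of pairwise orthogonal elements; by hypothesis the latter have cardinality at most $\chi$ (and Lemma~\ref{lem:pairwise_orthogonal} guarantees $\chi\leq n$, where $n$ is the complexity).

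Next I would invoke Dilworth's theorem for posets with antichains of bounded size (the infinite version, which is needed only if $\relevant_U^\ell(x,y,100E)$ happens to be infinite, holds via a standard Zorn/compactness argument since the antichain bound is finite). This yields a partition of $\relevant_U^\ell(x,y,100E)$ into at most $\chi$ chains $\mathcal C_1,\ldots,\mathcal C_\chi$. I would then define the coloring by assigning to each $V\in\relevant_U^\ell(x,y,100E)$ the color $i$ such that $V\in\mathcal C_i$.

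To verify the coloring works, I would note that if $V\neq V'$ are non-transverse elements of $\relevant_U^\ell(x,y,100E)$, then by the observation above they must be orthogonal, hence $\preceq$--incomparable, hence they cannot lie in a common chain $\mathcal C_i$, so they receive different colors. The main (very minor) obstacle is justifying Dilworth in the possibly infinite case, but this is standard and can be handled by noting that the bound on antichain size is uniform and finite, so a Zorn's lemma argument on partitions into chains produces the desired decomposition.
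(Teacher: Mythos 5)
Your proof is correct and follows essentially the same route as the paper: both apply Dilworth's theorem to the partial order $\preceq$ on $\relevant^\ell_U(x,y,100E)$, using the fact that antichains are sets of pairwise orthogonal elements of size at most $\chi$, and then color by chain membership so that $\preceq$--incomparable (equivalently, non-transverse) elements receive distinct colors. Your additional remark about handling the possibly infinite case via Zorn's lemma is a sensible clarification that the paper leaves implicit, but it does not change the substance of the argument.
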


\begin{proof}
Since each clique in $\mathcal G$ --- i.e., each $\preceq$--antichain in
$\relevant^\ell_U(x,y,100E)$ --- has cardinality at most $\chi$, Dilworth's
theorem~\cite[Theorem~1.1]{Dilworth:poset} implies that $\mathcal G$
can be colored with $\chi$ colors in such a way that
$\preceq$--incomparable elements have different colors; hence
non-transverse elements have different colors.
\end{proof}

\begin{rem}\label{rem:chi_finite}
The constant $\chi$ provided by Lemma~\ref{lem:coloring} is bounded by the complexity of $(\cuco X,\mathfrak S)$, by
Lemma~\ref{lem:ramsey_corollary}.
\end{rem}

\section{Realization of consistent tuples}\label{sec:realization}
The goal of this section is to prove Theorem~\ref{thm:realization}.
In this section we will work with a fixed hierarchical space $(\cuco
X,\mathfrak S)$.  We will use the concepts of consistency and partial realization points; see Definition~\ref{defn:consistent} and Definition~\ref{defn:partial_realization_point}.

\begin{thm}[Realization of consistent tuples]\label{thm:realization}
 For each $\kappa\geq1$ there exist $\theta_e,\theta_u\geq0$ such that
 the following holds.  Let $\tup
 b\in\prod_{W\in\mathfrak S}2^{\fontact W}$ be $\kappa$--consistent;
 for each $W$, let $b_W$ denote the $\fontact W$--coordinate of $\tup
 b$.

 Then there exists $x\in \cuco X$ so that $\dist_{
 W}(b_W,\pi_W(x))\leq\theta_e$ for all $\fontact W\in\mathfrak S$.
 Moreover, $x$ is \emph{coarsely unique} in the sense that the set of
 all $x$ which satisfy $\dist_{ W}(b_W,\pi_W(x))\leq\theta_e$ in each
 $\fontact W\in\mathfrak S$, has diameter at most $\theta_u$.
\end{thm}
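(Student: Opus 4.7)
\textbf{Coarse uniqueness} is immediate from the uniqueness axiom. If both $x$ and $x'$ satisfy $\dist_W(\pi_W(\cdot), b_W) \le \theta_e$ for every $W \in \mathfrak{S}$, then $\dist_W(x, x') \le 2\theta_e$ for every $W$, so Definition~\ref{defn:space_with_distance_formula}.\eqref{item:dfs_uniqueness} yields $\dist_{\cuco X}(x, x') \le \theta_u(2\theta_e)$, with $\theta_u$ depending only on $\theta_e$ and the HHS constants.

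For \textbf{existence}, I plan to iteratively improve a basepoint, using an induction on complexity to handle the depth of the hierarchy. Start with an arbitrary $x_0 \in \cuco X$ and fix a large threshold $\theta$ depending on $\kappa$, $E$, $\alpha$. Let $\relevant_{\max}$ denote the $\nest$-maximal elements of $\relevant(x_0, \tup b, \theta)$, equipped with the partial order $\preceq$ of Section~\ref{subsec:partial_order}. By Proposition~\ref{prop:partial_order}, any two elements of $\relevant_{\max}$ are either $\preceq$-comparable or orthogonal, so the set $\mathcal F$ of $\preceq$-minimal elements is pairwise orthogonal. Apply partial realization (Definition~\ref{defn:space_with_distance_formula}.\eqref{item:dfs_partial_realization}) to the coordinates $\{b_V : V \in \mathcal F\}$ to obtain $x_1$ with $\dist_V(x_1, b_V) \le \alpha$ for each $V \in \mathcal F$ and $\dist_W(x_1, \rho^V_W) \le \alpha$ whenever $V \nest W$ or $V \transverse W$.

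The idea is now to iterate, peeling off successive $\preceq$-minimal layers of the remaining $\nest$-maximal relevant elements until $\relevant_{\max}$ is exhausted. The crucial stability property — that later adjustments do not undo earlier ones — follows from the partial order itself: if $V_i \in \mathcal F$ was fixed at an earlier step and $U_j$ is fixed later, then $V_i \preceq U_j$ means $V_i \transverse U_j$ and $\dist_{V_i}(\rho^{U_j}_{V_i}, b_{V_i}) \le \kappa$ by definition of $\preceq$, while the later application of partial realization at $U_j$ ensures $\dist_{V_i}(x_{\text{new}}, \rho^{U_j}_{V_i}) \le \alpha$; hence $\dist_{V_i}(x_{\text{new}}, b_{V_i}) \le \alpha + \kappa$. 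Once no originally-$\nest$-maximal relevant element is still relevant, any remaining relevant element $U$ is properly nested in some $T$ that we just fixed. Restricting $\tup b$ to each $\mathfrak S_T \setminus \{T\}$ produces a $\kappa$-consistent tuple in a hierarchy of strictly smaller complexity, to which the induction applies. Because distinct such $T$'s that are originally in $\mathcal F$ are orthogonal, Definition~\ref{defn:space_with_distance_formula}.\eqref{item:dfs_orthogonal} together with Proposition~\ref{prop:rho_consistency} allows these per-$T$ adjustments to be assembled via one more invocation of partial realization without disturbing projections outside each $\mathfrak S_T$ by more than a controlled amount.

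\textbf{Main obstacle.} The principal technical difficulty is quantitative: each round introduces additive error of order $\alpha + \kappa$, and the final error $\theta_e$ must depend only on $\kappa$ and the HHS constants. The number of iterations is bounded via the finite complexity axiom (Definition~\ref{defn:space_with_distance_formula}.\eqref{item:dfs_complexity}) together with Lemma~\ref{lem:ramsey_corollary}, which bounds the width of the $\preceq$-poset on $\relevant_{\max}$, and Lemma~\ref{lem:nest_progress}, which controls how many relevant elements can be nested in a single fixed element by forcing large projections to propagate upward. The bookkeeping required to verify that (i) the iteration terminates after a number of steps depending only on $n$, and (ii) the resulting cumulative error depends only on $\kappa$ rather than on the starting point $x_0$, is the main work of the proof; the conceptual engine is simply the interplay among partial realization, the consistency inequalities, and the partial order~$\preceq$.
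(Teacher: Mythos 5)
Your uniqueness argument is exactly the paper's, and your existence strategy has the right ingredients (partial realization, the partial order $\preceq$, Lemmas~\ref{lem:ramsey_corollary} and~\ref{lem:nest_progress}, a complexity/level induction), but the assembly contains a genuine gap that the paper's proof is specifically organized to avoid.

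The gap is in the stability claim. You argue that later partial realizations do not undo earlier ones by noting that $V_i\preceq U_j$ implies $\dist_{V_i}(\rho^{U_j}_{V_i},b_{V_i})\le\kappa$ and the realization at $U_j$ forces $\dist_{V_i}(x_{\text{new}},\rho^{U_j}_{V_i})\le\alpha$. This is correct, but it only addresses the case $V_i\transverse U_j$. By Proposition~\ref{prop:partial_order}, two elements of $\relevant_{\max}$ that are $\preceq$-\emph{in}comparable are orthogonal, and elements in distinct $\preceq$-layers may perfectly well be orthogonal (a $\preceq$-layer is an antichain, but antichain elements from different layers need not be $\preceq$-related). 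Now the partial realization axiom (Definition~\ref{defn:space_with_distance_formula}.\eqref{item:dfs_partial_realization}) constrains the projection of the output point to domains that are transverse to, nested over, or equal to the realized family; it places \emph{no} constraint on projections to domains orthogonal to every realized element. So if $V_i\in\mathcal F_1$ is orthogonal to all of $\mathcal F_k$, the step-$k$ partial realization produces a fresh point whose projection to $\fontact V_i$ is uncontrolled, and your earlier fix at $V_i$ is lost. You cannot repair this by adding $V_i$ to the realized family at step $k$ either, since the set of earlier-layer elements orthogonal to $\mathcal F_k$ need not be pairwise orthogonal (they can be $\preceq$-comparable, hence transverse).

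The paper's proof is organized precisely to sidestep this. Rather than applying partial realization layer by layer with a different family each time, it incrementally builds a single \emph{totally orthogonal} family $\mathfrak U\subset\mathfrak S_V$ and only ever contemplates partial realization points for the entire current $\mathfrak U$. The engine is Lemma~\ref{lem:orthogonal_induction}: if $\mathfrak U$ is generically good (every domain non-orthogonal to $\mathfrak U$ already has small $d_W(x,b_W)$ at any realization point) but not good, then the bad domains must be orthogonal to all of $\mathfrak U$, and one can strictly enlarge $\mathfrak U$ to a still-totally-orthogonal, still-generically-good family by adding elements nested into those bad domains. Because each new $U_i$ is orthogonal to the old $\mathfrak U$, the enlarged family stays totally orthogonal, and a single invocation of partial realization at the terminal $\mathfrak U$ handles everything at once — there are no successive, potentially conflicting applications of the axiom. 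Termination comes from the bound on totally orthogonal families (Lemma~\ref{lem:pairwise_orthogonal}), not from the height of the $\preceq$-poset. If you want to salvage your iterative picture, you would need to reformulate it so that each step either enlarges a fixed totally orthogonal set (as the paper does) or proves that no domain already handled can be orthogonal to every domain handled later; as written, the proposal does neither.

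A smaller point: you say the induction is on complexity, but the paper's induction is on \emph{level} (Definition~\ref{defn:level}); this distinction matters for keeping the constants uniform, since the claim being inducted upon quantifies over all pairwise-orthogonal families $\{V_j\}$ of bounded level rather than over the complexity of the ambient space.
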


\begin{rem}In typical cases, where the $\pi_U$ are uniformly coarsely surjective, the admissibility part of the consistency 
hypothesis is satisfied automatically.\end{rem}

\begin{proof}[Proof of Theorem~\ref{thm:realization}]
The main task is to prove the following claim about a $\kappa$--consistent admissible tuple $\tup b$:

\begin{claim}\label{claim:main_realization_claim}
Let $\{V_j\}$ be a family of pairwise orthogonal elements of $\mathfrak S$, all of level at most $\ell$.  Then there exists $\theta_e=\theta_e(\ell,\kappa)>100E\kappa \alpha$ and pairwise-orthogonal $\{U_i\}$ so that:
 \begin{enumerate}
  \item each $U_i$ is nested into some $V_j$,\label{item:U_i_nested}
  \item for each $V_j$ there exists some $U_i$ nested into it, and\label{item:each_V_j_covered}
  \item any $E$--partial realization point $x$ for $\{U_i\}$ satisfies
  $\dist_{W}(b_W,x)\leq\theta_e$ for each $W\in \mathfrak S$ for which
  there exists $j$ with $W\nest
  V_j$.\label{item:nested_in_V_means_good}
\end{enumerate}
\end{claim}

Applying Claim~\ref{claim:main_realization_claim} when $\ell=\ell_S$,
where $S\in\mathfrak S$ is the unique $\nest$--maximal element, along
with the Partial Realization
axiom~(Definition~\ref{defn:space_with_distance_formula}.\eqref{item:dfs_partial_realization}),
completes the existence proof, giving us a constant $\theta_e$.  If $x,y$ both have the desired
property, then $\dist_V(x,y)\leq 2\theta_e+\kappa$ for all
$V\in\mathfrak S$, whence the uniqueness axiom
(Definition~\ref{defn:space_with_distance_formula}.\eqref{item:dfs_uniqueness})
ensures that $\dist(x,y)\leq\theta_u$, for an appropriate $\theta_u$.  Hence to prove the theorem it 
remains to prove
Claim~\ref{claim:main_realization_claim}, which we do now.

The claim when $\ell=1$ follows from admissibility and the partial realization axiom
(Definition
\ref{defn:space_with_distance_formula}.\ref{item:dfs_partial_realization}),
so we assume that the claim holds for $\ell-1\geq 1$, with
$\theta_e(\ell-1,\kappa)=\theta_e'$, and prove it for level $\ell$.

\textbf{Reduction to the case $|\{V_j\}|=1$:} It suffices to prove the
claim in the case where $\{V_j\}$ has a single element, $V$.  To see 
this, note that once we prove the claim for each $V_j$ separately, yielding a 
collection of 
pairwise-orthogonal sets $\{U_i^j\nest V_j\}$ with the desired
properties, then we take the union of these sets to obtain the claim 
for the collection $\{V_j\}$.

\textbf{The case $\{V_j\}=\{V\}$:} Fix $V\in\mathfrak S$ so that $\ell_V=\ell$.  If for each $x\in\cuco X$ that satisfies $\dist_V(x,b_V)\leq E$ we have $\dist_{W}(b_W,x)\leq100E\kappa\alpha$ for $W\in \mathfrak S_V$,  then the claim follows with $\{U_i\}=\{V\}$. Hence, we can suppose that this is not the case.

We are ready for the main argument, which is contained in Lemma~\ref{lem:orthogonal_induction} below. We will construct $\{U_i\}$ incrementally, using Lemma~\ref{lem:orthogonal_induction}, which essentially says that either we are done at a certain stage or we can add new elements to $\{U_i\}$.

We will say that the collection $\mathfrak U$ of elements of $\mathfrak S_V$ is \emph{totally orthogonal} if any pair of distinct elements of $\mathfrak U$ are orthogonal.
Given a totally orthogonal family $\mathfrak U$ we say that $W\in\mathfrak S_V$ is $\mathfrak U$--\emph{generic} if there exists $U\in\mathfrak U$ so that $W$ is not orthogonal to $U$. Notice that no $W$ is $\emptyset$--generic.

A totally orthogonal collection $\mathfrak U\subseteq \mathfrak S_V$ is $C$--\emph{good} if any $E$--partial realization point $x$ for $\mathfrak U$ has the property that for each $W\in\mathfrak S_V$ we have $\dist_W(x,b_W)\leq C$. (Notice that our goal is to find such $\mathfrak U$.)
A totally orthogonal collection $\mathfrak U\subseteq \mathfrak S_V$ is $C$--\emph{generically good} if any $E$--partial realization point $x$ for $\mathfrak U$ has the property that for each $\mathfrak U$--generic $W\in\mathfrak S_V$ we have $\dist_W(x,b_W)\leq C$ (e.g., for $\mathfrak U=\emptyset$).

We can now quickly finish the proof of the claim using 
Lemma~\ref{lem:orthogonal_induction} about extending generically good 
sets, which we state and prove below.
Start with $\mathfrak U=\emptyset$.  If $\mathfrak U$ is $C$--good for
$C=100E\kappa\alpha$, then we are done.  Otherwise we can apply Lemma
\ref{lem:orthogonal_induction} and get $\mathfrak U_1=\mathfrak U'$ as
in the lemma.  Inductively, if $\mathfrak U_n$ is not $10^nC$--good,
we can apply the lemma and extend $\mathfrak U_n$ to a new totally
orthogonal set $\mathfrak U_{n+1}$.  Since there is a bound on the
cardinality of totally orthogonal sets by Lemma
\ref{lem:pairwise_orthogonal}, in finitely many steps we necessarily
get a good totally orthogonal set, and this concludes the proof of the
claim, and hence of the theorem.
\end{proof}

\begin{lem}\label{lem:orthogonal_induction}
 For every $C\geq 100E\kappa\alpha$ the following holds. Let $\mathfrak U\subseteq \mathfrak S_V- \{V\}$ be totally orthogonal and $C$--\emph{generically good} but not $C$--good. Then there exists a totally orthogonal, $10C$--\emph{generically good} collection $\mathfrak U'\subseteq \mathfrak S_V$ with $\mathfrak U \subsetneq \mathfrak U'$.
\end{lem}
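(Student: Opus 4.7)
The strategy is to use the failure of $C$-goodness to isolate a sublattice of $\mathfrak S_V$ of strictly smaller level (via generic goodness), and then extend $\mathfrak U$ by the family provided by the inductive hypothesis applied inside this sublattice.

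First I locate the sublattice. Since $\mathfrak U$ is $C$-generically good but not $C$-good, there exist an $E$-partial realization point $x_0$ of $\mathfrak U$ and some $W_1\in\mathfrak S_V$ with $\dist_{W_1}(x_0,b_{W_1})>C$; by generic goodness, $W_1$ must be orthogonal to every element of $\mathfrak U$. The set
\[
\mathfrak S_V^{\orth\mathfrak U}:=\{W\in\mathfrak S_V : W\orth U \text{ for all } U\in\mathfrak U\}
\]
is therefore nonempty, and when $\mathfrak U\neq\emptyset$ the orthogonality axiom (Definition~\ref{defn:space_with_distance_formula}.\eqref{item:dfs_orthogonal}) applied to any $U_0\in\mathfrak U$ yields $W_0\in\mathfrak S_V\setminus\{V\}$ with $\mathfrak S_V^{\orth\mathfrak U}\subseteq\mathfrak S_{W_0}$, so every element of $\mathfrak S_V^{\orth\mathfrak U}$ has level at most $\ell-1$. (The edge case $\mathfrak U=\emptyset$ is handled separately by taking $W^*$ to be a $\nest$-maximal element of $\mathfrak S_V\setminus\{V\}$ containing the bad element supplied by the ``not the case'' assumption stated just before the lemma.)

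Next I pick $W^*$ and apply induction. Let $\mathcal M\subseteq\mathfrak S_V^{\orth\mathfrak U}$ be the collection of $\nest$-maximal elements. By Proposition~\ref{prop:partial_order}, the relation $\preceq$ of Section~\ref{subsec:partial_order} (with respect to the tuple $\tup b$) restricts to a partial order on $\mathcal M$ in which transverse pairs are comparable, so a $\preceq$-maximal $W^*\in\mathcal M$ exists. Apply the inductive hypothesis of Claim~\ref{claim:main_realization_claim} at level $\ell-1$ to the singleton family $\{W^*\}$: this produces a pairwise orthogonal family $\{U_i'\}\subseteq\mathfrak S_{W^*}$ such that any $E$-partial realization point $x$ for $\{U_i'\}$ satisfies $\dist_W(x,b_W)\leq\theta_e':=\theta_e(\ell-1,\kappa)$ for every $W\nest W^*$. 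Set $\mathfrak U':=\mathfrak U\cup\{U_i'\}$. Since each $U_i'\nest W^*$ and $W^*\orth U$ for every $U\in\mathfrak U$, the orthogonality axiom forces $U_i'\orth U$, so $\mathfrak U'$ is totally orthogonal and, by property~(2) of the inductive claim, strictly contains $\mathfrak U$.

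Finally I verify $10C$-generic goodness (choosing $C\geq\theta_e'$ in the overall iteration, which is legitimate since the lemma is stated for all $C\geq 100E\kappa\alpha$). Let $x$ be an $E$-partial realization point for $\mathfrak U'$ and $W\in\mathfrak S_V$ be $\mathfrak U'$-generic. If $W$ is $\mathfrak U$-generic then $\dist_W(x,b_W)\leq C$ by $C$-generic goodness of $\mathfrak U$. Otherwise $W\in\mathfrak S_V^{\orth\mathfrak U}$ and $W\not\orth U_i'$ for some $i$: the orthogonality axiom rules out $W\orth W^*$ (which would force $W\orth U_i'$), and $\nest$-maximality of $W^*$ rules out $W^*\propnest W$, leaving $W\nest W^*$ or $W\transverse W^*$. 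If $W\nest W^*$, then $\dist_W(x,b_W)\leq\theta_e'\leq 10C$ directly. If $W\transverse W^*$, combine three ingredients: partial realization gives $\dist_W(x,\rho^{U_i'}_W)\leq\alpha$; the last clause of Definition~\ref{defn:space_with_distance_formula}.\eqref{item:dfs_transversal} (applied with $U_i'\nest W^*$, $W\transverse W^*$ and $W\not\orth U_i'$) gives $\dist_W(\rho^{U_i'}_W,\rho^{W^*}_W)\leq\kappa_0$; and the $\preceq$-maximality of $W^*$ together with $\kappa$-consistency of $\tup b$ yields $\dist_W(b_W,\rho^{W^*}_W)\leq\kappa$ (directly when $W\in\mathcal M$, and when $W\notin\mathcal M$ by propagating the $\preceq$-bound from a $\nest$-maximal ancestor $W_m\in\mathcal M$ using bounded geodesic image applied to a short geodesic in $W_m$ supplied by $\dist_{W^*}(x,b_{W^*})\leq\theta_e'$). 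The triangle inequality then gives $\dist_W(x,b_W)\leq\alpha+\kappa_0+\kappa\leq 10C$.

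The principal obstacle is the subcase $W\transverse W^*$ with $W\notin\mathcal M$: the $\preceq$-maximality of $W^*$ only directly controls elements of $\mathcal M$, and transferring the $\preceq$-estimate to a $W$ strictly nested in some $W_m\in\mathcal M$ requires combining $\kappa$-consistency of $\tup b$ with bounded geodesic image on a geodesic whose length is controlled by $\theta_e'$. The $\mathfrak U=\emptyset$ base case is likewise delicate since a naive choice gives $W^*=V$ and breaks the level reduction, so the ``not the case'' hypothesis must be invoked to guarantee a proper coatom $W^*\propnest V$.
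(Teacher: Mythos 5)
Your strategy resembles the paper's in spirit (isolate a sublattice, apply induction, verify generic goodness), but the central object you order is the wrong one, and this causes several concrete failures.

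You define $\mathcal M$ to be the set of $\nest$--maximal elements of $\mathfrak S_V^{\orth\mathfrak U}$ and then apply Proposition~\ref{prop:partial_order} to get a $\preceq$--order on $\mathcal M$ with transverse pairs comparable. But Proposition~\ref{prop:partial_order} partially orders $\relevant_{\max}(x_0,\tup b,\theta)$ --- a set of pairwise $\nest$--incomparable elements each of which is \emph{relevant}, meaning $\dist_W(x_0,b_W)>\theta$. Nothing in your definition of $\mathcal M$ imposes relevance, so the dichotomy ``orthogonal or $\preceq$--comparable'' is not available on $\mathcal M$, and consequently the claim ``$\preceq$--maximality of $W^*$ together with consistency yields $\dist_W(b_W,\rho^{W^*}_W)\leq\kappa$'' is unsupported for a general $W\in\mathcal M$ transverse to $W^*$. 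Worse, the set the paper actually orders is $\mathfrak V_{\max}$ --- the elements $W\propnest V$ with $\dist_W(b_W,x_0)>C$ that are $\nest$--maximal among such --- and the paper proves this set is \emph{finite} by a nontrivial argument invoking Lemma~\ref{lem:nest_progress} before a $\preceq$--maximal element can be extracted. Your $\mathcal M$ has no relevance restriction and could well be infinite, so the very existence of $W^*$ is in question.

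The case $W\transverse W^*$ with $W\notin\mathcal M$ (or $W\in\mathcal M$ but not relevant) is where the real work lives, and your sketch does not survive scrutiny. The paper handles this by a contradiction: assuming $\dist_W(b_W,\rho^U_W)>2C$, it splits on whether $\dist_U(\rho^W_U,x_0)\leq E$; in one branch consistency kills it directly, and in the other one shows $\dist_W(x_0,b_W)\geq 2C-E$, hence $W$ sits under some $W'\in\mathfrak V_{\max}$ (now by definition of $\mathfrak V_{\max}$, which is where the relevance threshold pays off), so $W'$ \emph{is} in the poset, $W'\preceq U$ by maximality, and consistency then bounds $\dist_W(b_W,\rho^U_W)$ by $\kappa$. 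Your alternative --- ``propagating the $\preceq$-bound from a $\nest$-maximal ancestor $W_m\in\mathcal M$ using bounded geodesic image applied to a short geodesic in $W_m$ supplied by $\dist_{W^*}(x,b_{W^*})\leq\theta_e'$'' --- does not parse: the bound $\dist_{W^*}(x,b_{W^*})\leq\theta_e'$ controls a geodesic in $\fontact W^*$, not in $\fontact W_m$ (and $W_m$ is transverse to $W^*$, so this is not a typo that could be fixed by substitution). You also never establish that such a $W_m$ lies in the poset where $\preceq$ is defined.

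Finally, the $\mathfrak U=\emptyset$ base case is not a footnote: with your definition $\mathcal M=\{V\}$, forcing $W^*=V$ and destroying the level reduction that the induction requires. The paper avoids this because $\mathfrak V_{\max}$ is built from a specific $x_0$ chosen so that $V\notin\mathfrak V_{\max}$. Your ad hoc replacement (``a $\nest$--maximal element of $\mathfrak S_V\setminus\{V\}$ containing the bad element'') is a different object and you would still need to rebuild the $\preceq$--maximality and relevance arguments around it; as written this is a gap, not a fix.

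Taking a single $\preceq$--maximal $W^*$ rather than the full pairwise-orthogonal family $\mathfrak V'_{\max}$ is in principle a legitimate simplification (the outer iteration still terminates by Lemma~\ref{lem:pairwise_orthogonal}), but it only works if the verification of $10C$--generic goodness is carried out with the correct poset and the contradiction argument the paper uses; as it stands the proof has genuine holes in exactly those places.
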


\begin{proof}
Let $x_0$ be an $E$--partial realization point for $\mathfrak U$ so that there exists some $W\nest V$ for 
which 
$\dist_{W}(b_W,x_0)>C$. 

The idea is to try to ``move towards'' $\tup b$ starting from $x_0$, by looking at all relevant elements of $\mathfrak S_V$ that lie between them and finding out which ones are the ``closest'' to $\tup b$.

Let $\mathfrak V_{max}$ be the set of all $W\nest V$ for which:
\begin{enumerate}
 \item \label{item:bad_W_1} $\dist_{W}(b_W,x_0)>C$ and
 \item \label{item:bad_W_2} $W$ is not properly nested into any element of $\mathfrak S_V$ satisfying the above inequality.
\end{enumerate}
We now establish two facts about $\mathfrak V_{max}$.

\textbf{Applying Proposition~\ref{prop:partial_order} to partially order $\mathfrak V_{max}$:}  For $U,U'\in \mathfrak V_{\max}$, write $U\preceq U'$ if either $U=U'$ or $U\transverse U'$ and $\dist_{U}(\rho^{U'}_U, b_U)\leq 10E\kappa$; this is a partial order by Proposition~\ref{prop:partial_order}, which also implies that if $U,U'\in \mathfrak V_{\max}$ are transverse then they are $\preceq$-comparable.  Hence any two $\preceq$--maximal elements of $\mathfrak V_{\max}$ were orthogonal, and we denote by $\mathfrak V'_{max}$ the set of $\preceq$--maximal (hence pairwise-orthogonal) elements of $\mathfrak V_{max}$.

\textbf{Finiteness of $\mathfrak V_{max}$:}  We now show that $|\mathfrak V_{max}|<\infty$.  By Lemma~\ref{lem:ramsey_corollary} and Ramsey's theorem, if $\mathfrak V_{max}$ was infinite then it would contain an infinite subset of pairwise transverse elements, so, in order to conclude that $|\mathfrak V_{max}|<\infty$, it suffices to bound the cardinality of a pairwise-transverse subset of $\mathfrak V_{max}$.

Suppose that $W_1\prec\dots\prec W_{s}\in\mathfrak V_{max}$ are pairwise transverse. By partial realization (Definition 
\ref{defn:space_with_distance_formula}.\eqref{item:dfs_partial_realization}) and admissibility, there exists $z\in\cuco X$ 
such that $\dist_{W_s}(z,b_{W_s})\leq \alpha$ and $\dist_{W_i}(\rho^{W_s}_{W_i},z)\leq \alpha$ for each $i\neq s$, and such 
that $\dist_V(z,\rho^{W_s}_V)\leq \alpha$. By consistency of $\tup b$ and bounded geodesic image, $\rho^{W_s}_V$ has to be 
within distance $10E\kappa$ of a geodesic in $\fontact V$ from $x_0$ to $b_V$. In particular $\dist_V(x_0,z)\leq 
\theta_e'+100E\kappa\alpha + 10E\kappa$. Also, for each $i\neq s$,
\begin{eqnarray*}
\dist_{W_i}(x_0,z) & \geq &\dist_{W_i}(x_0,b_{W_i})-\dist_{W_i}(b_{W_i},\rho^{W_s}_{W_i})-\dist_{W_i}(\rho^{W_s}_{W_i},z)\\ 
& \geq & 100E\kappa\alpha-10E\kappa-\alpha\geq 50E\kappa\alpha\geq50E.
\end{eqnarray*}
Indeed, $\dist_{W_i}(b_{W_i},\rho^{W_s}_{W_i})\leq 10E\kappa$ since $W_i\prec W_s$, while $\dist_{W_i}(\rho^{W_s}_{W_i},z)\leq\alpha$ by our choice of $z$.  Lemma \ref{lem:nest_progress} now provides the required bound on $s$. 

\textbf{Choosing $\mathfrak U'$:}  Since $\ell_{U}<\ell_V$ for all $U\in \mathfrak V'_{max}$, by induction there exists a totally orthogonal set $\{U_i\}$ so that any $E$--partial realization point $x$ for $\{U_i\}$ satisfies $\dist_{T}(b_T,x)\leq\theta_e'$ for each $T\in \mathfrak S$ nested into some $U\in\mathfrak V'_{max}$.  Let $\mathfrak U'=\{U_i\}\cup \mathfrak U$. 

Choose such a partial realization point $x$ and let $W\nest V$ be $\mathfrak U'$--generic. Our goal is to bound
$\dist_W(x,b_W)$, and we will consider 4 cases.

If there exists $U\in\mathfrak U$ that is not orthogonal to $W$, then we are done by hypothesis, since any $E$--partial realization point for $\mathfrak U'$ is also an $E$--partial realization point for $\mathfrak U$.

Hence, from now on, assume that $W$ is orthogonal to each $U\in\mathfrak U$, i.e. $W$ is not $\mathfrak U$--generic. 

If $W\nest U$ for some $U\in \mathfrak V'_{max}$, then we are done by induction.

Suppose that $W\transverse U$ for some $U\in \mathfrak V'_{max}$.    For each $U_i\nest U$ --- and our induction
hypothesis implies that there is at least one such $U_i$ --- we have
$\dist_W(x,\rho^{U_i}_W)\leq E$ since $x$ is a partial
realization point for $\{U_i\}$ and either $U_i\nest W$ or
$U_i\transverse W$ (since $W$ is $\mathfrak U'$--generic but not $\mathfrak U$--generic).
The triangle inequality therefore yields:
$$\dist_W(x,b_W)\leq E+\dist_W(\rho^{U_i}_W,\rho^{U}_W)+\dist_W(b_W,\rho^{U}_W).$$
By Definition~\ref{defn:space_with_distance_formula}.\eqref{item:dfs_transversal}, $\dist_W(\rho^{U_i}_W,\rho^{U}_W)\leq E$, and we will show that $\dist_W(b_W,\rho^{U}_W)\leq 2C$, so that $\dist_W(x,b_W)\leq 2E+2C$.

Suppose, for a contradiction, that $\dist_{W}(b_W,\rho^{U}_W)>2C.$
If $\dist_{U}(\rho^W_{U},x_0)\leq E$, then $$\dist_{U}(\rho^W_{U},b_{U})\geq C-E>\kappa,$$ by consistency, whence $\dist_W(\rho^{U}_W,b_W)\leq\kappa,$ a contradiction.

On the other hand, if $\dist_{U}(\rho^W_{U},x_0)>E$, then $\dist_W(x_0,\rho^{U}_W)\leq E$ by consistency. Hence $\dist_W(x_0,b_W)\geq2C-E.$  Hence there exists a $\nest$--maximal $W'\neq V$ with the property that $W\nest W'\nest V$ and $\dist_{W'}(x_0,b_W)>C$ (possibly $W'=W$).  Such a $W'$ is in $\mathfrak V_{max}$ by definition.     

Since $W\transverse U$, and $W'$ and $U$ are
$\nest$--incomparable, $W'\transverse U$.  Thus $U$ and $W'$ are
$\preceq$--comparable, by Proposition~\ref{prop:partial_order}.  Since
$W'\neq U$ and $U$ is $\preceq$--maximal, we have $W'\preceq U$,
i.e., $\dist_{W'}(b_{W'},\rho^{U}_{W'})\leq10E\kappa$.  Since
$\preceq$ is antisymmetric, by Lemma~\ref{lem:antisymmetric}, we have
$\dist_{U}(b_{U},\rho^{W'}_{U})>10E\kappa$.  Since $\dist_{U}(\rho^{W}_{U},\rho^{W'}_{U})\leq E$ (from Definition~\ref{defn:space_with_distance_formula}.\eqref{item:dfs_transversal}), we have
$\dist_{U}(b_{U},\rho^W_{U})>10E\kappa-E>\kappa$, since $E\geq1$, so, by consistency,
$\dist_W(b_W,\rho^{U}_W)\leq\kappa$, a contradiction.

Finally, suppose $U\propnest W$ for some $U\in\mathfrak V'_{max}$. In this case, by $\nest$--maximality of $U$, we have $\dist_W(x_0,b_W)\leq C$. Also, $\dist_W(x,\rho^{U_i}_W)\leq E$ for any $U_i\nest U$ since $x$ is a partial realization point, so that $\dist_W(x,\rho^{U}_W)\leq 2E$, since $\dist_W(\rho^{U}_W,\rho^{U_i}_W)\leq E$ by Definition~\ref{defn:space_with_distance_formula}.\eqref{item:dfs_transversal}.  If $\dist_W(x,b_W)>2C$, then we claim $\dist_{U}(x_0,b_{U})\leq 10E\kappa$, a contradiction.  Indeed, any geodesic in $\fontact W$ from $\pi_W(x_0)$ to $b_W$ does not enter the $E$--neighborhood of $\rho^{U}_W$.  By bounded geodesic image, $\diam_{U}(\rho^W_{U}(\pi_W(x_0))\cup\rho^W_{U}(b_W))\leq E$ and by consistency, $\diam_{U}(\rho^W_{U}(\pi_W(x_0))\cup\pi_{U}(x_0))\leq E$ and $\diam_{U}(\rho^W_{U}(b_W)\cup b_{U})\leq\kappa$, and we obtain the desired bound on $\dist_{U}(x_0,b_{U})$.  This completes the proof of the lemma.
\end{proof}

\section{Hierarchy paths and the distance formula}\label{sec:hier_path_df}
Throughout this section, fix a hierarchically hyperbolic space $(\cuco X,\mathfrak S)$.  

\subsection{Definition of hierarchy paths and statement of main theorems}\label{subsec:dist_formula_state}
Our goal is to deduce the existence of hierarchy paths (Theorem~\ref{thm:monotone_hierarchy_paths}) from the other axioms and to prove the distance formula (Theorem~\ref{thm:distance_formula}).

\begin{defn}[Quasigeodesic, unparameterized quasigeodesic]\label{defn:quasi_geodesic}
A \emph{$(D,D)$--quasigeodesic} in the metric space $M$ is a
$(D,D)$--quasi-isometric embedding $f\co[0,\ell]\to M$; we allow $f$ to
be a coarse map, i.e., to send points in $[0,\ell]$ to uniformly
bounded sets in $M$.  A (coarse) map $f\co[0,\ell]\to M$ is a
\emph{$(D,D)$--unparameterized quasigeodesic} if there exists a
strictly increasing function $g\co[0,L]\to[0,\ell]$ such that $g(0)=f(0),g(L)=f(\ell)$, and $f\circ
g\co[0,L]\to M$ is a $(D,D)$--quasigeodesic and for each
$j\in[0,L]\cap\naturals$, we have $\diam_M\left(f(g(j))\cup
f(g(j+1))\right)\leq D$.
\end{defn}

\begin{defn}[Hierarchy path]\label{defn:hierarchy_path}
For $D\geq 1$, a (not necessarily continuous) path $\gamma\co[0,\ell]\to\cuco X$ is a \emph{$D$--hierarchy path} if
 \begin{enumerate}
  \item $\gamma$ is a $(D,D)$-quasigeodesic,
  \item for each $W\in\mathfrak S$, the path $\pi_W\circ\gamma$ is an unparameterized $(D,D)$--quasigeodesic.
\end{enumerate}
\end{defn}

\begin{notation}\label{notation:asymp_ignore}
Given $A,B\in\reals$, we denote by $\ignore{A}{B}$ the quantity which is $A$ if $A\geq B$ and $0$ otherwise.  Given $C,D$, we write $A\asymp_{C,D}B$ to mean $C^{-1}A-D\leq B\leq CA+D$.
\end{notation}

\begin{thm}[Existence of Hierarchy Paths]\label{thm:monotone_hierarchy_paths}
Let $(\cuco X,\mathfrak S)$ be hierarchically hyperbolic. Then there exists $D_0$ so that any $x,y\in\cuco X$ are joined by a $D_0$-hierarchy path. 
\end{thm}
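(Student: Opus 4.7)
The plan is to deduce the existence of hierarchy paths by combining the realization theorem (Theorem~\ref{thm:realization}) with the partial-order machinery of Section~\ref{subsec:partial_order} and the coloring from Lemma~\ref{lem:coloring}. Specifically, I would construct a discrete sequence $x=p_0,p_1,\ldots,p_N=y$ in $\cuco X$ with $\dist_{\cuco X}(p_i,p_{i+1})$ uniformly bounded and with each $\pi_W(p_i)$ close to a fixed $\fontact W$--geodesic from $\pi_W(x)$ to $\pi_W(y)$, then concatenate $q$--quasigeodesic segments between consecutive $p_i$. The $(D_0,D_0)$--quasigeodesic property in $\cuco X$ and the unparameterized quasigeodesic property in each $\fontact W$ should both follow from monotonicity in the $W$--coordinate together with the uniqueness axiom.

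For each $W\in\mathfrak S$ fix a geodesic $\gamma_W$ in $\fontact W$ from (a point near) $\pi_W(x)$ to (a point near) $\pi_W(y)$. I would construct $\kappa$--consistent tuples $\tup b^0,\ldots,\tup b^N$ whose $W$--coordinates $b^i_W$ travel monotonically along $\gamma_W$ and change, between successive $i$, by a uniformly bounded amount; Theorem~\ref{thm:realization} then supplies the points $p_i$, and the $\theta_u$--uniqueness part of that theorem controls $\dist_{\cuco X}(p_i,p_{i+1})$. The initial tuple $\tup b^0$ is the tuple of coordinates of $x$, the final tuple $\tup b^N$ is the tuple of coordinates of $y$, and the intermediate ones interpolate.

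The core of the argument is the definition of the intermediate tuples. I would process the set $\relevant(x,y,100E)$ of relevant elements level by level from the top down (Definition~\ref{defn:level}). At each level $\ell$, the set $\relevant^\ell(x,y,100E)$ carries the partial order $\preceq$ from Section~\ref{subsec:partial_order}: by Proposition~\ref{prop:partial_order}, two relevant elements are either $\preceq$--comparable or orthogonal. Lemma~\ref{lem:coloring} then partitions the relevant elements at level $\ell$ into a uniformly bounded number of classes of pairwise orthogonal elements. Within a single class, partial realization (Definition~\ref{defn:space_with_distance_formula}.\eqref{item:dfs_partial_realization}) allows one to advance all the coordinates in that class simultaneously along their respective geodesics by a controlled amount. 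The order in which classes and levels are processed is dictated by $\preceq$ so that a transverse pair $V\transverse W$ always has one of $\dist_W(b^i_W,\rho^V_W)$ or $\dist_V(b^i_V,\rho^W_V)$ uniformly small, ensuring $\kappa$--consistency is maintained throughout.

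The principal obstacle is proving that the intermediate tuples remain uniformly consistent. This requires a careful case analysis using the partial order together with Lemmas~\ref{lem:antisymmetric} and~\ref{lem:transitivity}, the bounded geodesic image axiom, and $\rho$--consistency (Proposition~\ref{prop:rho_consistency}), to control the interaction between coordinates that have already been advanced, coordinates being advanced, and coordinates not yet touched. Once consistency is in place, the unparameterized quasigeodesic property of $\pi_W$ applied to the concatenated path is essentially automatic: bounded geodesic image together with monotonicity of $b^i_W$ along $\gamma_W$ forces $\pi_W(p_i)$ to stay uniformly close to $\gamma_W$ and to progress monotonically, and the uniformly bounded step length in $\cuco X$ combined with the coarse Lipschitz projections controls backtracking, giving the final $D_0$--hierarchy path.
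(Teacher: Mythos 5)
Your proposal takes a genuinely different route from the paper. You aim to build a ``resolution'' in the style of Masur--Minsky: directly construct a sequence of $\kappa$--consistent tuples $\tup b^0,\ldots,\tup b^N$ whose $W$--coordinates sweep monotonically along fixed geodesics $\gamma_W\subset\fontact W$, realize these tuples via Theorem~\ref{thm:realization}, and then declare the resulting sequence of points to be the hierarchy path. The paper instead proceeds in the opposite direction: it begins with an \emph{efficient} discrete path that is not monotone --- the retraction of a $\cuco X$--quasigeodesic onto the hull $H_\theta(x,y)$, via Lemma~\ref{lem:2_point_retract} and Corollary~\ref{cor:paths_in_hulls} --- and then iteratively repairs backtracking (``omens''), one color class at a time, one level at a time, by splicing in straight paths lying in sub-hulls $H_{\theta_0}(x'_j,y'_j)\subseteq H_\theta(x,y)$. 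The key technical point in the paper's argument, reflected in the case analysis in the proof of Proposition~\ref{prop:monotone_paths}, is that such a splice does not create new backtracks in previously processed $W$, and efficiency is preserved by Lemma~\ref{lem:nested_hulls}.

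There are two genuine gaps in your sketch. First, the construction of the intermediate consistent tuples is the crux, and the outline does not address the cascading effect of nesting. When the $\fontact W$--coordinate of your tuple crosses the $E$--neighborhood of $\rho^V_W$ for some $V\propnest W$, bounded geodesic image and the nesting consistency inequality force the $\fontact V$--coordinate to jump from roughly $\pi_V(x)$ to roughly $\pi_V(y)$; to keep $\kappa$--consistency one must therefore interleave an entire $\fontact V$--sweep inside the window where the $\fontact W$--coordinate lingers near $\rho^V_W$, and recursively likewise for all domains nested inside $V$. Your plan of processing levels ``top down'' and ``advancing all coordinates in a color class simultaneously along their respective geodesics by a controlled amount'' does not describe how these nested sweeps are scheduled, and this is precisely where a resolution-style proof requires real work. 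Partial realization also does not do what the sketch asks of it: it produces a point of $\cuco X$ given a pairwise orthogonal family, not a new tuple in $\prod_W 2^{\fontact W}$, and the projections of that point to domains outside the orthogonal family are forced upon you rather than chosen.

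Second, monotonicity plus the uniqueness axiom do \emph{not} yield the $(D_0,D_0)$--quasigeodesic property: they give a discrete path with bounded step size that is monotone in each coordinate, but they give no a priori bound on the number of steps $N$ in terms of $\dist_{\cuco X}(x,y)$. The total number of steps is on the order of $\sum_W\ignore{\dist_W(x,y)}{s}$, and relating this to $\dist_{\cuco X}(x,y)$ is exactly the lower bound of the distance formula (Lemma~\ref{lem:df_l}), proved in the paper by the checkpoint/door counting argument. The paper gets the quasigeodesic conclusion by combining a $K$--monotone $(r,K)$--proper path (Lemma~\ref{lem:proper_path}) with the upper bound Lemma~\ref{lem:df_u} and the lower bound Lemma~\ref{lem:df_l}; your proof would need the same input, but the sketch asserts the quasigeodesic property follows ``from monotonicity together with the uniqueness axiom,'' which skips this entirely.
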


\begin{thm}[Distance Formula]\label{thm:distance_formula}
 Let $(X,\mathfrak S)$ be hierarchically hyperbolic. Then there exists $s_0$ such that for all $s\geq s_0$ there exist
 constants $K,C$ such that for all $x,y\in\cuco X$,
 $$\dist_{\cuco X}(x,y)\asymp_{(K,C)}\sum_{W\in\mathfrak S}\ignore{\dist_{ W}(x,y)}{s}.$$
\end{thm}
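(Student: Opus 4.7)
\emph{Proof proposal.} I would establish the two inequalities separately.

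For the upper bound $\sum_{W\in\mathfrak S}\ignore{\dist_W(x,y)}{s} \leq K\dist_{\cuco X}(x,y)+C$, take a $D_0$-hierarchy path $\gamma$ from $x$ to $y$ as provided by Theorem~\ref{thm:monotone_hierarchy_paths}, whose length is comparable to $\dist_{\cuco X}(x,y)$. For each level $\ell$, use the partial order $\preceq$ of Proposition~\ref{prop:partial_order}, together with the coloring from Lemma~\ref{lem:coloring}, to partition the relevant elements of level $\ell$ into at most $\chi$ chains $W_1\prec\cdots\prec W_k$, where $\chi$ is bounded in terms of the complexity. The key observation is that within a chain, the subarcs of $\gamma$ on which each $\pi_{W_i}\circ\gamma$ makes significant progress are essentially disjoint: by the definition of $\preceq$ and the bounded geodesic image axiom, on the subinterval where $\pi_{W_{i+1}}\circ\gamma$ is far from $\rho^{W_i}_{W_{i+1}}$, the projection $\pi_{W_i}\circ\gamma$ is coarsely constant near $\rho^{W_{i+1}}_{W_i}$. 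Summing progress along a single chain gives at most a uniform constant times the length of $\gamma$, and summing over chains and over the finitely many levels (which is controlled by the complexity) yields the upper bound.

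For the lower bound $\dist_{\cuco X}(x,y) \leq K\sum_{W}\ignore{\dist_W(x,y)}{s}+C$, I would proceed by induction on complexity. The base case (complexity one) is immediate since $\cuco X$ is then quasi-isometric to the hyperbolic space $\fontact S$. For the inductive step, let $S$ be $\nest$-maximal and fix a geodesic in $\fontact S$ from $\pi_S(x)$ to $\pi_S(y)$. The large link axiom supplies $T_1,\ldots,T_N\propnest S$ with $N\leq\lambda\dist_S(x,y)+\lambda$ such that every $W\propnest S$ with $\dist_W(x,y)\geq E$ is nested in some $T_i$, with the $\rho^{T_i}_S$ appearing roughly in order along the geodesic. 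Partial realization applied near each of these points produces $x=p_0,p_1,\ldots,p_{N+1}=y$ in $\cuco X$ with $\pi_S(p_i)\asymp \rho^{T_i}_S$ and, via consistency and bounded geodesic image, with $\pi_V(p_i)\asymp\pi_V(p_{i+1})$ whenever $V$ is not nested in $T_i$ or $T_{i+1}$. The inductive hypothesis applied to the lower-complexity HHS structure induced on $\fontact T_i$ by $\mathfrak S_{T_i}$ bounds each $\dist_{\cuco X}(p_i,p_{i+1})$ by a uniform multiple of the partial sum of projection distances indexed by $\mathfrak S_{T_i}\cup\mathfrak S_{T_{i+1}}$. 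Summing over $i$, with the $N$ linker steps absorbed into the $\dist_S(x,y)$ summand, yields the bound.

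The main obstacle is the correctness of the inductive decomposition in the lower bound: every relevant $W\in\mathfrak S_S-\{S\}$ must be captured by some $\mathfrak S_{T_i}$, and each $W$ must be overcounted only a uniformly bounded number of times. The first point is built into the large link axiom, but the second requires careful bookkeeping via the partial order of Section~\ref{subsec:partial_order} and the finite-dimensionality Lemma~\ref{lem:pairwise_orthogonal}. In parallel, one must verify that each $\fontact T_i$ with the inherited sub-family supports a uniformly controlled HHS structure of strictly lower complexity, so the induction is legitimate, and must handle orthogonal families among the $T_i$ by constructing the intermediate points $p_i$ consistently with the partial realization axiom and the consistency inequalities.
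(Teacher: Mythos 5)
Your proposal differs from the paper's argument on both sides of the estimate, and the second half has a genuine gap.

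For the inequality $\sum_W \ignore{\dist_W(x,y)}{s} \lesssim \dist_{\cuco X}(x,y)$ (the paper's Lemma~\ref{lem:df_l}), your per-level coloring into $\prec$-chains with essentially disjoint progress intervals is a workable route, conceptually close to the paper's but heavier. The paper instead lays down pairwise-disjoint ``checkpoints'' on each relevant $\fontact V$ at once, calls $j$ a ``door'' when the path enters a new checkpoint, and bounds the multiplicity of each door uniformly via Corollary~\ref{cor:not_far_from_both} and Lemma~\ref{lem:ramsey_corollary}; no per-level stratification is needed. You also cannot invoke Theorem~\ref{thm:monotone_hierarchy_paths} here: it is proved simultaneously with the distance formula. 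What the paper uses at this point is the $K$-discrete, $K$-efficient path in the hull $H_\theta(x,y)$ from Corollary~\ref{cor:paths_in_hulls}, which already projects uniformly close to geodesics in each $\fontact W$ and is available before hierarchy paths. (Small further note: for $W_i\prec W_{i+1}$ of the same level the relevant axiom is the transverse consistency inequality, not bounded geodesic image.)

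For the inequality $\dist_{\cuco X}(x,y) \lesssim \sum_W \ignore{\dist_W(x,y)}{s}$, your induction on complexity is precisely the Masur--Minsky-style resolution argument that this paper was written to avoid, and as outlined it does not close. The paper builds a $K$-monotone $(r,K)$-proper path directly (Lemma~\ref{lem:proper_path}, from the good paths of Proposition~\ref{prop:monotone_paths}), and then shows in Lemma~\ref{lem:df_u} that each $r$-sized step of a monotone proper path forces progress $\gtrsim s$ in some domain, which monotonicity prevents from being undone, giving $|\alpha|-1 \leq \sum_W\ignore{\dist_W(x,y)}{s}$ with no subsidiary HHS structures. In your version, to bound $\dist_{\cuco X}(p_i,p_{i+1})$ by the inductive hypothesis you need a uniformly controlled HHS of strictly lower complexity that carries both $p_i$ and $p_{i+1}$; this is what $\mathbf F_{T_i}$ and Proposition~\ref{prop:hereditary_HHS} provide, but that machinery (standard product regions, gates, hierarchical quasiconvexity) is developed in Section~\ref{sec:hierarchical_quasiconvexity_and_gates} and its proofs invoke hierarchy paths, so you would be assuming part of what you are trying to prove. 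Independently of that, the claim ``$\pi_V(p_i)\asymp\pi_V(p_{i+1})$ whenever $V$ is not nested in $T_i$ or $T_{i+1}$'' is asserted but not justified: partial realization only controls projections of $p_i$ relative to the $\{T_i\}$, and when $V\transverse T_i$ and $V\transverse T_{i+1}$ with $\rho^{T_i}_V$ far from $\rho^{T_{i+1}}_V$ you need a further bounded-geodesic-image argument along the $\fontact S$-geodesic. Finally, the overcounting problem you flag (a given $W$ may be nested in several $T_i$) and the handling of orthogonal $T_i$'s are left unresolved; these are exactly the places where the classical hierarchy machinery requires substantial work, and the point of the present proof is to sidestep them via monotonicity and the uniqueness axiom.
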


The proofs of the above two theorems are intertwined, and we give the proof immediately below.  This relies on several lemmas, namely Lemma~\ref{lem:proper_path}, proved in Section~\ref{subsec:good_and_proper}, and Lemmas~\ref{lem:df_l} and~\ref{lem:df_u}, proved in Section~\ref{subsec:df_ul}.

\begin{proof}[Proof of Theorems~\ref{thm:distance_formula} and~\ref{thm:monotone_hierarchy_paths}]
The lower bound demanded by Theorem~\ref{thm:distance_formula} is given by Lemma~\ref{lem:df_l} below. By Lemma~\ref{lem:proper_path} and Lemma~\ref{lem:df_u}, there is a monotone path (see Definition~\ref{defn:monotone}) whose length realizes the upper bound on $\dist_{\cuco X}(x,y)$, and the same holds for any subpath of this path, which is therefore a hierarchy path, proving Theorem~\ref{thm:monotone_hierarchy_paths} and completing the proof of Theorem~\ref{thm:distance_formula}.
\end{proof}

\subsection{Good and proper paths: definitions}\label{defn:good_proper_definition}
We now define various types of (non-continuous) paths in $\cuco X$ that will appear on the way to hierarchy paths.

\begin{defn}[Discrete path]\label{defn:discrete_path}
A \emph{$K$--discrete path} is a map $\gamma\co I\to\cuco X$, where
$I$ is an interval in $\mathbb Z$ and $\dist_{\cuco
X}(\gamma(i),\gamma(i+1))\leq K$ whenever $i,i+1\in I$.  The
\emph{length} $|\alpha|$ of a discrete path $\alpha$ is $\max I-\min
I$.
\end{defn}

\begin{defn}[Efficient path]\label{defn:efficient}
A discrete path $\alpha$ with endpoints $x,y$ is \emph{$K$--efficient} if $|\alpha|\leq K\dist_{\cuco X}(x,y)$.
\end{defn}

\begin{defn}[Monotone path]\label{defn:monotone}
Given $U\in\mathfrak S$, a $K$--discrete path $\alpha$ and a constant
$L$, we say that $\alpha$ is \emph{$L$--monotone in $U$} if whenever
$i\leq j$ we have $\dist_{ U}(\alpha(0),\alpha(i))\leq \dist_{
U}(\alpha(0),\alpha(j))+L$. A path which is $L$--monotone in $U$ for 
all $U\in \mathfrak S$ is said to be \emph{$L$--monotone}.
\end{defn}

\begin{defn}[Good path]\label{defn:discrete_efficient_monotone}  
A $K$--discrete path that is $L$--monotone in $U$ is said to be $(K,L)$--\emph{good for $U$}.  Given $\mathfrak S'\subseteq\mathfrak S$, a path $\alpha$ that is $(K,L)$--good for each $V\in\mathfrak S'$ is \emph{$(K,L)$--good} for $\mathfrak S'$.
\end{defn}

\begin{defn}[Proper path]\label{defn:discrete_and_proper}
A discrete path $\alpha\co\{0,\ldots,n\}\rightarrow\cuco X$ is 
\emph{$(r,K)$--proper} if for $0\leq i<n-1$, we have $\dist_{\cuco X}(\alpha(i),\alpha(i+1))\in[r,r+K]$ and $\dist_{\cuco X}(\alpha(n-1),\alpha(n))\leq r+K$.  
Observe that $(r,K)$-properness is preserved by passing to subpaths.
\end{defn}

\subsection{Good and proper paths: existence}\label{subsec:good_and_proper}
Our goal in this subsection is to join points in $\cuco X$ with proper paths, i.e., to prove Lemma~\ref{lem:proper_path}.  This relies on the much more complicated Proposition~\ref{prop:monotone_paths}, which produces good paths (which are then easily made proper).

\begin{lem}\label{lem:proper_path}
There exists $K$ so that for any $r\geq 0$, any $x,y\in\cuco X$ are joined by a $K$-monotone, $(r,K)$--proper discrete path.
\end{lem}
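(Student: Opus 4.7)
The plan is to invoke Proposition~\ref{prop:monotone_paths} to obtain a $(K_0, L)$-good discrete path $\alpha \co \{0, 1, \dots, n\} \to \cuco X$ from $x$ to $y$, and then to construct the desired $(r, K)$-proper path by subsampling. Since $\alpha$ is $K_0$-discrete and $L$-monotone in every $U \in \mathfrak{S}$, the remaining task is to extract a subsequence of indices whose images are spaced at distance roughly $r$ in $\cuco X$, while inheriting monotonicity.

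Explicitly, I would define indices $0 = i_0 < i_1 < \cdots < i_m = n$ greedily as follows. Given $i_k$, if $\dist_{\cuco X}(\alpha(i_k), \alpha(n)) \leq r + K_0$, then set $i_{k+1} = n$ and stop. Otherwise, let $i_{k+1}$ be the smallest index $> i_k$ with $\dist_{\cuco X}(\alpha(i_k), \alpha(i_{k+1})) \geq r$; this exists because eventually $\dist_{\cuco X}(\alpha(i_k), \alpha(n)) > r$. Since $\dist_{\cuco X}(\alpha(i_{k+1}-1), \alpha(i_{k+1})) \leq K_0$ and, by minimality of $i_{k+1}$, $\dist_{\cuco X}(\alpha(i_k), \alpha(i_{k+1}-1)) < r$, the triangle inequality yields $\dist_{\cuco X}(\alpha(i_k), \alpha(i_{k+1})) \in [r, r + K_0)$ for all intermediate steps, and the final step has length at most $r + K_0$. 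The procedure terminates in at most $n$ steps since the indices strictly increase.

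Let $\beta(k) := \alpha(i_k)$. Then $\beta$ is clearly $(r, K_0)$-proper with endpoints $x,y$. For monotonicity, fix $U \in \mathfrak{S}$ and $k \leq l$; since $i_k \leq i_l$ and $\alpha$ is $L$-monotone in $U$, we have
\[
\dist_U(\beta(0), \beta(k)) = \dist_U(\alpha(0), \alpha(i_k)) \leq \dist_U(\alpha(0), \alpha(i_l)) + L = \dist_U(\beta(0), \beta(l)) + L,
\]
so $\beta$ is $L$-monotone in $U$. Taking $K = \max\{K_0, L\}$, the path $\beta$ is $K$-monotone and $(r, K)$-proper, as required.

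The main content of the lemma is entirely packaged into Proposition~\ref{prop:monotone_paths}, which produces good (that is, discrete and universally monotone) paths; the passage from a good path to a proper one is merely a bookkeeping argument using the $K_0$-discreteness to control how much one overshoots the spacing $r$, and the fact that monotonicity relative to $\alpha(0)$ is automatically preserved by any order-respecting subsampling. The only subtlety is the treatment of the final step, which the definition of $(r,K)$-properness already allows to be shorter than $r$.
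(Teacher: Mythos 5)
Your proof is correct and matches the paper's argument: both invoke Proposition~\ref{prop:monotone_paths} to obtain a $(K,K)$-good path and then greedily subsample indices at spacing roughly $r$, observing that $K$-discreteness prevents overshooting the interval $[r, r+K]$ and that monotonicity is automatically inherited by any order-preserving subsequence. The only cosmetic difference is that you phrase the greedy step as taking the first index at distance at least $r$ and then bound it above, whereas the paper asks directly for the first index landing in $[r, r+K]$; these are equivalent given discreteness.
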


\begin{proof}
Let $\alpha_0\co\{0,\dots,n_0\}\rightarrow\cuco X$ be a $K$--monotone,
$K$--discrete path joining $x,y$, which exists by
Proposition~\ref{prop:monotone_paths}.  We modify $\alpha_0$ to obtain
the desired path in the following way.  Let $j_0=0$ and, proceeding
inductively, let $j_i$ be the minimal $j\leq n$ such that either
$\dist_{\cuco X}(\alpha_0(j_{i-1}),\alpha_0(j))\in[r,r+K]$ or $j=n$.
Let $m$ be minimal so that $j_m=n$ and define
$\alpha\co\{0,\ldots,m\}\to\cuco X$ by $\alpha(j)=\alpha_0(i_j)$.  The
path $\alpha$ is $(r,K)$-proper by construction; it is easily checked
that $K$--monotonicity is not affected by the above modification; the
new path is again discrete, although for a larger discreteness 
constant.
\end{proof}

It remains to establish the following proposition, whose proof is postponed until the end of this section, after several preliminary statements have been obtained.

\begin{prop}\label{prop:monotone_paths}
There exists $K$ so that any $x,y\in\cuco X$ are joined by path that is $(K,K)$--good for each $U\in\mathfrak S$.
\end{prop}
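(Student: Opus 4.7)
The plan is to establish the proposition by induction on the complexity of the hierarchically hyperbolic structure, building the good path as a concatenation of intermediate points produced by the Partial Realization axiom applied to pairwise-orthogonal families dictated by the partial order $\preceq$ of Section~\ref{subsec:partial_order}.

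The base case of complexity one is handled by lifting a coarse geodesic from $\fontact S$ to $\cuco X$. For the inductive step, fix $x, y \in \cuco X$ and a threshold $\theta$ large relative to the structure constants of Remark~\ref{rem:constants}. I would consider the set $\relevant_{max}(x, y, \theta)$ of $\preceq$-maximal top-level relevant elements, and apply the coloring of Lemma~\ref{lem:coloring} to obtain $\chi$ color classes, each of which is $\preceq$-totally ordered by Proposition~\ref{prop:partial_order}, with distinct color classes pairwise orthogonal. Then I would iteratively build points $x = p_0, p_1, \ldots, p_N = y$ as follows: at each step $t$, isolate the currently-$\preceq$-minimal unprocessed elements, which are pairwise orthogonal, and apply the Partial Realization axiom (Definition~\ref{defn:space_with_distance_formula}.\eqref{item:dfs_partial_realization}) to obtain $p_t$ whose projection to each such $V$ is close to $\pi_V(y)$ and whose projection to each $W \transverse V$ or $W$ properly containing $V$ is close to $\rho^V_W$. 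Once all top-level relevant elements are processed, the remaining distance from $p_N$ to $y$ is uniformly bounded by the Uniqueness axiom together with the Realization Theorem~\ref{thm:realization}.

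To join $p_{t-1}$ and $p_t$ by a good path, I would recurse inside the hierarchically quasiconvex product regions associated to the orthogonal family processed at step $t$; these regions inherit hierarchically hyperbolic structures of strictly smaller complexity, allowing the inductive hypothesis to supply good sub-paths for all $U \in \mathfrak S$ not in the currently-processed top-level set. Global $L$-monotonicity of the full concatenation in each $W \in \mathfrak S$ is then verified using two ingredients: the $\preceq$-structure together with $\kappa$-consistency of the target tuple $(\pi_W(y))_W$ ensures no backtracking among top-level relevant elements (indeed, for any processed $V$ with $V \preceq W$, the set $\rho^V_W$ is already close to $\pi_W(y)$), while Proposition~\ref{prop:rho_consistency} and the bounded geodesic image axiom control drift in sub-levels. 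The principal obstacle I anticipate is the treatment of $W$'s which are orthogonal to every element processed at a given step, since Partial Realization provides no direct control over their projections; handling this forces one to choose the currently-$\preceq$-minimal family as large as possible at each step, so that any such $W$ either lies strictly below some processed $V$ (and is controlled inductively through the product region) or is itself part of the processed family. A final discretization of each step, possible because $\cuco X$ is a quasigeodesic space, yields the required $K$-discreteness at the cost of enlarging the monotonicity constant.
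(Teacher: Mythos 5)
Your proposal takes a genuinely different route from the paper, and it has a gap. The paper begins with an efficient discrete path inside the hull $H_\theta(x,y)$ (Corollary~\ref{cor:paths_in_hulls}), where \emph{every} projection is already trapped near a geodesic, and then performs surgery: it locates ``omens'' (indices where $\pi_U\circ\alpha$ regresses) and replaces the offending subpaths by paths lying in sub-hulls of their endpoints, inducting on level and on a coloring index while staying inside $H_\theta(x,y)$ via Lemma~\ref{lem:nested_hulls}. You instead propose to build waypoints $p_0,\ldots,p_N$ directly via Partial Realization applied to successive pairwise-orthogonal families of $\preceq$-minimal relevant domains, recursing through product regions.

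The gap is exactly the obstacle you name but do not overcome. Partial Realization (Definition~\ref{defn:space_with_distance_formula}.\eqref{item:dfs_partial_realization}) controls $\pi_W(p_t)$ only when $W$ is one of the processed $V_j$, is transverse to some $V_j$, or properly contains some $V_j$; if $W$ is orthogonal to every processed $V_j$, the projection $\pi_W(p_t)$ is completely unconstrained. The dichotomy you propose as a fix fails: a relevant $W$ of lower level can be nested in a top-level relevant domain $V$ that is not $\preceq$-minimal (hence not yet processed) while simultaneously being orthogonal to every processed $V_i$ --- nothing in the axioms forbids this. For such $W$, the waypoint $p_t$ can break monotonicity before the recursion in $P_V$ is ever reached, and you have given no mechanism to prevent or repair this. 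The paper's construction avoids the problem at the outset by never leaving the hull, so that even uncontrolled domains have bounded-magnitude backtracks that can be excised locally. Separately, note a misstatement: Lemma~\ref{lem:coloring} only guarantees that \emph{same}-colored elements are pairwise transverse; distinct color classes may contain transverse pairs, so they are not ``pairwise orthogonal'' as you assert, and the paper's inductive step has to handle $W\transverse V_j$ across color classes explicitly.
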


\begin{defn}[Hull of a pair of points]\label{defn:hull_pair}
 For each $x,y\in \cuco X,\theta\geq 0$, let $H_{\theta}(x,y)$ be the set of all $p\in\cuco X$ so that, for each 
$W\in\mathfrak S$, the set $\pi_W(p)$ lies at distance at most $\theta$ from a geodesic in $\fontact W$ joining $\pi_W(x)$ to 
$\pi_W(y)$. Note that $x,y\in H_{\theta}(x,y)$. 
\end{defn}

\begin{rem}The notion of a hull is generalized in Section~\ref{sec:convex_hulls} to hulls of arbitrary finite sets, but we require only the version for pairs of points in this section.\end{rem}

\begin{lem}[Retraction onto hulls]\label{lem:2_point_retract}
There exist $\theta,K\geq 0$ such that, for each $x,y\in\cuco X$, there exists a $(K,K)$--coarsely Lipschitz map $r:\cuco 
X\to H_{\theta}(x,y)$ that restricts to the identity on $H_\theta(x,y)$.
\end{lem}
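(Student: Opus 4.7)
\medskip
\noindent\textbf{Proof plan.}
The plan is to construct $r$ via the realization theorem. For each $z\in\cuco X$ and each $W\in\mathfrak S$, fix once and for all a geodesic $\gamma_W$ in $\fontact W$ from $\pi_W(x)$ to $\pi_W(y)$, and let $b_W(z)$ denote a nearest point in $\gamma_W$ to $\pi_W(z)$. This produces, for each $z$, a tuple $\tup b(z)=(b_W(z))_{W\in\mathfrak S}$ all of whose coordinates lie in the respective $\fontact W$.

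The first step is to verify that $\tup b(z)$ is $\kappa$--consistent for a uniform $\kappa$. In a $\delta$--hyperbolic space, the nearest-point projection of $\pi_W(z)$ onto $\gamma_W$ is uniformly close to any center of the geodesic triangle on $\pi_W(x),\pi_W(y),\pi_W(z)$; that is, $b_W(z)$ lies within a uniform (depending only on $\delta$) distance of a point on each side of such a triangle. Consequently, Lemma~\ref{lem:median_cons}, applied to the triple $x,y,z$, shows that $\tup b(z)$ is $\kappa$--consistent for some $\kappa$ independent of $x,y,z$. The realization theorem (Theorem~\ref{thm:realization}) then provides $r(z)\in\cuco X$ with $\dist_W(r(z),b_W(z))\leq\theta_e$ for every $W$. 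Since $b_W(z)\in\gamma_W$, this gives $r(z)\in H_\theta(x,y)$ for $\theta=\theta_e$.

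Next I will show that $r$ is coarsely Lipschitz. Given $z,z'\in\cuco X$ with $\dist_{\cuco X}(z,z')\leq 1$, the coarse Lipschitzness of the projections $\pi_W$ yields $\dist_W(\pi_W(z),\pi_W(z'))\leq K'$ for a uniform $K'$. Nearest-point projection onto a geodesic in a $\delta$--hyperbolic space is $1$--Lipschitz up to additive error $O(\delta)$, so $\dist_W(b_W(z),b_W(z'))\leq K''$ uniformly across $W$. Hence the realizing points $r(z),r(z')$ both witness realization of a common (coarsely defined) consistent tuple, so the uniqueness part of Theorem~\ref{thm:realization} gives $\dist_{\cuco X}(r(z),r(z'))\leq\theta_u$, yielding the desired bound. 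Iterating along a quasigeodesic from $z$ to $z'$ establishes the coarse Lipschitz property with constants depending only on the HHS data.

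Finally, to arrange that $r$ restricts to the identity on $H_\theta(x,y)$: if $p\in H_\theta(x,y)$, then $\dist_W(\pi_W(p),\gamma_W)\leq\theta$, so $\dist_W(\pi_W(p),b_W(p))\leq\theta$ for every $W$. Thus $p$ itself is a realization of $\tup b(p)$ up to $\theta$, and the uniqueness clause of Theorem~\ref{thm:realization} gives $\dist_{\cuco X}(r(p),p)\leq\theta_u$. Redefining $r(p):=p$ on $H_\theta(x,y)$ perturbs $r$ by at most $\theta_u$, and hence preserves the coarse Lipschitz property after enlarging $K$. The principal technical point is the consistency verification for the tuple $\tup b(z)$; once one sees that $b_W(z)$ is a center of the triangle $\pi_W(x),\pi_W(y),\pi_W(z)$ up to $O(\delta)$, Lemma~\ref{lem:median_cons} and Theorem~\ref{thm:realization} do the rest.
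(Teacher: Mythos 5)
Your proof is correct and follows essentially the same route as the paper's: both construct a tuple whose $W$--coordinate is a coarse center of the triangle on $\pi_W(x),\pi_W(y),\pi_W(z)$ lying on $\gamma_W$ (you obtain it via nearest-point projection onto $\gamma_W$; the paper chooses a point of $\gamma_W$ that is $\delta$--close to the other two sides), invoke Lemma~\ref{lem:median_cons} for consistency, apply Theorem~\ref{thm:realization} to produce $r(z)\in H_\theta(x,y)$, and use the uniqueness axiom to obtain coarse Lipschitzness. The only cosmetic difference is that the paper simply sets $r(p)=p$ for $p\in H_{\theta}(x,y)$ from the outset, whereas you first build $r$ everywhere and then redefine it on $H_\theta(x,y)$, checking this is a bounded perturbation.
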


\begin{proof}
Let $\kappa$ be the constant from Lemma~\ref{lem:median_cons}, let
$\theta_e$ be chosen as in the realization theorem
(Theorem~\ref{thm:realization}), and let $p\in \cuco X -
H_{\theta_e}(x,y)$.  Define a tuple $\tup
b=(b^p_W)\in\prod_{W\in\mathfrak S}2^{\fontact W}$ so that $b^p_W$ is
on a geodesic in $\fontact W$ from $\pi_W(x)$ to $\pi_W(y)$ and is
within distance $\delta$ of the other two sides of a triangle with
vertices in $\pi_W(x),\pi_W(y),\pi_W(p)$.  By
Lemma~\ref{lem:median_cons}, this is a consistent tuple.  Hence, by
the realization theorem (Theorem~\ref{thm:realization}), there exists
$r(p)\in H_{\theta_e}(x,y)$ so that $\dist_{ W}(\pi_W(r(p)),b^p_W)\leq
\theta_e$.  For $p\in H_{\theta_e}(x,y)$, let
$r(p)=p$.

To see that $r$ is coarsely Lipschitz, it suffices to bound $\dist_{\cuco X}(r(p),r(q))$ when $p,q\in\cuco X$ satisfy $\dist_{\cuco X}(p,q)\leq 1$. For such $p,q$ we have $\dist_{ W}(b^p_W,b^q_W)\leq 100E$, so that Theorem~\ref{thm:realization} implies $\dist_{\cuco X}(r(p),r(q))\leq \theta_u(100E)$, as required.
\end{proof}

\begin{cor}\label{cor:paths_in_hulls}
 There exist $\theta,K\geq0$ such that, for each $x,y\in\cuco X$, there exists a $K$--discrete and $K$--efficient path that lies in $H_{\theta}(x,y)$ and joins $x$ to $y$.
\end{cor}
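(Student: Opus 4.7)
The plan is to produce the path by starting with a quasigeodesic from $x$ to $y$ in $\cuco X$, discretizing it, and then pushing it into the hull via the retraction provided by Lemma~\ref{lem:2_point_retract}.

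More concretely, since $\cuco X$ is a $q$-quasigeodesic space, I first choose a $(q,q)$-quasigeodesic $\gamma \co [0,L] \to \cuco X$ from $x$ to $y$, with $L \leq q\,\dist_{\cuco X}(x,y) + q$. Sampling $\gamma$ at integer times (and including the endpoint $y$) yields a discrete path $\gamma(0)=x, \gamma(1),\dots, \gamma(\lceil L\rceil-1), y$; consecutive sample points lie within uniformly bounded distance of each other (controlled by $q$ alone), so this is a $K'$-discrete path for some $K'$ depending only on $q$, and its length is at most $q\,\dist_{\cuco X}(x,y)+q+1$.

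Next, let $\theta$ and $r\co \cuco X \to H_\theta(x,y)$ be the constant and $(K,K)$-coarsely Lipschitz retraction produced by Lemma~\ref{lem:2_point_retract}. Apply $r$ pointwise to the discrete path above to obtain a sequence of points $x = r(\gamma(0)), r(\gamma(1)), \dots, r(y) = y$ in $H_\theta(x,y)$. Since $r$ fixes $x$ and $y$ (as both lie in $H_\theta(x,y)$ by definition) the endpoints are correct. Because $r$ is $(K,K)$-coarsely Lipschitz and consecutive samples of $\gamma$ are within distance $K'$ in $\cuco X$, consecutive images are within a bounded distance depending only on $K,K'$, so the resulting discrete path is $K''$-discrete for some uniform $K''$.

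Finally, efficiency: the number of steps of the new path is the same as that of the discretized quasigeodesic, namely at most $q\,\dist_{\cuco X}(x,y)+q+1$, which is $\leq K'''\,\dist_{\cuco X}(x,y)$ for some uniform constant $K'''$ when $\dist_{\cuco X}(x,y)$ is not too small (and otherwise the claim is trivial by bounding $K$). Taking $K$ to be the maximum of $K''$, $K'''$, and $\theta$ gives the desired constants. The only mild subtlety is ensuring that the efficiency constant absorbs the small-distance case, but this is handled by the standard trick of enlarging $K$ to also serve as an additive constant, which is consistent with the definition of efficient paths applied through the $K'$-discreteness of the underlying approximation.
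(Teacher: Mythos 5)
Your proposal is correct and takes essentially the same approach as the paper: sample a quasigeodesic from $x$ to $y$ at integer times, push it into $H_\theta(x,y)$ via the coarsely Lipschitz retraction $r$ from Lemma~\ref{lem:2_point_retract}, and read off discreteness and efficiency from the coarse Lipschitz constant of $r$ and the linear length bound on the quasigeodesic. The paper handles the degenerate case by assuming $\dist_{\cuco X}(x,y)\geq 1$ at the outset rather than enlarging $K$, but this is an immaterial difference.
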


\begin{proof}
We can assume that $\dist_{\cuco X}(x,y)\geq 1$.
Since $\cuco X$ is a quasigeodesic space, there exists $C=C(\cuco X)\geq 1$
and a $(C,C)$--quasi-isometric embedding $\gamma\co[0,L]\to\cuco X$ with
$\gamma(0)=x,\gamma(L)=y$.  Let $\rho$ be the path obtained by
restricting $r\circ\gamma\co[0,L]\to H_{\theta}(x,y)$ to
$[0,L]\cap\naturals$, where $r$ is the retraction obtained in 
Lemma~\ref{lem:2_point_retract}.  Then $\dist_{\cuco X}(\rho(i),\rho(i+1))\leq
10KC$ since $r$ is $(K,K)$--coarsely Lipschitz and $\gamma$ is
$(C,C)$--coarsely Lipschitz, i.e., $\rho$ is $10KC$--discrete.
Finally, $\rho$ is efficient because $L\leq C\dist_{\cuco X}(x,y)+C\leq 2C \dist_{\cuco X}(x,y)$.
\end{proof}

The efficiency part of the corollary is used in Lemma \ref{lem:df_l}.

\subsubsection{Producing good paths}\label{subsubsec:monotone_paths_proof}
We will need the following lemma, which is a special case of Proposition \ref{prop:HullProperties}.\ref{item:subhull}. We give a proof in the interest of a self-contained exposition.

\begin{lem}\label{lem:nested_hulls}
 For any $\theta_0$ there exists a constant $\theta$ such
 that for every $x,y\in \cuco X$ and every $x',y'\in H_{\theta_0}(x,y)$,
 we have $H_{\theta_0}(x',y')\subseteq H_{\theta}(x,y)$.
\end{lem}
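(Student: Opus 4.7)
The plan is to reduce the statement to a fact about $\delta$--hyperbolic spaces applied coordinate-by-coordinate. Fix $p\in H_{\theta_0}(x',y')$ and an arbitrary $W\in\mathfrak S$; we need to show that $\pi_W(p)$ lies within some uniform distance $\theta=\theta(\theta_0,\delta)$ of a geodesic from $\pi_W(x)$ to $\pi_W(y)$ in $\fontact W$.

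Fix a geodesic $\gamma$ in $\fontact W$ from $\pi_W(x)$ to $\pi_W(y)$ and a geodesic $\gamma'$ from $\pi_W(x')$ to $\pi_W(y')$. Since $x',y'\in H_{\theta_0}(x,y)$, both endpoints of $\gamma'$ lie within $\theta_0$ of $\gamma$. The first step is the standard hyperbolic fact that in a $\delta$--hyperbolic space, if the endpoints of a geodesic $\gamma'$ lie within distance $\theta_0$ of another geodesic $\gamma$, then $\gamma'$ is contained in the $M$--neighborhood of $\gamma$ for some $M=M(\theta_0,\delta)$; this follows, for example, by choosing nearest-point projections of the endpoints of $\gamma'$ to $\gamma$, forming the geodesic quadrilateral, and applying $\delta$--thinness twice.

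Now since $p\in H_{\theta_0}(x',y')$, we have $\dist_W(\pi_W(p),\gamma')\leq\theta_0$. Combined with the previous step, this yields $\dist_W(\pi_W(p),\gamma)\leq\theta_0+M(\theta_0,\delta)$. As $W\in\mathfrak S$ was arbitrary, this shows $p\in H_\theta(x,y)$ for $\theta:=\theta_0+M(\theta_0,\delta)$, which depends only on $\theta_0$ and the hyperbolicity constant $\delta$ (which is uniform over $\mathfrak S$).

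The argument uses nothing beyond $\delta$--hyperbolicity of each $\fontact W$ and the coordinate-wise definition of hulls; there is no real obstacle, since the coordinate projections $\pi_W$ do not interact in the definition of $H_\theta$. The only point to watch is that the constant $M$ must be chosen uniformly in $W$, which is immediate because the hyperbolicity constant $\delta$ in Definition~\ref{defn:space_with_distance_formula}.\eqref{item:dfs_curve_complexes} is uniform across $\mathfrak S$.
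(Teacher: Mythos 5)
Your proof is correct and takes essentially the same approach as the paper: the paper's proof is a one-line appeal to a ``thin quadrilateral argument'' in each $\fontact W$, which is precisely what you spell out.
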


\begin{proof}
For any $z\in H_{\theta_0}(x',y')$ and $W\in\mathfrak S$ the projection
$\pi_{W}(z)$ lies $2(\delta+\theta_0)$--close to a geodesic in $\fontact
W$ from $\pi_W(x)$ to $\pi_W(y)$, by a thin quadrilateral argument.
\end{proof}

We now prove the main proposition of this subsection.

\begin{proof}[Proof of Proposition~\ref{prop:monotone_paths}]
 Recall that, for $\ell\geq0$ and $U\in\mathfrak S$, the set $\mathfrak S^\ell_U$ consists of those $V\in\mathfrak S_U$ with $\ell_U-\ell_V\leq\ell$, and that $\mathfrak T_U^\ell$ consists of those $V\in\mathfrak S_U$ with $\ell_U-\ell_V=\ell$.

We prove by induction on $\ell$ that there exist $\theta,K$ such that
for any $\ell\geq 0$, $x,y\in\cuco X$ and $U\in\mathfrak S$, there is
a path $\alpha$ in $H_{\theta}(x,y)$ connecting $x$ to $y$ such that
$\alpha$ is $(K,K)$--good for $\mathfrak S_U^\ell$.  It then follows 
that for any $x,y\in\cuco X$, there exists a path $\alpha$ in
$H_{\theta}(x,y)$ connecting $x$ to $y$ such that $\alpha$ is
$(K,K)$--good for $\mathfrak S$; this latter statement directly implies
the proposition.

For $a,b\in\cuco X$, denote by
 $[a,b]_W$ a geodesic in $\fontact W$ from $\pi_W(a)$ to $\pi_W(b)$.  Fix $U\in\mathfrak S$.  

\textbf{The case $\ell=0$:} In this case, $\mathfrak S_U^0=\{U\}$.  By
Corollary~\ref{cor:paths_in_hulls}, there exists $\theta_0,K$ and a
$K$--discrete, $K$--efficient path $\alpha_0'\colon\{0,\ldots,k\}\rightarrow
H_{\theta_0}(x,y)$ joining $x$ to $y$.

Similarly, for each $x',y'\in H_{\theta_0}(x,y)$ there exists a $K$--discrete path
$\beta$ contained in $H_{\theta_0}(x',y')$, joining $x'$ to $y'$, and recall that $H_{\theta_0}(x',y')$ is contained in $H_{\theta}(x,y)$ for a suitable $\theta$ in view of Lemma \ref{lem:nested_hulls}.

We use the term \emph{straight path} to refer to a 
path, such as $\beta$, which for each $V\in\mathfrak S$ projects 
uniformly close to a geodesic of $\fontact(V)$.

We now fix $U\in\mathfrak S$, and, using the observation in the last 
paragraph explain how to 
modify $\alpha_0'$ to obtain a $K$--discrete path $\alpha_0$ in
$H_\theta(x,y)$ that is $K$--monotone in $U$; the construction will 
rely on replacing problematic subpaths with straight paths.

A point $t\in\{0,\dots,k\}$ is a \emph{$U$--omen} if there exists
$t'>t$ so that $\dist_{ U}(\alpha_0'(0),\alpha_0'(t))> \dist_{
U}(\alpha_0'(0),\alpha_0'(t'))+5KE$.  If $\alpha_0'$ has no
$U$--omens, then we can take $\alpha_0=\alpha_0'$, so suppose that
there is a $U$--omen and let $t_0$ be the minimal $U$--omen, and let
$t'_0>t_0$ be maximal so that $\dist_{U}(\alpha_0'(0),\alpha_0'(t_0))> \dist_{ U}(\alpha'_{0}(0),\alpha'_{0}(t'_0))$.
Inductively define $t_j$ to be the minimal $U$--omen with $t_j\geq
t'_{j-1}$, if such $t_j$ exists; and when $t_{j}$ exists, we 
define $t'_{j}$ to be maximal in $\{0,\dots,k\}$ satisfying 
$\dist_U(\alpha'_0(0),\alpha'_0(t_j))>\dist_U(\alpha'_0(0),\alpha'_0(t_j'))$.  For each $j\geq0$, let
$x'_j=\alpha_0'(t_j)$ and let $y'_j=\alpha_0'(t'_j)$. See Figure~\ref{fig:omen}.

\begin{figure}[h]
\begin{overpic}[width=0.5\textwidth]{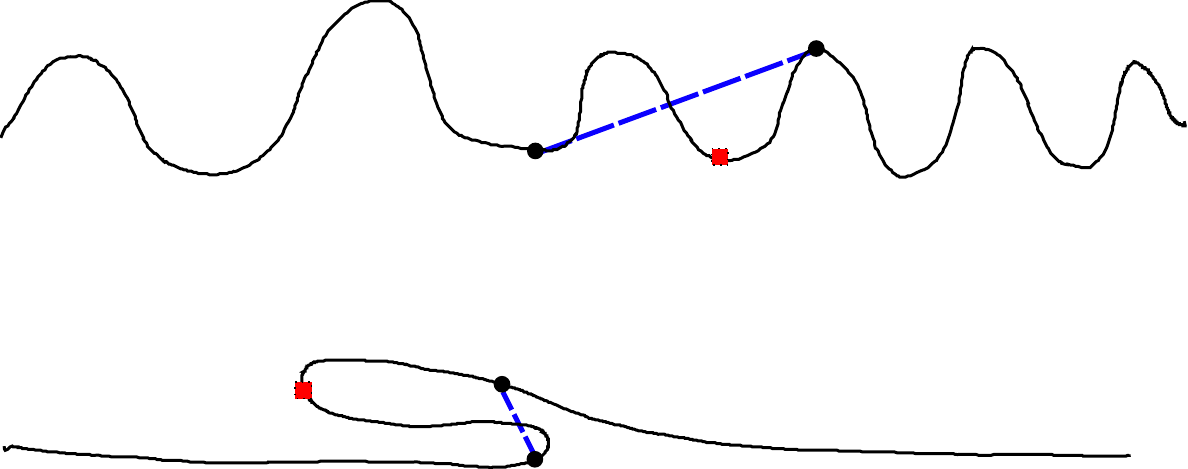}
\put(90,20){$y$}
\put(93,20){$\rightarrow$}
\put(9,20){$x$}
\put(4,20){$\leftarrow$}
\put(40,22){$\alpha_0'(t_j)$}
\put(65,38){$\alpha_0'(t'_j)$}
\end{overpic}
\caption{The picture above shows part of $\alpha_0'$ in $\cuco X$, and
that below shows its projection to $U$.  The point $t_j$ is an omen,
as witnessed by the point marked with a square.  Inserting the dashed
path $\beta_j$, and deleting the corresponding subpath of $\alpha_0'$,
makes $t_j$ cease to be an omen.}\label{fig:omen}
\end{figure}

For each $j$, there exists a $K$--discrete path $\beta_j$ which lies in 
$H_{\theta_0}(x_{j}',y_{j}')\subseteq H_\theta(x,y)$ and is a straight
path from $x'_j$ to $y'_j$.
Let $\alpha_0$ be obtained from
$\alpha_0'$ by replacing each $\alpha_0'([t_j,t'_j])$ with
$\beta_j$.  Clearly, $\alpha_0$ connects $x$ to $y$, is $K$--discrete,
and is contained in $H_{\theta}(x,y)$.  
For each $j$ we have that $\diam_{\fontact U}(\beta_{j})\leq 
\dist_{U}(x_{j}',y_{j}')+2\theta_0$.

Notice that $\dist_{U}(x_{j}',y_{j}')<2KE+10\theta_0$. In fact, since $\alpha'_0(0),\alpha'_0(t_j),\alpha'_0(t'_j)$ lie $\theta_0$-close to a common geodesic and $\dist_U(\alpha'_0(0),\alpha'_0(t_j))\geq\dist_U(\alpha'_0(0),\alpha'_0(t'_j))$, we would otherwise have
$$\dist_U(\alpha'_0(0),\alpha'_0(t_j))-\dist_U(\alpha'_0(0),\alpha'_0(t'_j))\geq \dist_{U}(x_{j}',y_{j}')-5\theta_0\geq 2KE+\theta_0.$$

However, $\dist_U(\alpha'_0(t_j),\alpha'_0(t_j+1))\leq 2KE$ because of $K$--discreteness and the projection map to $\contact U$ being $E$-coarsely Lipschitz. Hence, the inequality above implies
$$\dist_U(\alpha'_0(0),\alpha'_0(t_j))>\dist_U(\alpha'_0(0),\alpha'_0(t'_j))+2KE\geq \dist_U(\alpha'_0(t_j),\alpha'_0(t_j+1)),$$
which contradicts the maximality of $t'_j$.
(Notice that $t'_j\neq k$, and hence $t'_j+1\in\{0,\dots,k\}$ because $\dist_U(\alpha'_0(0),\alpha'_0(t'_j))+\theta_0<\dist_U(\alpha'_0(0),\alpha'_0(t_j))\leq \dist_U(\alpha'_0(0),\alpha'_0(k))+\theta_0$.)

In particular, we get $\diam_{\fontact U}(\beta_{j})\leq 2KE+12\theta_0$, and it is then easy to check that $\alpha_0$ is $\max\{5KE,2KE+12\theta_0\}$--monotone in $U$. By replacing $K$ with $\max\{5KE,2KE+12\theta_0\}$, we thus have a $K$--discrete
path $\alpha_0\subset H_\theta(x,y)$ that joins $x,y$ and is
$K$--monotone in $U$.

We now proceed to the inductive step. Specifically, we fix $\ell\geq 0$ and we assume there 
exist $\theta_{ind},K$ such that there is
a path $\alpha$ in $H_{\theta_{ind}}(x,y)$ connecting $x$ to $y$ such that
$\alpha$ is $(K,K)$--good for $\mathfrak S_U^{\ell-1}$.    

\textbf{The coloring:} For short, we will say that $V\in \mathfrak S$ is $A$--relevant if $\dist_{U}(x,y)\geq A$, see Definition \ref{defn:relevant}. Notice that to prove that a path in $H_\theta(x,y)$ is monotone, it suffices to restrict our attention to only those $W\in\mathfrak S$ which are, say, $10KE$--relevant.

By Lemma~\ref{lem:coloring}, there exists
$\chi\geq 0$, bounded by the complexity of $\cuco X$, and
a $\chi$--coloring $c$ of the $10KE$--relevant elements of $\mathfrak
T_U^\ell$ such that $c(V)=c(V')$ only if $V\transverse V'$. In other words, the set of $10KE$--relevant elements of $\mathfrak
T_U^\ell$ has the form
$\bigsqcup_{i=0}^{\chi-1} c^{-1}(i)$, where $c^{-1}(i)$ is a set
of pairwise-transverse relevant elements of $\mathfrak T_U^\ell$.

\textbf{Induction hypothesis:} Given $p<\chi-1$, assume by
induction (on $\ell$ and $p$) that there exist $\theta_p\geq \theta_{ind},K_p\geq K,$
independent of $x,y,U$, and a path $\alpha_p\co \{0,\dots,k\}\to
H_{\theta_p}(x,y)$, joining $x,y$, that is $(K_{p},K_{p})$--good for
$\bigsqcup_{i=0}^{p}c^{-1}(i)$ and good for $\mathfrak S_U^{\ell-1}$. 

\textbf{Resolving backtracks in the next color:}  Let $\theta_{p+1}$ be provided by Lemma \ref{lem:nested_hulls} with input $\theta_p$. We will modify $\alpha_p$ to construct a $K_{p+1}$--discrete path $\alpha_{p+1}$ in $H_{\theta_{p+1}}(x,y)$, for some $K_{p+1}\geq K_p$, that joins $x,y$ and is $(K_{p+1},K_{p+1})$--good in $\bigsqcup_{i=0}^{p+1}c^{-1}(i)\cup\mathfrak S^{\ell-1}_U$.

Notice that we can restrict our attention to the set $\mathcal C_{p+1}$ of $100(K_pE+\theta_p)$--relevant elements of $c^{-1}(p+1)$.

A point $t\in\{0,\dots,k\}$ is a \emph{$(p+1)$--omen} if there exists
$V\in\calC_{p+1}$ and $t'>t$ so that $\dist_{
V}(\alpha_p(0),\alpha_p(t))> \dist_{ 
V}(\alpha_p(0),\alpha_p(t'))+5K_pE$.
If $\alpha_p$ has no $(p+1)$--omens, then we can take
$\alpha_{p+1}=\alpha_p$, since $\alpha_p$ is good in each $V$ with
$c(V)<p+1$.  Therefore, suppose that there is a $(p+1)$--omen, let
$t_0$ be the minimal $(p+1)$--omen, witnessed by $V_0\in \calC_{p+1}$.
We can assume that $t_0$ satisfies $\dist_{ V_0}(\{x,y\},\alpha_p(t_0))> 10K_pE$.  Let $t'_0>t_0$ be maximal so that $\dist_{
V_0}(\alpha_p(0),\alpha_p(t_0))> \dist_{
V_0}(\alpha_p(0),\alpha_p(t'_0))$.  In particular $\dist_{
V_0}(y,\alpha_p(t_0'))\geq 10 E$.

Let $x'_0=\alpha_0(t_0)$ and $y'_0=\alpha_0(t'_0)$. Inductively, define $t_j$ as the minimal $(p+1)$--omen, witnessed by $V_j\in \calC_{p+1}$, with $t_j\geq t'_{j-1}$, if such $t_j$ exists and let $t'_j$ be maximal so that $\dist_{ V_j}(\alpha_p(0),\alpha_p(t_j))> \dist_{ V_j}(\alpha_p(0),\alpha_p(t'_j))$ and $\dist_{ V_j}(y,\alpha_p(t'_j))> 10 E$. We can assume that $t_j$ satisfies $\dist_{ V_j}(\{x,y\},\alpha_p(t_j))> 10K_pE$. Also, let $x'_j=\alpha_p(t_j),y'_j=\alpha_p(t'_j)$.

Let $\beta_j$ be a path in $H_{\theta_p}(x'_j,y'_j)$ joining $x'_j$ to
$y'_j$ \emph{that is $(K_p,K_p)$--good for each relevant $V$ with
$c(V)\leq p$} and each relevant $V\in\mathfrak S_U^{\ell-1}$.  Such paths can be constructed by induction.  By
Lemma \ref{lem:nested_hulls} $\beta_j$ lies in $H_{\theta_{p+1}}(x,y)$.  Let
$\alpha_{p+1}$ be obtained from $\alpha_p$ by replacing each
$\alpha_p(\{t_j,\dots,t'_j\})$ with $\beta_j$.  Clearly,
$\alpha_{p+1}$ connects $x$ to $y$, is $K_{p}$--discrete, and is contained
in $H_{\theta_{p+1}}(x,y)$.

We observe that the same argument as in the case $\ell=0$ gives $\dist_{V_j}(x'_j,y'_j)\leq 2K_pE+10\theta_p$.

\textbf{Verification that $\alpha_{p+1}$ is good for current colors:}  We next check that each $\beta_j$ is $10^3(K_pE+\theta_p)$--monotone in each $W\in\bigsqcup_{i=0}^{p+1}c^{-1}(i)$. We have to consider the following cases. (We can and shall assume below $W$ is $100(K_pE+\theta_p)$--relevant.)
\begin{itemize}
 \item If $W\nest V_j$, then $W=V_j$, since $\ell_W=\ell_{V_j}$.
 Since the projections on $\fontact W$ of the endpoints of the
 straight path $\beta_j$ coarsely coincide, $\beta_j$ is
 $(2K_pE+12\theta_p)$--monotone in $W$. (See the case $\ell=0$.)
 \item Suppose $V_j\propnest W$.  We claim that the
 projections of the endpoints of $\beta_j$ lie at a uniformly bounded 
 distance in $\fontact W$.  
 
 We claim that $\rho^{V_j}_W$ has to be $E$--close to either $[x,x'_j]_W$ or $
 [y'_j,y]_W$. In fact, if this was not the case,  we would have
 $$\dist_{V_j}(x,y)\leq \dist_{V_j}(x,x'_j)+\dist_{V_j}(x'_j,y'_j)+\dist_{V_j}(y'_j,y)\leq 2E+2K_pE+10\theta_p,$$
 where we applied bounded geodesic
 image
 (Definition~\ref{defn:space_with_distance_formula}.\eqref{item:dfs:bounded_geodesic_image}) to the first and last terms.
 
This is a contradiction with $V_j$ being $100(K_pE+\theta_p)$--relevant.

Suppose for a contradiction that $\dist_{W}(x'_j,y'_j)\geq 500(K_pE+\theta_p)$.  Suppose first that
 $\rho^{V_j}_W$ is $E$--close to $[x,x'_j]_W$.  Then, by monotonicity,
 $\rho^{V_j}_W$ is $E$--far from $[\alpha_p(t'_j),y]_W$.  By bounded
 geodesic image, this contradicts $\dist_{V_j}(y,\alpha_p(t'_j))\geq
 10E$.  If instead $\rho^{V_j}_W$ is $E$--close to $[y'_j,y]_W$, then by
 bounded geodesic image we have
 $\dist_{V_j}(x,\alpha_p(t_j))\leq E$, contradicting that $t_j$ is
 an omen witnessed by $V_j$.  See Figure~\ref{fig:in_W_nested}.
 
  \begin{figure}[h]
  \begin{overpic}[width=0.5\textwidth]{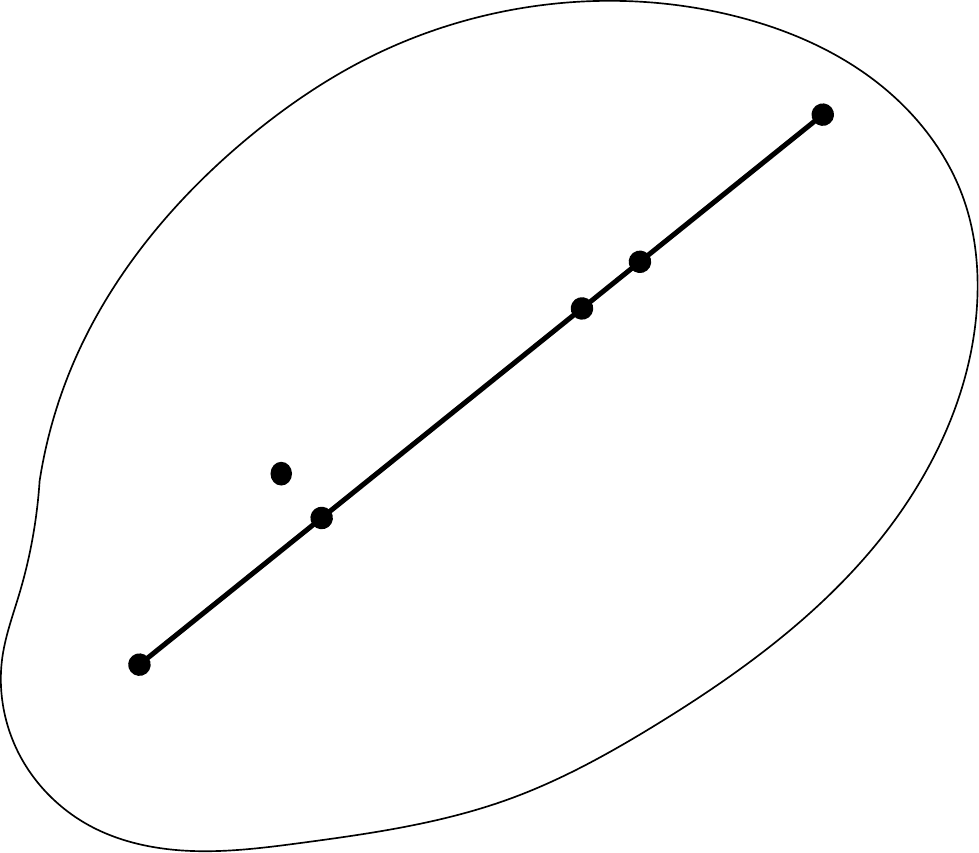}
   \put(80,10){$\fontact W$}
   \put(10,17){$x$}
   \put(86,75){$y$}
   \put(24,42){$\rho^{V_j}_W$}
   \put(35,32){$x'_j$}
   \put(40,59){$\alpha_0(t'_j-1)$}
   \put(70,60){$y'_j=\alpha_0(t'_j)$}
  \end{overpic}

 \caption{The situation in $\fontact W$.}\label{fig:in_W_nested}
 \end{figure}
 
 Hence $\dist_{W}(x'_j,y'_j)\leq 500(K_pE+\theta_p)$ and $\beta_j$ is $10^3(K_pE+\theta_p)$--monotone in $W$.
 
 \item Suppose $W\transverse V_j$.  We again claim that the
 projections of the endpoints of $\beta_j$ are uniformly close in
 $\fontact W$, by showing that they both coarsely
 coincide with $\rho^{V_j}_W$.
 Since $V_{j}$ is relevant, either 
 $\dist_{V_j}(x,\rho^{W}_{V_j})\geq E$ or $\dist_{V_j}(y,\rho^{W}_{V_j})\geq E$. 
 Thus, by consistency, 
 $\dist_{W}(\rho^{V_j}_{W},\{x,y\})\leq E$.  Suppose for a 
 contradiction, that $\dist_{W}(x'_j,y'_j)>100(K_pE+\theta_p)$.  We consider separately the cases where $\dist_W(x,\rho^{V_j}_W)\leq E$ and $\dist_W(y,\rho^{V_j}_W)\leq E$

First, suppose that $\dist_W(x,\rho^{V_j}_W)\leq E$.  Then $\dist_W(y,\rho^{V_j}_W)\geq10K_pE-E>E$, so by consistency, $\dist_{V_j}(y,\rho^W_{V_j})\leq E$.  If $\dist_{V_j}(x,\{x'_j,y'_j\})>E$, then consistency implies that $\dist_W(x_j',\rho^{V_j}_W)\leq E$ and $\dist_W(y'_j,\rho^{V_j}_W)\leq E$, whence $\dist_W(x'_j,y'_j)\leq 2E$, a contradiction.  If $\dist_{V_j}(x,\{x'_j,y'_j\})\leq E$, then since $\dist_{V_j}(x'_j,y'_j)\leq 2K_pE+10\theta_p$, we have $\dist_{V_j}(x,x'_j)\leq5K_pE+10\theta_p$, contradicting that, since $t_j$ was a $(p+1)$--omen witnessed by $V_j$, we must have $\dist_{V_j}(x,x'_j)>5K_pE$.
 
 Second, suppose $\dist_W(y,\rho^{V_j}_W)\leq E$.  Then by relevance of $W$ and consistency, $\dist_{V_j}(x,\rho^W_{V_j})\leq E$.  As above, we have $\dist_{V_j}(x'_j,x)>5K_pE+10\theta_p$, so $\dist_{V_j}(x,\{x'_j,y'_j\})>K_pE>3E$ (since $\dist_{V_j}(x'_j,y'_j)\leq 2K_pE+10\theta_p$ and we may assume $K_p>3$), so $\dist_{V_j}(\rho^W_{V_j},\{x'_j,y'_j\})>E$.  Thus, by consistency, $\pi_W(x'_j),\pi_W(y'_j)$ both lie at distance at most $E$ from $\rho^{V_j}_W$, whence $\dist_W(x'_j,y'_j)\leq 3E$.
 \item Finally, suppose that $W\orth V_j$.  Then either $c(W)<c(V_j)$ and $\beta_j$ is $K_p$--monotone in $W$, or $W$ is irrelevant.
\end{itemize}

Hence, each $\beta_j$ is $10^3(K_pE+\theta_p)$--monotone in each $W\in c^{-1}(\{0,\ldots,p+1\})$.  Moreover, our 
above choice of $\beta_j$ ensures that $\beta_j$ is $K_p$--monotone in each $V\in\mathfrak S_U^{\ell-1}$.

\textbf{Verification that $\alpha_{p+1}$ is monotone:}  Suppose that there exist $t,t'$ such that $t<t'$ and $\dist_{ 
V}(\alpha_{p+1}(0),\alpha_{p+1}(t))> \dist_{ V}(\alpha_{p+1}(0),\alpha_{p+1}(t'))+10^4(K_pE+\theta_p)$ for some $V\in 
c^{-1}(\{0,\ldots,p+1\})\cup\mathfrak S_U^{\ell-1}$. We can assume $t,t'\not\in \cup_i (t_i,t'_i)$. Indeed, if 
$t\in(t_i,t'_i)$ (respectively, $t'\in(t_j,t'_j)$), then since all $\beta_m$ are $10^3(K_pE+\theta_p)$--monotone, we 
can replace $t$ with $t'_i$ (respectively, $t'$ with $t_j$).  After such a replacement, we still have $\dist_{ 
V}(\alpha_{p+1}(0),\alpha_{p+1}(t))> \dist_{ V}(\alpha_{p+1}(0),\alpha_{p+1}(t'))+5K_pE$.

Let $i$ be maximal so that $t'_i\leq t$ (or let $i=-1$ if no such $t'_i$ exists). By definition of $t_{i+1}$, we have 
$t_{i+1}\leq t$, and hence $t_{i+1}=t$. But then $t'_{i+1}>t'$, which is not the case.

\textbf{Conclusion:} Continuing the above procedure while $p<\chi$ produces the desired path $\alpha_\chi$ which is 
$(K_\chi,K_\chi)$--good for  $\mathfrak S_U^\ell$.  In particular, when $U=S$ is $\nest$--maximal and $\ell$ is the 
length of a maximal $\nest$--chain, the proposition follows.
\end{proof}

\subsection{Upper and lower distance bounds}\label{subsec:df_ul}
We now state and prove the remaining lemmas needed to complete the proof of Theorem~\ref{thm:distance_formula} and~\ref{thm:monotone_hierarchy_paths}.

\begin{lem}[Upper bound]\label{lem:df_u}
For every $K,s$ there exists $r$ with the following property. Let 
$\alpha\co\{0,\dots,n\}\to \cuco X$ be a $K$-monotone, $(r,K)$-proper discrete path connecting $x$ to $y$. Then
$$|\alpha|-1\leq \sum_{W\in\mathfrak S}\ignore{\dist_{W}(x,y)}{s}.$$
\end{lem}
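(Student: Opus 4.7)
Plan: The argument is a counting one. For each step of $\alpha$ I will assign a ``witness'' domain $W_i\in\mathfrak S$ in which the step projects far, and then show (a) each witness $W$ forces $\dist_W(x,y)\geq s$, and (b) the multiplicity with which any given $W$ appears as a witness is controlled by $\dist_W(x,y)$. Summing over $W$ then gives the desired upper bound on $|\alpha|-1$.

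First I would fix a large constant $C$ (to be chosen of the order $C=10^2(K+s+E)$) and set $r:=\theta_u(C)$, where $\theta_u$ is furnished by the uniqueness axiom (Definition~\ref{defn:space_with_distance_formula}.\eqref{item:dfs_uniqueness}).  Since $(r,K)$-properness yields $\dist_{\cuco X}(\alpha(i),\alpha(i+1))\geq r=\theta_u(C)$ for $0\leq i<n-1$, uniqueness supplies $W_i\in\mathfrak S$ with $\dist_{W_i}(\alpha(i),\alpha(i+1))\geq C$; for the terminal step $i=n-1$ we may either adjoin the witness of the neighboring step or simply absorb the single extra unit in the ``$-1$'' of the statement.

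Next I would verify that any $W$ appearing as some $W_i$ satisfies $\dist_W(x,y)\geq s$.  Pick $i$ with $W_i=W$.  By the triangle inequality in $\fontact W$, at least one of $\dist_W(\alpha(0),\alpha(i))$ and $\dist_W(\alpha(0),\alpha(i+1))$ is at least $(C-K)/2$: if the former were smaller than $(C-K)/2$ then $\dist_W(\alpha(0),\alpha(i+1))\geq C-(C-K)/2=(C+K)/2$ by the triangle inequality.  Applying $K$-monotonicity once more, propagating forward from whichever of $\alpha(i),\alpha(i+1)$ is far from $\alpha(0)$ up to $\alpha(n)$, yields $\dist_W(x,y)=\dist_W(\alpha(0),\alpha(n))\geq (C-3K)/2\geq s$ provided we took $C\geq 2s+3K$.

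The main obstacle, and the real content of the proof, is to control the multiplicity $n_W:=|\{i : W_i=W\}|$ in terms of $\dist_W(x,y)$.  The idea is that the projections of $\alpha$ to $\fontact W$ form a sequence whose distance from $\pi_W(\alpha(0))$ is $K$-monotone and which, at each index $i\in f^{-1}(W)$, makes a ``jump'' of size at least $C$.  Using $\delta$-hyperbolicity of $\fontact W$, together with the consistency inequalities and the partial order $\preceq$ of Proposition~\ref{prop:partial_order} applied to the collection of witnesses (so that transverse witnesses are comparable while orthogonal ones are handled by partial realization), I would argue that the successive jumps at indices in $f^{-1}(W)$ are, after hyperbolic bookkeeping, roughly aligned with a geodesic $[\pi_W(x),\pi_W(y)]_W$, so that each such step contributes at least a uniform amount to $\dist_W(x,y)$.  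Quantitatively, for $C$ chosen large compared to $K,\delta,E$, this yields $n_W\leq \dist_W(x,y)$.

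Putting the three steps together gives
\[
|\alpha|-1=\sum_{W\in\mathfrak S}n_W=\sum_{W\text{ a witness}}n_W\leq \sum_{W\text{ a witness}}\dist_W(x,y)\leq \sum_{W\in\mathfrak S}\ignore{\dist_W(x,y)}{s},
\]
where the last inequality uses the second step above (witnesses force $\dist_W(x,y)\geq s$). The hardest step is the third, where the tension between $K$-monotonicity (a weak, one-sided constraint on distances to $\alpha(0)$) and the potential for the projection to wander within $\fontact W$ must be resolved by leveraging the full hierarchical structure, not merely hyperbolicity of a single $\fontact W$.
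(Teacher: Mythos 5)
Your decomposition is the right one and Steps 1, 2, and 4 are sound, but Step 3 — the multiplicity bound $n_W\leq\dist_W(x,y)$ — is the entire content of the lemma, and the argument you sketch for it does not close. The tools you invoke (the partial order $\preceq$ of Proposition~\ref{prop:partial_order}, consistency, partial realization) all concern how projections to \emph{different} domains interact; they say nothing about how $\pi_W\circ\alpha$ can move around \emph{within} a single $\fontact W$. And $\delta$-hyperbolicity of $\fontact W$ plus $K$-monotonicity genuinely do not give the bound: in a tree a $K$-monotone sequence can sit on a sphere about $\pi_W(\alpha(0))$ and make as many ``big jumps'' as you like. Concretely, let $\cuco X$ be the Cayley graph of $F_2$ with $\mathfrak S=\{\cuco X\}$; a path that first runs out to distance $d$ from $\alpha(0)$ and then repeatedly steps in by $50KEs$ and back out by $50KEs$ along a different branch is $0$-monotone and $(r,K)$-proper, has on the order of $3^d$ steps (each a big jump witnessed by $W=\cuco X$), yet ends at a point $y$ with $\dist_{\cuco X}(x,y)\approx d$. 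So the bound $n_W\leq\dist_W(x,y)$ — and indeed the lemma's conclusion — fails for an arbitrary $K$-monotone $(r,K)$-proper path.

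The ingredient you are missing, and the one the paper's proof uses implicitly, is that the path $\alpha$ it is applied to (the output of Lemma~\ref{lem:proper_path}, itself built in Proposition~\ref{prop:monotone_paths}) lives inside a hull $H_\theta(x,y)$: for every $W$, the projection $\pi_W\circ\alpha$ stays $\theta$-close to a geodesic $[\pi_W(x),\pi_W(y)]_W$. Near a geodesic, monotonicity together with $\dist_{V_j}(\alpha(j),\alpha(j+1))\geq 100KEs$ really does force $\dist_{V_j}(\alpha(0),\cdot)$ to advance by roughly $100KEs$ (this is exactly the inequality the paper asserts, which does not follow from monotonicity alone). Granting that inequality, each repeat of $W$ as a witness advances $\dist_W(\alpha(0),\cdot)$ by at least $50KEs$, so $n_W\leq \dist_W(x,y)/(50KEs)+1$, which is the multiplicity bound you want, and your summation in Step 4 finishes. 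With this piece in hand your accounting and the paper's (phrased as a running-sum induction over $j$ rather than a per-domain multiplicity count) are essentially the same argument; the genuine gap in your proposal is the absence of the near-geodesic (hull) property, not anything hierarchical.
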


\begin{proof}
 Let $r=r(K,E,s)$ be large enough that, for any $a,b\in\cuco X$, if $\dist_{\cuco X}(a,b)\geq r$, then there exists $W\in\mathfrak S$ so that $\dist_{ W}(a,b)\geq 100KEs$. This $r$ is provided by Definition~\ref{defn:space_with_distance_formula}.\eqref{item:dfs_uniqueness}.  
 
For $0\leq j\leq n-1$, choose $V_j\in\mathfrak S$ so that $\dist_{V_j}(\alpha(j),\alpha(j+1))\geq 100KEs$. By monotonicity of $\alpha$ in $V_j$, for any $j'> j$ we have 
 $$\dist_{V_j}(\alpha(0),\alpha(j'))\geq \dist_{V_j}(\alpha(0),\alpha(j))+50KEs.$$  It follows by induction on $j\leq n$ that $\sum_{W\in\mathfrak S}\ignore{\dist_{W}(\alpha(0),\alpha(j))}{s}\geq \min\{j,n-1\}$.
\end{proof}

\begin{lem}[Lower bound]\label{lem:df_l}
There exists $s_0$ such that for all $s\geq s_0$, there exists $C$ with the following property.
$$\dist_{\cuco X}(x,y)\geq \frac{1}{C}\sum_{W\in\mathfrak S}\ignore{\dist_{W}(x,y)}{s}.$$
\end{lem}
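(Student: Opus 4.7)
My plan is to bound $\sum_W\ignore{\dist_W(x,y)}{s}$ by summing triangle-inequality contributions along an efficient discrete path lying in the hull, then bounding uniformly the contribution coming from any single step.

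First, by Corollary \ref{cor:paths_in_hulls}, there is a $K$-discrete, $K$-efficient path $\alpha\co\{0,\ldots,L\}\to H_\theta(x,y)$ joining $x$ to $y$, so that $L\leq K\dist_{\cuco X}(x,y)+K$. Since each $\pi_W$ is $(K,K)$-coarsely Lipschitz and each step of $\alpha$ has $\cuco X$-length at most $K$, we have $\dist_W(\alpha(i),\alpha(i+1))\leq K_1$ for some universal $K_1$. For each $W$ with $\dist_W(x,y)\geq s$, the triangle inequality yields $\dist_W(x,y)\leq\sum_i\dist_W(\alpha(i),\alpha(i+1))$; summing over the relevant $W$'s and interchanging the order of summation gives
\[
\sum_W\ignore{\dist_W(x,y)}{s}\;\leq\;\sum_{i=0}^{L-1}\Xi_i,\qquad\text{where}\qquad\Xi_i\;:=\!\!\sum_{W\,:\,\dist_W(x,y)\geq s}\!\!\dist_W(\alpha(i),\alpha(i+1)).
\]
A uniform bound $\Xi_i\leq C_0$ would then give $\sum_W\ignore{\dist_W(x,y)}{s}\leq C_0L\leq C_0 K\dist_{\cuco X}(x,y)+C_0 K$, yielding the lemma for $C$ depending on $K,K_1,C_0$.

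The hard part is proving this per-step bound. Setting $a:=\alpha(i),b:=\alpha(i+1)$, one has $\dist_{\cuco X}(a,b)\leq K$ and $a,b\in H_\theta(x,y)$, and each $\dist_W(a,b)\leq K_1$, so it suffices to bound the \emph{number} of $W$ satisfying simultaneously $\dist_W(x,y)\geq s$ and $\dist_W(a,b)\geq\eta$ for a fixed threshold $\eta>\xi$. Since $\pi_W(a),\pi_W(b)$ both lie within $\theta$ of a geodesic $[\pi_W(x),\pi_W(y)]$ of $\fontact W$, such a $W$ forces the step $(a,b)$ to ``record progress'' of at least $\eta-2\theta$ along the $W$-geodesic. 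I would first apply Lemma \ref{lem:ramsey_corollary} together with Lemma \ref{lem:pairwise_orthogonal} to reduce to a pairwise-transverse subfamily of such $W$'s; on such a subfamily Proposition \ref{prop:partial_order} applied to the projection tuple $\tup b:=(\pi_W(y))_W$ yields a total order $\preceq$.

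The main obstacle is to show that such a $\preceq$-chain has universally bounded length. The idea is that for successive elements $W\prec W'$ in the chain, the definition of $\preceq$ together with Lemma \ref{lem:order_1}, bounded geodesic image, and the consistency inequalities of Definition~\ref{defn:space_with_distance_formula}.\eqref{item:dfs_transversal} force the two points $a,b$ to straddle the $\rho$-transition between $W$ and $W'$ (coarsely, $a$ is near $\rho^{W'}_W$ in $\fontact W$ while $b$ is near $\rho^{W}_{W'}$ in $\fontact{W'}$). Since $\dist_{\cuco X}(a,b)\leq K$ and each projection is coarsely Lipschitz, only boundedly many such transitions can be realized simultaneously by a single step, giving a universal bound on the chain length and hence the desired constant $C_0$.
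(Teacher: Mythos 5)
Your overall framework---a $K$-discrete, $K$-efficient path $\alpha$ in the hull $H_\theta(x,y)$ from Corollary~\ref{cor:paths_in_hulls}, followed by a per-step analysis using Lemma~\ref{lem:ramsey_corollary} and the partial order from Proposition~\ref{prop:partial_order}---uses the same toolkit as the paper's proof. But the reduction ``it suffices to bound the number of $W$ with $\dist_W(x,y)\geq s$ and $\dist_W(a,b)\geq\eta$'' is where the argument breaks. The quantity you need to bound is $\Xi_i=\sum_{W:\dist_W(x,y)\geq s}\dist_W(\alpha(i),\alpha(i+1))$, and controlling the cardinality of $\{W:\dist_W(a,b)\geq\eta\}$ leaves unaccounted-for the potentially unboundedly many domains $W$ with $0<\dist_W(a,b)<\eta$. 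There is no a priori bound on the number of $W$ with $\dist_W(x,y)\geq s$ (it grows with $\dist_{\cuco X}(x,y)$), so the tail contribution $\eta\cdot|\{W:0<\dist_W(a,b)<\eta,\ \dist_W(x,y)\geq s\}|$ can be arbitrarily large; controlling it is essentially the distance formula applied to the bounded pair $(a,b)$, i.e.\ a special case of what you are trying to prove. Thus the per-step bound $\Xi_i\leq C_0$ does not follow, and the interchange-of-summation scheme collapses.

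The paper avoids this by counting discrete events rather than summing distances. For each relevant $V$ it places roughly $\dist_V(x,y)/10$ pairwise $10KE$-separated ``checkpoint'' balls on a geodesic $[\pi_V(x),\pi_V(y)]$, each at distance $\geq 10KE+1$ from $\{\pi_V(x),\pi_V(y)\}$, and then counts ``doors'': indices $j$ such that $\pi_V(\alpha(j))$ enters a checkpoint of some $V$ that $\pi_V(\alpha(j-1))$ had not entered. Since checkpoints lie far from $\{x,y\}$ in $\fontact V$, Corollary~\ref{cor:not_far_from_both} shows a single step cannot be a door for two \emph{transverse} domains simultaneously, so the set $\mathcal M(j)$ of domains witnessing $j$ as a door contains no transverse pair, and Lemma~\ref{lem:ramsey_corollary} gives $|\mathcal M(j)|\leq\chi$. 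Note this is the opposite reduction to yours: instead of restricting to a pairwise-transverse subfamily and trying to bound $\preceq$-chain lengths (which I do not see how to do non-circularly given only $\dist_{\cuco X}(a,b)\leq K$), the paper \emph{excludes} transverse pairs outright and lets Ramsey's theorem handle the remaining orthogonal or nested domains. If you want to salvage your proof, replacing the per-step sum $\Xi_i$ with this checkpoint-crossing count is the missing idea.
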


\begin{proof}
From Corollary~\ref{cor:paths_in_hulls}, we obtain a $K$--discrete
path $\alpha\co\{0,n\}\to\cuco X$ joining $x,y$ and having the property
that the (coarse) path $\pi_V\circ\alpha\co\{0,\ldots,n\}\to\fontact V$
lies in the $K$--neighborhood of a geodesic from $\pi_V(x)$ to
$\pi_V(y)$.  Moreover, $\alpha$ is $K$--efficient, by the same
corollary.

Fix $s_0\geq 10^3KE$.  A \emph{checkpoint for $x,y$} in $V\in\mathfrak S$ is a ball $Q$ in $\fontact V$ so that $\pi_V\circ\alpha$ intersects $Q$ and $\dist_{ V}(\{x,y\},Q)\geq 10KE+1$.  Note that any ball of radius $10KE$ centered on a geodesic from $\pi_V(x)$ to $\pi_V(y)$ is a checkpoint for $x,y$ in $V$, provided it is sufficiently far from $\{x,y\}$.

For each $V\in\relevant(x,y,10^3KE)$, choose a set $\mathfrak C_V$ of $\left\lceil\frac{\dist_V(x,y)}{10}\right\rceil$ checkpoints for $x,y$ in $V$, subject to the requirement that $\dist_{ V}(C_1,C_2)\geq 10KE$ for all distinct $C_1,C_2\in\mathfrak C_V$.  For each $V\in\relevant(x,y,10^3KE)$, we have $10|\mathfrak C_V|\geq\dist_V(x,y)$, so $$\sum_{V\in\mathfrak S}|\mathfrak C_V|\geq\frac{1}{10}\sum_{W\in\mathfrak S}\ignore{\dist_{W}(x,y)}{s_0}.$$

Each $j\in\{0,\ldots,n\}$ is a \emph{door} if there exists
$V\in\relevant(x,y,10^3KE)$ and $C\in\mathfrak C_V$ such that
$\pi_V(\alpha(j))\in C$ but $\pi_V(\alpha(j-1))\not\in C$.  The
\emph{multiplicity} of a door $j$ is the cardinality of the set
$\mathcal M(j)$ of $V\in\relevant(x,y,10^3KE)$ for which there exists $C\in\mathfrak C_V$ with 
$\pi_V(\alpha(j))\in C$ and $\pi_V(\alpha(j-1))\not\in C$.  Since $\mathfrak C_V$ is a set of
pairwise-disjoint checkpoints, $j$ is a door for at most one element
of $\mathfrak C_V$, for each $V$.  Hence the multiplicity of $j$ is
precisely the total number of checkpoints in
$\cup_{V\in\relevant(x,y,10^3KE)}\mathfrak C_V$ for which $j$ is a
door.

We claim that the set $\mathcal M(j)$ does not contain a pair of transverse elements.  Indeed, suppose that $U,V\in\mathcal M(j)$, satisfy $U\transverse V$.  Let $Q_V\in\mathfrak C_V,Q_U\in\mathfrak C_U$ be the checkpoints containing $\pi_V(\alpha(j)),\pi_U(\alpha(j))$ respectively, so that $\dist_U(\alpha(j),\{x,y\}),\dist_V(\alpha(j),\{x,y\})\geq 10KE+1>10E$, contradicting Corollary~\ref{cor:not_far_from_both}.  Lemma~\ref{lem:ramsey_corollary} thus gives $|\mathcal M_V|\leq\chi$.  Now, $|\alpha|$ is at least the number of doors in $\{0,\ldots,n\}$, whence $|\alpha|\geq\frac{1}{\chi}\sum_{V\in\mathfrak S}|\mathfrak C_V|$.  Since $\alpha$ is $K$--efficient, we obtain $$\dist_{\cuco X}(x,y)\geq\frac{1}{10\chi K}\sum_{W\in\mathfrak S}\ignore{\dist_{W}(x,y)}{s_0}.$$
For $s\geq s_0$, $\sum_{W\in\mathfrak S}\ignore{\dist_{W}(x,y)}{s}\leq\sum_{W\in\mathfrak S}\ignore{\dist_{W}(x,y)}{s_0},$ so the claim follows.
\end{proof}

\section{Hierarchical quasiconvexity and gates}\label{sec:hierarchical_quasiconvexity_and_gates}
We now introduce the notion of hierarchical quasiconvexity, which is essential for the discussion of product regions, the combination theorem of Section~\ref{sec:combination}, and in the forthcoming~\cite{DurhamHagenSisto:HHS_III}.

\begin{defn}[Hierarchical quasiconvexity]\label{defn:hierarchical_quasiconvexity}
Let $(\cuco X,\mathfrak S)$ be a hierarchically hyperbolic space.  
Then $\cuco Y\subseteq\cuco X$ is \emph{$k$--hierarchically quasiconvex}, for some $k\co[0,\infty)\to[0,\infty)$, if the following hold:
\begin{enumerate}
 \item For all $U\in\mathfrak S$, the projection $\pi_U(\cuco Y)$ is 
 a $k(0)$--quasiconvex subspace of the $\delta$--hyperbolic space $\fontact U$.
 \item For all $\kappa\geq0$ and $\kappa$-consistent tuples $\tup
 b\in\prod_{U\in\mathfrak S}2^{\fontact U}$ with $b_U\subseteq\pi_U(\cuco Y)$ for all $U\in\mathfrak S$, each point
 $x\in\cuco X$ for which $\dist_{
 U}(\pi_U(x),b_U)\leq\theta_e(\kappa)$ (where $\theta_e(\kappa)$ is as in Theorem~\ref{thm:realization}) satisfies $\dist(x,\cuco Y)\leq
 k(\kappa)$.
\end{enumerate}
\end{defn}

\begin{rem} Note that condition (2) in the above definition is 
    equivalent to:  For every $\kappa>0$ and every point 
    $x\in\cuco X$ satisfying 
    $\dist_{U}(\pi_U(x),\pi_{U}(\cuco Y))\leq\kappa$ 
    for all $U\in\mathfrak S$, has the property that 
    $\dist(x,\cuco Y)\leq k(\kappa)$.
\end{rem}

\begin{lem}\label{lem:proj_consistent}
 For each $Q$ there exists $\kappa$ so that the following holds. Let $\cuco Y\subseteq\cuco X$ be such that 
$\pi_V(\cuco Y)$ is $Q$--quasiconvex for each $V\in\mathfrak S$. Let $x\in\cuco X$ and, for each $V\in\mathfrak S$, 
let $p_V\in\pi_V(\cuco Y)$ satisfy $\dist_V(x,p_V)\leq \dist_V(x,\cuco Y)+1$. Then $(p_V)$ is $\kappa$--consistent.
\end{lem}

\begin{proof}
 For each $V$, choose $y_V\in\cuco Y$ so that $\pi_V(y_V)=p_V$.

Suppose that $V\transverse W$ or $V\nest W$.  By Lemma~\ref{lem:median_cons} and Theorem~\ref{thm:realization}, there exists $z\in\cuco X$ so that for all $U\in\mathfrak S$, the projection $\pi_U(z)$ lies $C$--close to each of the geodesics $[\pi_U(x),\pi_U(y_V)],$ $[\pi_U(x),\pi_U(y_W)],$~and $[\pi_U(y_W),\pi_U(y_V)]$, where $C$ depends on $\cuco X$.  Hence $\dist_V(p_V,z)$ and $\dist_W(p_W,z)$ are uniformly bounded, by quasiconvexity of $\pi_V(\cuco Y)$ and $\pi_W(\cuco Y)$.  

Suppose that $V\transverse W$.  Since the tuple $(\pi_U(z))$ is consistent, either $y_V$ lies uniformly close in $\fontact V$ to $\rho^W_V$, or the same holds with $V$ and $W$ interchanged, as required.  Suppose that $V\nest W$.  Suppose that $\dist_W(p_W,\rho^V_W)$ is sufficiently large, so that we have to bound $\diam_V(\rho^W_V(p_W)\cup p_V)$.  Since $\dist_W(z,p_W)$ is uniformly bounded, $\dist_W(z,\rho^V_W)$ is sufficiently large that consistency ensures that $\diam_V(\rho^W_V(\pi_W(z))\cup\pi_V(z))$ is uniformly bounded.  Since any geodesic from $p_W$ to $z$ lies far from $\rho^V_W$, the sets $\rho^W_V(\pi_W(z))$ and $\rho^W_V(p_V)$ coarsely coincide. Since $\pi_V(z)$ coarsely coincides with $p_V$ by construction of $z$, we have the required bound.  Hence the tuple with $V$--coordinate $p_V$ is $\kappa$--consistent for uniform $\kappa$.
\end{proof}

\begin{defn}[Gate]\label{defn:gate}
A coarsely Lipschitz map 
$\gate_{\cuco Y}\colon\cuco X\to\cuco Y$ is called a \emph{gate map} 
if for each $x\in\cuco X$ it satisfies:  $\gate_{\cuco
Y}(x)$ is a point $y\in\cuco Y$ 
such that for all $V\in\mathfrak S$,
the set $\pi_V(y)$ (uniformly) coarsely coincides with the projection
of $\pi_V(x)$ to the $k(0)$--quasiconvex set $\pi_V(\cuco Y)$. The 
point $\gate(x)$ is called the \emph{gate of $x$ in $\cuco Y$}. 
The uniqueness axiom implies that when such a map exists it 
is coarsely well-defined.  
\end{defn}

We first establish that, as should be the case for a (quasi)convexity
property, one can coarsely project to hierarchically quasiconvex
subspaces. The next lemma shows that gates exist for 
$k$--hierarchically quasiconvex subsets.

\begin{lem}[Existence of coarse gates]\label{lem:gate}
If $\cuco Y\subseteq\cuco X$ is $k$--hierarchically quasiconvex and non-empty, then there exists a gate map for 
$\cuco Y$, i.e., for each $x\in \cuco X$ 
there exists $y\in\cuco Y$ such that for all $V\in\mathfrak S$,
the set $\pi_V(y)$ (uniformly) coarsely coincides with the projection
of $\pi_V(x)$ to the $k(0)$--quasiconvex set $\pi_V(\cuco Y)$.
\end{lem}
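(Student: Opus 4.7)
The plan is to build the gate of $x$ in $\cuco Y$ by forming the coordinate-wise nearest-point projections to each $\pi_V(\cuco Y)$ in $\fontact V$, invoking the Realization Theorem, and pulling the realized point into $\cuco Y$ via the second clause of hierarchical quasiconvexity. Concretely, for each $V\in\mathfrak S$ I will set $b_V$ to be a closest-point projection of $\pi_V(x)$ to the $k(0)$--quasiconvex set $\pi_V(\cuco Y)$ in the $\delta$--hyperbolic space $\fontact V$, so that $b_V\subseteq\pi_V(\cuco Y)$.

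The main step will be to show that $\tup b=(b_V)_{V\in\mathfrak S}$ is $\kappa$--consistent for some $\kappa$ depending only on $E,\delta,k(0)$. Granted this, Theorem~\ref{thm:realization} produces $z\in\cuco X$ with $\dist_V(z,b_V)\leq\theta_e(\kappa)$ for every $V$; since $b_V\subseteq\pi_V(\cuco Y)$, the point $z$ projects $\theta_e(\kappa)$--close to $\pi_V(\cuco Y)$ in each coordinate, so the second clause of Definition~\ref{defn:hierarchical_quasiconvexity} (in the equivalent form from the remark that follows it) yields $y\in\cuco Y$ with $\dist_{\cuco X}(z,y)$ bounded in terms of $\kappa$ and $k$. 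I will set $\gate_{\cuco Y}(x)=y$; the assertion that $\pi_V(y)$ coarsely agrees with $b_V$ for each $V$ then follows from coarse Lipschitz-ness of $\pi_V$ combined with the triangle inequality in $\fontact V$.

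The principal obstacle will be the consistency of $\tup b$. For a transverse pair $V\transverse U$ with both $\dist_V(b_V,\rho^U_V)>\kappa$ and $\dist_U(b_U,\rho^V_U)>\kappa$, the consistency of the tuple $(\pi_W(x))_W$ will let me assume, after swapping $V$ and $U$ if necessary, that $\dist_V(x,\rho^U_V)\leq E$; I will then split on whether some $y_0\in\cuco Y$ has $\pi_V(y_0)$ within $E$ of $\rho^U_V$. In the first subcase, $\pi_V(y_0)\in\pi_V(\cuco Y)$ lies $2E$--close to $\pi_V(x)$, contradicting that $b_V$ is the nearest point of $\pi_V(\cuco Y)$ to $\pi_V(x)$, which must sit at distance at least $\kappa-E$. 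In the second subcase, the consistency of every $y\in\cuco Y$ forces $\pi_U(y)$ into the $E$--neighborhood of $\rho^V_U$; hence $\pi_U(\cuco Y)$, and in particular $b_U$, lies in that neighborhood, again contradicting the assumption. The nested case $U\propnest V$ will proceed by a parallel dichotomy, now invoking bounded geodesic image (Definition~\ref{defn:space_with_distance_formula}.(\ref{item:dfs:bounded_geodesic_image})) and quasiconvexity of $\pi_V(\cuco Y)$: when no $y\in\cuco Y$ projects close to $\rho^U_V$ in $\fontact V$, all geodesics between points of $\pi_V(\cuco Y)$ remain far from $\rho^U_V$, so $\rho^V_U$ is coarsely constant on $\pi_V(\cuco Y)$ with coarse value $\rho^V_U(b_V)$; the consistency of each $y\in\cuco Y$ then confines $\pi_U(\cuco Y)$ to a bounded neighborhood of $\rho^V_U(b_V)$, yielding the required bound on $\diam_U(b_U\cup\rho^V_U(b_V))$.

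Once existence is established, coarse Lipschitz-ness of the map $x\mapsto\gate_{\cuco Y}(x)$ will follow from combining three ingredients: each $\pi_V$ is coarsely Lipschitz; closest-point projection to a quasiconvex subset of a $\delta$--hyperbolic space is coarsely Lipschitz; and the realized point in Theorem~\ref{thm:realization} is coarsely unique. Thus nearby $x,x'$ yield nearby tuples $\tup b,\tup b'$, hence nearby realized points, and hence nearby gate points in $\cuco Y$.
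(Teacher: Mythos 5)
Your proposal takes a genuinely different route from the paper's. You verify consistency of the tuple $\tup b$ directly via a dichotomy on whether $\pi_V(\cuco Y)$ meets a neighborhood of $\rho^U_V$. The paper instead chooses, for each pair $V,W$ to be tested, points $y_V,y_W\in\cuco Y$ with $\pi_V(y_V)\approx b_V$, $\pi_W(y_W)\approx b_W$, and applies Lemma~\ref{lem:median_cons} together with realization to obtain a coarse median point $z\in\cuco X$ of $x,y_V,y_W$; then $\pi_V(z)\approx b_V$ and $\pi_W(z)\approx b_W$ by quasiconvexity, so consistency of $\tup b$ at the pair $(V,W)$ is inherited wholesale from the consistency of $(\pi_U(z))_U$. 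This is cleverer in that it delegates all case analysis to a single lemma, while your approach is more hands-on. Your transverse case is correct, and your final steps (realization, hierarchical quasiconvexity, and the coarse Lipschitz argument via the uniqueness axiom and discretization) match the paper.

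However, your nested case has a gap. You announce a ``parallel dichotomy'' to the transverse case, but the transverse argument crucially used the consistency of $x$ to assume WLOG that $\dist_V(\pi_V(x),\rho^U_V)\leq E$; in the nested case one cannot swap $U$ and $V$, so you may have $\dist_V(\pi_V(x),\rho^U_V)>E$ while simultaneously some $y_0\in\cuco Y$ projects close to $\rho^U_V$. In that situation $b_V$ need not be close to $\rho^U_V$ (so no quick contradiction as in the transverse ``yes'' sub-case), and your explicitly written argument only covers the complementary ``no $y$ projects close'' sub-case. The gap is fillable: one observes that the geodesic $[\pi_V(x),\pi_V(y_0)]$ passes near $b_V$ (gate property of nearest-point projection to a quasiconvex set) and approaches $\rho^U_V$ only near its terminal point $\pi_V(y_0)$, so the sub-geodesic $[\pi_V(x),b_V]$ stays outside $\neb_E(\rho^U_V)$; BGI then gives $\rho^V_U(\pi_V(x))\approx\rho^V_U(b_V)$, consistency of $x$ gives $\pi_U(x)\approx\rho^V_U(b_V)$, consistency of some $y_V$ with $\pi_V(y_V)\approx b_V$ gives $\rho^V_U(b_V)\in\pi_U(\cuco Y)$ up to bounded error, and hence $b_U\approx\rho^V_U(b_V)$. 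This extra hyperbolic-geometry step is exactly what the median trick in the paper sidesteps; you should spell it out before claiming the consistency.
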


\begin{proof}
For each $V\in\mathfrak S$, let $p_V\in\pi_V(\cuco Y)$ satisfy $\dist_V(x,p_V)\leq \dist_V(x,\cuco Y)+1$. Then 
$(p_V)$ is $\kappa$--consistent for some $\kappa$ independent of $x$ by Lemma \ref{lem:proj_consistent}.  (Note that $(p_v)$ 
is admissible by construction.)

Theorem~\ref{thm:realization} and the definition of hierarchical
quasiconvexity combine to supply $y'\in\neb_{k(\kappa)}(\cuco Y)$ with
the desired projections to all $V\in\mathfrak S$; this point lies at
distance $k(\kappa)$ from some $y\in\cuco Y$ with the desired
property.

We now check that this map is coarsely Lipschitz.  Let
$x_0,x_n\in\cuco X$ be joined by a uniform quasigeodesic $\gamma$.
By sampling $\gamma$, we obtain a discrete path
$\gamma'\colon\{0,\ldots,n\}\to\cuco X$ such that $\dist_{\cuco
X}(\gamma'(i),\gamma'(i+1))\leq K$ for $0\leq i\leq n-1$, where $K$
depends only on $\cuco X$, and such that
$\gamma'(0)=x_0,\gamma'(n)=x_n$.  Observe that $\dist_{\cuco
X}(\gate_{\cuco Y}(x_0),\gate_{\cuco
Y}(x_n))\leq\sum_{i=0}^{n-1}\dist_{\cuco X}(\gate_{\cuco
Y}(\gamma'(i)),\gate_{\cuco Y}(\gamma'(i+1)))$, so it suffices to
exhibit $C$ such that $\dist_{\cuco X}(\gate_{\cuco Y}(x),\gate_{\cuco
Y}(x'))\leq C$ whenever $\dist_{\cuco X}(x,x')\leq K$.  But if
$\dist_{\cuco X}(x,x')\leq K$, then each $\dist_U(x,x')\leq K'$ for
some uniform $K'$, by
Definition~\ref{defn:space_with_distance_formula}.\eqref{item:dfs_curve_complexes},
whence the claim follows from the fact that each $\fontact
U\to\pi_U(\cuco Y)$ is coarsely Lipschitz (with constant depending
only on $\delta$ and $k(0)$) along with the uniqueness axiom
(Definition~\ref{defn:space_with_distance_formula}.\eqref{item:dfs_uniqueness}).
\end{proof}

\subsection{Hierarchically quasiconvex subspaces are hierarchically hyperbolic}\label{sec:hier_conv_hier_hyp}

\begin{prop}\label{prop:sub_hhs}
Let $\cuco Y\subseteq\cuco X$ be a hierarchically $k$-quasiconvex subset of the hierarchically hyperbolic space $(\cuco X,\mathfrak S)$.  Then $(\cuco Y,\dist)$ is a hierarchically hyperbolic space, where $\dist$ is the metric inherited from $\cuco X$. 
\end{prop}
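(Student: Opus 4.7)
The natural structure to put on $\cuco Y$ uses the same index set $\mathfrak S$ (with the same $\nest,\orth,\transverse$), takes $\fontact U^{\cuco Y}:=\pi_U(\cuco Y)$ with the subspace metric from $\fontact U$ (which is $\delta'$--hyperbolic since it is $k(0)$--quasiconvex in the $\delta$--hyperbolic space $\fontact U$), uses the restriction $\pi_U^{\cuco Y}:=\pi_U|_{\cuco Y}$ as projection (which is automatically coarsely surjective), and defines $\rho^U_V|_{\cuco Y}\subseteq\pi_V(\cuco Y)$ to be the image of the old $\rho^U_V$ under closest-point projection to the quasiconvex set $\pi_V(\cuco Y)$, whenever this is needed. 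Because each $\pi_U^{\cuco Y}$ is coarsely surjective, it will suffice to verify the streamlined axioms of Proposition~\ref{prop:new_hhs_defn}; this spares us from having to define the maps $\rho^W_V\co\fontact W^{\cuco Y}\to 2^{\fontact V^{\cuco Y}}$ by hand.

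First I would check that $(\cuco Y,\dist_{\cuco X}|_{\cuco Y})$ is quasigeodesic: given $y,y'\in\cuco Y$, Theorem~\ref{thm:monotone_hierarchy_paths} gives a $D_0$--hierarchy path $\gamma$ in $\cuco X$, and $\gate_{\cuco Y}\circ\gamma$ (Lemma~\ref{lem:gate}) is a discrete path in $\cuco Y$ whose consecutive points are uniformly close in $\cuco X$ (hence in $\cuco Y$, since we have the induced metric) and whose total length is controlled by $\dist_{\cuco X}(y,y')$; the endpoints coarsely coincide with $y,y'$ because $\gate_{\cuco Y}$ coarsely fixes $\cuco Y$. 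Projections $\pi_U^{\cuco Y}$ are coarsely Lipschitz and have uniformly bounded-diameter images by inheritance from $\cuco X$, and $\rho^U_V|_{\cuco Y}$ has uniformly bounded diameter since closest-point projection onto a quasiconvex subset of a hyperbolic space is coarsely Lipschitz. Nesting, orthogonality, and finite complexity are unchanged.

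Next I would verify the consistency and bounded-geodesic-image axioms. For transversality, if $y\in\cuco Y$ and $U\transverse V$ then $\pi_V(y)\in\pi_V(\cuco Y)$, so the triangle inequality together with the fact that a point of a quasiconvex set is at distance at most $2\kappa_0$ from the closest-point projection of anything within $\kappa_0$ of it allows us to replace $\rho^U_V$ by $\rho^U_V|_{\cuco Y}$ in the consistency inequality at the cost of a bounded additive error; the nested case and the auxiliary inequality $\dist_W(\rho^U_W,\rho^V_W)\leq\kappa_0$ when $U\nest V$ are handled identically. For the BGI variant Definition~\ref{defn:space_with_distance_formula}.(\ref{item:dfs:bounded_geodesic_image})$'$, observe that a geodesic in the quasiconvex subset $\pi_W(\cuco Y)$ is a uniform quasigeodesic of $\fontact W$ and hence stays within a uniform distance of any $\fontact W$--geodesic with the same endpoints; if such a $\pi_W(\cuco Y)$--geodesic stays sufficiently far from $\rho^V_W|_{\cuco Y}$, then the corresponding $\fontact W$--geodesic stays $(E+2\delta)$--far from $\rho^V_W$, and applying BGI in $\cuco X$ gives $\dist_V(y,y')\leq E$. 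The large-link axiom is inherited directly since the projections and the distances in $\fontact W$ are inherited.

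The main obstacle is the partial-realization axiom. Given pairwise orthogonal $\{V_j\}\subseteq\mathfrak S$ and points $p_j\in\pi_{V_j}(\cuco Y)$, I would pick lifts and apply the partial realization axiom in $(\cuco X,\mathfrak S)$ to obtain $x\in\cuco X$, then define the realization point in $\cuco Y$ to be $y:=\gate_{\cuco Y}(x)$. By definition of $\gate_{\cuco Y}$ and Lemma~\ref{lem:gate}, $\pi_{V_j}(y)$ is the closest-point projection of $\pi_{V_j}(x)$ to the quasiconvex set $\pi_{V_j}(\cuco Y)$, and since $p_j\in\pi_{V_j}(\cuco Y)$ is already uniformly close to $\pi_{V_j}(x)$, $\pi_{V_j}(y)$ is uniformly close to $p_j$; similarly, for $V$ with $V_j\nest V$ (resp.\ $W\transverse V_j$), the point $\pi_V(y)$ (resp.\ $\pi_W(y)$) is the closest-point projection of $\pi_V(x)$ (resp.\ $\pi_W(x)$) to $\pi_V(\cuco Y)$ (resp.\ $\pi_W(\cuco Y)$), hence uniformly close to $\rho^{V_j}_V|_{\cuco Y}$ (resp.\ $\rho^{V_j}_W|_{\cuco Y}$). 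Finally, the uniqueness axiom for $\cuco Y$ follows from uniqueness in $\cuco X$, because $\dist_{\cuco Y}=\dist_{\cuco X}|_{\cuco Y}$ and $\dist_V^{\cuco Y}=\dist_V$ for $y,y'\in\cuco Y$. Invoking Proposition~\ref{prop:new_hhs_defn} then yields the HHS structure on $\cuco Y$.
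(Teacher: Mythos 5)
Your proposal takes a genuinely different route from the paper's proof: the paper keeps the original hyperbolic spaces $\fontact U$ and modifies the projections by post-composing with the closest-point retraction $r_U\colon\fontact U\to\pi_U(\cuco Y)$ (so $\pi'_U=r_U\circ\pi_U$ and the relative projections become $r_V\circ\rho^U_V$), whereas you replace each $\fontact U$ by $\pi_U(\cuco Y)$ itself, keeping the projections as restrictions. Both approaches are natural, and in fact the paper records your alternative in the remark immediately following the proposition, noting that it has the advantage of producing coarsely surjective projections (which is precisely what lets you invoke Proposition~\ref{prop:new_hhs_defn} and avoid defining $\rho^W_V$ by hand). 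Your verification of consistency, the bounded-geodesic-image variant, large links, partial realization via the gate, and uniqueness are all sound, modulo constant-juggling of exactly the kind the paper elides.

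However, there is a technical gap you need to close. You declare $\fontact U^{\cuco Y}:=\pi_U(\cuco Y)$ with the subspace metric inherited from $\fontact U$ and assert that it is $\delta'$--hyperbolic. The HHS axioms require each $\fontact U$ to be a $\delta$--hyperbolic \emph{geodesic} metric space (geodesics are used essentially in bounded geodesic image, large links, and in Lemma~\ref{lem:median_cons}/the realization machinery), and a $k(0)$--quasiconvex subset of a hyperbolic space with the induced metric is generally not geodesic. The fix is exactly what the paper's remark says: replace $\pi_U(\cuco Y)$ by a uniform thickening (for instance a metric neighborhood of $\pi_U(\cuco Y)$ inside $\fontact U$, or the union of geodesics between its points) so as to obtain a genuine geodesic hyperbolic space quasi-isometric to $\pi_U(\cuco Y)$, with uniform constants controlled by $k(0)$ and $\delta$. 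Once you make this adjustment, every step of your argument goes through unchanged: your geodesics in $\fontact U^{\cuco Y}$ are then literal geodesics in a hyperbolic geodesic space, they are uniform quasigeodesics of $\fontact U$, and the comparison with $\rho^V_W$ in your BGI argument is exactly as you describe. Without the thickening, the phrase ``a geodesic in the quasiconvex subset $\pi_W(\cuco Y)$'' is not well-defined.
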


\begin{proof}
There exists $K$ so that any two points in $\cuco Y$ are joined by a uniform quasigeodesic.  Indeed, any two points in $\cuco Y$ are joined by a hierarchy path in $\cuco X$, which must lie uniformly close to $\cuco Y$.

We now define a hierarchically hyperbolic structure.  For each $U$, 
let $r_U\co \fontact U\to\pi_U(\cuco Y)$ be the coarse projection, 
which exists by quasiconvexity.  The index set is $\mathfrak S$, and 
the associated hyperbolic spaces are the various $\fontact U$.  For 
each $U$, define a projection $\pi'_U\co\cuco \to\fontact U$ by $\pi'_U=r_U\circ\pi_U$, and for each non-orthogonal pair $U,V\in\mathfrak S$, the corresponding relative projection $\fontact U\to\fontact V$ is given by $r_V\circ\rho^U_V$.  All of the requirements of Definition~\ref{defn:space_with_distance_formula} involving only the various relations on $\mathfrak S$ are obviously satisfied, since we have only modified the projections.  The consistency inqualities continue to hold since each $r_U$ is uniformly coarsely Lipschitz.  The same is true for bounded geodesic image and the large link lemma.  Partial realization holds by applying the map $\gate_{\cuco Y}$ to points constructed using partial realization in $(\cuco X,\mathfrak S)$.
\end{proof}

\begin{rem}[Alternative hierarchically hyperbolic structures]
In the above proof, one can replace each $\fontact U$ by a thickening $\fontact U_{\cuco Y}$ of $\pi_U(\cuco Y)$ (this set is quasiconvex; the thickening is to make a hyperbolic geodesic space).  This yields a hierarchically hyperbolic structure with coarsely surjective projections.
\end{rem}

\subsection{Standard product regions}\label{subsec:product_regions}
In this section, we describe a class of hierarchically quasiconvex subspaces called \emph{standard product regions} that will 
be useful in future applications. 
We first recall a construction from~\cite[Section~13]{BehrstockHagenSisto:HHS_I}.  

\begin{defn}[Nested partial tuple]\label{defn:nested_partial_tuple}
Recall $\mathfrak S_U=\{V\in\mathfrak S:V\nest U\}$.  Fix
$\kappa\geq\kappa_0$ and let $\mathbf F_U$ be the set of
$\kappa$--consistent tuples in $\prod_{V\in\mathfrak S_U}2^{\fontact
V}$. 
\end{defn}

\begin{defn}[Orthogonal partial tuple]\label{defn:orthogonal_partial_tuple}
Let $\mathfrak S_U^\orth=\{V\in\mathfrak S:V\orth U\}\cup\{A\}$, where
$A$ is a $\nest$--minimal element $A$ such that $V\nest A$ for all
$V\orth U$.  Fix $\kappa\geq\kappa_0$, let $\mathbf E_U$ be the set of
$\kappa$--consistent  tuples in $\prod_{V\in\mathfrak
S_U^\orth-\{A\}}2^{\fontact V}$. 
\end{defn}

\begin{cons}[Product regions in $\cuco X$]\label{const:embedding_product_regions}
Given $\cuco X$ and $U\in\mathfrak S$, there are coarsely well-defined
maps $\phi^\nest,\phi^\orth\co\mathbf F_U,\mathbf E_U\to\cuco X$ whose
images are hierarchically quasiconvex and which extend to a coarsely
well-defined map $\phi_U\co\mathbf F_U\times \mathbf E_U\to\cuco X$
with hierarchically quasiconvex image.  Indeed, for each $(\tup a,\tup b)\in
\mathbf F_U\times \mathbf E_U$, and each $V\in\mathfrak S$, define the
co-ordinate $(\phi_U(\tup a,\tup b))_V$ as follows.  If $V\nest U$,
then $(\phi_U(\tup a,\tup b))_V=a_V$.  If $V\orth U$, then
$(\phi_U(\tup a,\tup b))_V=b_V$.  If $V\transverse U$, then
$(\phi_U(\tup a,\tup b))_V=\rho^U_V$.  Finally, if $U\nest V$, and
$U\neq V$, let $(\phi_U(\tup a,\tup b))_V=\rho^U_V$.

We now verify that the tuple $\phi_U(\tup a,\tup b)$ is consistent.
If $W,V\in\mathfrak S$, and either $V$ or $W$ is transverse to $U$,
then the consistency inequality involving $W$ and $V$ is satisfied in view of Proposition~\ref{prop:rho_consistency}.  The same holds if $U\nest W$ or $U\nest V$.  Hence, 
it remains to consider the cases where $V$ and $W$ are each either nested into or orthogonal to $U$: if $V,W\nest U$ or $V,W\orth U$ then
consistency holds by assumption; otherwise, up to reversing the roles 
of $V$ and $W$ we have  $V\nest U$ and
$W\orth U$, in which case $V\orth W$ and there is nothing to check.
Theorem~\ref{thm:realization} thus supplies the map $\phi_U\co \mathbf F_U\times\mathbf E_U\to\cuco X$.  Fixing any 
$e\in\mathbf E_U$ yields a map $\phi^\nest\co\mathbf F_U\times\{e\}\to\cuco X$, and $\phi^\orth$ is defined 
analogously.  Note that these maps depend on choices of basepoints in $\mathbf E_U,\mathbf F_U$.

Where it will not introduce confusion (e.g., where the basepoints are understood or immaterial), we abuse
notation and regard $\mathbf F_U,\mathbf E_U$ as subspaces of $\cuco
X$, i.e., $\mathbf F_U=\image\phi^\nest,\mathbf E_U=\image\phi^\orth$.
\end{cons}

\begin{prop}\label{prop:hereditary_HHS}
When $\mathbf E_U,\mathbf F_U\subset\cuco X$ are endowed with the subspace metric $\dist$, the spaces $(\mathbf 
F_U,\mathfrak S_U)$ and $(\mathbf E_U,\mathfrak S_U^\orth)$ are hierarchically hyperbolic; if $U$ is not $\nest$--maximal, 
then their complexity is strictly less than that of $(\cuco X,\mathfrak S)$.  Moreover, $\phi^\nest$ and $\phi^\orth$ 
determine hieromorphisms $(\mathbf F_U,\mathfrak S_U),(\mathbf E_U,\mathfrak S_U^\orth)\to(\cuco X,\mathfrak S)$.
\end{prop}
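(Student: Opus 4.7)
My plan is to equip $\mathbf F_U$ with the index set $\mathfrak S_U$, associated hyperbolic spaces $\fontact V$ for $V \in \mathfrak S_U$, projections $\pi_V$ restricted to $\mathbf F_U$, and relative projections $\rho^V_W$ inherited unchanged from $(\cuco X, \mathfrak S)$; and analogously for $\mathbf E_U$ with $\mathfrak S_U^\orth$ (including the artificial $\nest$-maximal element $A$). I then need to verify the axioms of Definition~\ref{defn:space_with_distance_formula} and check that the inclusion data assemble into a hieromorphism.

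The first preliminary is that $\mathbf F_U$, with the subspace metric, is a quasigeodesic space. Since $\mathbf F_U$ is hierarchically quasiconvex in $\cuco X$ (by Construction~\ref{const:embedding_product_regions}), any two of its points are joined by a hierarchy path in $\cuco X$ (Theorem~\ref{thm:monotone_hierarchy_paths}) that stays uniformly close to $\mathbf F_U$; sampling such a path and applying $\gate_{\mathbf F_U}$ (Lemma~\ref{lem:gate}) produces a uniform quasigeodesic inside $\mathbf F_U$. After this, most axioms restrict cleanly. The projection, nesting, orthogonality, and transversality axioms are immediate: $U$ is the $\nest$-maximal element of $\mathfrak S_U$, the three relations on $\mathfrak S_U$ are just restrictions, and consistency for points of $\mathbf F_U$ is a special case of consistency in $(\cuco X,\mathfrak S)$. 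Finite complexity drops by at least one when $U$ is not $\nest$-maximal, because any $\nest$-chain in $\mathfrak S_U$ sits inside $\mathfrak S$ and can be extended by the ambient $\nest$-maximum $S$; for $\mathfrak S_U^\orth$ the analogous argument uses that $A \neq S$ by the orthogonality axiom. Bounded geodesic image and the large link axiom are restrictions, since all the distances and $\rho$-sets involved are literally the same.

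The two axioms needing a little care are partial realization and uniqueness. For partial realization, given pairwise orthogonal $\{V_j\} \subseteq \mathfrak S_U$ and points $p_j \in \pi_{V_j}(\mathbf F_U)$, I would invoke partial realization in $(\cuco X, \mathfrak S)$ to produce a realization point $x \in \cuco X$, then push $x$ back into $\mathbf F_U$ via $\gate_{\mathbf F_U}$; by the defining property of the gate (Definition~\ref{defn:gate}), all projections to $V \in \mathfrak S_U$ are preserved up to uniform error, and hierarchical quasiconvexity handles the $\rho^{V_j}_V$ and $\rho^{V_j}_W$ estimates. For uniqueness, suppose $x,y \in \mathbf F_U$ satisfy $\dist_V(x,y)\leq\kappa$ for every $V\in\mathfrak S_U$. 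By the very definition of $\phi^\nest$ in Construction~\ref{const:embedding_product_regions}, the coordinates of any point of $\mathbf F_U$ at an index $V\not\nest U$ are coarsely prescribed: they coincide with $\rho^U_V$ when $V\transverse U$ or $U\propnest V$, and are bounded in the $\mathbf E_U$-factor when $V\orth U$. Hence $\dist_V(x,y)$ is uniformly bounded for all $V\in\mathfrak S$, and the uniqueness axiom in $(\cuco X,\mathfrak S)$ bounds $\dist_{\cuco X}(x,y)$.

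Finally, the hieromorphism claim is essentially tautological: the inclusion $\mathfrak S_U \hookrightarrow \mathfrak S$ is injective and preserves nesting, orthogonality, and transversality; on each $\fontact V$ the map is the identity (in particular a uniform quasi-isometric embedding); and both coarse commutativity squares of Definition~\ref{defn:hieromorphism} hold because the projections and relative projections on $(\mathbf F_U,\mathfrak S_U)$ are, by construction, restrictions of those on $(\cuco X,\mathfrak S)$. The argument for $(\mathbf E_U,\mathfrak S_U^\orth)$ and $\phi^\orth$ is identical. The main obstacle I anticipate is bookkeeping for $\mathbf E_U$ around the artificial $\nest$-maximal element $A$: I expect to verify that $\pi_A$ is coarsely constant on $\mathbf E_U$, so the $A$-coordinate contributes only a uniform error to the consistency, partial realization, and uniqueness arguments, at which point all four parts of the proposition reduce to the arguments already sketched for $\mathbf F_U$.
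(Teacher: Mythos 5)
Your proposal is correct and takes essentially the same route as the paper. The paper's proof simply defers the axiom-checking to ``a simple version of the proof of Proposition~\ref{prop:sub_hhs}'' and notes the thickening $\fontact^* A$ of $\pi_A(\image\phi^\orth)$; what you have written out --- the quasigeodesic step via hierarchy paths and gates, the direct inheritance of consistency/BGI/large links, partial realization via $\gate_{\mathbf F_U}$, the bootstrapped uniqueness argument using that coordinates at $V\not\nest U$ are coarsely prescribed on $\mathbf F_U$, the complexity drop from $A\propnest S$, and the coarse constancy of $\pi_A$ on $\mathbf E_U$ (cf.\ Remark~\ref{rem:dependence_on_a}) --- is exactly what that ``simple version'' amounts to, with the one cosmetic difference that the paper replaces $\fontact A$ by the bounded thickening $\fontact^*A$ while you keep $\fontact A$ and note the projection is coarsely constant, and either choice works.
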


\begin{proof}
For each $V\nest U$ or $V\orth U$, the associated hyperbolic space $\fontact V$ is exactly the one used in the hierarchically hyperbolic structure $(\cuco X,\mathfrak S)$.  For $A$, use an appropriate thickening $\fontact^*A$ of $\pi_A(\image\phi^\orth)$ to a hyperbolic geodesic space.  
All of the projections $\mathbf F_U\to\fontact V,V\in\mathfrak S_U$ and $\mathbf E_U\to\fontact V,V\in\mathfrak S_U^\orth$ are as in $(\cuco X,\mathfrak S)$ (for $A$, compose with a quasi-isometry $\pi_A(\image\phi^\orth)\to\fontact^*A$).  Observe that $(\mathbf F_U,\mathfrak S_U)$ and $(\mathbf E_U,\mathfrak S_U^\orth)$ are hierarchically hyperbolic (this can be seen using a simple version of the proof of Proposition~\ref{prop:sub_hhs}).  If $U$ is not $\nest$--maximal in $\mathfrak S$, then neither is $A$, whence the claim about complexity.

The hieromorphisms are defined by the inclusions $\mathfrak S_U,\mathfrak S_U^\orth\to\mathfrak S$ and, for each $V\in\mathfrak S_U\cup\mathfrak S_U^\orth$, the identity $\fontact V\to\fontact V$, unless $V=A$, in which case we use $\fontact^*A\to\pi_A(\image\phi^\orth)\hookrightarrow\fontact A$.  These give hieromorphisms by definition.
\end{proof}

\begin{rem}[Dependence on $A$]\label{rem:dependence_on_a}
Note that $A$ need not be the unique $\nest$--minimal element of
$\mathfrak S$ into which each $V\orth U$ is nested; the axioms don't require uniqueness of such $\nest$--minimal elements.  Observe that
$\mathbf E_U$ (as a set and as a subspace of $\cuco X$) is defined
independently of the choice of $A$.  It is the hierarchically
hyperbolic structure from Proposition~\ref{prop:hereditary_HHS} that a
priori depends on $A$.  However, note that $A\not\nest U$, since there
exists $V\nest A$ with $V\orth U$, and we cannot have $V\nest U$ and
$V\orth U$ simultaneously.  Likewise, $A\notorth U$ by definition.
Finally, if $U\nest A$, then the axioms guarantee the existence of
$B$, properly nested into $A$, into which each $V\orth U$ is nested,
contradicting $\nest$--minimality of $A$.  Hence $U\transverse A$.  It
follows that $\pi_A(\mathbf E_U)$ is bounded --- it coarsely coincides
with $\rho^U_A$.  Thus the hierarchically hyperbolic structure on
$\mathbf E_U$, and the hieromorphism structure of $\phi^\orth$, is
actually essentially canonical: we can take the hyperbolic space
associated to the $\nest$--maximal element to be a point, whose image
in each of the possible choices of $A$ must coarsely coincide with
$\rho^U_A$.
\end{rem}

\begin{rem}[Orthogonality and product regions]\label{rem:orth_prod}
If $U\orth V$, then we have $\mathbf F_U\subseteq \mathbf E_V$ and
$\mathbf F_V\subseteq \mathbf E_U$, so there is a hierarchically
quasiconvex map $\phi_U\times\phi_V\co\mathbf F_U\times \mathbf
F_V\to\cuco X$ extending to $\phi_U\times\phi^\orth_U$ and
$\phi^\orth_V\times\phi_V$.
\end{rem}

\begin{rem}\label{rem:alternate_gates}
Since $\mathbf F_U,\mathbf E_U$ are hierarchically quasiconvex spaces,
Definition~\ref{defn:gate} provides coarse gates $\gate_{\mathbf
F_U}\co\cuco X\to \mathbf F_U$ and $\gate_{\mathbf E_U}\co\cuco
X\to \mathbf E_U$.  These are coarsely the same as the following maps:
given $x\in\cuco X$, let $\tup x$ be the tuple defined by
$x_W=\pi_W(x)$ when $W\nest U$ and $x_W=\pi_W(x)$ when $W\orth U$ and
$\rho^U_W$ otherwise.  Then $\tup x$ is consistent and coarsely equals
$\gate_{\mathbf F_U\times \mathbf E_U}(x)$.
\end{rem}

\begin{defn}[Standard product region]\label{defn:standard_product}
For each $U\in\mathfrak S$, let $P_U=\image\phi_U$, which is coarsely
$\mathbf F_U\times \mathbf E_U$.  We call this the \emph{standard
product region} in $\cuco X$ associated to $U$.
\end{defn}

The next proposition follows from the definition of the product regions and the fact that, if $U\nest V$, then $\rho^U_W,\rho^V_W$ coarsely coincide whenever $V\nest W$ or $V\transverse W$ and $U\not\orth W$, which holds by Definition~\ref{defn:space_with_distance_formula}.\eqref{item:dfs_transversal}.  

\begin{prop}[Parallel copies]\label{prop:parallel_copies}
There exists $\nu\geq0$ such that for all $U\in\mathfrak S$, all $V\in\mathfrak S_U$, and all $u\in \mathbf E_U$, there exists $v\in \mathbf E_V$ so that $\phi_V(\mathbf F_V\times\{v\})\subseteq\neb_\nu(\phi_U(\mathbf F_U\times\{u\}))$.  
\end{prop}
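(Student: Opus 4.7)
The plan is: given $u\in\mathbf E_U$, pick any point $p_0\in\phi_U(\mathbf F_U\times\{u\})$ and define $v\in\mathbf E_V$ by $v_W=\pi_W(p_0)$ for each $W\orth V$. This tuple is automatically $E$--consistent since it is the restriction of the projection tuple of a single point of $\cuco X$. To establish $\phi_V(\mathbf F_V\times\{v\})\subseteq\neb_\nu(\phi_U(\mathbf F_U\times\{u\}))$, I produce, for each $a'\in\mathbf F_V$, a companion point $y\in\phi_U(\mathbf F_U\times\{u\})$ whose projections coarsely match those of $x:=\phi_V(a',v)$ on every $W\in\mathfrak S$; the uniqueness axiom (Definition~\ref{defn:space_with_distance_formula}.\eqref{item:dfs_uniqueness}) then yields the required uniform bound on $\dist_{\cuco X}(x,y)$.

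The natural choice is $y:=\phi_U(\tilde a,u)$, where $\tilde a$ is the restriction to $\mathfrak S_U$ of the projection tuple $(\pi_W(x))_{W\in\mathfrak S}$. Since $\tilde a$ is the restriction of the projection tuple of a single point, it is $E$--consistent, so Theorem~\ref{thm:realization} produces $y$ with $\pi_W(y)\sim\pi_W(x)$ automatically for every $W\nest U$. It then remains to check the coarse match for $W\in\mathfrak S$ with $W\orth U$, $W\transverse U$, or $U\propnest W$, where $\pi_W(y)$ is coarsely prescribed by the definition of $\phi_U$ to be $u_W$ or $\rho^U_W$.

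For each of these three cases I split by the relation of $W$ to $V$. Several sub-configurations are ruled out by Definition~\ref{defn:space_with_distance_formula}.\eqref{item:dfs_orthogonal} together with the hypothesis $V\nest U$: the chain $V\nest U\orth W$ forces $V\orth W$, so if $W\orth U$ then necessarily $W\orth V$; if $U\propnest W$ then necessarily $V\propnest W$; and $W\transverse U$ is incompatible with $W=V$ or $W\nest V$. In each surviving subcase the match $\pi_W(x)\sim\pi_W(y)$ follows from one of two mechanisms: either $W\orth V$, in which case $\pi_W(x)\sim v_W=\pi_W(p_0)$ and the fact that $p_0\in\phi_U(\mathbf F_U\times\{u\})$ forces $\pi_W(p_0)$ to equal the prescribed value ($u_W$ when $W\orth U$, or $\rho^U_W$ when $W\transverse U$); or $\pi_W(x)\sim\rho^V_W$ (which occurs when $W\transverse V$ or $V\propnest W$), in which case the needed identity $\rho^V_W\sim\rho^U_W$ is delivered by the final clause of Definition~\ref{defn:space_with_distance_formula}.\eqref{item:dfs_transversal}, whose hypothesis $W\not\orth V$ is automatic in these subcases.

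The main obstacle is the book-keeping: one must enumerate the joint configurations of the relation of $W$ to $V$ with the relation of $W$ to $U$, identify those excluded by $V\nest U$ together with the orthogonality axiom, and in each surviving subcase invoke either the defining property of $v$ (through $p_0$) or the $\rho$--coincidence clause of the transversality axiom. No new technical ingredient is required beyond these two mechanisms and the observation that $\tilde a$ arises as the restriction of a single point's projection tuple, which is what makes consistency of $\tilde a$ automatic.
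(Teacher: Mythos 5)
Your proof is correct and follows essentially the same route the paper sketches: $v$ is obtained by restricting the projection tuple of a point in $\phi_U(\mathbf F_U\times\{u\})$, the $W\nest U$ coordinates are matched automatically by taking $\tilde a$ to be the restriction of $x$'s projection tuple, and the remaining coordinates (for $W\orth U$, $W\transverse U$, $U\propnest W$) are matched either via the definition of $v$ through $p_0$ or via the $\rho$--coincidence clause in Definition~\ref{defn:space_with_distance_formula}.\eqref{item:dfs_transversal}, exactly the two ingredients the paper names. The paper leaves the case analysis implicit; your write-up correctly identifies the configurations excluded by the orthogonality axiom and verifies the hypothesis $W\not\orth V$ in the surviving ones, so this is a faithful and complete elaboration of the paper's one-line sketch.
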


\subsubsection{Hierarchy paths and product regions}\label{subsubsec:hp_pr}
Recall that a \emph{$D$--hierarchy path} $\gamma$ in $\cuco X$ is a $(D,D)$--quasigeodesic $\gamma:I\to\cuco X$ such that $\pi_U\circ\gamma$ is an unparameterized $(D,D)$--quasigeodesic for each $U\in\mathfrak S$, and that Theorem~\ref{thm:monotone_hierarchy_paths} provides $D\geq1$ so that any two points in $\cuco X$ are joined by a $D$--hierarchy path.  In this section, we describe how hierarchy paths interact with standard product regions.  

In the next proposition and lemma, given $x,y\in\cuco X$, we declare $V\in\mathfrak S$ to be \emph{relevant (for 
$x,y$)} if $\dist_V(x,y)\geq 200DE$.

\begin{prop}[``Active'' subpaths]\label{prop:hierarchy_path_goes_close}
There exists $\nu\geq 0$ so that for all $x,y\in\cuco X$, all $V\in\mathfrak S$ with $V$ relevant for $(x,y)$, and all $D$--hierarchy paths $\gamma$ joining $x$ to $y$, there is a subpath $\alpha$ of $\gamma$ with the following properties:
\begin{enumerate}
 \item $\alpha\subset\neb_\nu(P_V)$;\label{item:HPGC_1}
 \item $\pi_U|_{\gamma}$ is coarsely constant on $\gamma-\alpha$ for all $U\in\mathfrak S_V\cup\mathfrak S_V^\orth$.\label{item:HPGC_2}
\end{enumerate}
\end{prop}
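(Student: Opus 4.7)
The plan is to set $x'=\gate_{P_V}(x)$ and $y'=\gate_{P_V}(y)$ and to define $\alpha$ as the subpath of $\gamma$ between parameters $t^-,t^+$ at which $\gamma$ coarsely passes through $x'$ and $y'$; the two conclusions then fall out from the defining property of the gate together with hierarchical quasiconvexity of $P_V$. The substantive work lies in locating $t^\pm$.

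First, I would use the distance formula (Theorem~\ref{thm:distance_formula}) to establish
\[\dist_{\cuco X}(x,y)\asymp\dist_{\cuco X}(x,x')+\dist_{\cuco X}(x',y')+\dist_{\cuco X}(y',y).\]
Splitting the distance-formula sum according to whether $U\in\mathfrak S_V\cup\mathfrak S_V^\orth$ (where $\pi_U(x')\approx\pi_U(x),\pi_U(y')\approx\pi_U(y)$) or $U$ is in the complement (where $\pi_U(x')\approx\pi_U(y')\approx\rho^V_U$), and using consistency together with $\dist_V(x,y)\geq 200DE$ (which precludes $\rho^W_V$ from being close to both $\pi_V(x)$ and $\pi_V(y)$ for any complementary $W$), one checks that each significant coordinate contribution is captured exactly once on the right-hand side. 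A consequence is that for every $U\in\mathfrak S$ the projection $\pi_U(x')$ lies coarsely on a geodesic from $\pi_U(x)$ to $\pi_U(y)$, so hyperbolicity of $\fontact U$ produces a parameter $t^-_U$ with $\pi_U(\gamma(t^-_U))\approx\pi_U(x')$.

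Next, I would synchronize the per-$U$ parameters into a single $t^-$. For $U\in\mathfrak S_V\cup\mathfrak S_V^\orth$ the parameter $t^-_U$ may be taken as $0$; for complementary $W\transverse V$, the partial order $\preceq$ from Section~\ref{subsec:partial_order} (with target $y$) splits these $W$ according to whether $W\preceq V$ (when $\pi_W(x)$ is far from $\rho^V_W$, so the $\pi_W$-passage near $\rho^V_W$ happens early) or $V\preceq W$ (symmetric, late); for $W$ with $V\propnest W$ a direct bounded geodesic image argument places the $\pi_W$-passage near $\rho^V_W$. Taking $t^-$ to be the first parameter at which $\pi_V\circ\gamma$ exits a large but uniform neighborhood of $\pi_V(x)$ then places $t^-$ after all $W\preceq V$ passages and before the $V\preceq W$ ones; at such $t^-$ the tuple $(\pi_U(\gamma(t^-)))_U$ coarsely equals $(\pi_U(x'))_U$, and the uniqueness axiom (Definition~\ref{defn:space_with_distance_formula}.(\ref{item:dfs_uniqueness})) forces $\gamma(t^-)$ within a uniform distance of $x'$. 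Define $t^+$ symmetrically and set $\alpha=\gamma|_{[t^-,t^+]}$.

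Conclusion (1) is then immediate: $\alpha$ is itself a hierarchy path with endpoints uniformly close to $x',y'\in P_V$, so hierarchical quasiconvexity together with the uniqueness axiom (cf.\ the proof of Lemma~\ref{lem:gate}) yield $\alpha\subset\neb_\nu(P_V)$. Conclusion (2) follows because, for $U\in\mathfrak S_V\cup\mathfrak S_V^\orth$, we have $\pi_U(\gamma(0))\approx\pi_U(x)\approx\pi_U(x')\approx\pi_U(\gamma(t^-))$, so the unparameterized $(D,D)$-quasigeodesic $\pi_U\circ\gamma|_{[0,t^-]}$ in the $\delta$-hyperbolic space $\fontact U$ has coarsely equal endpoints and hence uniformly bounded image; the case $[t^+,|\gamma|]$ is symmetric. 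The main obstacle is the synchronization in the third paragraph: although each per-$U$ passage exists separately, making them coincide at a single parameter requires that the various ``work schedules'' imposed by the consistency inequalities, the partial order $\preceq$, and bounded geodesic image mesh consistently, and verifying this involves a careful case analysis culminating in the delicate subcase where $\rho^U_V$ lies close to $\pi_V(x)$ for some $U\propnest V$.
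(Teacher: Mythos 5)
Your framework---setting $x'=\gate_{P_V}(x)$, $y'=\gate_{P_V}(y)$, using the distance formula to see that $\pi_U(x')$ lies coarsely on the geodesic $[\pi_U(x),\pi_U(y)]$ in each $\fontact U$, and hence finding a per--$U$ parameter where $\gamma$ matches $x'$ in coordinate $U$---is sound as far as it goes, and your claim that $\gamma$ coarsely passes through $x'$ and $y'$ is in fact a consequence of the proposition. But the synchronization is not a delicate subcase you can resolve by careful case analysis; as set up, it fails. Specifically, defining $t^-$ as the first time $\pi_V\circ\gamma$ leaves a uniform neighborhood of $\pi_V(x)$ does \emph{not} force $\pi_U(\gamma(t^-))\approx\pi_U(x)$ for $U\propnest V$ (nor, similarly, for $U\orth V$). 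If $\rho^U_V$ lies near $\pi_V(x)$, then bounded geodesic image permits $\pi_U\circ\gamma$ to progress precisely while $\pi_V\circ\gamma$ remains near $\rho^U_V$, i.e.\ on the entire interval $[0,t^-]$; so $\gamma$ may travel all the way from $\pi_U(x)$ to $\pi_U(y)$ in the $U$--coordinate before $t^-$, giving $\pi_U(\gamma(t^-))\approx\pi_U(y)$ rather than $\pi_U(x')\approx\pi_U(x)$, and the tuple at $t^-$ does not coarsely equal $(\pi_U(x'))_U$. No tuning of the radius in $\fontact V$ can repair this, because for a different $U'\propnest V$ with $\rho^{U'}_V$ near $\pi_V(y)$, the $U'$--activity window sits at the opposite end. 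For the same reason, your verification of conclusion~(2) on $[0,t^-]$ breaks down.

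The paper avoids this by timing the active window in $\fontact S$ for the $\nest$--maximal $S$ rather than in $\fontact V$: since every $U\nest V$ has $\rho^U_S$ coarsely equal to $\rho^V_S$ (last clause of Definition~\ref{defn:space_with_distance_formula}.\eqref{item:dfs_transversal}), bounded geodesic image freezes all the coordinates in $\mathfrak S_V$ outside the single window in which $\pi_S\circ\gamma$ is near $\rho^V_S$; Lemma~\ref{lem:hierarchy_path_goes_close_in_S} locates that window. The paper also does not try to pin the endpoints of $\alpha$ to $x'$ and $y'$: it compares $\gamma(j)$ to its own gate $\gate_{P_V}(\gamma(j))$ for $j$ in the window, which is weaker and directly reachable from the coordinate estimates, and it treats $V$ that are not $\nest$--maximal relevant by projecting $\gamma$ into $P_W$ for a larger relevant $W$ and inducting on complexity. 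To salvage your proof you would need the same restructuring: index the window by $\fontact S$, not $\fontact V$, and replace the endpoint--matching claim by a pointwise comparison with the gate.
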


\begin{proof}
We may assume $\gamma\co\{0,n\}\to\cuco X$ is a $2D$--discrete path.  Let $x_i=\gamma(i)$ for $0\leq i\leq n$.  Let $S\in\mathfrak S$ be the $\nest$--maximal element.  Since the proposition holds trivially for $V=S$, assume $V\propnest S$.

First consider the case where $V$ is $\nest$--maximal among relevant elements of $\mathfrak S$.  Lemma~\ref{lem:hierarchy_path_goes_close_in_S} provides $\nu''\geq0$, independent of $x,y$, and also provides $i\leq n$, such that $\dist_S(x_i,\rho^V_S)\leq\nu''$.  Let $i$ be minimal with this property and let $i'$ be maximal with this property.  Observe that there exists $\nu'\geq\nu''$, depending only on $\nu''$ and the (uniform) monotonicity of $\gamma$ in $\fontact S$, such that $\dist_S(x_j,\rho^V_S)\leq\nu'$ for $i\leq j\leq i'$.

For $j\in\{i,\ldots,i'\}$, let $x'_j=\gate_{P_V}(x_j)$.  Let $U\in\mathfrak S$.  By definition, if $U\nest V$ or $U\orth V$, then $\pi_U(x_j)$ coarsely coincides with $\pi_U(x'_j)$, while $\pi_U(x'_j)$ coarsely coincides with $\rho^V_U$ if $V\nest U$ or $V\transverse U$.  We claim that there exist $i_1,i'_1$ with $i\leq i_1\leq i'_1\leq i'$ such that for $i_1\leq j\leq i'_1$ and $U\in\mathfrak S$ with $V\nest U$ or $U\transverse V$, the points $\pi_U(x_j)$ and $\pi_U(x'_j)$ coarsely coincide; this amounts to claiming that $\pi_U(x_j)$ coarsely coincides with $\rho^V_U$.  

If $V\nest U$ and some geodesic $\sigma$ in $\fontact U$ from $\pi_U(x)$ to $\pi_U(y)$ fails to pass through the $E$-neighborhood of $\rho^V_U$, then bounded geodesic image shows that $\rho^U_V(\sigma)$ has diameter at most $E$.  On the other hand, consistency shows that the endpoints of $\rho^U_V(\sigma)$ coarsely coincide with $\pi_V(x)$ and $\pi_V(y)$, contradicting that $V$ is relevant.  Thus $\sigma$ passes through the $E$--neighborhood of $\rho^V_U$.  Maximality of $V$ implies that $U$ is not relevant, so that $\pi_V(x),\pi_V(y)$, and $\pi_V(x_j)$ all coarsely coincide, whence $\pi_V(x_j)$ coarsely coincides with $\rho^V_U$.

If $U\transverse V$ and $U$ is not relevant, then $\pi_U(x_j)$ coarsely coincides with both $\pi_U(x)$ and $\pi_U(y)$, each of which coarsely coincides with $\rho^V_U$, for otherwise we would have $\dist_V(x,y)\leq 2E$ by consistency and the triangle inequality, contradicting that $V$ is relevant.  If $U\transverse V$ and $U$ is relevant, then, by consistency, we can assume that $\pi_U(y)$,$\rho^V_U$ coarsely coincide, as do $\pi_V(x)$,$\rho^U_V$.  Either $\pi_U(x_j)$ coarsely equals $\rho^V_U$, or $\pi_V(x_j)$ coarsely equals $\pi_V(x)$, again by consistency.  If $\dist_V(x,x_j)\leq 10E$ or $\dist_V(y,x_j)\leq 10E$, discard $x_j$.  Our discreteness assumption and the fact that $V$ is relevant imply that there exist $i_1\leq i'_1$ between $i$ and $i'$ so that $x_j$ is not discarded for $i_1\leq j\leq i'_1$.  For such $j$, the distance formula now implies that $\dist(x_j,x'_j)$ is bounded by a constant $\nu$ independent of $x,y$.

We thus have $i_1,i'_1$ such that $x_j\in\neb_\nu(P_V)$ for $i\leq j\leq i'$ and $x_j\not\in\neb_\nu(P_V)$ for $j<i$ or $j>i'$, provided $V$ is $\nest$--maximal relevant.  If $W\nest V$ and $W$ is relevant, and there is no relevant $W'\neq W$ with $W\nest W'\nest V$, then we may apply the above argument to $\gamma'=\gate_{P_V}(\gamma|_{i,\ldots,i'})$ to produce a subpath of $\gamma'$ lying $\nu$--close to $P_W\subseteq P_V$, and hence a subpath of $\gamma$ lying $2\nu$--close to $P_W$.  Finiteness of the complexity (Definition~\ref{defn:space_with_distance_formula}.\eqref{item:dfs_complexity}) then yields assertion~\eqref{item:HPGC_1}.  Assertion~\eqref{item:HPGC_2} is immediate from our choice of $i_1,i_1'$.
\end{proof}

\begin{lem}\label{lem:hierarchy_path_goes_close_in_S}
There exists $\nu'\geq0$ so that for all $x,y\in\cuco X$, all relevant $V\in\mathfrak S$, and all $D$--hierarchy paths $\gamma$ joining $x$ to $y$, there exists $t\in\gamma$ so that $\dist_S(t,\rho^V_S)\leq\nu'$.
\end{lem}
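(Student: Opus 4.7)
The plan is to reduce the statement to a standard fact about quasigeodesics in a hyperbolic space shadowing a geodesic, combined with bounded geodesic image and consistency. First, I would fix a geodesic $\sigma$ in the $\delta$-hyperbolic space $\fontact S$ from $\pi_S(x)$ to $\pi_S(y)$, where $S$ is the $\nest$-maximal element of $\mathfrak S$ (so $V\propnest S$).

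The key intermediate claim is that $\sigma$ must pass through the $E$-neighborhood of $\rho^V_S$. Suppose, toward a contradiction, that $\sigma$ stays $E$-far from $\rho^V_S$. Then Definition~\ref{defn:space_with_distance_formula}.\eqref{item:dfs:bounded_geodesic_image} (bounded geodesic image) gives $\diam_V(\rho^S_V(\sigma))\leq E$. On the other hand, the endpoints of $\sigma$ lie at distance greater than $E$ from $\rho^V_S$ in $\fontact S$, so the consistency inequality for $V\propnest S$ (Definition~\ref{defn:space_with_distance_formula}.\eqref{item:dfs_transversal}) applied to $x$ and to $y$ forces $\pi_V(x)$ and $\pi_V(y)$ to lie uniformly close to $\rho^S_V(\pi_S(x))$ and $\rho^S_V(\pi_S(y))$ respectively. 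Combined with the previous sentence and the triangle inequality, this yields $\dist_V(x,y)\leq 3E+2\kappa_0$, which contradicts the relevance hypothesis $\dist_V(x,y)\geq 200DE$ (and our standing assumption $E\geq\kappa_0$). Hence $\sigma$ enters $\neb_E(\rho^V_S)$.

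Next, because $\gamma$ is a $D$-hierarchy path, $\pi_S\circ\gamma$ is an unparameterized $(D,D)$-quasigeodesic in $\fontact S$ connecting $\pi_S(x)$ to $\pi_S(y)$. By the Morse lemma (stability of quasigeodesics) in the $\delta$-hyperbolic space $\fontact S$, there is a constant $R=R(D,\delta)$ such that $\pi_S\circ\gamma$ lies in the $R$-Hausdorff neighborhood of $\sigma$. Pick a point $p\in\sigma$ with $\dist_S(p,\rho^V_S)\leq E$, and then choose $t\in\gamma$ with $\dist_S(\pi_S(t),p)\leq R$. We obtain
\[
\dist_S(t,\rho^V_S)=\dist_S(\pi_S(t),\rho^V_S)\leq R+E,
\]
so the lemma follows with $\nu'=R+E$.

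The argument is entirely routine once one has the key input that $\sigma$ passes close to $\rho^V_S$; the only place where care is required is in combining bounded geodesic image with the consistency inequality for the nested pair $V\propnest S$, which is what converts the lower bound $\dist_V(x,y)\geq 200DE$ into the geometric conclusion that $\sigma$ cannot avoid $\rho^V_S$. After that, the standard Morse-lemma step completes the proof with uniform constants depending only on $D$, $\delta$, $E$, $\kappa_0$.
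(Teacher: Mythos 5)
Your proof is correct and follows essentially the same argument as the paper, which compresses the same reasoning into two sentences: pick a geodesic $\sigma$ in $\fontact S$ between the projections of the endpoints; by consistency plus bounded geodesic image (exactly as you spell out), $\sigma$ must enter the $E$--neighborhood of $\rho^V_S$ because $V$ is relevant; and since $\pi_S\circ\gamma$ is an unparameterized quasigeodesic it shadows $\sigma$ by the Morse lemma. Your write-up is just a faithful unpacking of the paper's terse version, including the correct use of the nested consistency inequality at the endpoints to upgrade ``$\sigma$ avoids $\neb_E(\rho^V_S)$'' into a bound on $\dist_V(x,y)$ contradicting relevance.
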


\begin{proof}
Let $\sigma$ be a geodesic in $\fontact S$ joining the endpoints of $\pi_S\circ\gamma$.  Since $\dist_V(x,y)\geq200DE$, the consistency and bounded geodesic image axioms (Definition~\ref{defn:space_with_distance_formula}.\eqref{item:dfs_transversal},\eqref{item:dfs:bounded_geodesic_image}) imply that $\sigma$ enters the $E$--neighborhood of $\rho^V_S$ in $\fontact S$, whence $\pi_S\circ\gamma$ comes uniformly close to $\rho^V_S$.
\end{proof}

\section{Hulls}\label{sec:convex_hulls}
In this section we build ``convex hulls'' in 
hierarchically hyperbolic spaces. This construction is motivated by, 
and generalizes,  the concept in the mapping class group called 
$\Sigma$--hull, as defined by 
Behrstock--Kleiner--Minsky--Mosher \cite{BKMM:consistency}. 
Recall that given a set $A$ of points in a $\delta$--hyperbolic 
space $H$, its \emph{convex hull}, denoted $\hull_{H}(A)$, is the union of 
geodesics between pairs of points in this set. We will make use of 
the fact that the convex hull is $2\delta$--quasiconvex (since, if $p\in [x,y], q\in [x',y']$, then $[p,q]\subseteq \neb_{2\delta}( [p,x] \cup [x,x']\cup [x',q])\subseteq \neb_{2\delta}( [y,x] \cup [x,x']\cup [x',y'])$).

The construction of hulls is based on Proposition~\ref{prop:coarse_retract}, which generalizes 
Lemma~\ref{lem:2_point_retract}; indeed, the construction of hulls in this section generalizes the hulls of pairs of points 
used in Section~\ref{sec:hier_path_df} to prove the distance formula.  The second part of 
Proposition~\ref{prop:coarse_retract} (which is not used in Section~\ref{sec:hier_path_df}) relies on the distance formula.

\begin{defn}[Hull of a set]\label{defn:hull}
 For each set $A\subset \cuco X$ and $\theta\geq 0$, let
 $H_{\theta}(A)$ be the set of all $p\in\cuco X$ so that, for each
 $W\in\mathfrak S$, the set $\pi_W(p)$ lies at distance at most
 $\theta$ from $\hull_{\fontact W}(A)$. 
 Note that $A\subset H_{\theta}(A)$.
\end{defn}

\begin{lem}\label{lem:hull}
 There exists $\theta_0$ so that for each $\theta\geq \theta_0$ there exists $k:\reals^+\to\reals^+$ and each $A\subseteq \cuco X$, we have that $H_{\theta}(A)$ is $k$--hierarchically quasiconvex.
\end{lem}

\begin{proof}
For any $\theta$ and $U\in\mathfrak S$, due to $\delta$--hyperbolicity we have that $\pi_U(H_{\theta}(A))$ is $2\delta$--quasiconvex, so we only have to check the condition on realization points.

 Let $A'$ be the union of all $D_0$--hierarchy paths joining points in $A$, where $D_0$ is the constant from Theorem \ref{thm:monotone_hierarchy_paths}. Then the Hausdorff distance between $\pi_U(A')$ and $\pi_U(A)$ is bounded by $C=C(\delta,D_0)$ for each $U\in\mathfrak S$. Also, $\pi_U(A')$ is $Q=Q(\delta,D_0)$--quasiconvex. Let $\kappa$ be the constant from Lemma \ref{lem:proj_consistent}, and let $\theta_0=\theta_e(\kappa)$ be as in Theorem \ref{thm:realization}.
 
Fix any $\theta\geq\theta_0$, and any $\kappa\geq 0$.  Let $(b_U)$ be
a $\kappa'$--consistent tuple with $b_U\subseteq
\neb_{\theta}(\hull_{\fontact U}(A))$ for each $U\in\mathfrak S$.  Let
$x\in\cuco X$ project $\theta_e(\kappa')$--close to each $b_U$.  We
have to find $y\in H_{\theta}(A)$ uniformly close to $x$.  By Lemma
\ref{lem:proj_consistent}, $(p_U)$ is $\kappa$--consistent, where
$p_U\in \hull_{\fontact W}(A)$ satisfies $\dist_U(x,p_U)\leq
\dist_U(x,\hull_{\fontact W}(A))+1$.  It is readily seen from the
uniqueness axiom (Definition
\ref{defn:space_with_distance_formula}.\ref{item:dfs_uniqueness}) that
any $y\in\cuco X$ projecting close to each $p_U$ has the required
property, and such a $y$ exists by Theorem \ref{thm:realization}.  To
check admissibility, note that each $p_U$ lies $\theta$--close to
$\hull_{\fontact U}(A)$, which in turn lies uniformly close to
$\pi_U(\cuco X)$ by quasiconvexity of $\pi_U(\cuco
X)$.
\end{proof}

We denote the Hausdorff distance in the metric space $Y$ by
$\dist_{Haus,Y}(\cdot,\cdot)$. The next proposition directly generalizes 
\cite[Proposition~5.2]{BKMM:consistency} from mapping class groups to 
general hierarchically hyperbolic spaces. 

\begin{prop}[Retraction onto hulls]\label{prop:coarse_retract}
For each sufficiently large
$\theta$ there exists $C\geq 1$ so that for each set $A\subset\cuco X$
 there is a $(K,K)$--coarsely Lipschitz map
$r\colon\cuco X\to
H_{\theta}(A)$ restricting to the identity on $H_\theta(A)$. 
Moreover, if $A'\subset\cuco X$ lies at finite Hausdorff distance from $A$, then 
$d_{\cuco X}(r_{A}(x), r_{A'}(x))$ is $C$--coarsely Lipschitz in $\dist_{Haus,\cuco X}(A,A')$.
\end{prop}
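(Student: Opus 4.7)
The plan is to generalize the construction of Lemma~\ref{lem:2_point_retract} from two-point sets to finite sets $A$ of cardinality at most $N$. The key geometric fact is that in a $\delta$--hyperbolic space the convex hull of $N$ points is $Q$--quasiconvex with $Q=Q(N,\delta)$, so admits a coarse closest-point projection. For each $p\in\cuco X$, define a tuple $\tup b^p=(b^p_W)_{W\in\mathfrak S}$ by letting $b^p_W$ be a closest point of $\pi_W(p)$ in $\hull_{\fontact W}(A)$. If this tuple is $\kappa$--consistent for $\kappa=\kappa(N)$, then Theorem~\ref{thm:realization} will provide a coarsely unique $r(p)\in\cuco X$ satisfying $\dist_W(r(p),b^p_W)\leq\theta_e(\kappa)$ for every $W$, and choosing $\theta\geq\theta_e(\kappa)+Q$ will force $r(p)\in H_\theta(A)$.

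The main obstacle is verifying consistency of $\tup b^p$. For a transverse pair $V\transverse W$, each $a\in A$ is individually $E$--consistent, so satisfies $\pi_W(a)\in\neb_E(\rho^V_W)$ or $\pi_V(a)\in\neb_E(\rho^W_V)$; in the former case collectively over all $a\in A$, quasiconvexity forces $\hull_{\fontact W}(A)\subseteq\neb_{Q'}(\rho^V_W)$ and so bounds $\dist_W(b^p_W,\rho^V_W)$. Otherwise there is some $a\in A$ with $\pi_V(a)\in\neb_E(\rho^W_V)$, and combined with the consistency inequality satisfied by $p$ itself this gives a bound on $\dist_V(b^p_V,\rho^W_V)$ using the fact that closest-point projection onto a quasiconvex subspace of a hyperbolic space is coarsely Lipschitz. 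The nesting case $V\propnest W$ is similar but, I expect, will be the most technically involved part of the argument: it invokes bounded geodesic image (Definition~\ref{defn:space_with_distance_formula}.\eqref{item:dfs:bounded_geodesic_image}) applied to the $\binom{N}{2}$ geodesics between pairs of points of $\pi_W(A)$ (whose union coarsely covers the hull) in order to control $\rho^W_V(b^p_W)$ and compare it with $b^p_V$.

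Given consistency, we obtain $r$ via Theorem~\ref{thm:realization}. For $p\in H_\theta(A)$, by definition $\dist_W(p,b^p_W)\leq\theta+Q$, so the uniqueness part of Theorem~\ref{thm:realization} forces $\dist_{\cuco X}(p,r(p))$ to be uniformly bounded; we may then modify $r$ on $H_\theta(A)$ to be the identity at the cost of a uniform perturbation of its coarse Lipschitz constants. The coarse Lipschitz property of $r$ then follows as in Lemma~\ref{lem:2_point_retract}: if $\dist_{\cuco X}(p,q)\leq 1$, then $\dist_W(p,q)$ is uniformly bounded for every $W$, hence $\dist_W(b^p_W,b^q_W)$ is uniformly bounded (projection onto a quasiconvex subspace is coarsely Lipschitz), and uniqueness yields a uniform bound on $\dist_{\cuco X}(r(p),r(q))$.

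For the moreover clause, suppose $\dist_{Haus,\cuco X}(A,A')\leq D$. Since each $\pi_W$ is $(K,K)$--coarsely Lipschitz, $\dist_{Haus,\fontact W}(\pi_W(A),\pi_W(A'))\leq KD+K$, and standard hyperbolic geometry bounds $\dist_{Haus,\fontact W}(\hull_{\fontact W}(A),\hull_{\fontact W}(A'))$ linearly in $D$. Consequently the projections of $\pi_W(p)$ onto $\hull_{\fontact W}(A)$ and onto $\hull_{\fontact W}(A')$ are uniformly close in each $\fontact W$, with bound linear in $D$; a final invocation of Theorem~\ref{thm:realization} then bounds $\dist_{\cuco X}(r_A(p),r_{A'}(p))$ linearly in $D$, as required.
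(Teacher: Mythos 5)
Your overall architecture is sound—closest-point projection in each $\fontact U$ onto $\hull_{\fontact U}(A)$, verify consistency, realize—but the route you take through the consistency check is genuinely different from the paper's, and it is exactly there that your sketch has a gap. The paper's proof is terse for a reason: it asserts that $H_\theta(A)$ is hierarchically quasiconvex and then simply invokes Lemma~\ref{lem:gate}. Inside Lemma~\ref{lem:gate}, consistency of the closest-point tuple $(p_V)$ is \emph{not} verified by a case analysis of the kind you attempt; instead it is verified by the median trick: Lemma~\ref{lem:median_cons} and Theorem~\ref{thm:realization} produce a realization point $z$ that coarse-medianizes $x$, $y_V$, $y_W$, whose projections serve as a uniform interpolant that bounds all the relevant quantities at once, in both the transverse and nested cases simultaneously. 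That is precisely the device that makes the nested case painless, and it is the device you have discarded.

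In your transverse case, the dichotomy as written is wrong: you say "in the former case collectively over all $a\in A$" as though the alternatives were "every $a$ satisfies $\pi_W(a)\in\neb_E(\rho^V_W)$" versus "some $a$ satisfies $\pi_V(a)\in\neb_E(\rho^W_V)$." In fact each $a$ individually satisfies one or the other, so $A$ splits as $A_W\cup A_V$ and both parts can be nonempty; moreover the quantity you end up bounding in the "otherwise" branch depends on which consistency alternative $p$ itself satisfies, and in some sub-cases it is $\dist_W(b^p_W,\rho^V_W)$ rather than $\dist_V(b^p_V,\rho^W_V)$ that is controlled. This can all be repaired—the min in the consistency inequality lets you win either way—but as stated the dichotomy does not track the actual case structure. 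The nested case is then only gestured at ("I expect, will be the most technically involved part") and is left at the level of naming which tool to use. Since this is the hardest part of a direct verification, that is a real gap: you would need, for instance, to handle the situation where $\rho^V_W$ lies close to $\hull_{\fontact W}(A)$ but far from $b^p_W$, splitting along each pair geodesic and applying bounded geodesic image on each side, and you would need to invoke the third consistency clause of Definition~\ref{defn:space_with_distance_formula}.\eqref{item:dfs_transversal} to compare the $\rho^W_V$ images coming from different pairs of points of $A$. The paper's route—assert hierarchical quasiconvexity of the hull, cite Lemma~\ref{lem:gate}, and let the median realization absorb all this casework—buys you a much shorter argument at the cost of appealing to Lemma~\ref{lem:median_cons}; your route is more self-contained but must pay for it with exactly the delicate bookkeeping you deferred. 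The "moreover" clause is handled essentially as the paper does (Lipschitz projections give Hausdorff control on the hulls in each $\fontact W$, then realization or the distance formula lifts this to $\cuco X$), so that part is fine.
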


\begin{proof} By Lemma \ref{lem:hull}, for all sufficiently large $\theta$, $H_{\theta}(A)$ is 
    hierarchically quasiconvex. Thus, by Lemma~\ref{lem:gate} there exists 
    a map $r\co\cuco X \to H_{\theta}(A)$,  which is 
    coarsely Lipschitz and 
    which is the identity on $H_{\theta}(A)$.

We now prove the moreover clause.  By
Definition~\ref{defn:space_with_distance_formula}.\eqref{item:dfs_curve_complexes},
for each $W$ the projections $\pi_{W}$ are each coarsely Lipschitz and
thus $\dist_{Haus,\fontact W}(\pi_{W}(A),\pi_{W}(A'))$ is bounded by a
coarsely Lipschitz function of $\dist_{Haus,\cuco X}(A,A')$.  It is
then easy to conclude using the distance formula (Theorem
\ref{thm:distance_formula}) and the construction of gates (Definition
\ref{defn:gate}) used to produce the map $r$.
\end{proof}

\subsection{Homology of asymptotic cones} In this subsection we make a digression to study homological properties of asymptotic cones of hierarchically hyperbolic spaces. This subsection is not needed for the proof of distance formula, and in fact we will use the distance formula in a proof.

Using Proposition~\ref{prop:coarse_retract}, the identical proof as 
used in 
\cite[Lemma~5.4]{BKMM:consistency} for mapping class groups, yields:

\begin{prop}\label{prop:HullProperties}
    There exists $\theta_0 \ge 0$ depending only on the constants 
    of the hierarchically hyperbolic space $(\cuco X,\mathfrak S)$ such
    that for all $\theta,\theta' \ge \theta_0$ there
    exist $K$, $C$, and $\theta''$ such that given two sets 
    $A ,A' \subset \cuco X$, then:
\begin{enumerate}
\item $\diam(H_{\theta}(A)) \le K \, \diam(A)+C$
\item If $A'\subset H_{\theta}(A)$ then $H_{\theta}(A') \subset
H_{\theta''}(A)$.\label{item:subhull}
\item $d_{Haus,\cuco X}(H_{\theta}(A),H_{\theta}(A')) \le K d_{Haus,\cuco X}(A,A')+C$.
\item \label{item:HullIncreasingTheta} $d_{Haus,\cuco X}(H_{\theta}(A), H_{\theta'}(A)) \le C$
\end{enumerate}
\end{prop}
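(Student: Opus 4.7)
The plan is to prove the four assertions in order, using Proposition~\ref{prop:coarse_retract}, the distance formula (Theorem~\ref{thm:distance_formula}), and two elementary facts in a $\delta$--hyperbolic space: (a) the hull of a finite set $A$ satisfies $\diam(\hull(A))\leq 3\diam(A)$, and (b) if $B$ lies in the $R$--neighborhood of a $\sigma$--quasiconvex set $C$, then $\hull(B)\subseteq\neb_{R+\sigma+2\delta}(C)$, as one sees by a thin-quadrilateral argument on geodesics between pairs of points of $B$.

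For (1), I apply the distance formula to $x,y\in H_\theta(A)$ at a threshold $s$ chosen much larger than $\theta$ and the distance-formula constant $s_0$. Since $\pi_W(H_\theta(A))\subseteq\neb_\theta(\hull_{\fontact W}(\pi_W(A)))$, fact (a) gives $d_W(x,y)\leq 3\diam(\pi_W(A))+2\theta$, so every $W$ contributing to $\sum_W\ignore{d_W(x,y)}{s}$ satisfies $\diam(\pi_W(A))\geq s_0$. Applying the distance formula to the at most $I^2$ pairs in $A\times A$ simultaneously bounds both the sum of these diameters and the number of such $W$'s by a multiple of $\diam(A)$, yielding the claimed linear bound $\diam(H_\theta(A))\leq K\diam(A)+C$.

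For (2), fact (b) applied to $B=\pi_W(A')\subseteq\neb_\theta(\hull_{\fontact W}(\pi_W(A)))$ and $C=\hull_{\fontact W}(\pi_W(A))$ gives $\hull_{\fontact W}(\pi_W(A'))\subseteq\neb_{\theta+O(\delta)}(\hull_{\fontact W}(\pi_W(A)))$, so any $p\in H_\theta(A')$ lies in $H_{\theta''}(A)$ for $\theta''=2\theta+O(\delta)$. Assertion (3) is immediate from the moreover clause of Proposition~\ref{prop:coarse_retract}: for $x\in H_\theta(A)$ we have $r_A(x)=x$ and $r_{A'}(x)\in H_\theta(A')$, so $d_{\cuco X}(x,r_{A'}(x))=d_{\cuco X}(r_A(x),r_{A'}(x))\leq K\,d_{Haus,\cuco X}(A,A')+C$, and interchanging the roles of $A$ and $A'$ completes the Hausdorff bound.

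For (4), without loss of generality $\theta'\geq\theta\geq\theta_0$, giving trivially $H_\theta(A)\subseteq H_{\theta'}(A)$. Conversely, for $p\in H_{\theta'}(A)$, the tuple $\tup b=(b_W)$ with $b_W$ a nearest point of $\hull_{\fontact W}(\pi_W(A))$ to $\pi_W(p)$ is $O(\theta'+\kappa_0)$--consistent since it lies within $\theta'$ of the consistent tuple $(\pi_W(p))$. Realization (Theorem~\ref{thm:realization}) produces $q\in\cuco X$ with $d_W(q,b_W)\leq\theta_e$ for every $W$, so $q\in H_\theta(A)$ once $\theta_0\geq\theta_e$; since $d_W(p,q)\leq\theta'+\theta_e$ uniformly in $W$, the distance formula with threshold $s\geq\theta'+\theta_e$ yields $d_{\cuco X}(p,q)\leq C$. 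The main technical hurdle is assertion (1), where the constants must absorb a dependence on both $\theta$ and $I$ that requires the distance formula to be applied twice, once to bound $\diam(H_\theta(A))$ from above, and once to control the number and total size of relevant coordinates in terms of $\diam(A)$.
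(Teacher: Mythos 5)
The paper does not actually give a proof of this proposition: it simply asserts that Proposition~\ref{prop:coarse_retract} together with ``the identical proof as used in \cite[Proposition~5.4]{BKMM:consistency}'' yields the result.  Your proof is therefore necessarily an independent, from-scratch argument, and for parts (1)--(3) it is correct and self-contained.  Part (1) works exactly as you say, once one notes that for any $W$ with $\diam(\pi_W(A))\ge s_0$ there is a pair $(a,a')\in A\times A$ realizing the diameter with $d_W(a,a')\ge s_0$, so both $\sum_W\diam(\pi_W(A))$ (over relevant $W$) and the number of relevant $W$ are controlled by $I^2$ applications of the distance formula lower bound.  Part (2) is a clean use of thin quadrilaterals with $\theta''=2\theta+\sigma(I,\delta)+2\delta$, and part (3) is an immediate consequence of the ``moreover'' clause of Proposition~\ref{prop:coarse_retract}, exactly as you write.

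Part (4), however, has a genuine quantifier-order gap.  You build $q$ by applying realization to the tuple $\tup b=(b_W)$, and you correctly observe that the cheap estimate (``$\tup b$ lies within $\theta'$ of the $\kappa_0$--consistent tuple $(\pi_W(p))$'') only gives a consistency constant $\kappa=O(\theta'+\kappa_0+E)$ for $\tup b$.  The realization constant $\theta_e=\theta_e(\kappa)$ therefore grows with $\theta'$, so your point $q$ lies in $H_{\theta_e(\kappa)}(A)$, not in $H_\theta(A)$.  You then write ``so $q\in H_\theta(A)$ once $\theta_0\ge\theta_e$'', but $\theta_0$ is quantified \emph{before} $\theta'$ in the statement, while $\theta_e$ depends on $\theta'$; when $\theta'\gg\theta\ge\theta_0$ this cannot be arranged, and the argument becomes circular (you would be reducing membership near $H_\theta(A)$ to membership near $H_{\theta_e}(A)$ for an even larger constant).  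The fix is to avoid the $\theta'$--dependent consistency bound entirely.  One option: use the retraction $r_A\colon\cuco X\to H_{\theta_r}(A)$ from Proposition~\ref{prop:coarse_retract}, where $\theta_r$ is a \emph{fixed} sufficiently large constant; then choose $\theta_0\ge\theta_r$, so $r_A(p)\in H_{\theta_r}(A)\subseteq H_\theta(A)$.  Since $\pi_W(r_A(p))$ coarsely coincides with the projection of $\pi_W(p)$ to $\pi_W(H_{\theta_r}(A))$, and the latter set is Hausdorff-close (with constant depending only on $\theta_r$, $I$, $\delta$) to $\hull_{\fontact W}(\pi_W(A))$, one gets $d_W(p,r_A(p))\le\theta'+\mathrm{const}$ uniformly in $W$, and then uniqueness (or the distance formula with threshold above $\theta'+\mathrm{const}$) bounds $d_{\cuco X}(p,r_A(p))$ by a constant depending on $\theta'$ but not on $p$.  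Alternatively, one can show that your tuple $\tup b$ is in fact $\kappa$--consistent with $\kappa$ \emph{independent of} $\theta'$ (this follows from hierarchical quasiconvexity of $H_{\theta_r}(A)$ and the argument of Lemma~\ref{lem:gate}), but that requires more than the one-line bound you gave.
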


\begin{rem}
 Proposition \ref{prop:HullProperties} is slightly stronger than the
 corresponding \cite[Lemma 5.4]{BKMM:consistency}, in which $A,A'$ are
 finite sets and the constants depend on their cardinality.  The
 source of the strengthening is just the observation that hulls in
 $\delta$--hyperbolic spaces are $2\delta$--quasiconvex regardless of
 the cardinality of the set (see  \cite[Lemma 5.1]{BKMM:consistency}).
\end{rem}

 It is an easy observation that given a sequence $\seq A$ of sets  $A_{n}\subset \cuco X$ with bounded 
cardinality, the retractions to the corresponding hulls  
$H_{\theta}(A_{n})$ converge in 
any asymptotic cone, $\cuco X_{\omega}$, to a Lipschitz retraction from that asymptotic 
cone to the 
ultralimit of the hulls, $H(\seq A )$. A general argument, see e.g., 
\cite[Lemma~6.2]{BKMM:consistency} implies that the ultralimit of 
the hulls is then contractible. 
The proofs in \cite[Section~6]{BKMM:consistency} then apply in 
the present context using the above Proposition, with the only change 
needed that the reference to the rank theorem for 
hierarchically hyperbolic spaces as proven in 
\cite[Theorem~J]{BehrstockHagenSisto:HHS_I} must replace the 
application of \cite{BehrstockMinsky:dimension_rank}. In particular, this 
yields the following two results:

\begin{cor}\label{cor:AcyclicHulls}
Let $\cuco X$ be a hierarchically hyperbolic space and $\cuco 
X_{\omega}$ one of its asymptotic cones. Let $X\subset \cuco 
X_{\omega}$ be an open subset and suppose that for any sequence, $\seq 
A,$ of 
finite subsets of $X$ we have $H(\seq A)\subset X$. Then $X$ is acyclic.
\end{cor}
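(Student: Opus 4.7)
The plan is to adapt the argument given in \cite[Section~6]{BKMM:consistency}, where the analogous result is proved for the mapping class group; the argument there uses only formal consequences of the coarse retraction onto hulls, the Lipschitz dependence of hulls on their defining set, and a uniform bound on the dimension of asymptotic cones. In our setting these inputs are supplied by Propositions~\ref{prop:coarse_retract} and~\ref{prop:HullProperties}, together with the rank theorem from \cite[Theorem~J]{BehrstockHagenSisto:HHS_I} (playing the role that \cite{BehrstockMinsky:dimension_rank} played in \cite{BKMM:consistency}). The preceding paragraph in the text already notes that the ultralimits $H(\seq A)$ of hulls are contractible; this is the crucial consequence to be exploited.

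More concretely, I would argue as follows. Let $\sigma$ be a singular $k$--cycle supported in a compact set $K\subset X$, and let $\varepsilon>0$ be chosen so that the $\varepsilon$--neighborhood of $K$ in $\cuco X_\omega$ lies in $X$, which exists because $X$ is open. Approximate $\sigma$ by a simplicial cycle $\sigma'$ with vertex set $V_\omega\subset X$ that is $\eta$--dense in $K$ for $\eta\ll\varepsilon$; by a standard subdivision argument $\sigma$ and $\sigma'$ are homologous in $X$. Choose a sequence $\seq A$ of finite subsets of $\cuco X$ of bounded cardinality whose ultralimit is $V_\omega$. By hypothesis $H(\seq A)\subset X$, and the ultralimit of the Lipschitz retractions $r_{A_n}$ from Proposition~\ref{prop:coarse_retract} gives a Lipschitz retraction $r_\omega\colon\cuco X_\omega\to H(\seq A)$ which is the identity on $H(\seq A)$. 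Then $r_\omega\circ\sigma'$ is a cycle in the contractible set $H(\seq A)\subset X$, so it bounds there; it remains to show $\sigma'$ and $r_\omega\circ\sigma'$ are homologous in $X$, which one gets by interpolating along geodesics between a point and its image under $r_\omega$, provided these geodesics stay in $X$. Induction on $k$ (with base case $k=0$ immediate from path-connectedness of $H(\seq A)$, itself a consequence of contractibility) then shows every cycle in $X$ bounds.

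The main obstacle is ensuring that the straight-line homotopy from $\sigma'$ to $r_\omega\circ\sigma'$ stays inside $X$, because the Lipschitz constant of $r_\omega$ is not controlled to be close to $1$; a priori the retraction could displace a point of $|\sigma'|$ by a large amount. The remedy, as in \cite{BKMM:consistency}, is to iteratively enlarge the vertex set $V_\omega$ by adding more points of $K$, which shrinks the displacement $\dist(p,r_\omega(p))$ uniformly in $p\in|\sigma'|$ because each added point pulls the hull closer to that part of $\sigma'$; Proposition~\ref{prop:HullProperties}.(\ref{item:subhull}) and Proposition~\ref{prop:HullProperties}.(\ref{item:HullIncreasingTheta}) are what allow one to compare the enlarged hulls with the original one, and Proposition~\ref{prop:coarse_retract} (the moreover clause) controls how the retractions vary. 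Once the displacement is less than $\varepsilon/2$, the geodesic homotopy stays in the $\varepsilon$--neighborhood of $K$, hence in $X$, and the argument closes.
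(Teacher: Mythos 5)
Your proposal follows essentially the same route as the paper, which simply cites the argument of \cite[Section~6]{BKMM:consistency}, with Propositions~\ref{prop:coarse_retract} and~\ref{prop:HullProperties} (and the contractibility of the ultralimits $H(\seq A)$) supplying the needed inputs. One heuristic in your final paragraph deserves tightening: refining the net does not ``pull the hull closer to $\sigma'$'' (adding points only enlarges $H(\seq A)$); rather, the displacement $\dist(p,r_\omega(p))$ shrinks because $r_\omega$ restricts to the identity on $A\subset H(\seq A)$ and has controlled Lipschitz constant $L$, so any $p\in|\sigma'|$ within $\eta$ of the net is moved at most $(1+L)\eta$ --- which is why the vertex set should be taken $\eta$--dense in all of $|\sigma'|$, not just in $K$.
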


\begin{cor}\label{cor:homologydimension}
If $(U,V)$ is an open pair in $\cuco X_{\omega}$, then $H_k(U,V)=\{0\}$ for all 
$k$ greater than the complexity of $\cuco X$.
\end{cor}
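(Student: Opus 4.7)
The plan is to mirror the strategy of \cite[Section~6]{BKMM:consistency} (where the analogous statement is proven for the mapping class group), substituting the rank theorem from~\cite[Theorem~J]{BehrstockHagenSisto:HHS_I} for the Behrstock--Minsky rank theorem that BKMM invoke. The two essential ingredients are already on the table: Corollary~\ref{cor:AcyclicHulls} tells us that open subsets closed under taking ultralimit-hulls of finite sets are acyclic, and the rank theorem controls the topological dimension of $\cuco X_\omega$ by the complexity $n$ of $(\cuco X,\mathfrak S)$.

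First, I would build a sufficient supply of acyclic open sets. Given any point $p\in\cuco X_\omega$ and any $r>0$, I want to exhibit an open set $W$ with $p\in W\subset B(p,r)$ such that for every sequence $\seq A=(A_m)$ of finite subsets of $W$, the ultralimit hull $H(\seq A)$ is contained in $W$. Using Proposition~\ref{prop:HullProperties}.(1), one sees that $H_\theta(A)$ has diameter controlled linearly by $\diam(A)$ (and a fixed additive constant depending on $\theta$ and the cardinality of $A$); passing to the asymptotic cone, the ultralimit hull of a finite set of cone points has diameter bounded linearly in the diameter of that finite set. Hence the open metric ball $B(p,r)$ is already ``hull-closed'' up to the Lipschitz constant, and by shrinking slightly one obtains open $W$ that genuinely satisfies the hypothesis of Corollary~\ref{cor:AcyclicHulls} and is therefore acyclic.

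Second, I would invoke~\cite[Theorem~J]{BehrstockHagenSisto:HHS_I} (the HHS rank theorem) to conclude that every asymptotic cone $\cuco X_\omega$ has topological (covering) dimension at most $n$, where $n$ is the complexity of $(\cuco X,\mathfrak S)$. Combined with the previous step, this gives a basis for the topology on $\cuco X_\omega$ consisting of acyclic open sets, in a space of topological dimension $\le n$.

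Finally, I would combine these via a standard sheaf/\v{C}ech argument: in a paracompact metric space of covering dimension at most $n$ with a basis of acyclic open sets, the singular homology $H_k(U,V)$ of any open pair vanishes for $k>n$. Concretely, one refines a cover of $U$ by acyclic open sets (and their intersections with $V$) into a cover of multiplicity at most $n+1$, applies the Mayer--Vietoris / nerve argument inductively, and uses the \v{C}ech-to-singular comparison valid on open subsets of metric spaces. The main obstacle is the last step: asymptotic cones are not CW complexes or manifolds, so one must verify carefully that the dimension bound and the acyclic-basis property together imply the singular-homology vanishing for \emph{open} pairs. This is exactly what is done in~\cite[Section~6]{BKMM:consistency}, and the argument there transfers verbatim once the rank-theorem input is replaced as indicated above.
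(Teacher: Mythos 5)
Your high-level plan---mirror \cite[Section~6]{BKMM:consistency}, substituting the HHS rank theorem \cite[Theorem~J]{BehrstockHagenSisto:HHS_I} for the Behrstock--Minsky rank theorem---is exactly the approach the paper takes; the paper's treatment of this corollary is a one-sentence appeal to that strategy, with Proposition~\ref{prop:HullProperties} and Corollary~\ref{cor:AcyclicHulls} as the inputs, just as you identify.

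The detailed first step, however, has a genuine gap. You assert that the open ball $B(p,r)$ is ``hull-closed up to the Lipschitz constant, and by shrinking slightly one obtains open $W$'' satisfying the hypothesis of Corollary~\ref{cor:AcyclicHulls}. Proposition~\ref{prop:HullProperties}.(1) only gives $\diam(H_\theta(A))\leq K\diam(A)+C$ for a \emph{multiplicative} constant $K$ that is not claimed to be $1$ (and which, moreover, depends on a bound on $|A|$). Passing to the asymptotic cone kills the additive constant $C$, but $K$ survives, so the best you get from this is $\diam(H(\seq A))\leq K\,\diam(\seq A)$. The hull-closure condition ``$H(\seq A)\subset W$ for every finite $\seq A\subset W$'' is then scale-invariant in $\cuco X_\omega$: if $K>1$, no ball of any radius satisfies it, and shrinking cannot fix this since the obstruction rescales along with the ball. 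One needs either a genuinely different construction of acyclic open neighborhoods (this is what \cite[Section~6]{BKMM:consistency} actually supplies, and you should lift that lemma rather than substitute balls for it), or a separate argument that the ultralimit hull coincides with the median-convex hull in $\cuco X_\omega$ and that median-convex hulls preserve diameter---neither of which is contained in Proposition~\ref{prop:HullProperties}. You should also note that the cardinality-dependence of $K$ in Proposition~\ref{prop:HullProperties} requires a further remark before you quantify over arbitrary finite subsets, as Corollary~\ref{cor:AcyclicHulls} does.
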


\subsection{Relatively hierarchically hyperbolic spaces and the distance formula}\label{subsec:generalized_hull}
In this section, we work in the following context:

\begin{defn}[Relatively hierarchically hyperbolic spaces]\label{defn:RHHS}
The hierarchical space $(\cuco X,\mathfrak S)$ is \emph{relatively hierarchically hyperbolic} if there exists $\delta$ such 
that for all $U\in\mathfrak S$, either $U$ is $\nest$--minimal or $\fontact U$ is $\delta$--hyperbolic.  If $U$ is 
$\nest$--minimal and $\fontact U$ is not hyperbolic, then we insist that $\pi_U$ is $E$--coarsely surjective.
\end{defn}

\begin{rem}One could, more generally, only insist that each
$\pi_U(\cuco X)$ is a uniformly coarsely Lipschitz coarse retract.
For hyperbolic $\fontact U$, this is equivalent to the uniform
quasiconvexity from Definition~\ref{defn:space_with_distance_formula},
and is sufficient for our needs; for the present applications  
Definition~\ref{defn:RHHS} is sufficiently general, as well as for  
applications in \cite{HHS_3}.\end{rem}

Our goal is to prove the following two theorems, which provide hierarchy paths and a distance formula in relatively hierarchically hyperbolic spaces.  We will not use these theorems in the remainder of this paper, but they are required for future applications.

\begin{thm}[Distance formula for relatively hierarchically hyperbolic spaces]\label{thm:rel_hyp_df}
Let $(\cuco X,\mathfrak S)$ be a relatively hierarchically hyperbolic space.  Then there exists $s_0$ such that for all $s\geq s_0$, there exist constants $C,K$ such that for all $x,y\in\cuco X$, $$\dist_{\cuco X}(x,y)\asymp_{K,C}\sum_{U\in\mathfrak S}\ignore{\dist_U(x,y)}{s}.$$
\end{thm}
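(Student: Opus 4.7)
The strategy, as the authors outline in the paragraph preceding the statement, is to build for each pair $x,y\in\cuco X$ a ``hull'' $H_\theta(x,y)$ and endow it with a genuine hierarchically hyperbolic structure whose constants are independent of $x,y$. Once that is done, Theorem~\ref{thm:distance_formula} applied to the hull, combined with a coarse retraction $\cuco X\to H_\theta(x,y)$ showing $\dist_{H_\theta(x,y)}(x,y)\asymp\dist_{\cuco X}(x,y)$, yields the desired estimate.

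I would begin by constructing the hull: for each $V\in\mathfrak S$, fix a geodesic $\gamma_V$ in $\fontact V$ from $\pi_V(x)$ to $\pi_V(y)$ and set
\[
H_\theta(x,y)=\{p\in\cuco X:\dist_V(\pi_V(p),\gamma_V)\leq\theta\text{ for all }V\in\mathfrak S\}.
\]
Using Theorem~\ref{thm:realization}, whose proof makes no use of hyperbolicity and hence applies in any hierarchical space, I would imitate the proof of Lemma~\ref{lem:2_point_retract} (equivalently, Proposition~\ref{prop:coarse_retract} in the $|A|=2$ case) to produce a uniformly coarsely Lipschitz retraction $r:\cuco X\to H_\theta(x,y)$: given $z$, assemble a consistent tuple whose $V$-coordinate is a point of $\gamma_V$ close to $\pi_V(z)$ (for hyperbolic $\fontact V$ use triangle centers as in Lemma~\ref{lem:median_cons}; for non-hyperbolic $\nest$-minimal $V$ any choice of parameter on $\gamma_V$ works, because $V$ is minimal and so all consistency inequalities with $W\propnest V$ are vacuous). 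This yields $\dist_{H_\theta(x,y)}(x,y)\asymp\dist_{\cuco X}(x,y)$.

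Next, I would put an HHS structure on $H_\theta(x,y)$ with index set $\mathfrak S$ as follows. For $V$ not $\nest$-minimal, keep $\fontact V$ (it is already $\delta$-hyperbolic); for $V$ $\nest$-minimal, replace $\fontact V$ by $\gamma_V$, parameterized as an isometric interval $[0,L_V]$, which is vacuously hyperbolic. The projection $H_\theta(x,y)\to\gamma_V$ is defined via the gate construction of Lemma~\ref{lem:gate} on the hyperbolic factors, and on each $\gamma_V$ by sending $p$ to the minimal parameter $t$ with $\dist_V(\pi_V(p),\gamma_V(t))\leq\theta$. All relations on $\mathfrak S$, and the relative projections $\rho^U_V$, are inherited. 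The axioms (nesting, orthogonality, transversality/consistency, finite complexity, large links, bounded geodesic image, partial realization, uniqueness) then hold with uniform constants: for non-minimal $V$ they are inherited verbatim from $(\cuco X,\mathfrak S)$, and for $\nest$-minimal $V$ the nontrivial axioms reduce to statements about an interval, where they are trivial or follow from the corresponding statements in $(\cuco X,\mathfrak S)$ composed with the bounded-error projection to $\gamma_V$.

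The main obstacle is precisely the last point: verifying that the projections to the intervals $\gamma_V$ (for $\nest$-minimal $V$) are uniformly coarsely Lipschitz and satisfy the consistency/bounded-geodesic-image inequalities with constants independent of $x,y$. Coarse Lipschitzness is delicate because in a non-hyperbolic ambient $\fontact V$ the $\theta$-neighborhood of $\gamma_V$ can in principle fold back on itself; to circumvent this one uses that points of $H_\theta(x,y)$ are obtained from realization of consistent tuples, together with the uniqueness axiom, to argue that two nearby points of $H_\theta(x,y)$ actually have $\gamma_V$-parameters within a uniform constant. Granting this, Theorem~\ref{thm:distance_formula} applied to the uniform HHS $(H_\theta(x,y),\mathfrak S)$ gives
\[
\dist_{\cuco X}(x,y)\asymp\dist_{H_\theta(x,y)}(x,y)\asymp\sum_{V\in\mathfrak S}\ignore{\dist'_V(x,y)}{s},
\]
where $\dist'_V$ is the distance in the replaced hyperbolic space. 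Since $\pi_V(x)$ and $\pi_V(y)$ are the endpoints of $\gamma_V$, one has $\dist'_V(x,y)=\dist_V(x,y)$ for $\nest$-minimal $V$ and the two agree up to additive error $O(\theta)$ for the others, completing the proof.
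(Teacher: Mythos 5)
Your high-level plan matches the paper's: build a hull of $x,y$, make it a genuine HHS with constants uniform in $x,y$, apply Theorem~\ref{thm:distance_formula}, and use a coarse retraction to compare distances. But the one step you flag as ``the main obstacle'' is indeed a genuine gap, and your sketch of how to resolve it does not actually close it.

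The problem is your definition of the projection to $\gamma_V$ for $\nest$-minimal $V$: sending $p$ to the \emph{minimal} parameter $t$ with $\dist_V(\pi_V(p),\gamma_V(t))\leq\theta$. In a non-hyperbolic $\fontact V$, the geodesic $\gamma_V$ may re-enter a $2\theta$-ball around a point it has already left, so the set $\{t:\dist_V(\pi_V(p),\gamma_V(t))\leq\theta\}$ can be a disjoint union of intervals that are far apart, and the map ``take the minimum'' then jumps discontinuously between nearby $p$'s. The appeal to realization and the uniqueness axiom doesn't repair this: uniqueness bounds $\dist_{\cuco X}$ in terms of the $\dist_V$'s, but the issue here is that two points $p,p'\in H_\theta(x,y)$ with $\dist_{\cuco X}(p,p')$ small (hence $\dist_V(\pi_V(p),\pi_V(p'))$ small) can still have minimal $\gamma_V$-parameters that differ by an arbitrary amount. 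You would also need this Lipschitzness to push the consistency inequality for $V\propnest W$ through (that case is \emph{not} vacuous for minimal $V$; only $U\propnest V$ is vacuous), so the gap propagates.

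The paper avoids this entirely by choosing a different retraction onto $\gamma_V$ when $V$ is $\nest$-minimal: parametrize $\gamma_V$ by arc length from $\pi_V(x)$ and set $r_V(p)=\gamma_V\bigl(\min\{\dist_V(x,p),\dist_V(x,y)\}\bigr)$. Since $\dist_V(x,\cdot)$ is $1$-Lipschitz and $\gamma_V$ is an isometric embedding of an interval, $r_V$ is $1$-Lipschitz on all of $\fontact V$, restricts to the identity on $\gamma_V$, and moves points of $N_\theta(\gamma_V)$ at most $2\theta$. This uniform Lipschitzness is exactly what makes the consistency check for $U\propnest V$ (with $U$ minimal and $\fontact V$ hyperbolic) go through in Lemma~\ref{lem:hull_gate}, and it is what your version is missing. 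Two smaller, inessential deviations from the paper: the paper replaces \emph{every} $\fontact U$ (hyperbolic or not) by $\gamma_U$ in the HHS structure on the hull (you keep the original hyperbolic $\fontact V$ for non-minimal $V$, which would also work), and the paper packages the retraction $\cuco X\to M_\theta(x,y)$ and the HHS axioms for the hull as Lemma~\ref{lem:hull_gate}, Lemma~\ref{lem:qgs}, and Proposition~\ref{prop:hulls} rather than deducing them directly from Lemma~\ref{lem:2_point_retract}, whose proof uses triangle centers and therefore genuinely needs hyperbolicity.
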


\begin{proof}
By Proposition~\ref{prop:hulls} below, for some suitably-chosen $\theta\geq0$ and each $x,y\in\cuco X$, there exists a subspace $M_\theta(x,y)$ of $\cuco X$ (endowed with the induced metric) so that $(M_\theta(x,y),\mathfrak S)$ is a hierarchically hyperbolic space (with the same nesting relations and projections from $(\cuco X,\mathfrak S)$, so that for all $U\in\mathfrak S$, we have that $\pi_U(M_\theta(x,y))\subset\neb_\theta(\gamma_U)$, where $\gamma_U$ is an arbitrarily-chosen geodesic in $\fontact U$ from $\pi_U(x)$ to $\pi_U(y)$.  We emphasize that all of the constants from Definition~\ref{defn:space_with_distance_formula} (for $M_\theta(x,y)$) are independent of $x,y$.  The theorem now follows by applying the distance formula for hierarchically hyperbolic spaces (Theorem~\ref{thm:distance_formula}) to $(M_\theta(x,y),\mathfrak S)$.
\end{proof}

\begin{thm}[Hierarchy paths in relatively hierarchically hyperbolic spaces]\label{thm:rhhs_hierarchy_paths}
Let $(\cuco X,\mathfrak S)$ be a relatively hierarchically hyperbolic space.  Then there exists $D\geq0$ such that for all $x,y\in\cuco X$, there exists a $(D,D)$--quasigeodesic $\gamma$ in $\cuco X$ joining $x,y$ so that $\pi_U(\gamma)$ is an unparameterized $(D,D)$--quasigeodesic.
\end{thm}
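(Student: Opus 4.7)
The plan is to mimic the strategy used for Theorem~\ref{thm:rel_hyp_df} and reduce to the honest hierarchically hyperbolic setting by building, for each pair $x,y\in\cuco X$, a hull containing them on which $\mathfrak S$ induces a genuine HHS structure. We can then apply the existence of hierarchy paths for HHSs (Theorem~\ref{thm:monotone_hierarchy_paths}) inside this hull, and argue that the resulting path is still a hierarchy path when viewed in $\cuco X$.

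First, invoke Proposition~\ref{prop:hulls} (the same tool used in the proof of Theorem~\ref{thm:rel_hyp_df} above) to choose $\theta$ large enough that, for every $x,y\in\cuco X$, the subspace $M_\theta(x,y)\subseteq\cuco X$ equipped with the induced metric, together with $\mathfrak S$ and the original projections $\pi_U$, forms a hierarchically hyperbolic space with constants independent of $x,y$. For each $\nest$--minimal $U$ the associated space is replaced by a uniform neighborhood of a chosen geodesic $\gamma_U\subset\fontact U$ from $\pi_U(x)$ to $\pi_U(y)$; this neighborhood is uniformly hyperbolic (indeed, quasi-isometric to an interval), and on $M_\theta(x,y)$ it receives the original projection $\pi_U$.

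Next, apply Theorem~\ref{thm:monotone_hierarchy_paths} to the HHS $(M_\theta(x,y),\mathfrak S)$ to obtain a constant $D_0$, independent of $x,y$, and a $D_0$--hierarchy path $\gamma\colon[0,\ell]\to M_\theta(x,y)$ joining $x$ to $y$ inside $M_\theta(x,y)$. By definition, $\gamma$ is a $(D_0,D_0)$--quasigeodesic with respect to the induced metric, and $\pi_U\circ\gamma$ is a uniform unparameterized quasigeodesic in the hyperbolic space associated to $U$ in the $M_\theta$--structure, for every $U\in\mathfrak S$.

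Finally, transfer the conclusion back to $\cuco X$. The upper quasigeodesic bound $\dist_{\cuco X}(\gamma(s),\gamma(t))\leq D_0|s-t|+D_0$ is automatic because $\dist_{\cuco X}\leq\dist_{M_\theta(x,y)}$ on $M_\theta(x,y)$. For the lower bound, compare the two distance formulas: the HHS distance formula (Theorem~\ref{thm:distance_formula}) applied in $M_\theta(x,y)$ and the relative distance formula (Theorem~\ref{thm:rel_hyp_df}) applied in $\cuco X$ yield the same sum of projection distances, so $\dist_{M_\theta(x,y)}$ and $\dist_{\cuco X}$ are uniformly bi-Lipschitz on $M_\theta(x,y)$, and the lower quasigeodesic bound for $\gamma$ in $\cuco X$ follows with uniformly controlled constants. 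Moreover, for each $U\in\mathfrak S$, the projection $\pi_U(\gamma)$ lies in the $\theta$--neighborhood of the geodesic $\gamma_U\subset\fontact U$, so being an unparameterized quasigeodesic in the neighborhood (equivalently, in the $M_\theta$--hyperbolic space assigned to $U$) upgrades to being an unparameterized quasigeodesic in $\fontact U$ with uniformly controlled constants. Choosing $D$ larger than all constants produced above gives the desired statement.

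The main obstacle is the reliance on Proposition~\ref{prop:hulls}: one must verify that for every $\nest$--minimal $U$ the projected image $\pi_U(M_\theta(x,y))$ sits inside a uniformly hyperbolic region of $\fontact U$, and that bounded geodesic image, consistency, and the other HHS axioms still hold after replacing these non-hyperbolic coordinates by such thickenings. Once this hull-HHS correspondence is available with constants independent of $x,y$, the remainder of the argument reduces to a clean invocation of Theorem~\ref{thm:monotone_hierarchy_paths} and the two distance formulas.
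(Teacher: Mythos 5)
Your proposal is correct and follows the same route as the paper: invoke Proposition~\ref{prop:hulls} to endow the hull $M_\theta(x,y)$ with a uniform HHS structure, apply Theorem~\ref{thm:monotone_hierarchy_paths} there, and transfer the resulting hierarchy path back to $\cuco X$. This is precisely what the paper's one-line proof ("Proceed exactly as in Theorem~\ref{thm:rel_hyp_df}, but apply Theorem~\ref{thm:monotone_hierarchy_paths}...") intends.

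Two remarks on the transfer step, where you work harder than is necessary. First, Proposition~\ref{prop:hulls} equips $M_\theta(x,y)$ with the \emph{subspace} metric inherited from $\cuco X$ (i.e., the literal restriction of $\dist_{\cuco X}$, not the induced path metric). Consequently a $(D_0,D_0)$--quasigeodesic in $M_\theta(x,y)$ is, by definition, already a $(D_0,D_0)$--quasigeodesic in $\cuco X$; there is no need to compare the two distance formulas to obtain a bi-Lipschitz relation between $\dist_{M_\theta}$ and $\dist_{\cuco X}$. Your phrasing "$\dist_{\cuco X}\leq\dist_{M_\theta(x,y)}$" suggests you may be picturing the induced path metric on $M_\theta$, which is not what is used. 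Second, the hyperbolic space assigned to each $U$ in the hull structure is the geodesic $\gamma_U$ itself (with projection $r_U\circ\pi_U$), not a thickened neighborhood; the neighborhood only appears in the definition of the set $M_\theta(x,y)$. Your concluding argument that a quasigeodesic in $\gamma_U$ upgrades to an unparameterized quasigeodesic in $\fontact U$ is the right observation and works verbatim: the image $\pi_U(\gamma)$ is uniformly close to $\gamma_U$, and $\gamma_U$ is a genuine geodesic of $\fontact U$, so the bounded-error deviation is harmless.
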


\begin{proof}
Proceed exactly as in Theorem~\ref{thm:rel_hyp_df}, but apply Theorem~\ref{thm:monotone_hierarchy_paths} instead of Theorem~\ref{thm:distance_formula}.
\end{proof}

We now define hulls of pairs of points in the relatively hierarchically hyperbolic space $(\cuco X,\mathfrak S)$.  Let $\theta$ be a constant to be chosen (it will be the output of the realization theorem for a consistency constant depending on the constants associated to $(\cuco X,\mathfrak S)$), and let $x,y\in\cuco X$.  For each $U\in\mathfrak S$, fix a geodesic $\gamma_U$ in $\fontact U$ joining $\pi_U(x)$ to $\pi_U(y)$.  Define maps $r_U:\fontact U\to\gamma_U$ as follows: if $\fontact U$ is hyperbolic, let $r_U$ be the coarse closest-point projection map.  Otherwise, if $\fontact U$ is not hyperbolic (so $U$ is $\nest$--minimal), define $r_U$ as follows: parametrize $\gamma_U$ by arc length with $\gamma_U(0)=x$, and for each $p\in\fontact U$, let $m(p)=\min\{\dist_U(x,p),\dist_U(x,y)\}$.  Then $r_U(p)=\gamma_U(m(p))$.  This $r_U$ is easily seen to be an $L$--coarsely Lipschitz retraction, with $L$ independent of $U$ and $x,y$.  (When $U$ is minimal, $r_U$ is $1$--Lipschitz.)

Next, define the hull $M_\theta(x,y)$ to be the set of points $x\in\cuco X$ such that $\dist_U(x,\gamma_U)\leq\theta$ for all $U\in\mathfrak S$.  In the next proposition, we show that $M_\theta(x,y)$ is a hierarchically hyperbolic space, with the following hierarchically hyperbolic structure:
\begin{enumerate}
 \item the index set is $\mathfrak S$;\label{item:hull_index}
 \item the nesting, orthogonality, and transversality relations on $\mathfrak S$ are the same as in $(\cuco X,\mathfrak S)$;\label{item:hull_rel}
 \item for each $U\in\mathfrak S$, the associated hyperbolic space is $\gamma_U$;\label{item:hull_space}
 \item for each $U\in\mathfrak S$, the projection $\pi'_U:M_\theta(x,y)\to\gamma_U$ is given by $\pi'_U=r_U\circ\pi_U$;\label{item:hull_proj}
 \item for each pair $U,V\in\mathfrak S$ of distinct non-orthogonal elements, the relative projection $\fontact U\to\fontact V$ is given by $r_V\circ\rho^U_V$.\label{item:hull_relproj}
\end{enumerate}

Since there are now two sets of projections (those defined in the original hierarchical space $(\cuco X,\mathfrak S)$, denoted $\pi_*$, and the new projections $\pi'_*$), in the following proofs we will explicitly write all projections when writing distances in the various $\fontact U$.  

\begin{lem}[Gates in hulls]\label{lem:hull_gate}
Let $M_\theta(x,y)$ be as above.  Then there exists a uniformly coarsely Lipschitz retraction $r:\cuco X\to M_\theta(x,y)$ such that for each $U\in\mathfrak S$, we have, up to uniformly (independent of $x,y$) bounded error, $\pi_U\circ r=r_U\circ\pi_U$.
\end{lem}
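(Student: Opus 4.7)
My approach is to construct $r$ by applying the realization theorem (Theorem~\ref{thm:realization}) to a tuple built from $r_U\circ\pi_U$. For each $p\in\cuco X$, define $\tup b(p)\in\prod_{U\in\mathfrak S}2^{\fontact U}$ by $b_U(p)=r_U(\pi_U(p))\in\gamma_U$. If $\tup b(p)$ is uniformly $\kappa$-consistent, then Theorem~\ref{thm:realization}, which applies in our hierarchical (not necessarily hyperbolic) setting, produces $r(p)\in\cuco X$ with $\dist_U(\pi_U(r(p)),b_U(p))\leq\theta_e(\kappa)$ for all $U$. Choosing $\theta\geq\theta_e(\kappa)$ guarantees $r(p)\in M_\theta(x,y)$ and immediately yields the coarse identity $\pi_U\circ r\asymp r_U\circ\pi_U$ demanded by the lemma.

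The main obstacle is verifying that $\tup b(p)$ is uniformly consistent, particularly for pairs $V,W$ with one of $\fontact V,\fontact W$ non-hyperbolic. When both spaces are hyperbolic the argument follows the template of Lemma~\ref{lem:median_cons}: the closest-point projection $b_W(p)$ lies near all three sides of a $\delta$-thin geodesic triangle on $\pi_W(x),\pi_W(y),\pi_W(p)$, so consistency of $\tup b(p)$ inherits from the $E$-consistency of the tuples at $x,y,p$. For the delicate case, say $V\transverse W$ with $\fontact V$ non-hyperbolic, I will argue by cases starting from one half of the $E$-consistency at $p$, e.g.\ $\dist_V(\pi_V(p),\rho^W_V)\leq E$. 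If $\rho^W_V$ lies near an endpoint of $\gamma_V$ then $1$-Lipschitzness of the arc-length retraction $r_V$ transports this bound to $\dist_V(b_V(p),\rho^W_V)\leq O(E)$. If not, then $E$-consistency at $x$ and $y$ forces both $\dist_W(\pi_W(x),\rho^V_W)$ and $\dist_W(\pi_W(y),\rho^V_W)$ to be small, and any geodesic $\gamma_W$ between two such points must stay $O(E)$-close to $\rho^V_W$ by the triangle inequality alone, so $b_W(p)\in\gamma_W$ is close to $\rho^V_W$. The case $V\propnest W$ is handled analogously, using bounded geodesic image to control $\rho^W_V(b_W(p))$; this is cleaner because $W$ is automatically non-minimal and hence $\fontact W$ is hyperbolic.

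Once uniform consistency is in hand, realization produces the map $r$; I choose $\theta=\theta_e$ so $r(\cuco X)\subseteq M_\theta(x,y)$. To make $r$ an actual retraction I observe that on $p\in M_\theta(x,y)$ each $\pi_U(p)$ is already $\theta$-close to $\gamma_U$, so $b_U(p)$ lies within $(L+1)\theta$ of $\pi_U(p)$ where $L$ bounds the Lipschitz constants of the $r_U$; the uniqueness axiom (Definition~\ref{defn:space_with_distance_formula}.(\ref{item:dfs_uniqueness})) then places $r(p)$ within a uniform constant of $p$, and redefining $r|_{M_\theta(x,y)}$ to be the identity introduces only a uniform additive error in the Lipschitz constant. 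The uniform coarse Lipschitz property follows by the same strategy as in Lemma~\ref{lem:gate}: for $\dist_{\cuco X}(p,q)$ small, each $\pi_U(p),\pi_U(q)$ are uniformly close (since $\pi_U$ is $(K,K)$-coarsely Lipschitz), hence each $b_U(p),b_U(q)$ are uniformly close (since each $r_U$ is Lipschitz), and the uniqueness part of Theorem~\ref{thm:realization} then controls $\dist_{\cuco X}(r(p),r(q))$ as required.
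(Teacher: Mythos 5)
Your overall construction is the same as the paper's: set $\tup b(p)=(r_U\circ\pi_U(p))_U$, verify uniform $\kappa$--consistency, apply the realization theorem (which indeed holds for hierarchical spaces), and use the uniqueness axiom both for coarse Lipschitzness and to show $r$ is coarsely the identity on $M_\theta(x,y)$, then redefine on the hull. Your treatment of the transverse case $V\transverse W$ is correct: the observation that the arc--length retraction $r_V$ is $1$--Lipschitz and fixes $\gamma_V$, so that it transports the consistency bound whenever $\rho^W_V$ lies near an endpoint, is exactly the right mechanism, and the triangle--inequality argument in the complementary sub-case (where $\gamma_W$ has length $O(E)$ and both endpoints lie near $\rho^V_W$) is also fine.

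Where the sketch has a real gap is the nested case $V\propnest W$, which you dismiss as ``analogous'' and ``cleaner.'' In the paper this is the most delicate part of the consistency check, and bounded geodesic image applied to $\gamma_W$ alone is \emph{not} sufficient. The problematic sub-case is when $\gamma_W$ passes within $O(E)$ of $\rho^V_W$ but $b_W(p)=r_W(\pi_W(p))$ lies more than $E$ from $\rho^V_W$. Then one of the two halves of $\gamma_W$, say $[\pi_W(x),b_W(p)]$, avoids the $E$--neighborhood of $\rho^V_W$, and bounded geodesic image together with consistency at $x$ puts $\rho^W_V(b_W(p))$ near $\pi_V(x)$. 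But one must still locate $b_V(p)$ near $\pi_V(x)$, which (via the Lipschitz property of $r_V$) requires showing that $\pi_V(p)\asymp\rho^W_V(\pi_W(p))$ is also near $\pi_V(x)$ --- in other words that the geodesic $[\pi_W(x),\pi_W(p)]$ likewise avoids $\rho^V_W$. This is precisely where hyperbolicity of $\fontact W$ is used in an essential rather than merely convenient way: because $b_W(p)$ is the nearest--point projection of $\pi_W(p)$ to $\gamma_W$ in a $\delta$--hyperbolic space, the geodesic triangle with vertices $\pi_W(x),\pi_W(p),b_W(p)$ is $\delta$--thin and all three of its sides stay away from $\rho^V_W$, allowing bounded geodesic image to be applied to the whole triangle at once. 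Without this thin--triangle step the consistency estimate does not close, so the nested case should not be presented as easier than the transverse one.
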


\begin{rem}It is crucial in the following proof that $\fontact U$ is $\delta$--hyperbolic for each $U\in\mathfrak S$ that is not $\nest$--minimal.\end{rem}

\begin{proof}[Proof of Lemma~\ref{lem:hull_gate}]
Let $z\in\cuco X$ and, for each $U$, let $t_U=r_U\circ\pi_U(z)$; this defines a tuple $(t_U)\in\prod_{U\in\mathfrak 
S}2^{\fontact U}$ which we will check is $\kappa$--consistent for $\kappa$ independent of $x,y$.  The tuple $(t_U)$ is 
admissible because of quasiconvexity of the images of projections to hyperbolic spaces, and coarse surjectivity of 
projections to non-hyperbolic ones.

Realization 
(Theorem~\ref{thm:realization}) then yields $m\in\cuco X$ such that $\dist_U(\pi_U(m),t_U)\leq\theta$ for all $U\in\mathfrak 
S$.  By definition, $t_U\in\gamma_U$, so $m\in M_\theta(x,y)$ and we define $\gate_{x,y}(z)=m$.  Note that up to perturbing 
slightly, we may take $\gate_{x,y}(z)=z$ when $z\in M_\theta$.  Hence it suffices to check consistency of $(t_U)$.

First let $U,V\in\mathfrak S$ satisfy $U\transverse V$.  Then $\dist_V(\pi_V(x),\pi_V(y))\leq 2E$ (up to exchanging $U$ and $V$), and moreover each of $\pi_U(x),\pi_U(y)$ is $E$--close to $\rho^U_V$.  Since $t_V$ lies on $\gamma_V$, it follows that $\dist_V(t_V,\rho^U_V)\leq2E$.

Next, let $U,V\in\mathfrak S$ satisfy $U\propnest V$.  Observe that in this case, $\fontact V$ is $\delta$--hyperbolic because $V$ is not $\nest$--minimal.  First suppose that $\dist_V(\gamma_V,\rho^U_V)>1$.  Then by consistency and bounded geodesic image, $\dist_U(x,y)\leq 3E$, and $\diam_U(\rho^V_U(\gamma_V))\leq E$.  It follows that $\diam_U(t_U\cup\rho^U_V(t_V))\leq 10E$.  

Hence, suppose that $\dist_V(\rho^U_V,\gamma_V)\leq 10E$ but that $\dist_V(t_V,\rho^U_V)>E$.  Without loss of generality, $\rho^U_V$ lies at distance $\leq E$ from the subpath of $\gamma_V$ joining $t_V$ to $\pi_V(y)$.  Let $\gamma'_V$ be the subpath joining $x$ to $t_V$.  By consistency, bounded geodesic image, and the fact that $\fontact V$ is $\delta$--hyperbolic and $t_V=r_V\circ\pi_V(z)$, the geodesic triangle between $\pi_V(x),\pi_V(z)$, and $t_V$ projects under $\rho^V_U$ to a set, of diameter bounded by some uniform $\xi$, containing $\pi_U(x),\pi_U(z)$, and $\rho^V_U(t_V)$.  Hence, since $t_U=r_U\circ\pi_U(z)$, and $\pi_U(x)\in\gamma_U$, the triangle inequality yields a uniform bound on $\diam_U(t_U\cup\rho^V_U(t_V))$.  Hence there exists a uniform $\kappa$, independent of $x,y$, so that $(t_U)$ is $\kappa$--consistent.  Finally, $\gate_{x,y}$ is coarsely Lipschitz by the uniqueness axiom (Definition~\ref{defn:space_with_distance_formula}.\eqref{item:dfs_uniqueness}), since each $r_U$ is uniformly coarsely Lipschitz.
\end{proof}

\begin{lem}\label{lem:qgs}
Let $m,m'\in M_\theta(x,y)$.  Then there exists $C\geq0$ such that $m,m'$ are joined by a $(C,C)$--quasigeodesic in $M_\theta(x,y)$.
\end{lem}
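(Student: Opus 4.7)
The plan is to use the gate retraction $r\colon\cuco X\to M_\theta(x,y)$ from Lemma~\ref{lem:hull_gate} to push a quasigeodesic of $\cuco X$ into $M_\theta(x,y)$. Since $\cuco X$ is a $q$-quasigeodesic space, fix a $(q,q)$-quasigeodesic $\alpha\colon[0,L]\to\cuco X$ from $m$ to $m'$ and sample it at integer times to obtain a $K$-discrete path $\beta\colon\{0,\ldots,n\}\to\cuco X$ with $\beta(0)=m,\beta(n)=m'$, consecutive distances bounded in terms of $q$, and $\dist_{\cuco X}(\beta(i),\beta(j))\asymp|i-j|$ up to uniform additive and multiplicative constants. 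Set $\beta'=r\circ\beta$, a discrete path in $M_\theta(x,y)$ whose endpoints are coarsely $m$ and $m'$ (since $r$ is coarsely the identity on $M_\theta(x,y)$).

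The upper bound $\dist_{\cuco X}(\beta'(i),\beta'(j))\leq C|i-j|+C$ is immediate from $r$ being uniformly coarsely Lipschitz (Lemma~\ref{lem:hull_gate}) and $\beta$ being a discrete quasigeodesic. To establish the lower bound $\dist_{\cuco X}(\beta'(i),\beta'(j))\geq|i-j|/C-C$, apply the uniqueness axiom (Definition~\ref{defn:space_with_distance_formula}.\eqref{item:dfs_uniqueness}) to $\beta(i),\beta(j)$: whenever $|i-j|$ exceeds a uniform threshold, some $U\in\mathfrak S$ satisfies $\dist_U(\beta(i),\beta(j))\geq\kappa$ for arbitrarily prescribed $\kappa$. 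By Lemma~\ref{lem:hull_gate}, $\pi_U(\beta'(k))$ coarsely coincides with $r_U(\pi_U(\beta(k)))\in\gamma_U$ for each $k$, so it remains to transfer the lower bound from $\dist_U(\beta(i),\beta(j))$ to $\dist_U(\beta'(i),\beta'(j))$, and then again invoke the uniqueness axiom (in reverse) to convert this into a lower bound on $\dist_{\cuco X}(\beta'(i),\beta'(j))$.

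The main obstacle is this transfer step, which splits into two cases. When $U$ is not $\nest$-minimal, $\fontact U$ is $\delta$-hyperbolic and $r_U$ is a coarse closest-point projection onto the quasiconvex geodesic $\gamma_U$; since $\pi_U(m),\pi_U(m')$ lie within $\theta$ of $\gamma_U$ and the endpoints of $\beta$ lie in $M_\theta(x,y)$, a standard hyperbolic-geometry projection argument shows $\dist_U(\beta'(i),\beta'(j))$ is comparable to $\dist_U(\beta(i),\beta(j))$ up to additive error. When $U$ is $\nest$-minimal, $\fontact U$ may fail to be hyperbolic, but one exploits the explicit formula $r_U(p)=\gamma_U\bigl(\min\{\dist_U(x,p),\dist_U(x,y)\}\bigr)$ from Lemma~\ref{lem:hull_gate} together with the $1$-Lipschitzness of $r_U$ established there, producing the comparison directly.

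Combining the two cases, $\beta'$ is a $(C,C)$-quasigeodesic in $M_\theta(x,y)$ (with the subspace metric from $\cuco X$) for a constant $C$ independent of $x,y,m,m'$, proving the lemma.
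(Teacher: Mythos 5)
Your approach is genuinely different from the paper's, and it contains a real gap in the distance-transfer step.

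The paper's proof never claims that the gated path $\gate_{x,y}\circ\sigma$ is itself a quasigeodesic. It only uses the gated path to show that \emph{some} $K'$--discrete, $K'$--efficient path from $m$ to $m'$ exists inside $M_\theta(x,y)$, and then takes a \emph{minimal-length} such discrete path. Minimality forces every subpath to be efficient (otherwise a subpath could be replaced by a shorter efficient one, again produced by gating a quasigeodesic in $\cuco X$ between the subpath's endpoints, contradicting minimality), and a discrete path all of whose subpaths are $K'$--efficient is automatically a uniform quasigeodesic. So the paper sidesteps the need to control how gating distorts individual distances along the path.

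Your proposal instead asserts directly that $\beta'=r\circ\beta$ is a quasigeodesic, and the crux is the claim that a lower bound $\dist_U(\beta(i),\beta(j))\geq\kappa$ transfers to a lower bound on $\dist_U(\beta'(i),\beta'(j))=\dist_U(r_U\pi_U\beta(i),\,r_U\pi_U\beta(j))$. This transfer is false in general. For non-$\nest$--minimal $U$, the map $r_U$ is coarse closest-point projection onto the geodesic $\gamma_U$, which is coarsely $1$--Lipschitz (giving an \emph{upper} bound) but can dramatically \emph{contract}: two points in $\fontact U$ that are far apart but both far from $\gamma_U$ on the same side project to nearby points on $\gamma_U$. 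The ``standard hyperbolic-geometry projection argument'' you invoke is exactly this Lipschitz estimate, which goes the wrong way. The fact that the endpoints $\pi_U(m),\pi_U(m')$ lie $\theta$--close to $\gamma_U$ does not help, because nothing forces the intermediate projections $\pi_U(\beta(i))$ to stay near $\gamma_U$: $\beta$ is an arbitrary $(q,q)$--quasigeodesic in the ambient space $\cuco X$, not a hierarchy path, and at this stage in the argument $\cuco X$ is only a relatively hierarchical space, so there is no Morse lemma and no distance formula to constrain the subordinate projections of $\beta$. The same collapse occurs for $\nest$--minimal $U$: the explicit formula $r_U(p)=\gamma_U(\min\{\dist_U(x,p),\dist_U(x,y)\})$ sends any two points at the same $\fontact U$--distance from $x$ to the same point of $\gamma_U$, no matter how far apart they are, so $1$--Lipschitzness again gives only the upper bound. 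To repair the proof you would need to either control the projections $\pi_U\circ\beta$ (which would essentially require the hierarchy-path machinery you are trying to build), or replace the direct estimate with a minimality argument as in the paper.
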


\begin{proof}
Since $\cuco X$ is a quasigeodesic space, there exists $K\geq0$ so that $m,m'$ are joined by a $K$--discrete $(K,K)$--quasigeodesic $\sigma:[0,\ell]\to\cuco X$ with $\sigma(0)=m,\sigma(\ell)=m'$.  Note that $\gate_{x,y}\circ\sigma$ is a $K'$--discrete, efficient path for $K'$ independent of $x,y$, since the gate map is uniformly coarsely Lipschitz.  A minimal-length $K'$--discrete efficient path in $M_\theta(x,y)$ from $x$ to $y$ has the property that each subpath is $K'$--efficient, and is a uniform quasigeodesic, as needed.
\end{proof}

\begin{prop}\label{prop:hulls}
For all sufficiently large $\theta$, the data \eqref{item:hull_index}--\eqref{item:hull_relproj} above makes $(M_\theta(x,y),\mathfrak S)$ a hierarchically hyperbolic space, where $M_\theta(x,y)$ inherits its metric as a subspace of $\cuco X$.  Moreover, the associated constants from Definition~\ref{defn:space_with_distance_formula} are independent of $x,y$.
\end{prop}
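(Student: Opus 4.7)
The plan is to verify each axiom of Definition~\ref{defn:space_with_distance_formula} for $(M_\theta(x,y),\mathfrak S)$ by pushing the corresponding axiom in $(\cuco X,\mathfrak S)$ through the two uniformly coarsely Lipschitz maps available to us: the per-domain retractions $r_U\colon\fontact U\to\gamma_U$ and the global gate map $\gate_{x,y}\colon\cuco X\to M_\theta(x,y)$ from Lemma~\ref{lem:hull_gate}. The crucial feature of both maps is that their constants depend only on the ambient $(\cuco X,\mathfrak S)$ and not on $x$ or $y$, so every constant we produce will be $(x,y)$-independent. That $M_\theta(x,y)$ is a uniform quasigeodesic space is exactly Lemma~\ref{lem:qgs}.

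First I would dispose of the structural axioms: the index set and all three relations are unchanged, so finite complexity and the orthogonality/nesting bookkeeping are inherited. Each $\gamma_U$ is a geodesic in $\fontact U$, hence $0$-hyperbolic; moreover, since $\gamma_U$ is a geodesic the inclusion $\gamma_U\hookrightarrow\fontact U$ is isometric, so distances in $\gamma_U$ and distances in $\fontact U$ between points of $\gamma_U$ agree. The new projections $\pi'_U=r_U\circ\pi_U$ are uniformly coarsely Lipschitz, and the new $\rho$-sets $r_V\circ\rho^U_V$ are uniformly bounded.

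For the consistency inequalities, given $m\in M_\theta(x,y)$ and non-orthogonal $U,V$, apply consistency for $(\pi_U(m),\pi_V(m))$ in $(\cuco X,\mathfrak S)$ and compose with the uniformly Lipschitz $r_U,r_V$; the inequality $\dist_W(\rho^U_W,\rho^V_W)\leq\kappa_0$ whenever $U\nest V$ transfers under $r_W$. For bounded geodesic image, any geodesic $\eta$ of $\gamma_W$ is also a geodesic of $\fontact W$ by the isometric embedding, so BGI in $(\cuco X,\mathfrak S)$ gives either $\diam_V(\rho^W_V(\eta))\leq E$—hence $\diam_{\gamma_V}(r_V\rho^W_V(\eta))\leq LE$—or $\eta$ meets $\neb_E(\rho^V_W)$ in $\fontact W$; since $r_W$ fixes $\eta$ pointwise, this gives $\eta$ meeting a uniform neighborhood of $r_W(\rho^V_W)$ in $\gamma_W$. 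Large links transfers similarly: because $\pi'_U$ differs from $\pi_U$ by at most $L\theta$ on $M_\theta(x,y)$, the original family $\{T_i\}$ produced in $(\cuco X,\mathfrak S)$ has cardinality linear in $\dist_{\gamma_W}(\pi'_W(m_1),\pi'_W(m_2))$ and still covers the relevant $T\nest W$ after adjusting the threshold $E$ by an additive constant.

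The main step is partial realization, where I would leverage the gate map. Given pairwise orthogonal $\{V_j\}$ and $p_j\in\pi'_{V_j}(M_\theta(x,y))$, lift each $p_j$ to some $\tilde p_j\in\pi_{V_j}(\cuco X)$ with $r_{V_j}(\tilde p_j)=p_j$, invoke partial realization in $(\cuco X,\mathfrak S)$ to obtain $p\in\cuco X$ realizing the $\tilde p_j$, and set $m=\gate_{x,y}(p)\in M_\theta(x,y)$. Lemma~\ref{lem:hull_gate} guarantees $\pi_U(m)\sim r_U(\pi_U(p))$ with error independent of $x,y$, so composing once more with the Lipschitz $r_U$ transfers all of the partial realization estimates from $p$ to $m$. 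Uniqueness is immediate: if $m_1,m_2\in M_\theta(x,y)$ are far in the inherited metric, the uniqueness axiom in $(\cuco X,\mathfrak S)$ yields some $V$ with $\dist_V(m_1,m_2)$ large, and since $\pi_V(m_i)\in\neb_\theta(\gamma_V)$ and $\gamma_V$ is isometrically embedded, the corresponding $\dist_{\gamma_V}(\pi'_V(m_1),\pi'_V(m_2))$ is comparable. The main obstacle throughout is bookkeeping to confirm genuine $(x,y)$-independence of every constant; the essential input guaranteeing this is Lemma~\ref{lem:hull_gate}, whose proof was the one place that truly used the hyperbolicity of $\fontact V$ for non-$\nest$-minimal $V$—that is, the sole place the relative (rather than full) hierarchical hyperbolicity hypothesis enters.
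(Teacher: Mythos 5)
Your proposal follows essentially the same route as the paper's proof: quasigeodesicity from Lemma~\ref{lem:qgs}, the structural axioms inherited verbatim, consistency/bounded geodesic image/large links transferred through the uniformly Lipschitz retractions $r_U$, partial realization obtained by realizing in $(\cuco X,\mathfrak S)$ and then applying the gate map $\gate_{x,y}$ from Lemma~\ref{lem:hull_gate}, and uniqueness from the fact that $r_U$ moves points of $\neb_\theta(\gamma_U)$ by at most $2\theta$. Your closing observation about where hyperbolicity of the non-$\nest$-minimal $\fontact U$ is actually used matches the remark the paper makes before Lemma~\ref{lem:hull_gate}.
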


\begin{proof}
By Lemma~\ref{lem:qgs}, $M_\theta(x,y)$ is a uniform quasigeodesic space. We now verify that the enumerated axioms from Definition~\ref{defn:space_with_distance_formula} are satisfied.  Each part of the definition involving only $\mathfrak S$ and the $\nest,\orth,\transverse$ relations is obviously satisfied; this includes finite complexity.  The consistency inequalities hold because they hold in $(\cuco X,\mathfrak S)$ and each $r_U$ is $L$--coarsely Lipschitz.  The same holds for bounded geodesic image and the large link lemma.  We now verify the two remaining claims:

\textbf{Uniqueness:}  Let $m,m'\in M_\theta(x,y)$, so that $\dist_U(\pi_U(m),\gamma_U),\dist_U(\pi_U(m'),\gamma_U)\leq\theta$ for all $U\in\mathfrak S$.  The definition of $r_U$ ensures that $\dist_U(r_U\circ\pi_U(m),r_U\circ\pi_U(m'))\geq\dist_U(\pi_U(m),\pi_U(m'))-2\theta$, and uniqueness follows.

\textbf{Partial realization:}  Let $\{U_i\}$ be a totally orthogonal subset of $\mathfrak S$ and choose, for each $i$, some $p_i\in\gamma_{U_i}$.  By partial realization in $(\cuco X,\mathfrak S)$, there exists $z\in\cuco X$ so that $\dist_{U_i}(\pi_{U_i}(z),p_i)\leq E$ for each $i$ and $\dist_V(\pi_V(z),\rho^{U_i}_V)\leq E$ provided $U_i\propnest V$ or $U_i\transverse V$.  Let $z'=\gate_{x,y}(z)\in M_\theta(x,y)$.  Then, by the definition of the gate map and the fact that each $r_U$ is $L$--coarsely Lipschitz, there exists $\alpha$, independent of $x,y$, so that $\dist_{U_i}(r_{U_i}\circ\pi_{U_i}(z'),p_i)\leq\alpha$, while $\dist_V(r_V\circ\pi_V(z'),\rho^{U_i}_V)\leq\alpha$ whenever $U_i\transverse V$ or $U_i\nest V$.  Hence $z'$ is the required partial realization point.  
This completes the proof that $(M_\theta(x,y),\mathfrak S)$ is an HHS.
\end{proof}

\section{The coarse median property}\label{sec:coarse_media}
In this section, we study the relationship between hierarchically
hyperbolic spaces and spaces that are \emph{coarse median} in the
sense defined by Bowditch in~\cite{Bowditch:coarse_median}.  In
particular, this discussion shows that $\Out(F_n)$ is not a
hierarchically hyperbolic space, and hence not a hierarchically
hyperbolic group, for $n\geq 3$.

\begin{defn}[Median graph]\label{defn:median_graph}
Let $\Gamma$ be a graph with unit-length edges and path-metric
$\dist$.  Then $\Gamma$ is a \emph{median graph} if there is a map
$\median\co \Gamma^3\rightarrow\Gamma$ such that, for all
$x,y,z\in\Gamma$, we have $\dist(x,y)=\dist(x,m)+\dist(m,y)$, and
likewise for the pairs $x,z$ and $y,z$, where $m=\median(x,y,z)$.
Note that if $x=y$, then $\median(x,y,z)=x$.
\end{defn}

Chepoi established in~\cite{Chepoi:cube_median} that $\Gamma$ is a median graph precisely when $\Gamma$ is the $1$-skeleton of a CAT(0) cube complex. 

\begin{defn}[Coarse median space]\label{defn:coarse_median_space}
Let $(M,\dist)$ be a metric space and let $\median\co M^3\rightarrow M$ be a ternary operation satisfying the following:
\begin{enumerate}
 \item\label{cms_item:triples} (Triples)  There exist constants $\kappa,h(0)$ such that for all $a,a',b,b',c,c'\in M$, $$\dist(\median(a,b,c,),\median(a',b',c'))\leq\kappa\left(\dist(a,a')+\dist(b,b')+\dist(c,c')\right)+h(0).$$
 \item\label{cms_item:tuples} (Tuples)  There is a function 
 $h\co\naturals\cup\{0\}\rightarrow[0,\infty)$ such that for any 
 $A\subseteq M$ with $1\leq|A|=p<\infty$, there is a CAT(0) cube 
 complex $\cuco F_p$ and maps $\pi\co A\rightarrow\cuco F_p^{(0)}$ and 
 $\lambda\co\cuco F_p^{(0)}\rightarrow M$ such that $\dist(a,\lambda(\pi(a)))\leq h(p)$ for all $a\in A$ and such that $$\dist(\lambda(\median_p(x,y,z)),\median(\lambda(x),\lambda(y),\lambda(z)))\leq h(p)$$ for all $x,y,z\in\cuco F_p$, where $\median_p$ is the map that sends triples from $\cuco F_p^{(0)}$ to their median.
\end{enumerate}
Then $(M,\dist,\median)$ is a \emph{coarse median space}.  The \emph{rank} of $(M,\dist,\median)$ is at most $d$ if each $\cuco F_p$ above can be chosen to satisfy $\dimension\cuco F_p\leq d$, and the rank of $(M,\dist,\median)$ is exactly $d$ if $d$ is the minimal integer such that $(M,\dist,\median)$ has rank at most $d$.
\end{defn}

The next fact was observed by Bowditch~\cite{Bowditch:large_scale}; we include a proof for completeness.

\begin{thm}[Hierarchically hyperbolic implies coarse median]\label{thm:hier_hyp_coarse_median}
Let $(\cuco X,\mathfrak S)$ be a hierarchically hyperbolic space.
Then $\cuco X$ is coarse median of rank at most the complexity
of $(\cuco X,\mathfrak S)$.
\end{thm}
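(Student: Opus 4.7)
The plan is to define a ternary operation $\median\colon\cuco X^3\to\cuco X$ by realizing triangle centers coordinatewise, verify the triples condition using the distance formula, and build the cube complexes $\cuco F_p$ demanded by the tuples condition as consistent subalgebras of products of tree approximations, controlling dimension via orthogonality.

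First I would define the median. Given $x,y,z\in\cuco X$ and $U\in\mathfrak S$, choose a point $b_U\in\fontact U$ within $\delta$ of all three sides of some geodesic triangle on $\pi_U(x),\pi_U(y),\pi_U(z)$ in the $\delta$--hyperbolic space $\fontact U$. Lemma~\ref{lem:median_cons} shows the tuple $\tup b=(b_U)_{U\in\mathfrak S}$ is uniformly $\kappa$--consistent, so Theorem~\ref{thm:realization} produces a coarsely unique $\median(x,y,z)\in\cuco X$ with $\dist_U(\pi_U(\median(x,y,z)),b_U)\leq\theta_e$ for every $U$.

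For the triples axiom, if $(a,b,c)$ and $(a',b',c')$ differ by small $\cuco X$--distances, then each projection $\pi_U$ is $(K,K)$--coarsely Lipschitz by Definition~\ref{defn:space_with_distance_formula}.\eqref{item:dfs_curve_complexes}, and the triangle-center operation in a $\delta$--hyperbolic space is itself coarsely Lipschitz in its three vertices. Hence the chosen $b_U$ and $b'_U$ are uniformly close in every $\fontact U$, and Theorem~\ref{thm:realization} plus the distance formula (Theorem~\ref{thm:distance_formula}) bound $\dist_{\cuco X}(\median(a,b,c),\median(a',b',c'))$ linearly in $\dist_{\cuco X}(a,a')+\dist_{\cuco X}(b,b')+\dist_{\cuco X}(c,c')$, as required.

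The heart of the argument, and the main obstacle, is the tuples condition with the rank bound. Given finite $A\subset\cuco X$ with $|A|=p$, for each $U\in\mathfrak S$ I would approximate $\hull_{\fontact U}(\pi_U(A))$ by a finite tree $T_U$ with distortion $O(\delta\log p)$; the uniqueness axiom together with the distance formula ensure that all but finitely many $U$ contribute trivial $T_U$. Define $\cuco F_p^{(0)}$ to be the set of uniformly consistent tuples in $\prod_U T_U^{(0)}$, equipped with the coordinatewise median inherited from the trees. Since the coordinatewise median of consistent tuples is itself consistent (again by Lemma~\ref{lem:median_cons} applied in each $T_U$), $\cuco F_p^{(0)}$ is a discrete median algebra; Chepoi's theorem then realizes it as the vertex set of a CAT(0) cube complex $\cuco F_p$. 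Take $\pi\colon A\to\cuco F_p^{(0)}$ to be $a\mapsto(r_{T_U}\pi_U(a))_U$, where $r_{T_U}$ denotes closest-point projection onto $T_U$, and take $\lambda$ to be the realization map from Theorem~\ref{thm:realization}. The bound $h(p)$ on $\dist(\lambda(\median_p(x,y,z)),\median(\lambda(x),\lambda(y),\lambda(z)))$ follows because $\lambda$ matches each coordinate to within $\theta_e$, the tree medians match the hyperbolic centers up to $O(\delta\log p)$, and Theorem~\ref{thm:distance_formula} converts coordinatewise control into $\cuco X$--control at the cost of an additive error depending on $p$.

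For the rank bound: a $k$--cube in $\cuco F_p$ corresponds to $k$ coordinates $U_1,\ldots,U_k$ whose $T_{U_i}$--coordinates can be varied independently while preserving consistency. If any two of the $U_i$ were transverse or nested, the consistency inequalities in Definition~\ref{defn:space_with_distance_formula}.\eqref{item:dfs_transversal} would force one coordinate's motion to constrain the other's, obstructing independent variation. Hence the $U_i$ are pairwise orthogonal, and Lemma~\ref{lem:pairwise_orthogonal} bounds $k$ by the complexity of $(\cuco X,\mathfrak S)$, giving the desired rank bound.
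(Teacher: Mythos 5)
Your definition of $\median$ via Lemma~\ref{lem:median_cons} and Theorem~\ref{thm:realization} is exactly the paper's, and your argument for the triples condition (coarse Lipschitzness of the $\pi_U$ plus the distance formula) is sound. The paper diverges from you at the tuples condition and rank bound: rather than constructing the cube complexes $\cuco F_p$ by hand, it observes that the $\pi_U$ are ``quasimorphisms'' for $\median$, checks that the distance formula verifies hypothesis $(P1)$ of \cite[Proposition~10.1]{Bowditch:coarse_median}, and invokes that proposition, which delivers the coarse median property and the rank bound in one stroke. Your direct construction is in the spirit of what Bowditch's proposition does internally, but as written it has a genuine gap.

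The gap is the claim that the set of uniformly consistent tuples in $\prod_U T_U^{(0)}$ is a discrete median algebra. Consistency is a coarse condition: the coordinatewise median of three $\kappa$-consistent tuples is in general only $\kappa'$-consistent for some $\kappa'>\kappa$, so the set you describe is not literally closed under the median operation, and Chepoi's theorem does not apply to it. Your citation of Lemma~\ref{lem:median_cons} to close this does not work: that lemma is stated and proved for the specific tuples $(\pi_W(x)),(\pi_W(y)),(\pi_W(z))$ coming from three points of $\cuco X$ (its proof applies bounded geodesic image to geodesics between the projections of those points), not for abstract tuples in a product of tree approximations. Extracting an honest median algebra from this coarse data is precisely the nontrivial content of \cite[Proposition~10.1]{Bowditch:coarse_median}. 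Your rank argument --- that a $k$-cube forces $k$ pairwise orthogonal $U_i$ because transverse or nested pairs obstruct independent variation --- captures the correct intuition and is the ultimate source of the rank bound, but as stated it is a heuristic, not a proof; turning ``one coordinate's motion constrains the other'' into a bound on the dimension of the cube complex requires a case analysis you have not supplied. If you want a self-contained proof you would essentially be reproving Bowditch's proposition, which is more work than the paper's direct citation.
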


\begin{proof}
Since the spaces $\fontact U,U\in\mathfrak S$ are $\delta$-hyperbolic 
for some $\delta$ independent of $U$, there exists for each $U$ a 
ternary operation $\median^U\co\fontact U^3\rightarrow\fontact U$ so 
that $(\fontact U,\dist_{ U},\median^U)$ is a coarse median space of rank $1$, and the constant $\kappa$ and function $h\co\naturals\cup\{0\}\rightarrow[0,\infty)$ from Definition~\ref{defn:coarse_median_space} can be chosen to depend only on $\delta$ (and not on $U$).

\textbf{Definition of the median:}  Define a map $\median\co\cuco X^3\rightarrow\cuco X$ as follows.  Let $x,y,z\in\cuco X$ 
and, for each $U\in\mathfrak S$, let $b_U=\median^U(\pi_U(x),\pi_U(y),\pi_U(z))$.  By Lemma~\ref{lem:median_cons}, the tuple 
$\tup b\in\prod_{U\in\mathfrak S}2^{\fontact U}$ whose $U$-coordinate is $b_U$ is $\kappa$--consistent for an 
appropriate choice of $\kappa$.  Hence, by the realization theorem (Theorem~\ref{thm:realization}), there exists $\theta_e$ 
and $\median=\median(x,y,z)\in\cuco X$ such that $\dist_U(\median,b_U)\leq\theta_u$ for all $U\in\mathfrak S$.  Moreover, 
this is coarsely well-defined (up to the constant $\theta_e$ from the realization theorem). 

\textbf{Application
of~\cite[Proposition~10.1]{Bowditch:coarse_median}:} Observe that, by
Definition \ref{defn:space_with_distance_formula}.\ref{item:dfs_curve_complexes}, the
projections $\pi_U\colon\cuco X\rightarrow\fontact U,\,U\in\mathfrak
S$ are uniformly coarsely Lipschitz.  Moreover, for each $U\in\mathfrak S$,
the projection $\pi_U\colon\cuco X\rightarrow \fontact U$ is a
``quasimorphism'' in the sense
of~\cite[Section~10]{Bowditch:coarse_median}, i.e., $\dist_{
U}(\median^U(\pi_U(x),\pi_U(y),\pi_U(z)),\pi_U(\median(x,y,z)))$ is
uniformly bounded, by construction, as $U$ varies over $\mathfrak S$
and $x,y,z$ vary in $\cuco X$.  Proposition~10.1
of~\cite{Bowditch:coarse_median} then implies that $\median$ is a coarse median on
$\cuco X$, since that the hypothesis~$(P1)$ 
of that proposition holds in our situation by the distance
formula.
\end{proof}

The following is a consequence of 
Theorem~\ref{thm:hier_hyp_coarse_median} and work of Bowditch 
\cite{Bowditch:coarse_median, Bowditch:embeddings}:

\begin{cor}[Contractibility of asymptotic cones]\label{cor:HHS_contractible_cone}
Let $\cuco X$ be a hierarchically hyperbolic space. Then all the asymptotic cones of $\cuco X$ are contractible, and in fact bi-Lipschitz equivalent to CAT(0) spaces.
\end{cor}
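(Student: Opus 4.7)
The plan is simply to combine Theorem~\ref{thm:hier_hyp_coarse_median} with two results of Bowditch on coarse median spaces. First, I would invoke Theorem~\ref{thm:hier_hyp_coarse_median} to conclude that $(\cuco X,\dist_{\cuco X},\median)$ is a coarse median space whose rank is bounded above by the complexity $n$ of $(\cuco X,\mathfrak S)$. In particular, the rank is finite, which is the only input needed from the HHS structure for what follows.

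Next, I would pass to an arbitrary asymptotic cone $\cuco X_\omega$. A standard ultralimit argument (carried out by Bowditch in \cite{Bowditch:coarse_median}) shows that the coarse median $\median$ induces a genuine, Lipschitz ternary operation $\median_\omega\co\cuco X_\omega^3\to\cuco X_\omega$ making $\cuco X_\omega$ into a topological median algebra of rank at most $n$ --- the key point is that the ``tuples'' axiom (Definition \ref{defn:coarse_median_space}.\eqref{cms_item:tuples}) rescales in the limit to yield, for every finite $A\subset \cuco X_\omega$, an isometric median embedding of the median subalgebra generated by $A$ into a finite-dimensional CAT(0) cube complex of dimension at most $n$.

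At this point I would cite Bowditch's result (\cite{Bowditch:coarse_median} together with the more refined metric version in \cite{Bowditch:embeddings}) that any complete finite-rank median metric space arising as an asymptotic cone of a finite-rank coarse median space is bi-Lipschitz equivalent to a CAT(0) space; this is exactly the content needed here and is the ``main obstacle'' in the sense that all the heavy lifting takes place in those papers rather than here. The bi-Lipschitz part of the corollary is therefore immediate.

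Finally, contractibility is a formal consequence: a CAT(0) space is contractible via the geodesic contraction to any basepoint, and contractibility is preserved under bi-Lipschitz (indeed, under homotopy) equivalence. Thus $\cuco X_\omega$ is contractible, which completes the proof.
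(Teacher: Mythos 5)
Your proposal is correct and follows the same route the paper intends: apply Theorem~\ref{thm:hier_hyp_coarse_median} to obtain a coarse median structure of finite rank, pass to the asymptotic cone using Bowditch's ultralimit argument to get a median metric space of bounded rank, and then invoke Bowditch's results (\cite{Bowditch:coarse_median,Bowditch:embeddings}) for bi-Lipschitz equivalence to a CAT(0) space and hence contractibility. The paper states the corollary as an immediate consequence without expanding the argument; you have simply filled in the intermediate steps, and done so accurately.
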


\begin{cor}[HHGs have quadratic Dehn function]\label{cor:HHG_quadratic_Dehn_function}
Let $G$ be a finitely generated group that is a
hierarchically hyperbolic space.  Then $G$ is finitely presented and
has quadratic Dehn function.  In particular, this conclusion holds
when $G$ is a hierarchically hyperbolic group.
\end{cor}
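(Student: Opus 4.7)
The plan is to deduce this as a direct consequence of Theorem~\ref{thm:hier_hyp_coarse_median} combined with Bowditch's work on coarse median groups. First, note that the hypothesis that $G$ is a hierarchically hyperbolic space means that some (equivalently, any) word metric on $G$ makes it into an HHS $(G,\mathfrak{S})$; by Theorem~\ref{thm:hier_hyp_coarse_median}, $G$ is then a coarse median space of rank at most the complexity of $(G,\mathfrak{S})$, which is finite by Definition~\ref{defn:space_with_distance_formula}.\eqref{item:dfs_complexity}.

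Next, I would invoke \cite[Corollary~8.3]{Bowditch:coarse_median}, which states that a finitely generated group whose word metric admits a coarse median of finite rank is finitely presented and satisfies a quadratic isoperimetric inequality. Applying this to the coarse median structure produced in the previous step yields the desired conclusion that $G$ is finitely presented and has quadratic Dehn function.

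Finally, to handle the ``in particular'' clause: if $G$ is a hierarchically hyperbolic group, then by Definition~\ref{defn:hhs_automorphism} the group $G$ acts metrically properly and coboundedly on an HHS $(\cuco X,\mathfrak S)$, so that $G$ with any word metric is quasi-isometric to $\cuco X$. Proposition~\ref{prop:qie} then ensures that $(G,\mathfrak S)$ is itself a hierarchically hyperbolic space, reducing this case to the one already handled.

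The main (very minor) obstacle is simply verifying that the coarse median produced by Theorem~\ref{thm:hier_hyp_coarse_median} satisfies the precise hypotheses of Bowditch's Corollary 8.3 --- in particular, that the rank is finite and uniform, and that no additional equivariance hypothesis is needed beyond what we have. Both are immediate from our construction, since the coarse median is assembled from the coordinate-wise hyperbolic medians via realization, and finiteness of rank follows from the finite complexity axiom.
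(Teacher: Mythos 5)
Your proposal matches the paper's proof exactly: the corollary is derived by combining Theorem~\ref{thm:hier_hyp_coarse_median} (HHS implies coarse median of finite rank) with Bowditch's \cite[Corollary~8.3]{Bowditch:coarse_median}. The extra sentences you add about the ``in particular'' clause and about verifying Bowditch's hypotheses are correct and unobjectionable elaborations of what the paper leaves implicit.
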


\begin{proof}
This follows from Theorem~\ref{thm:hier_hyp_coarse_median} and~\cite[Corollary~8.3]{Bowditch:coarse_median}.
\end{proof}

\begin{cor}\label{cor:out_fn}
For $n\geq 3$, the group $\Out(F_n)$ is not a hierarchically hyperbolic space, and in particular is not a hierarchically hyperbolic group.
\end{cor}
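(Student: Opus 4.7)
The plan is simply to combine Corollary~\ref{cor:HHG_quadratic_Dehn_function} with the known computation of the Dehn function of $\Out(F_n)$. Specifically, Corollary~\ref{cor:HHG_quadratic_Dehn_function} tells us that any finitely generated group which is a hierarchically hyperbolic space (as a metric space with any word metric) must be finitely presented and have Dehn function bounded above by a quadratic polynomial. Hence to show that $\Out(F_n)$ is not an HHS (and, a fortiori, not an HHG) it suffices to exhibit a lower bound on its Dehn function which is strictly faster than quadratic.

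For $n\geq 3$, this lower bound is already present in the literature: the work of Bridson--Vogtmann \cite{BridsonVogtmann:dehn_1,BridsonVogtmann:dehn_2} together with Handel--Mosher \cite{HandelMosher:dehn} shows that the Dehn function of $\Out(F_n)$ grows at least exponentially. In particular, it is not bounded above by any polynomial, let alone a quadratic one. This directly contradicts the conclusion that would follow from Corollary~\ref{cor:HHG_quadratic_Dehn_function} under the hypothesis that $\Out(F_n)$ is a hierarchically hyperbolic space.

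Since there is no genuine obstacle here --- the entire argument is a one-line application of the quadratic Dehn function corollary --- the only thing to be careful about is to state the hypothesis of Corollary~\ref{cor:HHG_quadratic_Dehn_function} in its stronger form: the conclusion about the Dehn function holds for any finitely generated group that is merely a hierarchically hyperbolic \emph{space} (with respect to some word metric), and does not require a cofinite action on the hierarchically hyperbolic structure. This is exactly what is needed to rule out the weaker HHS possibility and not just the HHG possibility, and is what yields both assertions of the corollary at once.
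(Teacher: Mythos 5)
Your proof is correct and matches the paper's argument exactly: both cite Corollary~\ref{cor:HHG_quadratic_Dehn_function} (quadratic Dehn function for groups that are hierarchically hyperbolic spaces) together with the exponential lower bound on the Dehn function of $\Out(F_n)$ from \cite{BridsonVogtmann:dehn_1,HandelMosher:dehn,BridsonVogtmann:dehn_2}. Your observation that the corollary already applies to HHS (not only HHG) is precisely what justifies the "in particular" clause, and is implicit in the paper's one-line proof.
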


\begin{proof}
This is an immediate consequence of
Corollary~\ref{cor:HHG_quadratic_Dehn_function} and 
the exponential lower bound on the Dehn function
of $\Out(F_n)$ given by the combined results of 
\cite{BridsonVogtmann:dehn_1,HandelMosher:dehn,BridsonVogtmann:dehn_2}.
\end{proof}

We also recover a special case of Theorem~I of~\cite{BehrstockHagenSisto:HHS_I}, using Corollary~\ref{cor:HHG_quadratic_Dehn_function} and a theorem of Gersten-Holt-Riley~\cite[Theorem~A]{GerstenRiley:nilpotent}:

\begin{cor}\label{cor:nilpotent}
Let $N$ be a finitely generated virtually nilpotent group.  Then $G$ is quasi-isometric to a hierarchically hyperbolic space if and only if $N$ is virtually abelian.
\end{cor}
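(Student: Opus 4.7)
The proof is in two directions, and both should be short given the tools already established.

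For the easy direction, suppose $N$ is virtually abelian. Then $N$ contains a finite-index subgroup isomorphic to $\mathbb{Z}^n$, so $N$ is quasi-isometric to $\mathbb{Z}^n$. I would argue that $\mathbb{Z}^n$ is a hierarchically hyperbolic space: the space $\mathbb{Z}$ is quasi-isometric to a line, hence hyperbolic, hence HHS of complexity one, and the combination theorem for products (from Section~\ref{sec:combination}, alluded to just after Corollary~\ref{cor:combination_theorem_for_HHG}) inductively gives an HHS structure on $\mathbb{Z}^n$. This direction presents no real obstacle.

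For the forward direction, suppose $N$ is quasi-isometric to some hierarchically hyperbolic space $(\cuco X,\mathfrak S)$. Since $N$, equipped with a word metric, is a quasigeodesic space, Proposition~\ref{prop:qie} transfers the HHS structure: $N$ itself is a hierarchically hyperbolic space. Then Corollary~\ref{cor:HHG_quadratic_Dehn_function} implies that $N$ is finitely presented and has quadratic Dehn function.

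To finish, I would invoke Theorem~A of Gersten--Holt--Riley~\cite{GerstenRiley:nilpotent}, which gives a polynomial lower bound of degree strictly greater than $2$ on the Dehn function of any finitely generated nilpotent group that is not virtually abelian (the lower bound coming from the nilpotency class). Since the Dehn function is a quasi-isometry invariant of finitely presented groups, the only way $N$ (virtually nilpotent) can have quadratic Dehn function is if it is virtually abelian.

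The only point that requires any care is the small conceptual gap between ``quasi-isometric to an HHS'' and ``is an HHS,'' which is bridged cleanly by Proposition~\ref{prop:qie}; otherwise the argument is a direct chain of invocations of results already in hand, and I do not expect a genuine obstacle.
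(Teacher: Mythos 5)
Your approach matches the paper's, which offers only a one-sentence sketch: cite Corollary~\ref{cor:HHG_quadratic_Dehn_function} together with Theorem~A of Gersten--Holt--Riley. The easy direction (virtually abelian implies quasi-isometric to $\mathbb{Z}^n$, which is an HHS by Proposition~\ref{prop:product_HHS}) and the passage through Proposition~\ref{prop:qie} are both correct; the paper leaves these implicit.

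The difficulty is in your last step. Theorem~A of Gersten--Holt--Riley is an \emph{upper} bound on Dehn functions --- a finitely generated nilpotent group of class $c$ has Dehn function bounded above by a polynomial of degree $c+1$ --- not the lower bound of degree strictly greater than $2$ that you describe. And no such lower bound can hold in this generality: the higher Heisenberg groups $H_{2n+1}$ for $n\geq 2$ are nilpotent of class $2$, are not virtually abelian, and nonetheless have \emph{quadratic} Dehn function (Allcock; Olshanskii--Sapir). So, as written, your conclusion does not follow from Corollary~\ref{cor:HHG_quadratic_Dehn_function} alone; the Dehn function does not separate these Heisenberg groups from virtually abelian ones. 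The paper remarks that the corollary is a special case of Theorem~I of~\cite{BehrstockHagenSisto:HHS_I}, and a complete argument must ultimately draw on that stronger input (a rank/quasiflat obstruction) rather than just the isoperimetric bound. The gap here appears to be shared with the paper's own terse sketch, but it is nonetheless a genuine gap in the argument you have written out.
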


\begin{cor}\label{cor:symmetric}
Let $S$ be a symmetric space of non-compact type, or a thick affine
building.  Suppose that the spherical type of $S$ is not $A^r_1$.
Then $S$ is not hierarchically hyperbolic.
\end{cor}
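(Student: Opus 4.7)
The plan is to derive a contradiction via the coarse median structure provided by Theorem~\ref{thm:hier_hyp_coarse_median}. Suppose for contradiction that $S$ admits a hierarchically hyperbolic structure $(S,\mathfrak{S})$. Then $S$ is a coarse median space of rank bounded by the complexity of $\mathfrak{S}$, and hence, by Bowditch's work on coarse median spaces \cite{Bowditch:coarse_median,Bowditch:embeddings}, every asymptotic cone $S_\omega$ is bi-Lipschitz equivalent to a median metric space; this refines Corollary~\ref{cor:HHS_contractible_cone}.

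By the Kleiner--Leeb theorem, every asymptotic cone of a symmetric space of non-compact type, or of a thick affine building, is itself a Euclidean building of the same spherical type as $S$, equipped with its canonical CAT(0) metric. To reach a contradiction it therefore suffices to show that a Euclidean building whose spherical type is not $A_1^r$ is not bi-Lipschitz equivalent to any median metric space.

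The main obstacle is this final geometric rigidity statement, and my plan is to establish it by working inside a single apartment $\mathbb{A}\cong\reals^r$ of such a building. Concretely, I would argue that any bi-Lipschitz median structure on $\mathbb{A}$ must arise, up to bi-Lipschitz change of coordinates, from the $\ell^1$-median on $\reals^r$ in some linear coordinate system, using that the median of three collinear points is the middle one and that the Euclidean metric makes geodesics between generic triples unique. The Weyl group acting on $\mathbb{A}$ must then preserve the associated coordinate hyperplane arrangement, and an irreducible factor of that Weyl group of rank at least $2$ --- necessarily of type $A_n$, $B_n$, $D_n$, $E_n$, $F_4$, or $G_2$ --- contains reflections whose fixed hyperplanes cannot simultaneously be coordinate hyperplanes of any such linear coordinate system. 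This forces the spherical type of $S$ to be $A_1^r$, contrary to hypothesis, and completes the argument.
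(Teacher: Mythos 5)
Your first two steps exactly match the paper's strategy: use Theorem~\ref{thm:hier_hyp_coarse_median} to deduce that an HHS structure on $S$ would make $S$ coarse median, then appeal to geometric rigidity of symmetric spaces and buildings to get a contradiction. But the paper discharges the second step in one line by citing Theorem~A of~\cite{Haettel:symmetric}, which states directly that these spaces are not coarse median when the spherical type is not $A_1^r$; you are instead attempting to re-prove Haettel's theorem from scratch via asymptotic cones and Kleiner--Leeb, and the sketch you give for that step has real gaps.

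The central gap is the reduction to a single apartment. You want to argue ``the median structure on $\mathbb{A}\cong\reals^r$ must be the $\ell^1$-median in some linear coordinates, and the Weyl group cannot preserve the associated hyperplane arrangement.'' But a median metric structure on the asymptotic cone $S_\omega$ does not restrict to a median structure on the apartment: the median of three points of $\mathbb{A}$ is a point of $S_\omega$, and there is no reason it lies in $\mathbb{A}$, because $\mathbb{A}$ need not be median-convex. Indeed, this cannot be a red herring one can just wave away: an apartment $\reals^r$ with its Euclidean metric \emph{is} bi-Lipschitz to a median metric space (namely $(\reals^r,\ell^1)$), so no contradiction can possibly come from looking at a single apartment in isolation. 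The obstruction must involve the branching of walls in the thick building, i.e.\ the way the median interacts with more than one apartment at a time, and your sketch never engages with that. Similarly, the claim that the Weyl group ``must preserve the associated coordinate hyperplane arrangement'' presupposes that the median structure produced by Bowditch's theorem is invariant under the Weyl group (or under enough of $\operatorname{Isom}(S_\omega)$), which does not follow: Bowditch gives some median metric bi-Lipschitz to $d_\omega$, not a canonical or equivariant one. Both of these points are precisely what makes Haettel's theorem nontrivial, and filling them in would amount to re-deriving his paper rather than proving this corollary. The efficient route is the paper's: quote \cite{Haettel:symmetric} as a black box.
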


\begin{proof}
This follows from Theorem~\ref{thm:hier_hyp_coarse_median} and
Theorem~A of~\cite{Haettel:symmetric}.
\end{proof}

Finally, Theorem~9.1 of~\cite{Bowditch:embeddings} combines with Theorem~\ref{thm:hier_hyp_coarse_median} to yield:

\begin{cor}[Rapid decay]\label{cor:rapid_decay}
Let $G$ be a group whose Cayley graph is a hierarchically hyperbolic
space.
Then $G$ has
the rapid decay property.
\end{cor}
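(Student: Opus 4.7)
The plan is essentially to chain together two results that have already been established (or cited) in the paper. By hypothesis, $G$ acts on its Cayley graph, which carries a hierarchically hyperbolic structure $(\cuco X,\mathfrak S)$. Applying Theorem~\ref{thm:hier_hyp_coarse_median}, I would first conclude that $\cuco X$, and hence $G$ itself (since $G$ is quasi-isometric to its Cayley graph, and coarse median structures transfer under quasi-isometries up to the coarse median constants), is a coarse median space of rank at most the complexity of $(\cuco X,\mathfrak S)$; in particular, of \emph{finite} rank.

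Next, I would invoke \cite[Theorem~9.1]{Bowditch:embeddings}, which asserts that a finitely generated group whose Cayley graph admits a coarse median structure of finite rank (equivalently, a finite-rank coarse median group) satisfies the rapid decay property. Combining this with the preceding paragraph yields the conclusion.

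The only mild subtlety is the passage from ``the Cayley graph of $G$ is an HHS'' to ``$G$ is a coarse median group in the sense required by Bowditch.'' This is not really an obstacle, however: Theorem~\ref{thm:hier_hyp_coarse_median} directly produces the ternary operation $\median$ on $\cuco X$ from the realization theorem applied to coordinatewise medians in each $\fontact U$, and the construction is manifestly equivariant under the $G$-action by HHS automorphisms (since such automorphisms permute the $\fontact U$ by isometries and commute with projections, hence preserve the coordinatewise medians and therefore coarsely preserve $\median$). Thus $\median$ descends to a $G$-equivariant (up to bounded error) coarse median on $G$ of the same finite rank, putting us squarely in the setting of \cite[Theorem~9.1]{Bowditch:embeddings}. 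There is nothing further to prove.
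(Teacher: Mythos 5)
Your proof is essentially the paper's: the paper gives no argument beyond noting that Theorem~\ref{thm:hier_hyp_coarse_median} (HHS implies coarse median of finite rank) combines with \cite[Theorem~9.1]{Bowditch:embeddings} to give the conclusion, which is exactly your first two paragraphs.

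One remark on your last paragraph. You flag as a ``mild subtlety'' the passage from ``Cayley graph is an HHS'' to ``$G$ is a coarse median group,'' and resolve it by appealing to $G$--equivariance of the coarse median, deduced from a $G$--action by HHS automorphisms. But the hypothesis of the corollary is only that the Cayley graph is a hierarchically hyperbolic \emph{space}; it is not assumed that $G$ acts by automorphisms of the HHS structure (i.e.\ that $G$ is a hierarchically hyperbolic \emph{group}). So the equivariance you invoke is not available. This turns out to be harmless, because Bowditch's notion of coarse median group does not require any equivariance of the median: a finitely generated group is coarse median provided its Cayley graph is a coarse median space, full stop. Thus the subtlety you raised dissolves for a different reason than the one you gave, and the intended one-line combination of Theorem~\ref{thm:hier_hyp_coarse_median} with Bowditch's Theorem~9.1 goes through directly.
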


\subsection{Coarse media and hierarchical quasiconvexity}\label{subsec:coarse_median_quasiconvex}
The natural notion of quasiconvexity in the coarse median setting is related to hierarchical quasiconvexity.  

\begin{defn}[Coarsely convex]\label{defn:median_closed}
Let $(\cuco X,\mathfrak S)$ be a hierarchically hyperbolic space and let $\median\co\cuco X^3\rightarrow\cuco X$ be the coarse median map constructed in the proof of Theorem~\ref{thm:hier_hyp_coarse_median}.  A closed subspace $\cuco Y\subseteq\cuco X$ is \emph{$\mu$--convex} if for all $y,y'\in\cuco Y$ and $x\in\cuco X$, we have $\median(y,y',x)\in\neb_\mu(\cuco Y)$.  
\end{defn}

\begin{rem}
We will not use $\mu$--convexity in the remainder of the paper.  However, it is of independent interest since it parallels a characterization of convexity in median spaces: a subspace $\cuco Y$ of a median space is convex exactly when, for all $y,y'\in\cuco Y$ and $x$ in the ambient median space, the median of $x,y,y'$ lies in $\cuco Y$.
\end{rem}

\begin{prop}[Coarse convexity and hierarchical quasiconvexity]\label{prop:coarse_median_hier_convex}
Let $(\cuco X,\mathfrak S)$ be a hierarchically hyperbolic space and let $\cuco Y\subseteq\cuco X$.  If $\cuco Y$ is hierarchically $k$-quasiconvex, then there exists $\mu\geq 0$, depending only on $k$ and the constants from Definition~\ref{defn:space_with_distance_formula}, such that $\cuco Y$ is $\mu$--convex.
\end{prop}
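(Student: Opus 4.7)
The plan is to reduce $\mu$-convexity of $\cuco Y$ to the fact that, for every $U\in\mathfrak S$, the coarse median in the hyperbolic space $\fontact U$ sends triples with two coordinates in the quasiconvex subset $\pi_U(\cuco Y)$ to points near $\pi_U(\cuco Y)$, and then to invoke hierarchical quasiconvexity in the form noted in the remark following Definition~\ref{defn:hierarchical_quasiconvexity}.

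Fix $y,y'\in\cuco Y$ and $x\in\cuco X$, and let $m=\median(y,y',x)$. Recall from the construction of the coarse median in the proof of Theorem~\ref{thm:hier_hyp_coarse_median} that $m$ is produced by applying realization (Theorem~\ref{thm:realization}) to the $\kappa_0'$-consistent tuple $\tup b=(b_U)_{U\in\mathfrak S}$ with $b_U=\median^U(\pi_U(y),\pi_U(y'),\pi_U(x))$, where $\kappa_0'$ depends only on $\delta$ and the HHS constants (by Lemma~\ref{lem:median_cons}). In particular, there is a constant $\theta_e'$, depending only on the HHS constants, such that
\[
\dist_U(\pi_U(m),b_U)\leq\theta_e' \quad\text{for every } U\in\mathfrak S.
\]

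Next, fix $U\in\mathfrak S$. Since $\fontact U$ is $\delta$-hyperbolic, the coarse median $b_U$ of the triple $(\pi_U(y),\pi_U(y'),\pi_U(x))$ lies within some uniform distance $C_1=C_1(\delta)$ of any geodesic in $\fontact U$ from $\pi_U(y)$ to $\pi_U(y')$. Since $\pi_U(y),\pi_U(y')\in\pi_U(\cuco Y)$ and $\pi_U(\cuco Y)$ is $k(0)$-quasiconvex, such a geodesic lies in the $k(0)$-neighborhood of $\pi_U(\cuco Y)$. Combining these facts with the bound on $\dist_U(\pi_U(m),b_U)$ yields
\[
\dist_U(\pi_U(m),\pi_U(\cuco Y))\leq k(0)+C_1+\theta_e' =: \kappa,
\]
a constant depending only on $k(0)$ and the HHS constants.

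Finally, apply the characterization of hierarchical quasiconvexity given in the remark after Definition~\ref{defn:hierarchical_quasiconvexity}: since $\dist_U(\pi_U(m),\pi_U(\cuco Y))\leq\kappa$ for every $U\in\mathfrak S$, we conclude $\dist(m,\cuco Y)\leq k(\kappa)$. Setting $\mu=k(\kappa)$ gives the required bound, which depends only on $k$ and on the HHS constants. (Equivalently, one can argue directly using the gate map from Lemma~\ref{lem:gate}: by construction $\pi_U(\gate_{\cuco Y}(m))$ coarsely coincides with the closest-point projection of $\pi_U(m)$ to $\pi_U(\cuco Y)$, so all projections of $m$ and $\gate_{\cuco Y}(m)$ coarsely agree, and the uniqueness axiom~\eqref{item:dfs_uniqueness} bounds $\dist(m,\gate_{\cuco Y}(m))$.) There is no serious obstacle; the only care needed is tracking that the intermediate constant $\kappa$ depends only on $k(0)$, $\delta$, and the realization constant $\theta_e'$, so that $\mu=k(\kappa)$ is well-defined.
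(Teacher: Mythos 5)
Your proof is correct and takes essentially the same route as the paper's: both unpack the construction of $\median(x,y,y')$ from the realization theorem applied to the tuple of coarse medians in each $\fontact U$, use $\delta$-hyperbolicity and $k(0)$-quasiconvexity of $\pi_U(\cuco Y)$ to bound $\dist_U(m,\pi_U(\cuco Y))$, and then invoke the second condition of hierarchical quasiconvexity (via its reformulation in the remark after Definition~\ref{defn:hierarchical_quasiconvexity}) to bound $\dist(m,\cuco Y)$. Your write-up is in fact somewhat more explicit about the intermediate constants than the paper's.
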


\begin{proof}Let $\cuco Y\subseteq\cuco X$ be $k$--hierarchically quasiconvex, let $y,y'\in\cuco Y$ and $x\in\cuco X$.  Let $m=\median(x,y,y')$.  For any $U\in\mathfrak S$, the projection $\pi_U(\cuco Y)$ is by definition $k(0)$-quasiconvex, so that, for some $k'=k'(k(0),\delta)$, we have $\dist_{ U}(m_U,\pi_U(\cuco Y))\leq k'$, where $m_U$ is the coarse median of $\pi_U(x),\pi_U(y),\pi_U(y')$ coming from hyperbolicity of $\fontact U$.  The tuple $(m_U)_{U\in\mathfrak S}$ was shown above to be $\kappa$--consistent for appropriately-chosen $\kappa$ (Lemma~\ref{lem:median_cons}), and $\dist_U(m_U,\median(x,y,y'))\leq\theta_e(\kappa)$, so, by hierarchical quasiconvexity $\dist_{\cuco X}(\median(x,y,y'),\cuco Y)$ is bounded by a constant depending on $k(\kappa)$ and $k'$. 
\end{proof}

\section{Combination theorems for hierarchically hyperbolic spaces}\label{sec:combination}
The goal of this section is to prove Theorem~\ref{thm:combination}, which enables the construction of new hierarchically hyperbolic spaces and groups from a tree of given ones.  We postpone the statement of the theorem until after the relevant definitions.

\begin{defn}[Quasiconvex hieromorphism, full hieromorphism]\label{defn:quasiconvex_full_hieromorphism}
Let 
$(f,f\inducedS,\{f\induced(U)\}_{U\in\mathfrak S})$ be a 
hieromorphism $(\cuco X,\mathfrak S)\rightarrow(\cuco
X',\mathfrak S')$. 
We say $f$ is
\emph{$k$--hierarchically quasiconvex} if its image is $k$--hierarchically quasiconvex and $f\co\cuco X\to\cuco X'$ is a quasi-isometric embedding.  The
hieromorphism is \emph{full} if:
\begin{enumerate}
 \item there exists $\xi\geq 0$ such that each $f\induced(U)\co\fontact
 U\to\fontact(f\inducedS (U))$ is a $(\xi,\xi)$--quasi-isometry, and
 \item for each $U\in\mathfrak S$, if $V'\in\mathfrak S'$ satisfies
 $V'\nest f\inducedS (U)$, then there exists $V\in\mathfrak S$ such that
 $V\nest U$ and $f\inducedS (V)=V'$.\label{item:2_full}
\end{enumerate}  
\end{defn}

\begin{rem}\label{rem:full}
Observe that Definition~\ref{defn:quasiconvex_full_hieromorphism}.\eqref{item:2_full} holds automatically unless $V'$ is bounded.
\end{rem}

\begin{defn}[Tree of hierarchically hyperbolic spaces]\label{defn:tree_of_HHS}
    Let $\mathcal V,\mathcal E$ denote the vertex and edge-sets, respectively, of the
    simplicial tree $T$. 
    A \emph{tree of hierarchically hyperbolic spaces} is a quadruple
$$\mathcal T=\left(T,\{\cuco X_v\},\{\cuco X_e\},
\{\phi_{e_{\pm}}:v\in\mathcal V,e\in\mathcal E\}\right)$$ satisfying:
\begin{enumerate}
 \item $\{\mathcal X_v\}$ and $\{\mathcal X_e\}$ are uniformly hierarchically hyperbolic: each $\mathcal X_v$ has index set $\mathfrak S_v$, and each $\mathcal X_e$ has index set $\mathfrak S_e$.  In particular, there is a uniform bound on the complexities of the hierarchically hyperbolic structures on the $\mathcal X_v$ and $\mathcal X_e$.
 \item Fix an orientation on each $e\in\mathcal E$ and let  $e_+,e_-$ 
 denote the initial and terminal vertices of $e$.  Then, each
 $\phi_{e_{\pm}}\co\cuco X_e\to \cuco X_{e_\pm}$ is a hieromorphism with
 all constants bounded by some uniform $\xi\geq 0$.  (We adopt the
 hieromorphism notation from Definition~\ref{defn:hieromorphism}.
 Hence we actually have maps $\phi_{e_\pm}\co\cuco X_e\to\cuco
 X_{e_\pm}$, and maps $\phi_{e_\pm}\inducedS \co\mathfrak S_e\to\mathfrak
 S_{e_\pm}$ preserving nesting, transversality, and orthogonality, and
 coarse $\xi$--Lipschitz maps $\phi_{e_\pm}\induced (U)\co\fontact
 U\to\fontact(\phi_{e_\pm}\inducedS (U))$ satisfying the conditions of that
 definition.)
\end{enumerate}
Given a tree $\mathcal T$ of hierarchically hyperbolic spaces, denote
by $\cuco X(\mathcal T)$ the metric space constructed from
$\bigsqcup_{v\in\mathcal V} \cuco X_v$ by adding edges of length $1$ as
follows: if $x\in \cuco X_e$, we declare $\phi_{e_-}(x)$ to be joined by an
edge to $\phi_{e_+}(x)$.  Given $x,x'\in\cuco X$ in the same vertex
space $\cuco X_v$, define $\dist'(x,x')$ to be $\dist_{\cuco X_v}(x,x')$.  Given $x,x'\in\cuco X$ joined by an edge, define
$\dist'(x,x')=1$.  Given a sequence $x_0,x_1,\ldots,x_k\in\cuco X$,
with consecutive points either joined by an edge or in a common vertex
space, define its length to be $\sum_{i=1}^{k-1}\dist'(x_i,x_{i+1})$.
Given $x,x'\in\cuco X$, let $\dist(x,x')$ be the infimum of the
lengths of such sequences $x=x_0,\ldots,x_k=x'$.
\end{defn}

\begin{rem}\label{rem:tree_quasigeodesic}
Since the vertex spaces are (uniform) quasigeodesic spaces, $(\cuco X,\dist)$ is a quasigeodesic space.
\end{rem}

\begin{defn}[Equivalence, support, bounded support]\label{defn:equivalence}
Let $\mathcal T$ be a tree of hierarchically hyperbolic spaces.  For
each $e\in\mathcal E$, and each $W_{e_-}\in\mathfrak
S_{e_-},W_{e_+}\in\mathfrak S_{e_+}$, write $W_{e_-}\sim_d W_{e_+}$ if
there exists $W_e\in\mathfrak S_e$ so that
$\phi_{e_\pm}\inducedS (W_e)=W_{e_{\pm}}$. 
The transitive closure
$\sim$ of $\sim_d$ is an equivalence relation on $\bigcup_v\mathfrak
S_v$.  The $\sim$--class of $W\in\bigcup_v\mathfrak S_v$ is denoted
$[W]$.

The \emph{support} of an equivalence class $[W]$ is the induced subgraph $T_{[W]}$ of $T$ whose vertices are those $v\in T$ so that $\mathfrak S_v$ contains a representative of $[W]$.  Observe that $T_{[W]}$ is connected.  The tree $\mathcal T$ of hierarchically hyperbolic spaces has \emph{bounded supports} if there exists $n\in\naturals$ such that each $\sim$--class has support of diameter at most $n$.
\end{defn}

We can now state the main theorem of this section:

\begin{thm}[Combination theorem for hierarchically hyperbolic spaces]\label{thm:combination}
 Let $\mathcal T$ be a tree of hierarchically hyperbolic spaces.  Suppose that:
 \begin{enumerate}
  \item there exists a function $k$ so that each edge--hieromorphism is $k$--hierarchically quasiconvex;
  \item each edge--hieromorphism is full;
  \item $\mathcal T$ has bounded supports of diameter at most~$n$;\label{item:bounded_proj} 
  \item \label{item:hypothesis_4} if $e$ is an edge of $T$ and $S_e$
  is the $\nest$--maximal element of $\mathfrak S_e$, then for all
  $V\in\mathfrak S_{e^\pm}$, the elements $V$ and
  $\phi_{e^\pm}\inducedS (S_e)$ are not orthogonal in $\mathfrak
  S_{e^\pm}$.  Moreover, there exists $K\ge0$ such that for all
  vertices $v$ of $\mathcal T$ and edges $e$ incident to $v$, we have
  $\dist_{Haus}(\phi_v(\cuco X_e)),\mathbf
  F_{\phi_v\inducedS(S_e)}\times\{\star\})\leq K$, where
  $S_e\in\mathfrak S_e$ is the unique maximal element and
  $\star\in\mathbf E_{\phi_v\inducedS(S_e)}$.
 \end{enumerate}
Then $\cuco X(\mathcal T)$ is hierarchically hyperbolic.
\end{thm}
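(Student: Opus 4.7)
The plan is to endow $\mathcal{X}(\mathcal{T})$ with an explicit HHS structure $(\mathcal{X}(\mathcal{T}),\mathfrak S)$ whose index set is $\mathfrak S=\{[W]:W\in\bigsqcup_v\mathfrak S_v\}\cup\{S\}$, consisting of the $\sim$--equivalence classes from Definition~\ref{defn:equivalence} together with an added $\nest$--maximal element $S$ into which every $[W]$ is nested. The hyperbolic space $\fontact S$ will be (a thickening of) the tree $T$, which is $0$--hyperbolic, and for each class $[W]$ we set $\fontact[W]:=\fontact W_0$ for a fixed representative $W_0$; fullness of the edge hieromorphisms guarantees that every other representative gives a space uniformly quasi-isometric to $\fontact W_0$, so the identification is canonical up to uniform constants. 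We set $\pi_S(x):=v$ for $x\in\mathcal X_v$; and given $[W]$ with support $T_{[W]}$, we define $\pi_{[W]}$ as follows. If $v\in T_{[W]}$, choose the representative $W_v\in\mathfrak S_v$ and project using the HHS structure on $\mathcal X_v$. If $v\notin T_{[W]}$, let $v'\in T_{[W]}$ be closest to $v$ and $e$ the edge of $T$ at $v'$ toward $v$, and set $\pi_{[W]}(x):=\rho^{\phi_{e^\pm}\inducedS(S_e)}_{W_{v'}}$; hypothesis~\eqref{item:hypothesis_4} ensures $W_{v'}$ and $\phi_{e^\pm}\inducedS(S_e)$ are not orthogonal, which is exactly what makes this relative projection defined.

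\textbf{Relations and relative projections.}
We declare $[U]\nest[V]$ if representatives $U_v,V_v\in\mathfrak S_v$ at a common vertex $v$ satisfy $U_v\nest V_v$, $[U]\orth[V]$ analogously, and $[U]\transverse[V]$ otherwise; everything is nested into $S$. The first order of business is to verify these relations are well-defined on equivalence classes, which reduces via the definition of $\sim$ to propagating nesting/orthogonality across a single edge hieromorphism; this propagation is a consequence of the fact that hieromorphisms preserve the three relations, combined with fullness in the reverse direction. For relative projections $\rho^{[U]}_{[V]}$: when $T_{[U]}\cap T_{[V]}\neq\emptyset$, choose a vertex $v$ in the intersection and use the relative projection inside $(\mathcal X_v,\mathfrak S_v)$; when the supports are disjoint, define $\rho^{[U]}_{[V]}$ using the vertex $v'\in T_{[V]}$ closest to $T_{[U]}$ and the edge $e$ leaving $v'$ toward $T_{[U]}$, pushing forward via $\rho^{\phi_{e^\pm}\inducedS(S_e)}_{V_{v'}}$. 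The projection $\rho^{[W]}_S$ is the inclusion of $T_{[W]}$ in $T$.

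\textbf{Verification of the axioms.}
Coarse-Lipschitz projections follow from the vertex-level Lipschitz projections, coarse-Lipschitzness of the gate map onto the image of an adjacent edge space (Lemma~\ref{lem:gate}), and the fact that adjacent-vertex crossings cost $1$ in $\mathcal X(\mathcal T)$. Consistency for a pair $[U],[V]$ is checked by cases: if the supports meet at a vertex $v$, it reduces to consistency inside $(\mathcal X_v,\mathfrak S_v)$; if the supports are disjoint, the witness is provided by $\pi_S$ together with the fact that each point $x$ projects to $[U]$ (resp.\ $[V]$) through the edge $S_e$ at the appropriate closest vertex, and Proposition~\ref{prop:rho_consistency} inside one vertex space closes the argument. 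Bounded geodesic image and the large link lemma reduce similarly to the vertex-level statements plus the trivial behavior in the tree $T$. Partial realization for a pairwise-orthogonal family $\{[V_j]\}$ requires a common vertex $v$ in $\bigcap_jT_{[V_j]}$, which exists because orthogonal classes must be represented together at some vertex (their supports pairwise meet in a subtree with the Helly property); then one applies partial realization in $(\mathcal X_v,\mathfrak S_v)$. Uniqueness is derived from vertex-level uniqueness: if $x,y\in\mathcal X(\mathcal T)$ satisfy $\dist_{[W]}(x,y)\leq\kappa$ for all $[W]$, then $\pi_S(x),\pi_S(y)$ are close in $T$, so $x,y$ can be gated into a bounded subtree of vertex spaces, and vertex-level uniqueness together with bounded supports yield a bound on $\dist_{\mathcal X(\mathcal T)}(x,y)$.

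\textbf{Finite complexity and main obstacles.}
Finite complexity of $\mathfrak S$ is immediate from bounded supports plus uniform complexity of the $(\mathcal X_v,\mathfrak S_v)$: a $\nest$--chain $[W_1]\nest\cdots\nest[W_k]\nest S$ yields, at a common vertex (which exists by the intersection argument used for orthogonality), a $\nest$--chain of the same length inside $\mathfrak S_v$. The principal difficulties will be, first, the well-definedness of nesting, orthogonality, and transversality on equivalence classes, which requires a careful induction along $T$ using both fullness and hypothesis~\eqref{item:hypothesis_4} (without the latter, an equivalence class could interact badly with an orthogonal edge-maximal class and destroy the defined-ness of the relative projections); and second, consistency and uniqueness for pairs of points in distant vertex spaces, which rests on the fact that bounded supports limit the number of edge-parallel-copy arguments one must chain together to reduce to a single vertex space. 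The remaining axioms then follow as routine (if notationally heavy) case checks.
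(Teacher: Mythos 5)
Your proposal follows essentially the same route as the paper: the same index set of $\sim$--classes plus the tree, the same relations, the same use of fullness to propagate nesting/orthogonality along edges, the same reliance on hypothesis~\eqref{item:hypothesis_4} to define relative projections across disjoint supports, the same Helly argument for partial realization and finite complexity, and the same gate-into-a-bounded-subtree argument for uniqueness. The one organizational difference is that the paper defines all projections uniformly via iterated gate maps $\gate_v\colon\cuco X\to\cuco X_v$ and fixes favorite vertices/representatives plus comparison maps to make $\fontact[W]$ a single well-defined hyperbolic space, whereas you define $\pi_{[W]}$ by a case split (direct projection if the base vertex lies in the support, constant relative projection otherwise) and wave at the identification of the various $\fontact W_v$; these give coarsely equivalent projections, but the gate-map formulation is what makes coarse Lipschitzness, consistency, and the large-link verification go through cleanly, and you will need it (or a comparison-map lemma like the paper's Lemma~\ref{lem:comparison_map}) in any case, so the distinction is cosmetic rather than a genuinely different proof.
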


We postpone the proof until after the necessary lemmas and
definitions.  For the remainder of this section, fix a tree of hierarchically hyperbolic spaces $\mathcal
T=\left(T,\{\cuco X_v\},\{\cuco X_e\},\{\phi_{e^\pm}\}\right)$
 satisfying the hypotheses of
Theorem~\ref{thm:combination}; let $n$ be the constant implicit in assumption \ref{item:bounded_proj}.

Let $\mathfrak S^0=\{T\}\cup\left(\bigcup_v\mathfrak
S_v/\sim\right)$.

\begin{defn}[Nesting, orthogonality, transversality in $\mathfrak S^0$]\label{defn:nest_etc_on_classes}
For all $[W]\in\mathfrak S$, declare $[W]\nest T$.  If $[V],[W]$ are $\sim$--classes, then $[V]\nest[W]$ if and only 
if there exists $v\in T$ such that $[V],[W]$ are respectively represented by $V_v,W_v\in\mathfrak S_v$ and $V_v\nest 
W_v$; this relation is \emph{nesting}.  For convenience, for $A\in\mathfrak S$, we write $\mathfrak S_{A}$ to  denote 
the set of $B\in\mathfrak S^0$ such that $B\nest A$.

Likewise, $[V]\orth[W]$ if and only if there exists a vertex $v\in T$ such that $[V],[W]$ are respectively 
represented by $V_v,W_v\in\mathfrak S_v$ and $V_v\orth W_v$; this relation is \emph{orthogonality}.  If 
$[V],[W]\in\mathfrak S$ are not orthogonal and neither is nested into the other, then they are \emph{transverse}, 
written $[V]\transverse[W]$.  Equivalently, $[V]\transverse[W]$ if for all $v\in T_{[V]}\cap T_{[W]}$, the 
representatives $V_v,W_v\in\mathfrak S_v$ of $[V],[W]$ satisfy $V_v\transverse W_v$.
\end{defn}

Fullness (Definition~\ref{defn:quasiconvex_full_hieromorphism}.\eqref{item:2_full}) was introduced to enable the following two lemmas:

\begin{lem}\label{lem:no_cycle}
Let $\mathcal T$ be a tree of hierarchically hyperbolic spaces, let $v$ be a vertex of the underlying tree $T$, and let $U,U'\in\mathfrak S_v$ satisfy $U\nest U'$.  Then either $U=U'$ or $U\not\sim U'$.
\end{lem}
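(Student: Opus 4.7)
My plan is to argue by contradiction, showing that the fullness hypothesis forces the level function $\ell$ of Definition~\ref{defn:level} to be invariant under $\sim_d$, hence under $\sim$, which then clashes with proper nesting. Specifically, I would assume that $U \propnest U'$ while $U \sim U'$ and fix a chain $U = W_0 \sim_d W_1 \sim_d \cdots \sim_d W_k = U'$ with $W_i \in \mathfrak{S}_{v_i}$, $v_0 = v_k = v$, together with edges $e_i$ of $T$ between $v_i$ and $v_{i+1}$ and elements $W_{e_i} \in \mathfrak{S}_{e_i}$ whose images under $\phi_{(e_i)_{v_i}}\inducedS$ and $\phi_{(e_i)_{v_{i+1}}}\inducedS$ are $W_i$ and $W_{i+1}$ respectively.

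The core step is to prove $\ell_{W_i} = \ell_{W_{e_i}} = \ell_{W_{i+1}}$ for each $i$, where each level is computed in the appropriate hierarchy. This reduces to showing that each edge hieromorphism $\phi = \phi_{(e_i)_{v_i}}\inducedS$ restricts to a poset isomorphism
\[
\{V \in \mathfrak{S}_{e_i} : V \nest W_{e_i}\} \;\longrightarrow\; \{V' \in \mathfrak{S}_{v_i} : V' \nest W_i\},
\]
with nesting preserved in both directions. Surjectivity of this restriction is precisely the fullness condition Definition~\ref{defn:quasiconvex_full_hieromorphism}.\eqref{item:2_full}; injectivity is part of the definition of a hieromorphism; preservation of $\nest$ is also part of that definition. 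To see that $\nest$ is reflected, suppose $\phi(V_1) \nest \phi(V_2)$ with $V_1 \neq V_2$: since hieromorphisms preserve each of the three mutually exclusive relations $\nest$, $\orth$, $\transverse$ among distinct elements, we cannot have $V_1 \orth V_2$ or $V_1 \transverse V_2$; and $V_2 \propnest V_1$ would yield $\phi(V_2) \nest \phi(V_1)$, forcing $\phi(V_1) = \phi(V_2)$ by antisymmetry, contradicting injectivity of $\phi$. Hence $V_1 \nest V_2$. Since $\ell_W$ depends only on the poset structure of $\{V : V \nest W\}$, the bijection preserves level.

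Concatenating across the chain yields $\ell_U = \ell_{W_0} = \cdots = \ell_{W_k} = \ell_{U'}$. On the other hand, $U \propnest U'$ forces $\ell_{U'} \geq \ell_U + 1$ directly from Definition~\ref{defn:level}, giving the desired contradiction and hence $U = U'$.

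The only delicate point is the verification that the restricted hieromorphism reflects nesting; this is the main bookkeeping step but follows cleanly from the exclusivity of the three relations together with injectivity. Everything else is a straightforward level-counting argument, and notably the bounded-supports hypothesis of Theorem~\ref{thm:combination} is not needed here, only fullness.
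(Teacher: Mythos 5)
Your proof is correct, and it takes a genuinely different route from the paper's. The paper argues by iteration: starting from $U\propnest U'$, it uses fullness and preservation of nesting to pull $U$ back around the $\sim_d$--chain, producing some $U''\in\mathfrak S_v$ with $U''\propnest U$ and $U''\sim U$; repeating this indefinitely yields an arbitrarily long strictly descending $\nest$--chain in $\mathfrak S_v$, contradicting the finite complexity axiom. Your argument instead proves, in one shot, that each full edge-hieromorphism restricts to a poset isomorphism on $\nest$--downsets, so the level function of Definition~\ref{defn:level} is $\sim_d$--invariant, and then $U\propnest U'$ with $\ell_U=\ell_{U'}$ is an immediate contradiction. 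The reflection-of-nesting step you flag is indeed the new ingredient your approach requires (and your argument for it, using exclusivity of the three relations plus injectivity, is sound); the paper sidesteps it because it only ever pushes nesting forward. Both proofs ultimately lean on finite complexity --- the paper's via bounded length of $\nest$--chains, yours via well-definedness of the level function --- and neither needs bounded supports. Your version is arguably more conceptual, as the poset-isomorphism fact it isolates is reusable (it is essentially why full hieromorphisms behave so well throughout Section~\ref{sec:combination}), whereas the paper's is shorter on the page once one accepts the pull-back step. One cosmetic note: you describe $\nest$, $\orth$, $\transverse$ as ``three mutually exclusive relations among distinct elements''; strictly speaking the trichotomy for distinct $V_1,V_2$ is $\nest$--comparability (in one of two directions), orthogonality, or transversality, which is what your argument actually uses.
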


\begin{proof}
Suppose that $U\sim U'$, so that there is a closed path
$v=v_0,v_1,\ldots,v_n=v$ in $T$ and a sequence
$U=U_0,U_1,\ldots,U_n=U'$ such that $U_i\in\mathfrak S_{v_i}$ and
$U_i\sim_d U_{i+1}$ for all $i$.  If $U\neq U'$, then
Condition~\eqref{item:2_full} (fullness) from
Definition~\ref{defn:quasiconvex_full_hieromorphism} and the fact that
hieromorphisms preserve nesting yields $U''\in\mathfrak S_v$,
different from $U'$, such that $U''\sim U$ and $U''\propnest
U'\propnest U$ (where $\propnest$ denotes proper nesting).  Repeating
this argument contradicts finiteness of complexity.
\end{proof}

\begin{lem}\label{lem:comb_nest}
The relation $\nest$ is a partial order on $\mathfrak S^0$, and $T$ is the unique $\nest$--maximal element.  
Moreover, if $[V]\orth [W]$ and $[U]\nest[V]$, then $[U]\orth[W]$ and $[V],[W]$ are not $\nest$--comparable. 
\end{lem}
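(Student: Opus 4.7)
The plan hinges on first establishing a uniqueness principle: each $\sim$-class admits at most one representative at each vertex of $T$. Indeed, if $W_v, W'_v\in\mathfrak S_v$ were distinct representatives of $[W]$, the witnessing $\sim_d$-chain would determine a closed walk $v = u_0, u_1,\ldots,u_k = v$ in the tree $T$. Any such walk of positive length in a tree contains a backtrack $u_{i-1}=u_{i+1}$ across some edge $e$; the two edge elements $W_e, W'_e\in\mathfrak S_e$ witnessing the adjacent $\sim_d$-relations then satisfy $\phi_{e^\pm}\inducedS(W_e) = \phi_{e^\pm}\inducedS(W'_e)$ at the pivot vertex, forcing $W_e=W'_e$ by the injectivity of $\phi_{e^\pm}\inducedS$ built into Definition~\ref{defn:hieromorphism}. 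The walk can then be shortened; iterating yields $W_v=W'_v$.

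\textbf{Paragraph 2 (propagation).} Next I would set up the workhorse: if $[V]\nest[W]$ is witnessed by $V_v\nest W_v$ at some $v$, then $T_{[W]}\subseteq T_{[V]}$ and at every $u\in T_{[W]}$ the unique representatives (from Paragraph~1) satisfy $V_u\nest W_u$. The proof runs by induction along the unique $T_{[W]}$-path from $v$ to $u$: at each edge $e=(w_i,w_{i+1})$, writing $W_{w_i}=\phi_{e,w_i}\inducedS(W_e)$ for the edge element realizing the $\sim_d$-relation of $[W]$, fullness (Definition~\ref{defn:quasiconvex_full_hieromorphism}.\eqref{item:2_full}) lifts the nesting $V_{w_i}\nest W_{w_i}$ to some $V_e\nest W_e$ in $\mathfrak S_e$ with $\phi_{e,w_i}\inducedS(V_e) = V_{w_i}$; pushing forward via $\phi_{e,w_{i+1}}\inducedS$ produces a representative of $[V]$ at $w_{i+1}$ nested in $W_{w_{i+1}}$, which by Paragraph~1 must be $V_{w_{i+1}}$.

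\textbf{Paragraph 3 (partial order).} With these tools, reflexivity and the $\nest$-maximality of $T$ are immediate from the definition. For antisymmetry, given witnesses $V_{v_1}\nest W_{v_1}$ and $W_{v_2}\nest V_{v_2}$, propagation yields $V_{v_2}\nest W_{v_2}$ at $v_2\in T_{[W]}$, and then antisymmetry of $\nest$ in the single HHS $\mathfrak S_{v_2}$ forces $V_{v_2}=W_{v_2}$, whence $[V]=[W]$. For transitivity of $[U]\nest[V]\nest[W]$ with witnesses $U_v\nest V_v$ and $V_w\nest W_w$, propagating $U\nest V$ along $T_{[V]}$ to $w$ gives $U_w\nest V_w\nest W_w$ in $\mathfrak S_w$, hence $[U]\nest[W]$.

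\textbf{Paragraph 4 (moreover clauses).} For the orthogonality-inheritance clause, I would take witnesses $V_v\orth W_v$ and $U_{v'}\nest V_{v'}$; since $v, v'\in T_{[V]}$, propagation yields $U_v\nest V_v$, and then the HHS orthogonality axiom (Definition~\ref{defn:space_with_distance_formula}.\eqref{item:dfs_orthogonal}) gives $U_v\orth W_v$, so $[U]\orth[W]$. For non-$\nest$-comparability, suppose for contradiction that $[V]\orth[W]$ and (without loss of generality) $[V]\nest[W]$, with witnesses $V_{v'}\orth W_{v'}$ and $V_v\nest W_v$; since $v,v'\in T_{[W]}$, propagating the nesting to $v'$ forces $V_{v'}\nest W_{v'}$, contradicting the HHS axiom in $\mathfrak S_{v'}$ that $\orth$ and $\nest$-comparability are mutually exclusive. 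The main obstacle I anticipate is Paragraph~1, whose combinatorial bookkeeping about walks in $T$ is the sole step not reducible to the single-vertex HHS axioms together with fullness.
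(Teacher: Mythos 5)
Your proof is correct, and the propagation-via-fullness mechanism in Paragraphs~2--4 is the same engine the paper deploys inline. The genuine departure is Paragraph~1. The paper proves the narrower statement of Lemma~\ref{lem:no_cycle} (if $U\nest U'$ within a single $\mathfrak S_v$ and $U\sim U'$, then $U=U'$) by constructing an arbitrarily long chain of proper nestings and invoking the finite-complexity axiom. You instead establish the strictly stronger fact that each $\sim$-class has at most one representative per vertex of $T$, and you do so by a purely combinatorial backtrack-cancellation argument leaning only on the acyclicity of $T$ and the injectivity of each $\phi_{e^\pm}\inducedS$ (which is built into Definition~\ref{defn:hieromorphism}). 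This is a real simplification: it removes the finite-complexity axiom from the argument entirely, and it streamlines the rest of the proof because one can refer unambiguously to \emph{the} representative $V_u$ at each $u$ in the support, making Paragraphs~3--4 essentially mechanical. The one structural dependence worth flagging is that your Paragraph~1 lives or dies by the injectivity hypothesis on $f\inducedS$ in the hieromorphism definition; were that hypothesis relaxed, the walk-shortening would break and one would have to revert to something in the spirit of the paper's Lemma~\ref{lem:no_cycle}.
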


\begin{proof}
Reflexivity is clear.  Suppose that $[V_v]\nest[U_u]\nest[W_w]$.  Then there are vertices $v_1,v_2\in\mathcal V$ and 
representatives $V_{v_1}\in[V_v],U_{v_1}\in[U_u],U_{v_2}\in[U_u],W_{v_2}\in[W_w]$ so that $V_{v_1}\nest U_{v_1}$ and 
$U_{v_2}\nest W_{v_2}$.  Since edge--hieromorphisms are full, induction on $\dist_T(v_1,v_2)$ yields $V_{v_2}\nest 
U_{v_2}$ so that $V_{v_2}\sim V_{v_1}$.  Transitivity of the nesting relation in $\mathfrak S_{v_2}$ implies that 
$V_{v_2}\nest W_{v_2}$, whence $[V_v]\nest[W_w]$.   

Suppose that $[U_u]\nest[V_v]$ and $[V_v]\nest[U_u]$, and suppose 
by contradiction that $[U_u]\neq[V_v]$.  Choose $v_1,v_2\in\mathcal V$ and representatives 
$U_{v_1},U_{v_2},V_{v_1},V_{v_2}$ so that $U_{v_1}\nest V_{v_1}$ and $V_{v_2}\nest U_{v_2}$.  The definition of 
$\sim$ again yields $U_{v_2}\sim U_{v_1}$ with $U_{v_2}\nest V_{v_2}\neq U_{v_2}$.  This contradicts 
Lemma~\ref{lem:no_cycle}.  Hence $\nest$ is antisymmetric, whence it is a partial order.  The underlying tree 
$T$ is the unique $\nest$--maximal element by definition.

Suppose that $[V]\orth[W]$ and $[U]\nest[V]$.  Then there are vertices $v_1,v_2$ and representatives 
$V_{v_1},W_{v_1},U_{v_2},V_{v_2}$ such that $V_{v_1}\orth W_{v_1}$ and $U_{v_2}\orth V_{v_2}$.  Again by fullness of 
the edge--hieromorphisms, there exists $U_{v_1}\sim U_{v_2}$ with $U_{v_1}\nest V_{v_1}$, whence $U_{v_1}\orth 
W_{v_1}$.  Thus $[U]\orth [W]$ as required.  Also, $\nest$--incomparability of $[V],[W]$ follows from fullness and 
the fact that edge-hieromorphisms preserve orthogonality and nesting.
\end{proof}

\begin{lem}\label{lem:orth_comp}
Let $[W]\in\mathfrak S^0$ and let $[U]\nest[W]$.  Suppose moreover that  $$\left\{[V]\in\mathfrak 
S_{[W]}:[V]\orth[U]\right\}\neq\emptyset.$$  Then there exists $[A]\in\mathfrak S_{[W]}-\{[W]\}$ such that $[V]\nest 
[A]$ for all $[V]\in\mathfrak S_{[W]}$ with $[V]\orth[U]$.
\end{lem}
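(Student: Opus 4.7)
The plan is to reduce the statement to the orthogonality container axiom (Definition~\ref{defn:space_with_distance_formula}.\eqref{item:dfs_orthogonal}) applied inside a single vertex space $\mathfrak S_v$, and then check that the equivalence class of the resulting container does the job for $\mathfrak S$. The main preparatory ingredient is a \emph{transport principle}: if $[U]\nest[W]$ in $\mathfrak S$, then $T_{[W]}\subseteq T_{[U]}$ and at every $v\in T_{[W]}$ there exist representatives $U_v,W_v\in\mathfrak S_v$ with $U_v\nest W_v$; and if $[V]\orth[U]$ with $[U]\nest[W]$, then at every $v\in T_{[W]}$ there are representatives with $V_v\nest W_v$ and $V_v\orth U_v$ simultaneously.

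First I would prove this transport principle by induction along paths in $T$ inside $T_{[W]}$. If $v',v''$ are adjacent in $T_{[W]}$ via an edge $e$ with $W_{v'}=\phi_{e^+}\inducedS(W_e)$ and $W_{v''}=\phi_{e^-}\inducedS(W_e)$, then because $U_{v'}\nest W_{v'}$, fullness of $\phi_{e^+}$ (Definition~\ref{defn:quasiconvex_full_hieromorphism}.\eqref{item:2_full}) supplies $U_e\in\mathfrak S_e$ with $U_e\nest W_e$ and $\phi_{e^+}\inducedS(U_e)=U_{v'}$; applying $\phi_{e^-}\inducedS$ gives $U_{v''}:=\phi_{e^-}\inducedS(U_e)\nest W_{v''}$ as required. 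For the orthogonality statement, the same pullback yields $V_e,U_e\in\mathfrak S_e$ mapping to $V_{v'},U_{v'}$; since $\phi_{e^+}\inducedS$ is injective and preserves all three relations, the images being orthogonal (hence distinct and neither nested nor transverse) forces $V_e\orth U_e$ in $\mathfrak S_e$, and pushing forward by $\phi_{e^-}\inducedS$ gives $V_{v''}\orth U_{v''}$.

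Granted the transport principle, fix any $v\in T_{[W]}$ and representatives $U_v\nest W_v$ in $\mathfrak S_v$. By hypothesis some $[V_0]\in\mathfrak S_{[W]}$ satisfies $[V_0]\orth[U]$, so the transport principle produces $V_{0,v}\in\mathfrak S_v$ with $V_{0,v}\nest W_v$ and $V_{0,v}\orth U_v$. Hence $\{V'\in\mathfrak S_v:V'\orth U_v,\ V'\nest W_v\}\neq\emptyset$, and Definition~\ref{defn:space_with_distance_formula}.\eqref{item:dfs_orthogonal} applied in $(\cuco X_v,\mathfrak S_v)$ yields $A_v\in\mathfrak S_{W_v}-\{W_v\}$ such that every such $V'$ satisfies $V'\nest A_v$. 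Set $[A]:=[A_v]$. Then $[A]\nest[W]$ is immediate from $A_v\nest W_v$, while $A_v\propnest W_v$ combined with Lemma~\ref{lem:no_cycle} forces $[A]\neq[W]$.

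Finally, I would verify the containment property: given any $[V]\in\mathfrak S_{[W]}$ with $[V]\orth[U]$, the transport principle provides a representative $V_v\in\mathfrak S_v$ with $V_v\nest W_v$ and $V_v\orth U_v$, whence $V_v\nest A_v$ by the choice of $A_v$, and therefore $[V]\nest[A]$. The main obstacle in this plan is the transport principle, and within it the orthogonality case; the nesting case uses fullness directly, but pulling orthogonality back across $\phi_{e^+}\inducedS$ requires exploiting the trichotomy together with the injectivity and relation-preservation built into the definition of a hieromorphism, which is where the specific strength of the hypotheses of Theorem~\ref{thm:combination} is used.
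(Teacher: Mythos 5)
Your proof is correct and follows essentially the same strategy as the paper's: transport representatives of $[U]$, $[V]$, $[W]$ to a common vertex, apply the orthogonality container axiom there, and use fullness plus relation-preservation of edge-hieromorphisms to lift the containment back to $\mathfrak S$. The paper's three-sentence argument leaves the transport step implicit (it simply invokes "the fact that the edge-hieromorphisms are full and preserve (non)orthogonality"); you make it explicit with the inductive transport principle, and you also spell out why $[A]\neq[W]$ via Lemma~\ref{lem:no_cycle}, both of which are welcome clarifications rather than deviations.
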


\begin{proof}
Choose some $v\in\mathcal V$ so that there exists $V_v\in\mathfrak S_v$ and $U_v\in\mathfrak S_v$ with $[U_v]=[U]$ 
and $V_v\orth U_v$.  Then by definition, there exists $A_v\in\mathfrak S_v$ so that $B_v\nest A_v$ whenever $B_v\orth 
U_v$ and so that $[B_v]\nest[W]$.  It follows from the fact that the edge-hieromorphisms are full and preserve 
(non)orthogonality that $[B]\nest[A_v]$ whenever $[B]\orth[U]$.  
\end{proof}

The set $\mathfrak S^0$ is not quite large enough to satisfy the orthogonality axiom, for the following reason: in 
Lemma~\ref{lem:orth_comp}, we needed $[W]$ to be a $\sim$--class, but since $T\in\mathfrak S^0$, we need to be able 
to satisfy the axiom with $[W]$ replaced by $T$.  To this end, we add some new elements to $\mathfrak S^0$, and 
extend the $\nest,\orth,\transverse$ relations, as follows.

\newcommand{\container}{\mathrm{K}^{\orth}}

\begin{defn}[Containers and $\mathfrak S$]\label{defn:containers_S}
We now define the index set $\mathfrak S$ for the HHS structure we will construct in order to prove 
Theorem~\ref{thm:combination}.  First, $\mathfrak S$ contains $\mathfrak S^0$.  Next, for each $[W]$ for which there 
exists $[U]$ with $[U]\orth[W]$, let $\container_0([W])$ be a new element of $\mathfrak S$, which we call the 
\emph{container} of $[W]$.  We make the following declarations:

\begin{itemize}
     \item $\container_0([W])\nest T$;
     \item $[U]\nest\container_0([W])$ if and only if $[U]\orth[W]$;
     \item $\container_0([W])\transverse\container_0([U])$ if $[U]\neq[W]$;
     \item $\container_0([W])\orth[V]$ if and only if $[V]\nest[W]$;
     \item for all other $[U]$, we have $[U]\transverse\container_0([W])$.
\end{itemize}

Let $\mathcal K_0$ be the set of all $\container_0([W])$ as $[W]$ varies among those $\sim$--classes for which there 
is at least one orthogonal $\sim$--class.

Next, for each $\container_0([W])\in\mathcal K_0$, consider a $\sim$--class $[U]\nest\container_0([W])$ such that 
$[U]\orth[V]$ for some other $[V]\nest\container_0([W])$.  Let $\container_1([W],[U])$ be a new element of 
$\mathfrak S$, and let $\mathcal K_1$ be the set of such containers, as $[W]$ varies and as $[U]$ varies over those 
$\sim$--classes nested in $\container_0([W])$ (i.e. orthogonal to $[W]$) that are orthogonal to some other 
$\sim$--class nested in $\container_0([W])$.  

We now make the following declarations:
\begin{itemize}
     \item $\container_1([W],[U])\nest\container_0([W])$ and $\container_1([W],[U])$ is transverse to every other 
element of $\mathcal K_0\cup\mathcal K_1$.
    \item $\container_1([W],[U])\nest T$.
    \item $[V]\nest\container_1([W],[U])$ if and only if $[V]\nest\container_0([W])$ and $[V]\orth[U]$.
    \item $[V]\orth\container_1([W],[U])$ if and only if either $[V]\nest [W]$ (i.e. $[V]\orth\container_0([W])$) 
or $[V]\nest[U]$.  
\item If none of the two preceding conditions is satisfied by $[V]$, then 
$[V]\transverse\container_1([W],[U])$.
\end{itemize}
We now proceed as above to inductively construct sets $\mathcal K_\eta,\eta\ge1$ of new ``containers'', where each 
$\container_\eta([W]_1,\ldots,[W_\eta])$ is nested in $\container_i([W]_1,\ldots,[W_i])$ for $i\leq \eta-1$, and 
also nested 
in $T$.  Our inductive construction ensures that $[W_1],\ldots,[W_\eta]$ are pairwise-orthogonal.  The 
$\sim$--classes 
$[U]$ nested in $\container_\eta([W]_1,\ldots,[W_\eta])$ are precisely those that are orthogonal to each of 
$[W_1],\ldots,[W_\eta]$.  The $\sim$--classes $U$ orthogonal to $\container_\eta([W_1],\ldots,[W_\eta])$ are 
precisely those 
$[U]$ nested into some $[W_i]$.

Let $\mathfrak S=\mathfrak S^0\cup\bigcup_{\eta\ge0}\mathcal K_\eta$.
\end{defn}

\begin{rem}[Extension of $\nest,\orth,\transverse$ satisfies the axioms]\label{rem:extending_relations}
Lemma~\ref{lem:comb_nest} shows that $\nest$ is a partial order on $\mathfrak S^0$, and 
Definition~\ref{defn:containers_S} shows how to extend $\nest$ to all of $\mathfrak S$.  By construction, the 
extended $\nest$ continues to be transitive.  This follows from Lemma~\ref{lem:comb_nest}, the definition, and 
induction on the $\eta$ in $\mathcal K_\eta$.  By definition, $T$ is still the unique $\nest$--maximal element.

Now suppose that $[U]\nest\container_\eta([W_1],\ldots,[W_\eta])$ and $[V]\orth 
\container_\eta([W_1],\ldots,[W_\eta])$.  Then  
$[V]\nest[W_i]$ for some $i$, and $[U]\orth[W_j]$ for all $j$.  Lemma~\ref{lem:comb_nest} implies $[U]\orth[V]$.  On 
the other hand, $\container_\eta([W_1],\ldots,[W_\eta])$ is never nested into any $\sim$--class or orthogonal to any 
element of $\bigcup_\eta\mathcal K_\eta$.
\end{rem}

\begin{lem}\label{lem:comb_complexity}
There exists $\chi\geq0$ such that if $\{V_1,\ldots,V_c\}\subset\mathfrak S$ consists of pairwise orthogonal or 
pairwise $\nest$--comparable elements, then $c\leq\chi$.  In particular, $\bigcup_{\eta\ge0}\mathcal 
K_\eta=\bigcup_{\eta=0}^{(\chi-1)/2}\mathcal K_\eta$.
\end{lem}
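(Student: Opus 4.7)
Let $N$ be a uniform upper bound on the complexities of the vertex and edge HHS structures in $\mathcal T$ (such $N$ exists by hypothesis on $\mathcal T$), so that both chains and pairwise-orthogonal antichains in any $\mathfrak S_v$ have cardinality at most $N$ (using Definition~\ref{defn:space_with_distance_formula}.\eqref{item:dfs_complexity} and Lemma~\ref{lem:pairwise_orthogonal}). I will show the lemma holds with $\chi=N+1$ by reducing each of the two cases to the corresponding statement inside a single vertex index set $\mathfrak S_v$.

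The first preliminary I would establish is the following combinatorial fact: for any $[W]\in\mathfrak S\setminus\{T\}$ and any $u,v\in T_{[W]}$, every edge on the unique $T$-geodesic from $u$ to $v$ carries a representative of $[W]$ (i.e.\ there is $W_e\in\mathfrak S_e$ whose $\phi_{e^\pm}\inducedS$-images represent $[W]$ at the two endpoints). Indeed, $u\sim v$ unfolds as a walk $u=w_0,\dots,w_k=v$ in $T$ with each successive pair $\sim_d$-related, hence each edge of the walk carrying a representative; in a tree, any walk between $u$ and $v$ must traverse every edge of the $T$-geodesic between them.

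\emph{Nested case.} Suppose $[V_1],\dots,[V_c]$ are pairwise $\nest$-comparable and distinct. Remove $T$ from the list if it appears and linearly order the remaining classes as $[V_1]\propnest\cdots\propnest[V_{c'}]$ with $c'\geq c-1$. I claim $T_{[V_{c'}]}\subseteq T_{[V_{c'-1}]}\subseteq\cdots\subseteq T_{[V_1]}$. For each $i$, by definition of $[V_{i-1}]\nest[V_i]$ there is a witness vertex $w$ with $V_{i-1}^w\nest V_i^w$; given any $v\in T_{[V_i]}$, the $T$-geodesic from $w$ to $v$ lies in $T_{[V_i]}$, so by the preliminary its edges all carry representatives of $V_i$, and Definition~\ref{defn:quasiconvex_full_hieromorphism}.\eqref{item:2_full} (fullness) propagates a representative $V_{i-1}^v\in\mathfrak S_v$ nested in $V_i^v$ along this path. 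Choosing any $v\in T_{[V_{c'}]}$ produces a chain $V_1^v\propnest\cdots\propnest V_{c'}^v$ of distinct elements in $\mathfrak S_v$, forcing $c'\leq N$ and hence $c\leq N+1$.

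\emph{Orthogonal case.} Suppose $[V_1],\dots,[V_c]$ are pairwise orthogonal; none equals $T$. Pairwise orthogonality forces $T_{[V_i]}\cap T_{[V_j]}\neq\emptyset$ for all $i,j$. The supports $T_{[V_i]}$ are connected subtrees of the tree $T$, so by the Helly property for subtrees $\bigcap_i T_{[V_i]}\neq\emptyset$; fix $v$ in this intersection. I claim the representatives $V_i^v\in\mathfrak S_v$ are pairwise orthogonal, which will give $c\leq N$ by Lemma~\ref{lem:pairwise_orthogonal}. Fix $i\neq j$ and a witness vertex $v'$ with $V_i^{v'}\orth V_j^{v'}$. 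The intersection $T_{[V_i]}\cap T_{[V_j]}$ is itself a subtree containing $v$ and $v'$, so the $T$-geodesic between them stays in this intersection, and by the preliminary every edge on it carries representatives of both $[V_i]$ and $[V_j]$ simultaneously. Each edge hieromorphism $\phi_{e^\pm}\inducedS$ is injective and preserves the three relations $\nest,\orth,\transverse$; since distinct elements satisfy exactly one of these, the hieromorphism also reflects them, so orthogonality transfers edge-to-vertex and vertex-to-edge. Propagating along the geodesic from $v'$ to $v$ yields $V_i^v\orth V_j^v$ as required.

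\emph{Main obstacle.} The crux of the argument is the propagation step: transferring a relation (orthogonality or nesting) between $\sim$-classes from one witness vertex to another. This requires both the preliminary ensuring enough edges carry representatives, and the observation that injectivity of the edge hieromorphisms together with the trichotomy among distinct elements lets the edge hieromorphisms \emph{reflect} the relations $\nest,\orth,\transverse$, not merely preserve them forward. Once these two points are in hand the remainder of the proof is bookkeeping.
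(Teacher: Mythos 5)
Your proof is correct and follows essentially the same strategy as the paper's: locate a single vertex $v$ carrying representatives of every class in the family, then invoke the complexity bound for $\mathfrak S_v$. The paper treats both the orthogonal and nested cases uniformly via the Helly property for subtrees (supports are connected, pairwise comparability or orthogonality forces pairwise intersection, hence a common vertex exists) and then asserts tersely that the representatives at that vertex inherit the relations because edge-hieromorphisms preserve nesting, orthogonality, and transversality. You instead split the two cases, using Helly only for the orthogonal one and a chain of inclusions $T_{[V_{c'}]}\subseteq\cdots\subseteq T_{[V_1]}$ (a consequence of fullness) for the nested one. More importantly, you make explicit the propagation argument that the paper leaves implicit: you observe that relations must be transported along the $T$-geodesic between the witness vertex and the common vertex, which requires (a) that every edge on that geodesic carries a representative (your preliminary about walks in a tree covering the geodesic), and (b) that the edge-hieromorphisms not only preserve but reflect the relations $\nest,\orth,\transverse$, which you correctly derive from injectivity and the trichotomy. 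One small point you leave implicit is that a representative of a given $\sim$-class in a fixed $\mathfrak S_v$ is \emph{unique}; this follows from injectivity of $\phi_{e^\pm}\inducedS$ and the fact that any closed walk in the tree $T$ must backtrack, but since you repeatedly write ``the representative $V_i^v$'' and propagate from several different witness vertices to the same $v$, a sentence acknowledging and justifying this would tighten the argument. Overall your version supplies details the paper's one-line justification elides, so while the route through the nested case is cosmetically different, I would regard this as the same argument made more explicit rather than a genuinely different proof.
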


\begin{proof}
For each $v\in T$, let $\chi_v$ be the complexity of $(\cuco
X_v,\mathfrak S_v)$ and let $\chi=2\max_v\chi_v+1$.  Let
$[V_1],\ldots,[V_c]\in\mathfrak S-\{T\}$ be $\sim$--classes that are
pairwise orthogonal or pairwise $\nest$--comparable.  The Helly
property for trees yields a vertex $v$ lying in the support of each
$[V_i]$; let $V^v_i\in\mathfrak S_{v}$ represent $[V_i]$.  Since
edge--hieromorphisms preserve nesting, orthogonality, and
transversality, $c\leq\chi_v$.  

Any pairwise-orthogonal set in $\mathfrak S$ either has cardinality $\le1$ or contains at most one element that is 
not a $\sim$--class, so the bound on pairwise-orthogonal sets is $\max_v\chi_v+1$.  

Hence it suffices to bound $\nest$--chains in $\mathfrak S$.  Any $\nest$--chain $V_1\nest V_2\nest\cdots\nest V_k$ 
has the property that, for some $0\le m\le k$, the first $m$ elements are $\sim$--classes, and the remaining 
elements lie in $\{T\}\cup\bigcup_\eta\mathcal K_\eta$.  Hence it suffices to show that any $\nest$--chain in 
$\bigcup_{\eta\ge0}\mathcal K_\eta$ has length at most $\chi$.  But by definition, any such chain has the form 
$$\container_0([W_0])\sqsupset\container_1([W_0],[W_1])\sqsupset\cdots\sqsupset\container_\eta([W_0],\ldots,[W_{
\eta-1}] ),
$$ where the $[W_i]$ are pairwise orthogonal.  Hence $\eta\leq\max_v\chi_v\leq(\chi-1)/2$, as required.  This also 
proves the final assertion.
\end{proof}

\begin{defn}[Favorite representative, hyperbolic spaces associated to elements of $\mathfrak 
S$]\label{defn:favorites}
Let $\fontact T=T$.  For each $\sim$--class $[W]$, choose a \emph{favorite vertex} $v$ of $T_{[W]}$ and let 
$W_v\in\mathfrak S_{W_v}$ be the \emph{favorite representative} of $[W]$.  Let $\fontact [W]=\fontact W_v$.  Note 
that each $\fontact [W]$ is $\delta$--hyperbolic, where $\delta$ is the uniform hyperbolicity constant for $\mathcal 
T$.

Finally, for each $\container\in\bigcup_\eta\mathcal K_\eta$, let $\fontact\container$ be a single 
point.
\end{defn}

\begin{defn}[Gates in vertex spaces]\label{defn:tree_gate}
For each vertex $v$ of $T$, define a \emph{gate map} $\gate_v\co\cuco
X \to \cuco X_v$ as follows.  Let $x\in\cuco X_u$ for some vertex $u$
of $T$.  We define $\gate_v(x)$ inductively on $\dist_T(u,v)$.  If
$u=v$, then set $\gate_v(x)=x$.  Otherwise, $u=e^{-}$ for some edge
$e$ of $T$ so that $d_T(e^+,v)=d_T(u,v)-1$.  Then set
$\gate_{v}(x)=\gate_{v}(\phi_{e^+}(\phi_{e^-}^{-1}(\gate_{\phi_{e^-}(\cuco
X_e)}(x))))$.  We also have a map $\beta_{V_v}\co\cuco X \to \fontact
V_v$, defined by $\beta_{V_v}(x)= \pi_{V_v}(\gate_v(x))$.  (Here,
$\gate_{\phi_{e^-}(\cuco X_e)}\co \cuco X_{e^-}=\cuco X_u\to\phi_{e^-}(\cuco
X_e)$ is the usual gate map to a hierarchically quasiconvex subspace,
described in Definition~\ref{defn:gate}, and $\phi_{e^\pm}^{-1}$ is a
quasi-inverse for the edge-hieromorphism.)
\end{defn}

\begin{lem}\label{lem:far_gates}
There exists $K$, depending only on $E$ and $\xi$, such that the
following holds.  Let $e,f$ be edges of $T$ and $v$ a vertex so that
$e^-=f^-=v$.  Suppose for some $V\in\mathfrak S_v$ that there exist
$x,y\in\phi_{e^-}(\cuco X_e)\subseteq\cuco X_v$ with
$\dist_{V}(\gate_{\phi_{f^-}(\cuco X_{f})}(x),\gate_{\phi_{f^-}(\cuco
X_{f})}(y))>10K$.  Then $V\in\phi_{e^-}\inducedS (\mathfrak S_e)\cap
\phi_{f^{-}}\inducedS (\mathfrak S_f)$. 
\end{lem}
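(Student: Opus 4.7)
Write $\cuco Y_h := \phi_{h^-}(\cuco X_h) \subseteq \cuco X_v$ for $h\in\{e,f\}$; both are $k$--hierarchically quasiconvex subspaces by hypothesis. The plan is to prove both $V \in \phi_{f^-}\inducedS(\mathfrak S_f)$ and $V \in \phi_{e^-}\inducedS(\mathfrak S_e)$ by reducing each to the following uniform boundedness statement: \emph{there is a constant $K_0 = K_0(\xi, E)$ such that, for $h\in\{e,f\}$, if $V\notin\phi_{h^-}\inducedS(\mathfrak S_h)$, then $\diam_{\fontact V}(\pi_V(\cuco Y_h)) \leq K_0$.}

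The reduction uses the gate property (Lemma~\ref{lem:gate}): $\pi_V\circ\gate_{\cuco Y_f}$ is coarsely the closest-point projection in the $\delta$--hyperbolic space $\fontact V$ onto the $k(0)$--quasiconvex set $\pi_V(\cuco Y_f)$, and closest-point projection to a quasiconvex subspace of a hyperbolic space is coarsely Lipschitz. Hence $\dist_V(\gate_{\cuco Y_f}(x), \gate_{\cuco Y_f}(y)) > 10K$ forces simultaneously $\diam_{\fontact V}(\pi_V(\cuco Y_f)) > 10K$ (both gate-projections lie in this set) and $\dist_V(\pi_V(x), \pi_V(y)) > 10K - C_0$ for a uniform $C_0 = C_0(\delta, k, \xi)$ (otherwise the two closest-point projections would coarsely coincide). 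Since $x, y \in \cuco Y_e$, this also gives $\diam_{\fontact V}(\pi_V(\cuco Y_e)) > 10K - C_0$. Choosing $K$ so that $10K - C_0 > K_0$, the boundedness statement applied with $h=f$ gives $V\in\phi_{f^-}\inducedS(\mathfrak S_f)$, and applied with $h=e$ gives $V\in\phi_{e^-}\inducedS(\mathfrak S_e)$.

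For the boundedness statement, set $W := \phi_{h^-}\inducedS(S_h)$, where $S_h$ is the $\nest$--maximal element of $\mathfrak S_h$. Fullness of $\phi_{h^-}$ (Definition~\ref{defn:quasiconvex_full_hieromorphism}.\eqref{item:2_full}) excludes $V \propnest W$ (else $V$ would be the $\phi_{h^-}\inducedS$--image of some element properly nested in $S_h$, contradicting $V\notin\phi_{h^-}\inducedS(\mathfrak S_h)$); hypothesis~(\ref{item:hypothesis_4}) of Theorem~\ref{thm:combination} excludes $V \orth W$; and $V = W$ is impossible since $W \in \phi_{h^-}\inducedS(\mathfrak S_h)$. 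Hence $V\transverse W$ or $W\propnest V$. In each case, the bound on $\pi_V(\cuco Y_h)$ follows by combining the consistency inequalities (Definition~\ref{defn:space_with_distance_formula}.\eqref{item:dfs_transversal}) and bounded geodesic image (Definition~\ref{defn:space_with_distance_formula}.\eqref{item:dfs:bounded_geodesic_image}) in $(\cuco X_v,\mathfrak S_v)$ with the observation that fullness makes $\phi_{h^-}\induced(S_h)\colon\fontact S_h\to\fontact W$ a uniform quasi-isometry, so that partial realization inside $\cuco X_h$ produces, for any $z\in\cuco Y_h$, nearby points of $\cuco Y_h$ with prescribed $\pi_W$--projection. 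In the nested case $W\propnest V$, any geodesic in $\fontact V$ joining projections of two points of $\cuco Y_h$ either enters $\neb_E(\rho^W_V)$ or has short $\rho^V_W$--image; consistency then forces both $\pi_W$--coordinates to coincide coarsely, contradicting the QI--surjectivity of $\pi_W|_{\cuco Y_h}$ onto (large parts of) $\fontact W$ once $\dist_V$ is sufficiently large.

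The main obstacle lies in the transverse case, where consistency gives only the dichotomy $\dist_V(z,\rho^W_V)\leq\kappa_0$ or $\dist_W(z,\rho^V_W)\leq\kappa_0$, with the second alternative leaving $\pi_V(z)$ \emph{a priori} unconstrained. The expected resolution is to exploit the full HHS structure of $\cuco X_h$ (transported into $\cuco X_v$ by $\phi_{h^-}$) to control $\pi_V$ on the ``exceptional'' points of the second type: fullness and partial realization inside $\cuco X_h$ should allow any such $z\in\cuco Y_h$ to be modified within $\cuco Y_h$ into a nearby point $z'$ whose $\pi_W$--projection lies far from $\rho^V_W$, whereupon consistency pins $\pi_V(z')$ near the bounded set $\rho^W_V$. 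The delicate bookkeeping of how $\pi_V$ changes across this modification, using both the HHS structure on $\cuco X_h$ and the ambient axioms in $\cuco X_v$, forms the technical heart of the argument.
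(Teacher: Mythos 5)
Your high-level structure matches the paper's: introduce $\cuco Y_e, \cuco Y_f$, reduce to the claim that for $V\notin\phi_{h^-}\inducedS(\mathfrak S_h)$ the projection $\pi_V$ is coarsely constant on $\cuco Y_h$ with uniform constant, then apply this twice using the gate property and the assumption $x,y\in\cuco Y_e$. That reduction is correct and essentially what the paper does.

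However, your proof of the key boundedness claim is incomplete, and you say so yourself: in the transverse case $V\transverse W$ (with $W=\phi_{h^-}\inducedS(S_h)$) you write that the ``delicate bookkeeping of how $\pi_V$ changes across this modification \ldots forms the technical heart of the argument'' without carrying it out. Worse, the modification you propose does not straightforwardly work: to move $\pi_W(z)$ far from $\rho^V_W$ within $\cuco Y_h$ you must move $z$ a long way in $\cuco X_v$ (projections are coarsely Lipschitz), so the new point $z'$ is not ``nearby,'' and you cannot control $\pi_V(z')$ in terms of $\pi_V(z)$ without already knowing the boundedness you are trying to prove. This circularity is exactly why one cannot argue purely pointwise via the consistency dichotomy. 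The nested case $W\propnest V$ has a similar issue you do not fully resolve: bounding the $\rho^V_W$--image of one geodesic does not by itself bound $\diam_{\fontact V}(\pi_V(\cuco Y_h))$ when $\pi_V(a),\pi_V(b)$ could sit on opposite sides of $\rho^W_V$.

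The paper sidesteps all of this by invoking the distance formula: $\cuco X_h$ is quasi-isometrically embedded via $\phi_{h^-}$, so distances in $\cuco Y_h$ are controlled by projections to $\phi_{h^-}\inducedS(\mathfrak S_h)$; a large $\pi_V$--variation for $V$ outside this set would contradict the distance formula in the ambient HHS. This global argument makes the pairwise case analysis (relations between $V$ and $W$) unnecessary. The paper's parenthetical alternative does sketch a consistency/BGI case analysis, but it is phrased in terms of $V$ being transverse to \emph{some unbounded element} of $\phi_{h^-}\inducedS(\mathfrak S_h)$, not just the maximal one, which is what gives the requisite leverage in the transverse branch. Either way, you need to actually close the transverse case; as written the argument has a genuine gap there.
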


\begin{proof}
Let $\cuco Y_e=\phi_{e^-}(\cuco X_e)$ and let $\cuco
Y_f=\phi_{f^-}(\cuco X_f)$; these spaces are uniformly hierarchically
quasiconvex in $\cuco X_v$.  Moreover, by fullness of the
edge-hieromorphisms, we can choose $K\geq100E$ so that the map
$\pi_V\co\cuco Y_e\to\fontact V$ is $K$--coarsely surjective for each
$V\in\phi_{e^-}\inducedS (\mathfrak S_e)$, and likewise for
$\phi_{f^-}\inducedS (\mathfrak S_f)$ and $\cuco Y_v$.  If $V\in\mathfrak
S_v-\phi_{f^-}\inducedS (\mathfrak S_f)$, then $\pi_V$ is $K$--coarsely
constant on $\cuco Y_f$, by the distance formula, since $\cuco X_f$ is
quasi-isometrically embedded.  Likewise, $\pi_V$ is coarsely constant
on $\cuco Y_e$ if $V\not\in\phi_{e^-}\inducedS (\mathfrak S_e)$.  (This
also follows from consistency when $V$ is transverse to some unbounded
element of $\phi_{e^-}\inducedS (\mathfrak S_e)$ and from consistency and
bounded geodesic image otherwise.)

Suppose that there exists $V\in\mathfrak S_v$ such that
$\dist_{V}(\gate_{\phi_{f^-}(\cuco X_{f})}(x),\gate_{\phi_{f^-}(\cuco
X_{f})}(y))>10K$.  Since $\gate_{\phi_{f^-}(\cuco
X_{f})}(x),\gate_{\phi_{f^-}(\cuco X_{f})}(y)\in\cuco X_f$, we
therefore have that $V\in\phi_{f^-}\inducedS (\mathfrak S_f)$.  On the
other hand, the definition of gates implies that $\dist_{V}(x,y)>8K$,
so $V\in\phi_{e^-}\inducedS (\mathfrak S_e).$
\end{proof}

\begin{lem}\label{lem:equiv}
There exists a constant $K'$ such that the following holds.  Let $e,f$
be edges of $T$ and suppose that there do not exist $V_e\in\mathfrak
S_e,V_f\in\mathfrak S_f$  for which 
$\phi_{e^-}\inducedS (V_e)\sim\phi_{f^-}\inducedS (V_f)$.  Then
$\gate_{e^-}(\cuco X_f)$ has diameter at most $K'$.
In particular, the conclusion holds if $\dist_T(e,f)>n$, where $n$ 
bounds the diameter of the supports.
\end{lem}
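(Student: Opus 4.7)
The plan is to apply the distance formula in $\cuco X_{e^-}$ (Theorem~\ref{thm:distance_formula}) and bound each term $\dist_V(\gate_{e^-}(x), \gate_{e^-}(y))$ uniformly in $V \in \mathfrak S_{e^-}$ and $x, y \in \cuco X_f$. Since $\mathfrak S_{e^-}$ has bounded complexity, uniform bounds on the terms suffice to bound the total gate diameter; I will obtain the bound on each term by tracing large $\dist_V$ values back through the iterative gate construction via Lemma~\ref{lem:far_gates} and derive a contradiction with the hypothesis.

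Let $v_0 = f^-, v_1, \ldots, v_k = e^-$ be the geodesic path in $T$, with $e_i$ the edge joining $v_i$ to $v_{i+1}$. By Definition~\ref{defn:tree_gate}, the image $\gate_{e^-}(\phi_{f^-}(x))$ is obtained iteratively: at each vertex $v_i$, the current point (lying in $\phi_{e_{i-1}^+}(\cuco X_{e_{i-1}})$ for $i \geq 1$, or in $\phi_{f^-}(\cuco X_f)$ for $i=0$) is gated into $\phi_{e_i^-}(\cuco X_{e_i})$ and then transferred via the edge hieromorphism to $\cuco X_{v_{i+1}}$. Suppose $\dist_V(\gate_{e^-}(x), \gate_{e^-}(y)) > 10K$, where $K$ is from Lemma~\ref{lem:far_gates}. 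Since the final image lies in $\phi_{e_{k-1}^+}(\cuco X_{e_{k-1}})$, the argument used in the proof of Lemma~\ref{lem:far_gates} forces $V = \phi_{e_{k-1}^+}\inducedS(V_{k-1})$ for some $V_{k-1} \in \mathfrak S_{e_{k-1}}$; via the hieromorphism $\phi_{e_{k-1}^+}\induced$ this large distance translates to a large $\dist_{V'_{k-1}}$-distance at the preceding stage, where $V'_{k-1} := \phi_{e_{k-1}^-}\inducedS(V_{k-1}) \in \mathfrak S_{v_{k-1}}$.

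Iterating Lemma~\ref{lem:far_gates} at each intermediate vertex $v_i$ produces a chain of $\sim_d$-equivalences $V \sim V'_{k-1} \sim V'_{k-2} \sim \cdots \sim V'_0$, with each $V'_i$ lying in the intersection $\phi_{e_{i-1}^+}\inducedS(\mathfrak S_{e_{i-1}}) \cap \phi_{e_i^-}\inducedS(\mathfrak S_{e_i})$. At the starting vertex $v_0$, the input subspace is $\phi_{f^-}(\cuco X_f)$, so Lemma~\ref{lem:far_gates} yields $V'_0 \in \phi_{f^-}\inducedS(\mathfrak S_f) \cap \phi_{e_0^-}\inducedS(\mathfrak S_{e_0})$. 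Hence $V \sim \phi_{f^-}\inducedS(V_f)$ for some $V_f \in \mathfrak S_f$. To close the argument, I would then establish $V \sim \phi_{e^-}\inducedS(V_e)$ for some $V_e \in \mathfrak S_e$: this is immediate when $e_{k-1} = e$ (so $V = \phi_{e^+}\inducedS(V_{k-1}) \sim \phi_{e^-}\inducedS(V_{k-1})$), and in general one invokes hypothesis~(\ref{item:hypothesis_4}) of Theorem~\ref{thm:combination} combined with fullness of $\phi_{e^-}$ to pull $V$ into $\phi_{e^-}\inducedS(\mathfrak S_e)$, directly contradicting the non-existence hypothesis.

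The hard part will be the case where the $T$-path from $f$ to $e^-$ does not terminate along the edge $e$ itself: here $V$ lives in $\phi_{e_{k-1}^+}\inducedS(\mathfrak S_{e_{k-1}})$ without a priori lying in $\phi_{e^-}\inducedS(\mathfrak S_e)$, so closing the argument requires exploiting the non-orthogonality of $V$ to $\phi_{e^-}\inducedS(S_e)$ guaranteed by Theorem~\ref{thm:combination}.(\ref{item:hypothesis_4}), together with fullness of the edge hieromorphism, to conclude $V \in \phi_{e^-}\inducedS(\mathfrak S_e)$. The in-particular clause for $\dist_T(e,f) > n$ follows without this delicate step: the chain of $\sim_d$-equivalences derived above forces the support of $[V]$ to span a subtree of $T$ of diameter exceeding $n$, directly contradicting the bounded-supports hypothesis, and so no such $V$ with large projection can exist in the first place.
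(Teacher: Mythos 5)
Your plan --- fix $V\in\mathfrak S_{e^-}$ with large $\dist_V$, push it backward through the gate construction via the hieromorphism diagrams, and apply Lemma~\ref{lem:far_gates} at each intermediate vertex to build a chain of $\sim_d$--equivalences --- is the same mechanism as the paper's, which is phrased as an induction on $\dist_T(e,f)$, splitting off the edge $b$ nearest $f$ and invoking the inductive hypothesis on $(e,b)$ together with Lemma~\ref{lem:far_gates}. Your formulation is in one respect tighter: carrying the \emph{same} $V$ along each step (rather than invoking two separate existential statements at $b$) is exactly what makes the chain of $\sim_d$--equivalences link up. The in-particular clause is handled correctly. One small misstatement: a uniform bound on every $\dist_V(\gate_{e^-}(x),\gate_{e^-}(y))$ bounds $\dist_{\cuco X_{e^-}}(\gate_{e^-}(x),\gate_{e^-}(y))$ because of the threshold in the distance formula (equivalently, the uniqueness axiom, Definition~\ref{defn:space_with_distance_formula}.(\ref{item:dfs_uniqueness})), not because $\mathfrak S_{e^-}$ has ``bounded complexity'' --- that index set is generally infinite.

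The genuine gap is in your treatment of the ``hard part.'' When $e_{k-1}\neq e$, hypothesis~(\ref{item:hypothesis_4}) gives only that $V$ is not \emph{orthogonal} to $\phi_{e^-}\inducedS(S_e)$, which leaves open $V\transverse\phi_{e^-}\inducedS(S_e)$ and $\phi_{e^-}\inducedS(S_e)\propnest V$. Fullness pulls $V$ into $\phi_{e^-}\inducedS(\mathfrak S_e)$ only in the remaining possibility $V\nest\phi_{e^-}\inducedS(S_e)$; in the other two, your chain of $\sim_d$--equivalences has no link to $\mathfrak S_e$ and no contradiction materializes. What rescues the argument is that the hard case does not actually occur: in this lemma $e^-$ is the endpoint of $e$ on the \emph{far} side from $f$ --- this is what makes the base case $\dist_T(e,f)=1$ an application of Lemma~\ref{lem:far_gates}, and is consistent with the choice $b^-=v_k$ in the paper's inductive step. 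Consequently $\gate_{e^-}(\cuco X_f)$ lands in $\phi_{e^-}(\cuco X_e)\subseteq\cuco X_{e^-}$, so $e_{k-1}=e$ automatically; your ``easy case'' is the only case, and no appeal to hypothesis~(\ref{item:hypothesis_4}) or to fullness is needed to close the chain.
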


\begin{proof}
The second assertion follows immediately from the first in light of how $n$ was chosen.  

We now prove the first assertion by induction on the number $k$ of vertices on the geodesic in $T$ from $e$ to $f$.  
The base case, $k=1$, follows from Lemma~\ref{lem:far_gates}.  

For $k\geq 1$, let $v_0,v_1,\ldots,v_k$ be the vertices on a geodesic
from $e$ to $f$, in the obvious order.  Let $b$ be the edge joining
$v_{k-1}$ to $v_k$, with $b^-=v_k$.

\begin{figure}[h]
\begin{overpic}[width=0.4\textwidth]{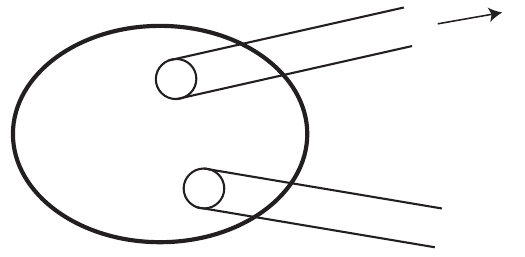}
\put(100,45){$\cuco X_{e}$}
\put(65,6){$\cuco X_{f}$}
\put(15,20){$\cuco X_{b_{-}}$}
\put(65,40){$\cuco X_{b}$}
\end{overpic}
\caption{Schematic of the subset of $\cuco X$ near $\cuco X_{b^{-}}$ .}\label{fig:gates}
\end{figure}

It follows from the definition of gates that $\gate_{e^-}(\cuco X_f)$
has diameter (coarsely) bounded above by that of
$\gate_{\phi_{b^-}(\cuco X_{b})}(\cuco X_f)$ and that of
$\gate_{e^-}(\cuco X_{b})$.  Hence suppose that
$\diam(\gate_{\phi_{b^-}(\cuco X_{b})}(\cuco X_f))>10K$ and
$\diam(\gate_{e^-}(\cuco X_{b}))>10K$.  Then, by induction and
Lemma~\ref{lem:far_gates}, we see that there exists $V_e\in\mathfrak
S_e,V_f\in\mathfrak S_f$ for which $\phi_{e^-}\inducedS
(V_e)\sim\phi_{f^-}\inducedS (V_f)$, a contradiction.
\end{proof}

\begin{lem}\label{lem:vertex_gate_lipschitz}
The map $\gate_v\colon\cuco X\to\cuco X_v$ is coarsely Lipschitz, with constants independent of $v$.
\end{lem}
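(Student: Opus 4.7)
The plan is to reduce, via the uniqueness axiom applied inside the uniformly HHS vertex spaces $\cuco X_v$, to a coordinate-wise bound: for each $W\in\mathfrak S_v$ and each pair $x,x'\in\cuco X$ at distance at most $1$ in $\cuco X$, one bounds $\dist_W(\gate_v(x),\gate_v(x'))$ by a universal constant $C_0$ independent of $v$. Since any two points of $\cuco X$ can be joined by a discrete path with unit-length steps whose length is at most $\dist_{\cuco X}(x,x')+1$, the uniform bound for the unit-distance case then yields coarse Lipschitzness of $\gate_v$ with constants independent of $v$ by concatenation. For the unit-distance case I would split into two subcases: (i) $x,x'\in\cuco X_u$ for a common vertex $u$, and (ii) $x=\phi_{e^-}(y),\, x'=\phi_{e^+}(y)$ for some edge $e$ and $y\in\cuco X_e$. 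Subcase (ii) reduces directly to (i): for any $v\in\mathcal V$ one of $e^-,e^+$ lies on the $T$-geodesic from the other to $v$, so the iterative definition of $\gate_v$ identifies one of $\gate_v(x),\gate_v(x')$ with the result of applying $\gate_v$ to the image of the other after a single gate-and-transport step, which alters coordinate distances only by uniform additive constants.

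To handle subcase (i), fix $W\in\mathfrak S_v$ and its $\sim$-class $[W]$. The key dichotomy is whether $u$ lies in the support $T_{[W]}$. If $u\in T_{[W]}$, connectedness of $T_{[W]}$ forces the $T$-geodesic from $u$ to $v$ to remain in $T_{[W]}$, hence to have length at most $n$ by the bounded-supports hypothesis; moreover a chain-of-$\sim_d$ argument in the tree shows that $[W]$ is represented in each edge space along that geodesic. Gating within a vertex space onto an edge space in which $[W]$ is represented coarsely preserves the projection to $\fontact W$ (by fullness and the standard behavior of gates to hierarchically quasiconvex subsets), and each edge-hieromorphism factor $\phi_{e^\pm}\induced(W_e)$ is a uniform quasi-isometry; composing at most $n$ such maps yields $\dist_W(\gate_v(x),\gate_v(x'))\leq C\dist_{W_u}(x,x')+C$ for a uniform $C$, which is uniformly bounded since $\pi_{W_u}$ is uniformly coarsely Lipschitz and $\dist_{\cuco X_u}(x,x')\leq 1$.

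If instead $u\notin T_{[W]}$, let $w\in T_{[W]}$ be the first vertex on the $T$-geodesic from $u$ to $v$ that lies in $T_{[W]}$, and let $e$ be the edge entering $w$ from the $u$-side. Then $W_w\notin\phi_{e^+}\inducedS(\mathfrak S_e)$, for otherwise the predecessor of $w$ on the geodesic would already belong to $T_{[W]}$. Fullness of the edge-hieromorphism together with Hypothesis~\eqref{item:hypothesis_4} of Theorem~\ref{thm:combination} (which rules out orthogonality between $W_w$ and $\phi_{e^+}\inducedS(S_e)$) forces, via consistency and bounded geodesic image, that $\pi_{W_w}(\phi_{e^+}(\cuco X_e))$ is a bounded subset of $\fontact W_w$. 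Since $\gate_w(x)\in\phi_{e^+}(\cuco X_e)$ for every $x\in\cuco X_u$, the projection $\pi_{W_w}(\gate_w(x))$ is coarsely independent of $x$; propagating through the remaining at most $n$ edges of $T_{[W]}$ from $w$ to $v$ (each applying a uniform QI) shows $\pi_W(\gate_v(x))$ is coarsely independent of $x$ as well, giving again a uniform bound on $\dist_W(\gate_v(x),\gate_v(x'))$.

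Combining the two subcases yields $\dist_W(\gate_v(x),\gate_v(x'))\leq C_0$ for every $W\in\mathfrak S_v$, uniformly in $v,u,x,x'$; applying the uniqueness axiom (Definition~\ref{defn:space_with_distance_formula}.\eqref{item:dfs_uniqueness}) inside the HHS $\cuco X_v$ (with its uniform constants) then gives $\dist_{\cuco X_v}(\gate_v(x),\gate_v(x'))\leq\theta_u(C_0)$, completing the unit-distance bound and hence the lemma. The main obstacle is the bounded-projection argument in the case $u\notin T_{[W]}$: one must carefully combine fullness, bounded supports, and Hypothesis~\eqref{item:hypothesis_4} to ensure that the iterated gate construction does not reintroduce arbitrarily large variation in the $\fontact W$-coordinate as one traverses vertices and edges outside the support of $[W]$.
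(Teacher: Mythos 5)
Your proposal is correct in substance, but it takes a different organizational route from the paper's proof, which is considerably shorter because it leans on machinery already established. The paper reduces to $\dist_{\cuco X}(x,y)\leq C$ (as you do), introduces the median $v'$ of $v,v_x,v_y$ in $T$ so that $\gate_{v'}(x)$ and $\gate_{v'}(y)$ are uniformly close, and then invokes Lemma~\ref{lem:equiv}: if $\dist_T(v,v')>2n+1$, the further gating of $\cuco X_{v'}$ towards $v$ collapses to a uniformly bounded set because no $\sim$--class survives across a gap longer than $n$. When $\dist_T(v,v')\leq 2n+1$, the gate map is a bounded composition of the per-step coarse Lipschitz maps of Lemma~\ref{lem:gate}. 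Your proof instead performs a from-scratch coordinate-by-coordinate analysis in each $\fontact W$, with the dichotomy $u\in T_{[W]}$ versus $u\notin T_{[W]}$, and then applies the uniqueness axiom inside $\cuco X_v$. In effect you are re-deriving the content of Lemma~\ref{lem:far_gates} and Lemma~\ref{lem:equiv} in-line: the assertion that $\pi_{W_w}$ is coarsely constant on $\phi_{e^+}(\cuco X_e)$ when $W_w\notin\phi_{e^+}\inducedS(\mathfrak S_e)$ (using fullness and Hypothesis~\eqref{item:hypothesis_4} via consistency and bounded geodesic image) is exactly what is established and packaged in the proof of Lemma~\ref{lem:far_gates}, and the propagation across at most $n$ edges is exactly the comparison map of Lemma~\ref{lem:comparison_map}. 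Both routes rest on the same inputs (bounded supports, fullness, Hypothesis~\eqref{item:hypothesis_4}, uniqueness); the paper's is more efficient because it is permitted to quote its own preceding lemmas, whereas yours has the merit of being self-contained.

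Two points you should tighten if writing this out: the phrase "coarsely preserves the projection to $\fontact W$" is stronger than what you need and what is literally true (it is only correct after passing to the normalized structure, or for the surjective case); what you actually use is that closest-point projection to a quasiconvex subset of a hyperbolic space is coarsely Lipschitz, so gating is coarsely Lipschitz in each coordinate. Second, the "chain-of-$\sim_d$ argument" that $[W]$ is represented in every edge space along the geodesic inside $T_{[W]}$ deserves an explicit sentence — it works because the $\sim_d$-chain realizing $W_w\sim W_{w'}$ is a walk in the tree $T$ that necessarily crosses each edge of the geodesic, carrying along a representative — but as stated it is an assertion, not an argument.
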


\begin{proof}
Let $x,y\in\cuco X$.  If the projections of $x,y$ to $T$ lie in the ball of radius $2n+1$ about $v$, then this 
follows since $\gate_v$ is the composition of a bounded number of maps, each of which is uniformly coarsely 
Lipschitz by Lemma~\ref{lem:gate}.  Otherwise, by Remark~\ref{rem:tree_quasigeodesic}, it suffices to consider $x,y$ 
with $\dist_{\cuco X}(x,y)\leq C$, where $C$ depends only on the metric $\dist$.  In this case, let $v_x,v_y$ be the 
vertices in $T$ to which $x,y$ project.  Let $v'$ be the median in $T$ of $v,v_x,v_y$.  Observe that there is a 
uniform bound on $\dist_T(v_x,v')$ and $\dist_T(v_y,v')$, so it suffices to bound 
$\dist_v(\gate_v(\gate_{v'}(x)),\gate_v(\gate_{v'}(y)))$.  Either $\dist_T(v,v')\leq 2n+1$, and we are done, or 
Lemma~\ref{lem:equiv} gives the desired bound, since equivalence classes have support of diameter at most $n$.
\end{proof}

\begin{defn}[Projections]\label{defn:comb_proj}
For each $[W]\in\mathfrak S$, define the \emph{projection} $\pi_{[W]}\colon\cuco X\to\fontact [W]$ by 
$\pi_{[W]}(x)=\beta_{W_v}(x)$, where $W_v$ is the favorite representative of $[W]$.  Note that these projections 
take 
points to uniformly bounded sets, since the collection of vertex spaces is uniformly hierarchically hyperbolic.  
Define $\pi_T\colon\cuco X\to T$ to be the usual projection to $T$.  Finally, for each 
$\container\in\bigcup_\eta\mathcal K_\eta$, just let $\pi_\container:\cuco X\to\fontact\container$ be a constant 
map.
\end{defn}

\begin{lem}[Comparison maps]\label{lem:comparison_map}
There exists a uniform constant $\xi\geq1$ such that for all $W_v\in\mathfrak S_v,W_w\in\mathfrak S_w$ with $W_v\sim 
W_w$, there exists a $(\xi,\xi)$--quasi-isometry $\mathfrak c\co W_v\to W_w$ such that $\mathfrak 
c\circ\beta_v=\beta_w$ up to uniformly bounded error.
\end{lem}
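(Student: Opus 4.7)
My plan is to build $\mathfrak c$ by composing ``local'' comparison maps, one per edge appearing in a $\sim_d$--chain from $W_v$ to $W_w$, and to verify compatibility with the $\beta$--maps one edge at a time. Since the support of $[W]$ has diameter at most $n$ by the bounded-supports hypothesis, any two representatives of $[W]$ are linked by a sequence of $\sim_d$--identifications of length at most $n$, so $\mathfrak c$ will be a composition of at most $n$ uniform quasi-isometries and the constants at the end will depend only on $n$, $\xi$, and $E$.

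\emph{Base case.} Let $e$ be an edge with $v=e^-$, $w=e^+$, and let $W_e\in\mathfrak S_e$ satisfy $\phi_{e^\pm}\inducedS(W_e)=W_{e^\pm}$. Fullness of the edge-hieromorphism (Definition~\ref{defn:quasiconvex_full_hieromorphism}) implies that $\phi_{e^\pm}\induced(W_e)\colon\fontact W_e\to\fontact W_{e^\pm}$ are uniform quasi-isometries, and I set
$$\mathfrak c_e\ :=\ \phi_{e^+}\induced(W_e)\circ(\phi_{e^-}\induced(W_e))^{-1}\colon \fontact W_v\to\fontact W_w,$$
which is a uniform quasi-isometry.

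\emph{Compatibility of $\mathfrak c_e$ with the gates.} For any $x\in\cuco X$, Lemma~\ref{lem:vertex_gate_lipschitz} reduces the problem to $x\in\cuco X_w$, in which case $\beta_{W_w}(x)=\pi_{W_w}(x)$. The inductive definition of $\gate_v$ gives $\gate_v(x)=\phi_{e^-}(\phi_{e^+}^{-1}(\gate_{\phi_{e^+}(\cuco X_e)}(x)))$; writing $z:=\phi_{e^+}^{-1}(\gate_{\phi_{e^+}(\cuco X_e)}(x))\in\cuco X_e$, the coarse commutativity of the hieromorphism squares (Definition~\ref{defn:hieromorphism}) yields
$$\beta_{W_v}(x)=\pi_{W_v}(\phi_{e^-}(z))\ \approx\ \phi_{e^-}\induced(W_e)(\pi_{W_e}(z)),$$
$$\pi_{W_w}(\phi_{e^+}(z))\ \approx\ \phi_{e^+}\induced(W_e)(\pi_{W_e}(z)).$$
Applying $\mathfrak c_e$ to the first and comparing with the second gives $\mathfrak c_e(\beta_{W_v}(x))\approx \pi_{W_w}(\gate_{\phi_{e^+}(\cuco X_e)}(x))$. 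What remains is to check $\pi_{W_w}(\gate_{\phi_{e^+}(\cuco X_e)}(x))\approx \pi_{W_w}(x)$, after which the general case follows by induction along a chain of length at most $n$, composing the per-edge maps $\mathfrak c_{e_i}$ and adding errors.

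\emph{Main obstacle.} The remaining approximation is the delicate point. By Definition~\ref{defn:gate} the left-hand side is the coarse closest-point projection of $\pi_{W_w}(x)$ to $\pi_{W_w}(\phi_{e^+}(\cuco X_e))$; by coarse commutation this target set is coarsely $\phi_{e^+}\induced(W_e)(\pi_{W_e}(\cuco X_e))$, and since $\phi_{e^+}\induced(W_e)$ is a quasi-isometry onto $\fontact W_w$ by fullness, coarse density in $\fontact W_w$ reduces to coarse density of $\pi_{W_e}(\cuco X_e)$ in $\fontact W_e$. This is the coarse-surjectivity property that one can arrange as in Remark~\ref{rem:surjectivity} (or verify directly from the HHS structure of $\cuco X_e$), and it forces the closest-point projection to be within a uniformly bounded distance of $\pi_{W_w}(x)$, completing the verification.
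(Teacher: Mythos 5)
Your construction matches the paper's in all essentials: both define $\mathfrak c$ as a composition of per-edge quasi-isometries $\phi_{e^+}\induced(W_e)\circ\bigl(\phi_{e^-}\induced(W_e)\bigr)^{-1}$ along the geodesic in $T$ from $v$ to $w$ (of length at most $n$ by bounded supports), and both verify $\mathfrak c\circ\beta_v\approx\beta_w$ by chaining the coarsely commuting hieromorphism diagrams over the successive edges and inducting on $\dist_T(v,w)$. One small caveat: the reduction to $x\in\cuco X_w$ is not a direct consequence of Lemma~\ref{lem:vertex_gate_lipschitz}, since $\gate_v=\gate_v\circ\gate_w$ only when $w$ separates the vertex of $T$ carrying $x$ from $v$; in the base case one also needs the symmetric case $x\in\cuco X_v$, and in the inductive step one should first replace $x$ by $\gate_m(x)$, where $m$ is the median of $\pi_T(x),v,w$, which lies in the support of $[W]$ because supports are connected subtrees. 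Your identification of the remaining point --- that $\pi_{W_w}(\gate_{\phi_{e^+}(\cuco X_e)}(x))\approx\pi_{W_w}(x)$ rests on coarse density of $\pi_{W_e}(\cuco X_e)$ in $\fontact W_e$ --- is exactly the ingredient the paper's diagram-chase also uses; the same coarse surjectivity assertion is invoked explicitly in the proof of Lemma~\ref{lem:far_gates}.
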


\begin{defn}\label{defn:comparison_map}A map $\mathfrak c$ as given 
    by Lemma~\ref{lem:comparison_map} is called a \emph{comparison map}.\end{defn}

\begin{proof}[Proof of Lemma~\ref{lem:comparison_map}]
We first clarify the situation by stating some consequences of the definitions.  Let $e^+,e^-$ be vertices of $T$ 
joined by an edge $e$.  Suppose that there exists $W^+\in\mathfrak S_{e^+},W^-\in\mathfrak S_{e^-}$ such that 
$W^+\sim W^-$, so that there exists $W\in\mathfrak S_e$ with $\pi(\phi_{e^\pm})(W)=W^\pm$.  Then the following 
diagram coarsely commutes (with uniform constants):
\begin{center}
$
\begin{diagram}
\node{}\node{\cuco X}\arrow{sw,l}{\gate_{e^-}}\arrow{se,l}{\gate_{e^+}}\node{}\\
\node{\cuco X_{e^-}}\arrow{se,t}{}\arrow{s,l}{\pi_{W^-}}\node{}\node{\cuco 
X_{e^+}}\arrow{sw,t}{}\arrow{s,r}{\pi_{W^-}}\\
\node{\fontact W^-}\arrow{se}\node{\cuco X_e}\arrow{nw,t}{}\arrow{ne,t}{}\arrow{s,r}{\pi_W}\node{\fontact 
W^+}\arrow{sw}\\
\node{}\node{\fontact W}\arrow{nw}{}\arrow{ne}{}
\end{diagram}
$
\end{center}
where $\cuco X_e\to\cuco X_{e^\pm}$ is the uniform quasi-isometry
$\phi_{e^\pm}$, while $\cuco X_{e^{\pm}}\to\cuco X_e$ is the
composition of a quasi-inverse for $\phi_{e^\pm}$ with the gate map
$\cuco X_{e^\pm}\to\phi_{e^\pm}(\cuco X_e)$, and the maps $\fontact
W\leftrightarrow\fontact W^{\pm}$ are the quasi-isometries implicit in
the edge hieromorphism or their quasi-inverses.  The proof essentially
amounts to chaining together a sequence of these diagrams as $e$
varies among the edges of a geodesic from $v$ to $w$; an important 
ingredient is 
played by the fact that such a geodesic has length at most
$n$.

Let $v=v_0,v_1,\ldots,v_m,v_{m+1}=w$ be the geodesic sequence in $T$
from $v$ to $w$ and let $e^i$ be the edge joining $v_i$ to $v_{i+1}$.
For each $i$, choose $W_i\in\mathfrak S_{e^i}$ and
$W_i^\pm\in\mathfrak S_{e^i_\pm}$ so that (say) $W_0^-=W_v$ and
$W_{m}^+=W_{w}$ and so that $\phi_{e^i_{\pm}}\inducedS (W_i)=W_i^{\pm}$ for
all $i$.  For each $i$, let $q_i^{\pm}\co\fontact W_i\to\fontact
W_i^{\pm}$ be $q_i=\phi_{e^i_{\pm}}\induced(W_i)$ be the
$(\xi',\xi')$--quasi-isometry packaged in the edge--hieromorphism, and
let $\bar q_i^{\pm}$ be a quasi-inverse; the constant $\xi'$ is
uniform by hypothesis, and $m\leq n$ since $\mathcal T$ has bounded
supports.  The hypotheses on the edge--hieromorphisms ensure that the
$W_i^{\pm}$ are uniquely determined by $W_v,W_{w}$, and we define
$\mathfrak c$ by $$\mathfrak c=q_m^{\epsilon_m}\bar
q_m^{\epsilon_m'}\cdots q_1^{\epsilon_1}\bar q_1^{\epsilon_1'},$$
where $\epsilon_i,\epsilon_i'\in\{\pm\}$ depend on the orientation of
$e^i$, and $\epsilon_i'=+$ if and only if $\epsilon_i=-$.  This is a
$(\xi,\xi)$--quasi-isometry, where $\xi=\xi(\xi'_n)$.

If $v=w$, then $\mathfrak c$ is the identity and $\mathfrak c\circ\beta_v=\beta_v$.  Let $d\geq 1=\dist_T(v,w)$ and 
let $w'$ be the penultimate vertex on the geodesic of $T$ from $v$ to $w$.  Let $\mathfrak c':\fontact 
W_v\to\fontact W_{w'}$ be a comparison map, so that, by induction, there exists $\lambda'\geq0$ so that 
$\dist_{\fontact W_{w'}}(\mathfrak c'\circ\beta_v(x),\beta_{w'}(x))\leq\lambda'$ for all $x\in\cuco X$.  Let 
$\mathfrak c''=\bar q_k^+q_k^-\co\fontact W_{w'}\to\fontact W_w$ be the $(\xi',\xi')$--quasi-isometry packaged in 
the edge--hieromorphism, so that the following diagram coarsely commutes:
\begin{center}
$
\begin{diagram}
\node{}\node{\cuco X}\arrow{sw,l}{\gate_v}\arrow{s,r}{\gate_{w'}}\arrow{se,l}{\gate_w}\\
\node{\cuco X_v}\arrow{r,b}{\gate_{w'}}\arrow{s,l}{\pi_{W_v}}\node{\cuco 
X_{w'}}\arrow{r,b}{\gate_w}\arrow{s,l}{\pi_{W_{w'}}}\node{\cuco X_w}\arrow{s,l}{\pi_{W_w}}\\
\node{\fontact W_v}\arrow{e,b}{\mathfrak c'}\node{\fontact W_{w'}}\arrow{e,b}{\mathfrak c''}\node{\fontact W_w}
\end{diagram}
$
\end{center}
Since $\mathfrak c=\mathfrak c''\circ\mathfrak c'$ and the constants implicit in the coarse commutativity of the 
diagram depend only on the constants of the hieromorphism and on $d\leq n$, the claim follows.
\end{proof}

\begin{lem}\label{lem:projection_coarse_lipschitz}
There exists $K$ such that each $\pi_{[W]}$ is $(K,K)$--coarsely Lipschitz.
\end{lem}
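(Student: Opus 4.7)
The plan is to simply unpack the definition of $\pi_{[W]}$ and observe that it is a composition of two already-established uniformly coarsely Lipschitz maps. By Definition~\ref{defn:comb_proj}, we have $\pi_{[W]} = \pi_{W_v} \circ \gate_v$, where $W_v \in \mathfrak{S}_v$ is the favorite representative of $[W]$ (chosen once and for all, as in Definition~\ref{defn:favorites}).

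First, I would invoke Lemma~\ref{lem:vertex_gate_lipschitz}, which provides a constant $K_1$, independent of the vertex $v$, such that $\gate_v \colon \cuco{X} \to \cuco{X}_v$ is $(K_1, K_1)$--coarsely Lipschitz. Next, because the family $\{(\cuco{X}_v, \mathfrak{S}_v)\}$ is uniformly hierarchically hyperbolic (part of the standing hypothesis on $\mathcal{T}$), Definition~\ref{defn:space_with_distance_formula}.\eqref{item:dfs_curve_complexes} gives a uniform constant $K_2$ such that $\pi_{W_v} \colon \cuco{X}_v \to \fontact W_v = \fontact [W]$ is $(K_2, K_2)$--coarsely Lipschitz, with $K_2$ depending only on the uniform HHS constants and not on the choice of $v$ or $W_v$.

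Composing these two bounds yields the result: for $x, y \in \cuco{X}$,
\[
\dist_{\fontact [W]}(\pi_{[W]}(x), \pi_{[W]}(y)) \leq K_2 \dist_{\cuco{X}_v}(\gate_v(x), \gate_v(y)) + K_2 \leq K_1 K_2 \dist_{\cuco{X}}(x,y) + (K_1 K_2 + K_2),
\]
so setting $K = K_1 K_2 + K_2$ (or any suitably large uniform constant) proves the lemma. There is no genuine obstacle here; the only point worth emphasizing in the write-up is that both $K_1$ and $K_2$ are uniform over all $v$ and $[W]$, which is exactly the content of Lemma~\ref{lem:vertex_gate_lipschitz} and the uniformity clause in Definition~\ref{defn:tree_of_HHS} respectively. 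The case of $\pi_T$ is even easier, since projection to the underlying tree is $1$--Lipschitz.
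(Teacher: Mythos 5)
Your proof is correct and takes exactly the same route as the paper: unpack $\pi_{[W]}=\pi_{W_v}\circ\gate_v$, invoke Lemma~\ref{lem:vertex_gate_lipschitz} for uniform coarse Lipschitzness of the gate, and use the uniform HHS constants for $\pi_{W_v}$. Your remark about $\pi_T$ is a small but reasonable addition not spelled out in the paper.
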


\begin{proof}
For each vertex $v$ of $T$ and each $V\in\mathfrak S_v$, the projection $\pi_V:\cuco X_v\to\fontact V$ is uniformly 
coarsely Lipschitz, by definition.  By Lemma~\ref{lem:vertex_gate_lipschitz}, each gate map $\gate_v:\cuco X\to\cuco 
X_v$ is uniformly coarsely Lipschitz.  The lemma follows since $\pi_{[W]}=\pi_{W_v}\circ\gate_v$, where $v$ is the 
favorite vertex carrying the favorite representative $W_v$ of $[W]$.   
\end{proof}

\begin{defn}[Projections between hyperbolic spaces]\label{defn:comb_rho}
If $[V]\nest[W]$, then choose vertices $v,v',w\in\mathcal V$ so that $V_v,W_w$ are respectively the favorite 
representatives of $[V],[W]$, while $V_{v'},W_{v'}$ are respectively representatives of $[V],[W]$ with 
$V_{v'},W_{v'}\in\mathfrak S_{v'}$ and $V_{v'}\nest W_{v'}$.  Let $\mathfrak c_V\co\fontact V_{v'}\to\fontact V_v$ 
and $\mathfrak c_W\co\fontact W_{v'}\to\fontact W_w$ be comparison maps.  Then define 
$$\rho^{[V]}_{[W]}=c_W\left(\rho^{V_{v'}}_{W_{v'}}\right),$$ which is a uniformly bounded set, and define 
$\rho^{[W]}_{[V]}\co\fontact [W]\to\fontact [V]$ by $$\rho^{[W]}_{[V]}=\mathfrak 
c_V\circ\rho^{W_{v'}}_{V_{v'}}\circ\bar{\mathfrak c}_W,$$ where $\bar{\mathfrak c}_W$ is a quasi--inverse of 
$\mathfrak c_W$ and $\rho^{W_{v'}}_{V_{v'}}\co\fontact W_{v'}\to\fontact V_{v'}$ is the map provided by 
Definition~\ref{defn:space_with_distance_formula}.\eqref{item:dfs_nesting}.  Similarly, if $[V]\transverse [W]$, and 
there exists $w\in T$ so that $\mathfrak S_w$ contains representatives $V_w,W_w$ of $[V],[W]$, then let 
$$\rho^{[V]}_{[W]}=\mathfrak c_W\left(\rho^{V_{w}}_{W_{w}}\right).$$  Otherwise, choose a closest pair $v,w$ so that 
$\mathfrak S_v$ (respectively $\mathfrak S_w$) contains a representative of $[V]$ (respectively $[W]$).  Let $e$ be 
the first edge of the geodesic in $T$ joining $v$ to $w$, so $v=e^-$ (say).  Let $S$ be the $\nest$--maximal element 
of $\mathfrak S_e$, and let $$\rho^{[W]}_{[V]}=\mathfrak c_V\left(\rho^{\phi_{e^-}\inducedS(S)}_{V_{v}}\right).$$  This 
is well-defined by hypothesis~\eqref{item:hypothesis_4}.

For each $\sim$--class $[W]$, let $\rho^{[W]}_T$ be the support of $[W]$ (a uniformly bounded set since $\mathcal T$ 
has bounded supports).  Define $\rho^T_{[W]}\colon T\to\fontact W$ as follows: given $v\in T$ not in the support of 
$[W]$, let $e$ be the unique edge with $e^-$ (say) separating $v$ from the support of $[W]$.  Let $S\in\mathfrak 
S_e$ 
be $\nest$--maximal.  Then $\rho^T_{[W]}(v)=\rho^{[\phi_{e^-}\inducedS(S)]}_{[W]}$.  If $v$ is in the support of $[W]$, then 
let $\rho^T_{[W]}(v)$ be chosen arbitrarily.

Finally, let $\container,{\container}'\in\bigcup_\eta\mathcal K_\eta$ and let $[W]$ be a $\sim$--class.  If 
$\container\transverse{\container}'$, then $\rho^{{\container}'}_{\container}$ is the single point 
$\fontact\container$.  If $\container\nest{\container}'$, then $\rho^{{\container}'}_{\container}$ is a constant map 
and $\rho_{{\container}'}^{\container}$ is the obvious single point.  We never have $\container\nest[W]$.  If 
$[W]\nest\container$, then $\rho^{[W]}_{\container}$ is again the obvious single point, and we can define 
$\rho_{[W]}^{\container}\co\fontact\container\to\fontact[W]$ to be an arbitrary constant map.  Finally, 
$\rho^T_{\container}\co T\to\fontact\container$ is the constant map, and $\rho^{\container}_T$ is the bounded set 
defined as follows.  By definition, there is a unique pairwise orthogonal set $[W_1],\ldots,[W_\eta]$ so that 
$\container=\container_\eta([W_1],\ldots,[W_\eta])$.  By the proof of Lemma~\ref{lem:comb_complexity}, the supports 
of the 
various $[W_i]$ all intersect in a subtree of $T$, which necessarily has diameter at most $n$, by the bounded 
supports hypothesis; we take this intersection to be $\rho^{\container}_T$.
\end{defn}

We are now ready to complete the proof of the combination theorem.

\begin{proof}[Proof of Theorem~\ref{thm:combination}]
We claim that $(\cuco X(\mathcal T),\mathfrak S)$ is hierarchically 
hyperbolic.  We take the nesting, orthogonality, and
transversality relations for a tree of spaces given by
Definition~\ref{defn:nest_etc_on_classes} and Definition~\ref{defn:containers_S}. In 
Lemmas~\ref{lem:comb_nest}, ~\ref{lem:orth_comp}, and Remark~\ref{rem:extending_relations}, it is shown that 
these relations satisfy all of
the conditions \eqref{item:dfs_nesting} and~\eqref{item:dfs_orthogonal} of
Definition~\ref{defn:space_with_distance_formula}
not involving the projections.  Moreover, the complexity of $(\cuco
X(\mathcal T),\mathcal S)$ is finite, by
Lemma~\ref{lem:comb_complexity}, verifying
Definition~\ref{defn:space_with_distance_formula}.\eqref{item:dfs_complexity}.
The set of $\delta$--hyperbolic spaces $\{\fontact A:A\in\mathfrak
S\}$ is provided by Definition~\ref{defn:favorites}, while the
projections $\pi_{[W]}\co\cuco X\to\fontact [W]$ required by
Definition~\ref{defn:space_with_distance_formula}.\eqref{item:dfs_curve_complexes}
are defined in Definition~\ref{defn:comb_proj} and are uniformly coarsely Lipschitz by 
Lemma~\ref{lem:projection_coarse_lipschitz}.  Since $\pi_{[W]}(\cuco X)$ uniformly coarsely coincides with the image of an 
appropriately-chosen vertex space $\cuco X_v$, it is quasiconvex since $\pi_{W_v}(\cuco X_v)$ is uniformly quasiconvex by 
Definition~\ref{defn:space_with_distance_formula}.\eqref{item:dfs_curve_complexes}.
The projections
$\rho^{[V]}_{[W]}$ when $[V],[W]$ are non-orthogonal are described in
Definition~\ref{defn:comb_rho}.  To complete the proof, it thus
suffices to verify the consistency inequalities
(Definition~\ref{defn:space_with_distance_formula}.\eqref{item:dfs_transversal}),
the bounded geodesic image axiom
(Definition~\ref{defn:space_with_distance_formula}.\eqref{item:dfs:bounded_geodesic_image}),
the large link axiom
(Definition~\ref{defn:space_with_distance_formula}.\eqref{item:dfs_large_link_lemma}),
partial realization
(Definition~\ref{defn:space_with_distance_formula}.\eqref{item:dfs_partial_realization}),
and uniqueness
(Definition~\ref{defn:space_with_distance_formula}.\eqref{item:dfs_uniqueness}).

\textbf{Consistency:}  Any consistency inequalities involving elements of $\bigcup_\eta\mathcal K_\eta$ hold 
trivially since in that case, at least one of the two associated hyperbolic spaces in question is a point.  Suppose 
that $[U]\transverse [V]$ or $[U]\nest[V]$ and let $x\in\cuco X$.  Choose representatives $U_u\in\mathfrak 
S_u,V_u\in\mathfrak S_v$ of $[U],[V]$ so that $\dist_T(u,v)$ realizes the distance between the supports of 
$[U],[V]$. 
 By composing the relevant maps in the remainder of the argument with comparison maps, we can assume that $U_u,V_v$ 
are favorite representatives.  Without loss of generality, there exists a vertex $w\in T$ so that $x\in\cuco X_w$.  
If $u=v$, then consistency follows since it holds in each vertex space, so assume that $u,v$ have disjoint supports 
and in particular $[U]\transverse[V]$.

If $w\not\in[u,v]$, then (say) $u$ separates $w$ from $v$.  Then $\pi_{[V]}(x)=\pi_{V_v}(\gate_{v}(x))$.  Let $e$ be 
the edge of the geodesic $[u,v]$ emanating from $u$, so that $\rho^{[V]}_{[U]}=\rho^{S}_{U_u}$, where $S$ is the 
image in $\mathfrak S_u$ of the $\nest$--maximal element of $\mathfrak S_e$.  If $\dist_{\fontact 
U_u}(\gate_u(x),\rho^S_{U_u})\leq E$, then we are done.  Otherwise, by consistency in $\mathfrak S_u$, we have 
$\dist_{\fontact S}(\gate_u(x),\rho^{U_u}_S)\leq E$, from which consistency follows.  Hence suppose that 
$w\in[u,v]$.  Then without loss of generality, there is an edge $e$ containing $v$ and separating $w$ from $v$.  As 
before, projections to $V$ factor through the $\nest$-maximal element of $\mathfrak S_e$, from which consistency 
follows.  

We verify consistency for $T,[W]$ for each $\sim$--class $[W]$.  Choose $x\in\cuco X_v$.  If $\dist_T(v,T_{[W]})\geq 
n+1$, then let $e$ be the edge incident to $T_{[W]}$ separating it from $v$, so that (up to a comparison map) 
$\rho^T_{[W]}(v)=\rho^{S}_{W}$, where $W\in\mathfrak S_{e^+}$ represents $W$, and $e^+\in T_{[W]}$, and $S$ is the 
image in $\mathfrak S_{e^+}$ of the $\nest$--maximal element of $\mathfrak S_e$.  (Note that $W\transverse S$ by 
hypothesis~\eqref{item:hypothesis_4} of the theorem and the choice of $e$).  On the other hand (up to a comparison 
map) $\pi_{[W]}(x)=\pi_W(\gate_{e^+}(x))\asymp\pi_W(\gate_{e^+}(\cuco X_v))\asymp\pi_W(\mathbf F_S)=\rho^S_W$, as 
desired.  (The final coarse equality holds by hypothesis~\eqref{item:group:hypothesis_4}.)

Finally, suppose that $[U]\nest[V]$ and that either $[V]\propnest[W]$ or $[V]\transverse [W]$ and $[U]\not\orth[W]$. 
 We claim that $\dist_{[W]}(\rho^{[U]}_{[W]},\rho^{[V]}_{[W]})$ is uniformly bounded.  By definition, $T_{[U]}\cap 
T_{[V]}\neq\emptyset$ and we fix representatives $U_u\in\mathfrak S_u,V_u\in\mathfrak S_u$ of $[U],[V]$ with 
$U_u\nest V_u$.

Next, suppose that $T_{[V]}\cap T_{[W]}\neq\emptyset$ and $T_{[U]}\cap T_{[W]}\neq\emptyset$, so that we can choose 
vertices $v,w$ of $T$ and representatives $V_w,W_w\in\mathfrak S_w$ so that $V_w\nest W_w$ or 
$V_w\transverse W_w$ according to whether $[V]\nest[W]$ or $[V]\transverse [W]$, and choose representatives 
$U_v,W_v\in\mathfrak S_v$ of $[U],[W]$ so that $U_v\nest W_v$ or $U_v\transverse W_v$ according to whether 
$[U]\nest[W]$ or $[U]\transverse [W]$.  Let $m\in T$ be the median of $u,v,w$.  Since $u,w$ lie in the support of 
$[U],[W]$, so does $m$, since supports are connected.  Likewise, $m$ lies in the support of $[V]$.  Let 
$U_m,V_m,W_m$ be the representatives of $[U],[V],[W]$ in $m$.  Since edge-maps are full hieromorphisms, we have 
$U_m\nest V_m$ and $U_m\not\orth W_m$ and either $V_m\nest W_m$ or $V_m\transverse W_m$.  Hence 
Definition~\ref{defn:space_with_distance_formula}.\eqref{item:dfs_transversal} implies that 
$\dist_{W_m}(\rho^{U_m}_{W_m},\rho^{V_m}_{W_m})$ is uniformly bounded.  Since the comparison maps are uniform 
quasi-isometries, it follows that $\dist_{[W]}(\rho^{[U]}_{[W]},\rho^{[V]}_{[W]})$ is uniformly bounded, as desired.

Next, suppose that $T_{[U]}\cap T_{[W]}=\emptyset$. Then $[U]\transverse[W]$.  If there is an edge $e$ separating 
$T_{[W]}$ from $T_{[U]}\cup T_{[V]}$, then $\rho^{[U]}_{[W]}=\rho^{[V]}_{[W]}$ by definition.  Otherwise, 
$[V]\transverse [W]$ (by transitivity of $\nest$ and the fact that $T_{[U]}\cap T_{[W]}=\emptyset$) but there is 
some 
$v\in T_{[V]}\cap T_{[W]}$ and representatives $V_v,W_v\in\mathfrak S_v$ of $[V],[W]$ with $V_v\transverse W_v$.  
But 
by fullness of the hieromorphism and induction on $\dist_T(u,v)$, we find that $v\in T_{[U]}$, contradicting that 
$T_{[U]}\cap T_{[W]}=\emptyset$.

\textbf{Bounded geodesic image and large link axiom in $T$ and $\bigcup_\eta\mathcal K_\eta$:}  For bounded geodesic 
image, in the case where one of the two nested elements of $\mathfrak S$ in question is in $\bigcup_\eta\mathcal 
K_\eta$, 
the claim holds trivially since the associated hyperbolic space has diameter $0$ and the associated map between 
hyperbolic spaces has either domain or codomain a single point.

Let $\gamma$ be a geodesic in $T$ and let $[W]$ be a $\sim$--class so that $\dist_T(\gamma,\rho^{[W]}_T)>1$, which 
is to say that $\gamma$ does not contain vertices in the support of $[W]$.  Let $e$ be the terminal edge of the 
geodesic joining $\gamma$ to the support of $[W]$.  Then for all $u\in\gamma$, we have by definition that 
$\rho^T_{[W]}(u)=\rho^{[S]}_{[W]}$ for some fixed $\sim$--class $[S]$.  This verifies the bounded geodesic image 
axiom for $T,[W]$.  

By Lemma~\ref{lem:equiv}, there exists a constant $K''$ such that if $x,x'\in\cuco X$ respectively project to 
vertices $v,v'$, then any $[W]\in\mathfrak S-\{T\}$ with $\dist_{[W]}(x,x')\geq K''$ is supported on a vertex 
$v_{[W]}$ on the geodesic $[v,v']$ and is hence nested into $[S_{v_W}]$, where $S_{v_W}$ is maximal in $\mathfrak 
S_{v_{[W]}}$.  Indeed, choose $w$ in the support of $[W]$.  Then either $\dist_{[W]}(x,x')$ is smaller than some 
specified constant, or $\dist_{\cuco X_w}(\gate_w(x),\gate_w(x')>K'$.  Thus $\gate_{\cuco X_w}(\cuco X_m)$ has 
diameter at least $K'$, where $m$ is the median of $v,v',w$.  Hence $m$ lies in the support of $[W]$, and 
$m\in[v,v']$, and $[W]\nest [S]$, where $S$ is $\nest$--maximal in $\mathfrak S_m$.  Finally, for each such 
$S_{v_W}$, it is clear that $\dist_T(x,\rho^{[S_{v_W}]}_T)\leq\dist_T(x,x')$, verifying the conclusion of the large 
link axiom for $T$.

Finally, we have to check that if $\container=\container([W_1],\ldots,[W_\eta])$ and $x,y\in\cuco X$, then there 
exist 
a uniformly bounded number of elements $[U_j]$ so that for any $[V]\nest\container$ with $\dist_{[V]}(x,y)\ge E$, 
the class $[V]$ is nested into some $U_j$.  We have shown above that any such $[V]$ is nested into the 
$\nest$--maximal $S_v\in\mathfrak S_v$ for some $v$ on the geodesic of $T$ between $\pi_T(x)$ and $\pi_T(y)$.  Now, 
since $[V]\nest\container$, we have $[V]\orth[W_i]$ for all $i$, so that the support of $[V]$ uniformly coarsely 
coincides with the support of $[W_i]$ for each $i$.  Hence $v$ must be among the boundedly many vertices on 
$[\pi_T(x),\pi_T(y)]$ that lie in the intersection of the supports of the $[W_i]$.  Thus we can take our set of 
$U_j$ to be the set of such $S_v$, which has uniformly bounded cardinality (bounded in terms of $n$).

\textbf{Bounded geodesic image and large link axiom in $W\sqsubset T$:}  Let $[W]$ be non-$\nest$--maximal, let 
$[V]\nest[W]$, and let $\gamma$ be a geodesic in $\fontact [W]$.  Then $\gamma$ is a geodesic in $\fontact W_w$, by 
definition, where $w$ is the favorite vertex of $[W]$ with corresponding representative $W_w$.  Let $V_w$ be the 
representative of $[V]$ supported on $w$, so that $\rho^{[V]}_{[W]}=\rho^{V_w}_{W_w}$, so that $\gamma$ avoids the 
$E$--neighborhood of $\rho^{[V]}_{[W]}$ exactly when it avoids the $E$--neighborhood of $\rho^{V_w}_{W_w}$.  The 
bounded geodesic image axiom now follows from bounded geodesic image in $\mathfrak S_w$, although the constant $E$ 
has been changed according to the quasi-isometry constant of comparison maps.  

Now suppose $x,x'\in\cuco X_v,\cuco X_{v'}$ and choose $w$ to be the favorite vertex in the support of $[W]$.  
Suppose for some $[V]\nest[W]$ that $\dist_{[V]}(x,x')\geq E'$, where $E'$ depends on $E$ and the quasi-isometry 
constants of the edge-hieromorphisms.  Then $\dist_{V_w}(\gate_w(x),\gate_w(x'))\geq E$, for some representative 
$V_w\in\mathfrak S_w$ of $[V]$, by our choice of $E'$.  Hence, by the large link axiom in $\mathfrak S_w$, we have 
that $V_w\nest T_i$, where $\{T_i\}$ is a specified set of 
$N=\lfloor\dist_{W_w}(\gate_w(x),\gate_w(x'))\rfloor=\lfloor\dist_{[W]}(x,x')\rfloor$ elements of $\mathfrak S_w$, 
with each $T_i\sqsubset W_w$.  Moreover, the large link axiom in $\mathfrak S_w$ implies that 
$\dist_{[W]}(x,\rho^{[T_i]}_{[W]})=\dist_{W_w}(\gate_w(x),\rho^{T_i}_{W_w})\leq N$ for all $i$.  This verifies the 
large link axiom for $(\cuco X(\mathcal T),\mathfrak S)$.

\textbf{Partial realization:}  Let $[V_1],\ldots,[V_k]\in\mathfrak S$ be pairwise--orthogonal, and, for each $i\leq 
k$, let $p_i\in\fontact [V_i]$.  For each $i$, let $T_i\subseteq T$ be the induced subgraph spanned by the vertices 
$w$ such that $[V_i]$ has a representative in $\mathfrak S_w$.  The definition of the $\sim$--relation implies that 
each $T_i$ is connected, so by the Helly property of trees, there exists a vertex $v\in T$ such that for each $i$, 
there exists $V^i_v\in\mathfrak S_v$ representing $[V_i]$.  Moreover, we have $V^i_v\orth V^j_v$ for $i\neq j$, 
since 
the edge--hieromorphisms preserve the orthogonality relation.  Applying the partial realization axiom 
(Definition~\ref{defn:space_with_distance_formula}.\eqref{item:dfs_partial_realization}) to $\{p_i'\in\fontact 
V^i_v\}$, where $p_i'$ is the image of $p_i$ under the appropriate comparison map, yields a point $x\in\cuco X_v$ 
such that $\pi_{V^i_v}(x)$ is coarsely equal to $p_i'$ for all $i$, whence $\dist_{[V_i]}(x,p_i)$ is uniformly 
bounded.  If $[V_i]\nest[W]$, then $W$ has a representative $W_v\in\mathfrak S_v$ such that $V^i_v\nest W$, whence 
$\dist_{[W]}(x,\rho^{[V_i]}_W)$ is uniformly bounded since $x$ is a partial realization point for $\{V^i_v\}$ in 
$\mathfrak S_v$.  Finally, if $[W]\transverse[V_i]$, then either the subtrees of $T$ supporting $[W]$ and $[V_i]$ 
are 
disjoint, in which case $\dist_{[W]}(x,\rho^{[V_i]}_{[W]})$ is bounded, or $[W]$ has a representative in $\mathfrak 
S_v$ transverse to $V^i_v$, in which case the same inequality holds by our choice of $x$.  There is nothing to check 
regarding projections onto $\fontact\container$ for $\container\in\bigcup_\eta\mathcal K_\eta$, since those spaces 
are 
single points.

It remains to consider pairwise orthogonal collections that include elements of $\bigcup_\eta\mathcal K_\eta$.  Since 
no 
two of these elements can be orthogonal, we must consider $\container=\container_\eta([W_1],\ldots,[W_\eta])$, which 
is 
orthogonal to $\sim$--classes $[V_1],\ldots,[V_k]$, which themselves form an orthogonal collection.  Let $p$ be the 
unique point of $\fontact\container$, and for each $i\leq k$, let $p_i\in\fontact V_i$.  Then the previous 
discussion provides a point $x$ so that for any $i$, we have $\pi_{[V_i]})(x)\asymp p_i$, and 
$\pi_T(x)\asymp\rho^{[V_i]}_T$.  Moreover, for any $[U]$ so that, for some $i$, we have $[U]\transverse[V_i]$ or 
$[V_i]\nest[U]$, we have $\pi_{[U]}(x)\asymp\rho^{[V_i]}_{[U]}$.  We claim that $x$ also satisfies the conclusion of 
partial realisation for the pairwise-orthogonal set $\container,[V_1],\ldots,[V_k]$.  Again, there is nothing to 
check regarding projections onto $\fontact\container$ for $\container\in\bigcup_\eta\mathcal K_\eta$, since those 
spaces 
are single points, and this includes the statement about $\pi_{\container}(x)$.  So, it just remains to check that 
$\pi_T(x)$ uniformly coarsely coincides with $\rho^{\container}_T$.  But $\pi_T(x)$ coarsely coincides with 
$\rho_T^{[V_i]}$ for any $i$, by the construction of $x$.  Since $[V_i]\orth\container$, we have $[V_i]\nest[W_j]$ 
for some $j$, so $\rho^{[V_i]}_T$ coarsely coincides with $\rho^{[W_j]}_T$.  But $\rho^{[W_j]}_T$ coarsely 
coincides, by definition, with $\rho^{\container}_T$, as required.

\textbf{Uniqueness of realization points:} Suppose $x,y\in\cuco X$
satisfy $\dist_{[V]}(x,y)\leq K$ for all $[V]\in\mathfrak S$.  Then,  
for each vertex $v\in T$,  
applying the uniqueness axiom in $\cuco X_v$
to $\gate_v(x),\gate_v(y)$ shows that 
$\dist_{\cuco
X_v}(\gate_v(x),\gate_v(y))\leq\zeta=\zeta(K)$. 
Indeed, otherwise we
would have $\dist_{V}(\gate_v(x),\gate_v(y))>\xi K+\xi$ for some
$V\in\mathfrak S_v$, whence $\dist_{[V]}(x,y)>K$.  There exists $k\leq
K$ and a sequence $v_0,\ldots,v_k$ of vertices in $T$ so that
$x\in\cuco X_{v_0},y\in\cuco X_{v_k}$.  For each $j$, let
$x_j=\gate_{v_j}(x)$ and $y_j=\gate_{v_j}(y)$.  Then
$x=x_0,y_0,x_1,y_1,\ldots,y_{j-1},x_j,y_j,\ldots,x_k,y_k=y$ is a path
of uniformly bounded length joining $x$ to $y$.  Indeed, $\dist_{\cuco
X_{v_j}}(x_j,y_j)\leq\zeta$ and $k\leq K$ by the preceding discussion,
while $x_j$ coarsely coincides with a point on the opposite side of an
edge-space from $y_{j-1}$ by the definition of the gate of an
edge-space in a vertex-space and the fact that $x_{j-1}$ and $y_{j-1}$
coarsely coincide.  This completes the proof.
\end{proof}

\subsection{Equivariant definition of $(\cuco X(\mathcal T),\mathfrak
S)$}\label{subsec:tree_of_HHS_automorphisms} 
Let $\mathcal T$ denote the tree of hierarchically hyperbolic spaces  
$(\mathcal T,\{\cuco X_v\},\{\cuco X_e\},\{\pi_{e^\pm}\})$, 
and let $(\cuco X(\mathcal T),\mathfrak S)$ be the hierarchically hyperbolic structure defined in the proof of 
Theorem~\ref{thm:combination}.  Various arbitrary choices were made in defining the constituent hyperbolic spaces 
and projections in this hierarchically hyperbolic structure, and we now insist on a specific way of making these 
choices in order to describe automorphisms of $(\cuco X(\mathcal T),\mathfrak S)$.

Recall that an automorphism of $(\cuco X(\mathcal T),\mathfrak S)$ is determined by a bijection $g\co\mathfrak 
S\to\mathfrak S$ and a set of isometries $g\co\fontact Q\to\fontact gQ$, for $Q\in\mathfrak S$.  Via the 
distance formula, this determines a uniform quasi-isometry $\cuco X(\mathcal T)\to\cuco X(\mathcal T)$.

A bijection $g\co\bigsqcup_{v\in\mathcal V}\mathfrak S_{v}\to\bigsqcup_{v\in\mathcal V}\mathfrak S_v$ is 
\emph{$T$--coherent}
if there is an induced isometry $g$ of the underlying tree, $T$, so that $fg=gf$, where
$f\colon\bigsqcup_{v\in\mathcal V}\mathfrak S_{v}\to T$
sends each $V\in\mathfrak S_v$ to $v$, for all $v\in\mathcal V$.  The
$T$--coherent bijection $g$ is said to be  \emph{$\mathcal T$--coherent} 
if it also 
preserves the relation~$\sim$.  Noting that the composition of
$\mathcal T$--coherent bijections is $\mathcal T$--coherent, denote by
$\mathcal P_{\mathcal T}$ the group of $\mathcal T$--coherent
bijections.  For each $g\in\mathcal P_{\mathcal T}$, there is an
induced bijection $g\co\mathfrak S^0\to\mathfrak S^0$.

Recall that the hierarchically hyperbolic structure $(\cuco X(\mathcal
T),\mathfrak S)$ was completely determined except for the following
three types of choices which were made arbitrarily.
\begin{enumerate}
 \item For each $[V]\in\mathfrak S$, we chose an arbitrary
 \emph{favorite vertex} $v$ in the support of $[V]$; and, 
 \item we chose an arbitrary \emph{favorite representative} $V_v\in\mathfrak
 S_v$ with $[V]=[V_v]$.  (Note that if, as is often the case in
 practice, edge-hieromorphisms $\mathfrak S_e\to\mathfrak S_v$ are
 injective, then $V_v$ is the unique representative of its
 $\sim$--class that lies in $\mathfrak S_v$, and hence our choice is 
 uniquely determined.)
 \item For each $[W]\in\mathfrak S$, the
 point $\rho^T_{[W]}(v)$ is chosen arbitrarily in $\fontact W$, where $W$
 is the favorite representative of $[W]$ and $v$ is a vertex in the support of $[W]$.
\end{enumerate}

We now constrain these choices so that they are equivariant.  For each $\mathcal P_{\mathcal T}$--orbit in 
$\mathfrak S$, choose a representative $[V]$ of that orbit, choose a favorite vertex $v$ arbitrarily in its support, 
and choose a favorite representative $V_v\in\mathfrak S_v$ of $[V]$.  Then declare $gV_v\in\mathfrak S_{gv}$ to be 
the favorite representative, and $gv$ the favorite vertex, associated to $g[V]$, for each $g\in\mathcal P_{\mathcal 
T}$.  

Recall that, for each $[W]\in\mathfrak S$, we defined $\fontact [W]$ to be $\fontact W$, where $W$ is the favorite 
representative of $[W]$.  Suppose that we have specified a subgroup $G\leq\mathcal P_{\mathcal T}$ and, for each 
$[W]\in\mathfrak S$ and $g\in\mathcal P_{\mathcal T}$, an isometry $g\co\fontact [W]\to\fontact g[W]$.  Then we 
choose $\rho^T_{[W]}$ in such a way that $\rho^T_{g[W]}=g\rho^T_{[W]}$ for each $[W]\in\mathfrak S$ and $g\in G$. 

\subsection{Graphs of hierarchically hyperbolic groups}\label{subsec:combination_theorem_for_groups}
Recall that the finitely generated group $G$ is \emph{hierarchically hyperbolic} if there is a hierarchically 
hyperbolic space $(\cuco X,\mathfrak S)$ such that $G\leq\Aut(\mathfrak S)$ and the action of $G$ on $\cuco X$ is 
metrically proper and cobounded and the action of $G$ on $\mathfrak S$ is co-finite (this, together with the 
definition of an automorphism, implies that only finitely many isometry types of hyperbolic space are involved in 
the HHS structure).  Endowing $G$ with a word-metric, we see that $(G,\mathfrak S)$ is a hierarchically hyperbolic 
space.

If $(G,\mathfrak S)$ and $(G',\mathfrak S')$ are hierarchically hyperbolic groups, then a \emph{homomorphism of 
hierarchically hyperbolic groups} $\phi\co(G,\mathfrak S)\to(G',\mathfrak S')$ is a homomorphism $\phi\co G\to G'$ 
that is also a $\phi$--equivariant hieromorphism as in Definition~\ref{defn:homomorphism_of_HHGs}.

Recall that a \emph{graph of groups} $\mathcal G$ is a graph $\Gamma=(V,E)$ together with a set $\{G_v:v\in V\}$ of 
vertex groups, a set $\{G_e:e\in E\}$ of edge groups, and monomorphisms $\phi_{e}^\pm\co G_e\to G_{e^\pm}$, where 
$e^\pm$ are the vertices incident to $e$.  As usual, the \emph{total group} $G$ of $\mathcal G$ is the quotient of 
$\left(*_{v\in V}G_v\right)*F_E$, where $F_E$ is the free group generated by $E$, obtained by imposing the following 
relations:
\begin{itemize}
 \item $e=1$ for all $e\in E$ belonging to some fixed spanning tree $T$ of $\Gamma$;
 \item $\phi_{e}^+(g)=e\phi_{e}^-(g)e^{-1}$ for $e\in E$ and $g\in G_e$.
\end{itemize}

We are interested in the case where $\Gamma$ is a finite graph and,
for each $v\in V,e\in E$, we have sets $\mathfrak S_v,\mathfrak S_e$
so that $(G_v,\mathfrak S_v)$ and $(G_e,\mathfrak S_e)$ are
hierarchically hyperbolic group structures for which  $\phi_{e}^\pm\colon
G_e\to G_{e^\pm}$ is a homomorphism of hierarchically hyperbolic
groups.  In this case, $\mathcal G$ is a \emph{finite graph of
hierarchically hyperbolic groups}.  If in addition each $\phi_e^\pm$
has hierarchically quasiconvex image, then $\mathcal G$ has
\emph{quasiconvex edge groups}.

Letting $\widetilde\Gamma$ denote the Bass-Serre tree, observe that $\mathcal T=\widetilde{\mathcal 
G}=(\widetilde\Gamma,\{G_{\tilde v}\},\{G_{\tilde e}\},\{\phi_{\tilde 
e}^\pm\})$ is a tree of hierarchically 
hyperbolic spaces, where $\tilde v$ ranges over the vertex set of $\widetilde\Gamma$, and each $G_{\tilde v}$ is a 
conjugate in the total group $G$ to $G_v$, where $\tilde v\mapsto v$ under $\widetilde\Gamma\to\Gamma$, and an 
analogous statement holds for edge-groups.  Each $\phi_{\tilde e}^\pm$ is conjugate to an edge-map in $\mathcal G$ 
in the obvious way.  We say $\mathcal G$ has \emph{bounded supports} if $\mathcal T$ does.

\begin{cor}[Combination theorem for HHGs]\label{cor:combination_theorem_for_HHG}
Let $\mathcal G=(\Gamma,\{G_v\},\{G_e\},\{\phi_{e}^\pm\})$ be a finite graph of hierarchically hyperbolic groups, with 
Bass-Serre tree $\widetilde\Gamma$.  
Suppose that:
\begin{enumerate}
  \item $\mathcal G$ has quasiconvex edge groups;
  \item each $\phi_e^\pm$, as a hieromorphism, is full;
  \item $\mathcal G$ has bounded supports;\label{item:atoroidal}
  \item \label{item:group:hypothesis_4}  if $e$ is an edge of $\Gamma$ and $S_e$ the $\nest$--maximal element of 
$\mathfrak S_e$, then for all $V\in\mathfrak S_{e^\pm}$, the elements $V$ and $\phi_{e^\pm}\inducedS(S_e)$ are not 
orthogonal in $\mathfrak S_{e^\pm}$;
\item for each vertex $v$ of $\Gamma$, there are finitely many $G_v$--orbits of subsets $\mathcal 
U\subset\mathfrak S_v$ for which the elements of $\mathcal U$ are 
pairwise-orthogonal;\label{item:pwo}
\item there exists $K\ge0$ such that for all vertices $v$ of $\widetilde\Gamma$ and edges $e$ incident to $v$, 
we have $\dist_{Haus}(\phi_v(G_e)),\mathbf F_{\phi_v\inducedS(S_e)}\times\{\star\})\leq K$, where $S_e\in\mathfrak 
S_e$ is the unique maximal element and $\star\in\mathbf E_{\phi_v\inducedS(S_e)}$.
\end{enumerate}
Then the total group $G$ of $\mathcal G$ is a hierarchically hyperbolic group.
\end{cor}

\begin{rem}
We have added hypothesis~\eqref{item:pwo} because it is exactly what's required.  In fact, it should follow from 
a stronger but more natural condition, namely that for each $V\in\mathfrak S_v$, the stabiliser in $G_v$ of the 
standard product region $\mathbf P_V$ acts cocompactly on $\mathbf P_V$.  This holds, for example, in the mapping 
class group.  On the other hand, this stronger condition it is not a consequence of the definition of an HHG 
since, for example, one can put exotic HHG structures on a free group where this fails.
\end{rem}

\begin{proof}[Proof of Corollary~\ref{cor:combination_theorem_for_HHG}]
By Theorem~\ref{thm:combination}, $(G,\mathfrak S)$ is a hierarchically hyperbolic space.  Observe that 
$G\leq\mathcal P_{\mathcal G}$, since $G$ acts on the Bass-Serre tree $\widetilde\Gamma$, and this action is induced 
by an action on $\bigcup_{v\in\mathcal V}\mathfrak S_v$ preserving the $\sim$--relation.  Hence the hierarchically 
hyperbolic structure $(G,\mathfrak S)$ can be chosen according to the constraints in 
Section~\ref{subsec:tree_of_HHS_automorphisms}, whence it is easily checked that $G$ acts on $\mathfrak S^0$ by HHS 
automorphisms.  Moreover, for any $[V]$, there are finitely many $\stabilizer_G([V])$--orbits of $\sim$--classes 
nested in $[V]$.  

The action on $\mathfrak S^0$ is co-finite since each $G_v$ is a hierarchically hyperbolic group.

Moreover, since $G$ preserves nesting and orthogonality in $\mathfrak S^0$, we have an induced action of $G$ on 
$\bigcup_\eta\mathcal K_\eta$ defined by 
$g\container([W_1],\ldots,[W_\eta])=\container([gW_1],\ldots,[gW_n])$.  We must show that this action 
(and hence the action of $G$ on $\mathfrak S$ obtained by combining this with the action on $\mathfrak S^0$) is 
cofinite.  

Since each element of $\mathcal K_n$ corresponds to a $n$--element pairwise-orthogonal set in 
$\mathfrak S^0$, and this correspondence is injective, it suffices to show that there are only finitely many 
$G$--orbits of such sets.  This follows from hypothesis~\eqref{item:pwo}.

Finally, the maps of the form $\pi_{\container}:G\to\fontact\container$ and $\rho^*_{\container}$ obviously satisfy 
the conditions required of an action by HHS automorphisms, since they are constant maps.  Finally, for all 
$\container\in\bigcup_\eta\mathcal K_\eta$ and $[W]$, and $g\in G$, we can choose the arbitrary constant map 
$\rho^{\container}_{[W]}$ so that $g(\rho^{\container}_{[W]})=\rho^{g\container}_{g[W]}$, where 
$g:\fontact[W]\to\fontact g[W]$ is the isometry from the automorphism action on $\mathfrak S^0$.  The same holds 
with $W$ replaced by $T$, since $\rho^{\container}_T$ was defined to be an intersection of support 
trees associated to $\container$, and $\rho^{g\container}_T$ is, by the definition of the $G$--action on 
$\bigcup_\eta\mathcal K_\eta$ and the $G$--equivariance of the assignment of each $\sim$--class to its support tree, 
the 
intersection of the $g$--translates of these support trees, i.e. $g\rho^{\container}_T$.  This completes the proof.
\end{proof}

\begin{rem}[Examples where the combination theorem does not apply]\label{prob:hhs_combination_examples}
Examples where one cannot apply Theorem~\ref{thm:combination} or Corollary~\ref{cor:combination_theorem_for_HHG} are 
likely to yield examples of groups that are not 
hierarchically hyperbolic groups, or even hierarchically hyperbolic spaces.  
\begin{enumerate}
 \item Let $\mathcal G$ be a finite graph of groups with $\integers^2$ vertex 
groups and $\integers$ edge groups, i.e., a \emph{tubular group}.  
In~\cite{Wise:tubular}, Wise completely characterized the tubular groups that 
act freely on CAT(0) cube complexes, and also characterized the (rare) tubular groups 
that admit cocompact such actions; Woodhouse recently gave a necessary and 
sufficient condition for the particular cube complex constructed 
in~\cite{Wise:tubular} to be finite-dimensional~\cite{Woodhouse:tubular}. These 
results suggest that there is little hope of producing 
hierarchically hyperbolic structures for tubular groups via cubulation, except 
in particularly simple cases.  

This is because the obstruction to cocompact cubulation is very similar to the obstruction to building a 
hierarchically hyperbolic structure using Theorem~\ref{thm:combination}.  
Indeed, if some vertex-group $G_v\cong\integers^2$ has more than $2$ 
independent incident edge-groups, then, if $\mathcal G$ satisfied the 
hypotheses of Theorem~\ref{thm:combination}, the hierarchically hyperbolic 
structure on $G_v$ would include $3$ pairwise-orthogonal unbounded elements, 
contradicting partial realization.  This shows that such a tubular group does 
not admit a hierarchically hyperbolic structure by virtue of the obvious 
splitting, and in fact shows that there is no hierarchically hyperbolic 
structure in which $G_v$ and the incident edge-groups are hierarchically 
quasiconvex.  

 \item \label{item:free_by_z}Let $G=F\rtimes_{\phi}\integers$, where $F$ is a 
finite-rank free group and $\phi:F\to F$ an automorphism.  When $F$ is 
atoroidal, $G$ is a hierarchically hyperbolic group simply by virtue of being 
hyperbolic~\cite{BestvinaFeighn:combination,Brinkmann:free_aut}.  There is also 
a more refined hierarchically hyperbolic structure in this case, in which all 
of the hyperbolic spaces involved are quasi-trees.  Indeed, by combining 
results in~\cite{HagenWise:freebyz} and~\cite{Agol:virtual_haken}, one finds 
that $G$ acts freely, cocompactly, and hence virtually co-specially on a CAT(0) 
cube complex, which therefore contains a $G$-invariant \emph{factor system} in 
the sense of~\cite{BehrstockHagenSisto:HHS_I} and is hence a hierarchically 
hyperbolic group; the construction in~\cite{BehrstockHagenSisto:HHS_I} ensures 
that the hierarchically hyperbolic structure for such cube complexes always 
uses a collection of hyperbolic spaces uniformly quasi-isometric to trees. However, the situation is presumably quite different when $G$ is not hyperbolic.  In this case, it seems that $G$ is rarely hierarchically hyperbolic.
\end{enumerate}
\end{rem}

\subsection{Products}\label{sec:product_HHS}
In this short section, we briefly describe a hierarchically hyperbolic structure on products of hierarchically hyperbolic spaces.

\begin{prop}[Product HHS]\label{prop:product_HHS}
Let $(\cuco X_0,\mathfrak S_0)$ and $(\cuco X_1,\mathfrak S_1)$ be
hierarchically hyperbolic spaces.  Then $\cuco X=\cuco X_0\times\cuco
X_1$ admits a hierarchically hyperbolic structure $(\cuco X,\mathfrak
S)$ such that for each of $i\in\{0,1\}$ the inclusion map $\cuco X_i\to\cuco
X$ induces a quasiconvex hieromorphism.
\end{prop}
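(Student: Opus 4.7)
The plan is to equip $\cuco X = \cuco X_0 \times \cuco X_1$ with a hierarchically hyperbolic structure whose index set is (essentially) the product of the two factor index sets. Writing $\mathfrak S_i^+ = \mathfrak S_i \sqcup \{\emptyset_i\}$ with $\emptyset_i$ a formal bottom element, I set
\[
\mathfrak S = \bigl(\mathfrak S_0^+ \times \mathfrak S_1^+\bigr) \setminus \{(\emptyset_0, \emptyset_1)\},
\]
with nesting $(A, B) \nest (A', B')$ iff $A \nest A'$ and $B \nest B'$ (componentwise), and orthogonality $(A, B) \orth (A', B')$ iff in each coordinate one entry is $\emptyset$ or the two are $\orth$ in the appropriate $\mathfrak S_i$ (and the elements are not $\nest$-comparable). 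Transversality is the remaining case. The unique $\nest$-maximal element is $(S_0, S_1)$, and the associated hyperbolic space of $(A, \emptyset_1)$ (resp.\ $(\emptyset_0, B)$) is the original $\fontact A$ (resp.\ $\fontact B$), while every other element has hyperbolic space a single point. Projections factor through the factor projections $p_i \colon \cuco X \to \cuco X_i$: I define $\pi_{(A, \emptyset_1)} = \pi_A \circ p_0$ and symmetrically for the other factor, while projections to ``product'' elements are trivial. Relative projections $\rho^U_V$ are inherited from $\mathfrak S_i$ when $U$ and $V$ both lie in that factor, and are chosen to be bounded (hence can be set arbitrarily) otherwise.

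With this structure, nearly all the axioms of Definition~\ref{defn:space_with_distance_formula} reduce to the corresponding axioms applied separately in each $(\cuco X_i, \mathfrak S_i)$; the key analytic input is $\dist_{\cuco X}((x_0, x_1), (y_0, y_1)) \asymp \dist_{\cuco X_0}(x_0, y_0) + \dist_{\cuco X_1}(x_1, y_1)$, which immediately yields coarse Lipschitz $\pi_U$'s and reduces uniqueness to factorwise uniqueness. Consistency, bounded geodesic image, and large links are vacuous for pairs involving a product element (since its hyperbolic space is a point) and for cross-factor pairs of pure elements (which are declared orthogonal), and they inherit for within-factor pairs. Finite complexity is bounded by the sum of factor complexities. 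Partial realization combines the factorwise partial realizations: a totally orthogonal family in $\mathfrak S$ splits into its pure-$\mathfrak S_0$ and pure-$\mathfrak S_1$ parts (together with product elements whose targets are points), and one applies partial realization separately in each factor before pairing up. The hieromorphism $\cuco X_i \hookrightarrow \cuco X$ sending $x \mapsto (x, x^*_{1-i})$ for a fixed basepoint, with index-set injection $A \mapsto (A, \emptyset)$ and identities on hyperbolic spaces, satisfies the diagrams of Definition~\ref{defn:hieromorphism} on the nose; hierarchical quasiconvexity of its image is immediate via partial realization applied to the consistent tuple pinning the other factor's coordinates at $\pi_V(x^*_{1-i})$.

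The main obstacle I expect is the verification of the orthogonality axiom (Definition~\ref{defn:space_with_distance_formula}.\eqref{item:dfs_orthogonal}): given $U = (A, \emptyset_1) \in \mathfrak S$, the orthogonal complement of $U$ in $\mathfrak S$ is always nonempty (every $(\emptyset_0, B)$ is orthogonal to $U$), so one must exhibit a container $W \in \mathfrak S - \{(S_0, S_1)\}$ with $V \nest W$ for every $V \orth U$. The natural choice is $W = (W'_0, S_1)$, where $W'_0 \in \mathfrak S_0 - \{S_0\}$ is the container provided by the orthogonality axiom for $A$ in $\mathfrak S_0$ (and $W = (\emptyset_0, S_1)$ if $A$ has empty orthogonal complement in $\mathfrak S_0$); a direct componentwise check confirms this succeeds. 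This is also what motivates including the ``product'' entries $(A, B)$ with both coordinates nonempty in $\mathfrak S$: the naive structure $\mathfrak S_0 \sqcup \mathfrak S_1 \sqcup \{T\}$ fails the orthogonality axiom, because no element there simultaneously contains all of $\mathfrak S_1$ and a proper subset of $\mathfrak S_0$, whereas in the product lattice the needed container is present essentially for free.
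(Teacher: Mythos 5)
Your approach is a genuine alternative to the paper's. The paper takes $\mathfrak S = \mathfrak S_0 \sqcup \mathfrak S_1 \sqcup\{S,U_0,U_1\}\cup\{V_U\}_{U\in\mathfrak S_0\cup\mathfrak S_1}$, where $S$, the $U_i$, and the $V_U$ are new point-domains and $V_U$ is introduced precisely to serve as the orthogonal container for $U$; you instead take the full product lattice $(\mathfrak S_0^+\times\mathfrak S_1^+)\setminus\{(\emptyset_0,\emptyset_1)\}$. Both constructions append dummy point-domains to $\mathfrak S_0\sqcup\mathfrak S_1$ in order to satisfy Definition~\ref{defn:space_with_distance_formula}.(\ref{item:dfs_orthogonal}), and your diagnosis of why the naive $\mathfrak S_0\sqcup\mathfrak S_1\sqcup\{T\}$ fails is exactly right. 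Your version has the advantage of a transparent componentwise nesting lattice; the paper's adds only $|\mathfrak S_0|+|\mathfrak S_1|+3$ extra domains rather than $|\mathfrak S_0|\cdot|\mathfrak S_1|$. One small omission: you verify the orthogonality container only when $T$ is $\nest$--maximal; the same case-split on whether $A$ has nonempty orthogonal complement in $\mathfrak S_{T_0}$ works for general $T=(T_0,T_1)$, but you should say so.

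There is, however, a genuine gap in your handling of relative projections. You assert that $\rho^U_V$ can be ``set arbitrarily'' when $U$ and $V$ do not both lie in the same pure factor, but this fails: the final clause of Definition~\ref{defn:space_with_distance_formula}.(\ref{item:dfs_transversal}) and the partial realization axiom both impose constraints on $\rho^U_W$ with $U$ a mixed element $(A,B)$ and $W$ a pure element $(C,\emptyset_1)$, and the relevant distance $\dist_W$ lives in the non-trivial space $\fontact C$, so nothing is vacuous. Concretely, take $A\orth A'$ in $\mathfrak S_0$ and $B\in\mathfrak S_1$; then $V_1=(A,\emptyset_1)$ and $V_2=(A',B)$ are orthogonal, $(S_0,\emptyset_1)\transverse V_2$, and partial realization applied to $\{V_1,V_2\}$ requires a point $x$ with $\dist_{S_0}(x_0,\rho^{V_2}_{(S_0,\emptyset_1)})\leq\alpha$ \emph{and} $\dist_{S_0}(x_0,\rho^{A}_{S_0})\leq\alpha$; with $\rho^{V_2}_{(S_0,\emptyset_1)}$ arbitrary in the (typically unbounded) space $\fontact S_0$, no such $x_0$ exists. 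Likewise, with $U=(A,\emptyset_1)\nest V=(A',B)$ (so $A\nest A'$, $B\neq\emptyset_1$) and $W=(C,\emptyset_1)$ transverse to $V$ with $C\not\orth A$, the axiom demands $\dist_C(\rho^{A}_C,\rho^{V}_W)\leq\kappa_0$, which an arbitrary $\rho^V_W$ violates. The fix is to inherit these projections from the visible coordinate: set $\rho^{(A,B)}_{(C,\emptyset_1)}=\rho^{A}_{C}$ whenever $A\propnest C$ or $A\transverse C$, and symmetrically in the other factor. With that definition both constraints reduce to the corresponding statements in $\mathfrak S_0$ (or $\mathfrak S_1$), and your ``apply partial realization separately in each factor'' strategy, now including the nonempty first (resp.\ second) coordinates of the mixed $V_j$ in the $\mathfrak S_0$ (resp.\ $\mathfrak S_1$) family, does go through.
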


\begin{proof}
Let $(\cuco X_i,\mathfrak S_i)$ be hierarchically hyperbolic spaces for $i\in\{0,1\}$.  Let
$\mathfrak S$ be a hierarchically hyperbolic structure consisting of
the disjoint union of $\mathfrak S_0$ and $\mathfrak S_1$ (together
with their intrinsic hyperbolic spaces, projections, and nesting,
orthogonality, and transversality relations), along with the following
domains whose associated hyperbolic spaces are points: $S$, into which
everything will be nested; $U_i$, for $i\in\{0,1\}$, into which
everything in $\mathfrak S_i$ is nested; for each $U\in\mathfrak S_i$
a domain $V_{U}$, with $|\fontact V_{U}|=1$, into which is nested
everything in $\mathfrak S_{i+1}$ and everything in $\mathfrak S_i$
orthogonal to $U$.  The elements $V_U$ are all transverse to $U_0$ and $U_1$.  Given $U,U'$, the elements $V_U,V_{U'}$ are 
transverse unless $U\nest U'$, in which case $V_U\nest V_{U'}$.  Projections $\pi_U\colon\cuco
X_0\times\cuco X_1\to U\in\mathfrak S$ are defined in the obvious way
when $U\not\in\mathfrak S_0\cup\mathfrak S_1$; otherwise, they are the
compositions of the existing projections with projection to the
relevant factor.  Projections of the form $\rho^U_V$ are either
defined already, uniquely determined, or are chosen to coincide with
the projection of some fixed basepoint (when $V\in\mathfrak
S_0\cup\mathfrak S_1$ and $U$ is not).  It is easy to check that this
gives a hierarchically hyperbolic structure on $\cuco X_1\times\cuco
X_2$.

The hieromorphisms $(\cuco X_i,\mathfrak S_i)\to(\cuco X,\mathfrak S)$ are inclusions on $\cuco X_i$ and $\mathfrak S$; for each $U\in\mathfrak S_i$, the map $\mathfrak S_i\ni \fontact U\to \fontact U\in\mathfrak S$ is the identity.  It follows immediately from the definitions that the diagrams from Definition~\ref{defn:hieromorphism} coarsely commute, so that these maps are indeed hieromorphisms.  Hierarchical quasiconvexity likewise follows from the definition.
\end{proof}

Product HHS will be used in defining hierarchically hyperbolic structures on graph manifolds in Section~\ref{sec:hier_hyp_3_manifolds}.  The next result follows directly from the proof of the previous 
proposition.
\begin{cor}\label{cor:product_HHG}
    Let $G_{0}$ and $G_{1}$ be hierarchically hyperbolic groups. Then $G_{0}\times G_{1}$ is a hierarchically hyperbolic 
    group. 
\end{cor}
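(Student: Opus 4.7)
The plan is to invoke Proposition~\ref{prop:product_HHS} to produce an HHS structure on $\cuco X = G_0 \times G_1$ and then check that the natural diagonal action by left multiplication lifts to a geometric action by automorphisms of this structure, with cofinitely many orbits on the index set, as required by Definition~\ref{defn:hhs_automorphism}.

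First, I would endow each $G_i$ with a word metric coming from a finite generating set, so that $(G_i,\mathfrak S_i)$ is an HHS; then Proposition~\ref{prop:product_HHS} gives an HHS structure $(\cuco X,\mathfrak S)$ on $\cuco X=G_0\times G_1$ (with the product metric, quasi-isometric to any word metric on $G_0\times G_1$), where
\[
\mathfrak S \;=\; \mathfrak S_0 \sqcup \mathfrak S_1 \sqcup \{S, U_0, U_1\} \sqcup \{V_U : U\in\mathfrak S_0\cup\mathfrak S_1\},
\]
with $S,U_0,U_1$ and each $V_U$ having a one-point associated space.

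Next, I would define the action of $G=G_0\times G_1$ on $(\cuco X,\mathfrak S)$. On $\cuco X$ it is left multiplication, which is metrically proper and cobounded since each $G_i$ acts so on itself. On $\mathfrak S$, declare that $(g_0,g_1)$ acts on $\mathfrak S_i$ via the $G_i$-action from the HHG structure $(G_i,\mathfrak S_i)$, fixes each of $S,U_0,U_1$, and sends $V_U\mapsto V_{(g_0,g_1)\cdot U}$. On each hyperbolic space $\fontact V$ with $V\in\mathfrak S_i$, use the isometry provided by the automorphism $(g_0,g_1)\in\Aut(\mathfrak S_i)$; on the one-point spaces attached to $S,U_0,U_1$ and to each $V_U$, the action is forced to be trivial.

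Then I would verify that this defines an automorphism of $(\cuco X,\mathfrak S)$ in the sense of Definition~\ref{defn:hhs_automorphism}. The bijection of $\mathfrak S$ preserves nesting, orthogonality, and transversality because these relations on $\mathfrak S_i$ are $G_i$-invariant by hypothesis, and the relations involving the auxiliary elements $S,U_0,U_1,V_U$ are determined entirely by the index data, which is equivariantly preserved by our definition. Commutativity of the diagrams in Definition~\ref{defn:hieromorphism} follows from equivariance in $(G_i,\mathfrak S_i)$ together with the observation that, in the construction of Proposition~\ref{prop:product_HHS}, every projection $\pi_U$ for $U\in\mathfrak S_i$ factors through the $G_i$-equivariant projection to the $i$-th factor, while all projections and relative projections involving the auxiliary one-point spaces are coarsely constant and hence automatically equivariant.

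Finally, I would check cofiniteness of the action on $\mathfrak S$: there are finitely many $G_i$-orbits on $\mathfrak S_i$ by hypothesis, the three auxiliary elements $S,U_0,U_1$ are fixed, and the orbits of the $V_U$ are in bijection with the orbits on $\mathfrak S_0\cup\mathfrak S_1$ and hence also finite. Combined with properness and coboundedness of the action on $\cuco X$, this shows $(G_0\times G_1,\mathfrak S)$ is a hierarchically hyperbolic group structure. There is no real obstacle here --- essentially every verification is immediate from the explicit description of $\mathfrak S$ in Proposition~\ref{prop:product_HHS} together with the equivariance built into each $(G_i,\mathfrak S_i)$; the only mild care is needed to specify the action on the $V_U$ so that it is compatible with the $G_i$-action on $\mathfrak S_i$.
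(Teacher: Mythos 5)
Your proposal is correct and takes essentially the same approach as the paper, which simply remarks that the corollary "follows directly from the proof of the previous proposition." You have written out in full the equivariance verification that the paper leaves implicit; in particular, your observation that the projections from $G_0\times G_1$ to the $\fontact U$ for $U\in\mathfrak S_i$ factor through the coordinate projection, and that all other projections land in one-point spaces, is exactly why the diagonal action is by automorphisms with cofinitely many orbits on $\mathfrak S$.
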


\section{Hyperbolicity relative to HHGs}\label{sec:hyperbolicity_rel_HHS}

Relatively hyperbolic groups possess natural 
hierarchically hyperbolic structures:

\begin{thm}[Hyperbolicity relative to HHGs] \label{thm:rel_hyp}
Let the group $G$ be hyperbolic relative to a finite collection
$\mathcal P$ of peripheral subgroups.  If each $P\in\mathcal
P$ is a hierarchically hyperbolic space, then $G$ is a hierarchically
hyperbolic space. Further, if each $P\in\mathcal
P$ is a hierarchically hyperbolic group, then so is $G$.
\end{thm}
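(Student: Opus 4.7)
The plan is to build a hierarchically hyperbolic structure on $G$ by taking the coned-off Cayley graph $\widehat{G}$ (which is hyperbolic by definition of relative hyperbolicity) as the top level, and grafting onto it the HHS structures on each peripheral coset. More precisely, I would set
\[
\mathfrak{S} \;=\; \{S\} \;\sqcup\; \bigsqcup_{gP\,:\,P\in\mathcal{P},\,g\in G/P}\mathfrak{S}_{gP},
\]
where $S$ is a new $\nest$--maximal element with $\fontact S := \widehat{G}$ and each $\mathfrak{S}_{gP}$ is a copy of the HHS structure on the peripheral $gP$ (transported by left-translation from a fixed HHS structure on each $P\in\mathcal{P}$, of which there are finitely many). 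The relations are: everything is nested into $S$; within a fixed $\mathfrak{S}_{gP}$ the relations are inherited; elements of $\mathfrak{S}_{gP}$ and $\mathfrak{S}_{g'P'}$ with $gP\neq g'P'$ are declared transverse; and no cross-coset orthogonality is introduced. Projections are defined as follows: $\pi_S:G\to\widehat G$ is the inclusion; for $U\in\mathfrak{S}_{gP}$ the projection $\pi_U:G\to\fontact U$ is the composition of the coarse closest-point projection $\gate_{gP}:G\to gP$ (well-defined up to uniform error by relative hyperbolicity) with the intrinsic projection $gP\to\fontact U$.

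The key relative projections are: $\rho^U_S\subset\widehat G$ is taken to be the image of $gP$ (a uniformly bounded set in $\widehat G$, since $gP$ coarsely coincides with its cone point); $\rho^S_U:\widehat G\to\fontact U$ is defined via the standard bounded geodesic image for relatively hyperbolic spaces --- on points far from the cone point of $gP$ it is coarsely constant, and on points close to that cone point it agrees with $\pi_U$; for $U\in\mathfrak{S}_{gP}$ and $V\in\mathfrak{S}_{g'P'}$ with $gP\neq g'P'$, set $\rho^U_V$ equal to the projection of $gP$ into $\fontact V$ (a bounded set, by the Druțu--Sapir/Sisto bounded projection property for distinct peripherals in a relatively hyperbolic group). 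Remaining $\rho$'s are inherited from each $\mathfrak{S}_{gP}$. I would then verify the axioms of Definition~\ref{defn:space_with_distance_formula} in the streamlined form of Proposition~\ref{prop:new_hhs_defn}. Finite complexity adds $1$ to the peripheral complexity. Large links in $S$ follow from the fact that the number of peripheral cosets a $\widehat G$--geodesic from $x$ to $y$ enters deeply is controlled linearly by $\dist_{\widehat G}(x,y)$; large links and bounded geodesic image inside each $\mathfrak{S}_{gP}$ are inherited. Partial realization reduces to the peripheral case, because a pairwise--orthogonal family is necessarily contained in a single $\mathfrak{S}_{gP}$. Uniqueness follows once the rest of the axioms are in place, using the distance formula in each peripheral combined with the fact that $\dist_G$ is controlled by $\dist_{\widehat G}$ plus sums of peripheral distances (Druțu--Sapir).

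The main obstacle, and the place where I expect to spend most of the work, is the consistency inequality for pairs $U\in\mathfrak{S}_{gP}$, $V\in\mathfrak{S}_{g'P'}$ with $gP\neq g'P'$, together with the compatible bounded geodesic image statement. These are both instances of the bounded projection principle for distinct peripheral cosets in a relatively hyperbolic group: for any $x\in G$, if the closest-point projection of $x$ to $gP$ is far in $gP$ from the closest-point projection of $g'P'$ to $gP$, then a quasigeodesic from $x$ to $g'P'$ must pass close to that projection, which in turn forces the closest-point projection of $x$ to $g'P'$ to coincide, up to bounded error, with the projection of $gP$ to $g'P'$. Projecting further from $gP$ to $\fontact U$ and from $g'P'$ to $\fontact V$ via the coarsely Lipschitz peripheral projections promotes this to the required Behrstock-style inequality. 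The mixed nested/transverse consistency and the bounded geodesic image axiom for $S$ are verified in the same way.

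For the moreover clause, assume each $P\in\mathcal{P}$ is an HHG via an action $P\to\Aut(\mathfrak{S}_P)$. Fixing coset representatives and transporting the peripheral HHS structures equivariantly makes $\mathfrak S$ into a $G$--set with finitely many orbits (one orbit of the class $\{S\}$ and finitely many orbits for each $\mathcal P$--conjugacy class of peripheral domains). Using the equivariant choices of favorite representatives and comparison isometries in the style of Section~\ref{subsec:tree_of_HHS_automorphisms}, together with the fact that the coarse closest-point projection $G\to gP$ is $G$--equivariant up to uniform error, gives an action $G\to\Aut(\mathfrak{S})$; the action on $G$ is trivially metrically proper and cobounded in any word metric, so $(G,\mathfrak{S})$ is a hierarchically hyperbolic group.
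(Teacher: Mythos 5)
Your proposal is correct and follows essentially the same route as the paper's proof: the same index set $\{\widehat G\}\sqcup\bigsqcup_{gP}\mathfrak S_{gP}$ with cross-coset transversality, the same gate-composed projections, the same $\rho$ maps (cone-points for $\rho^U_{\widehat G}$, projection of one peripheral coset into another for the cross-coset $\rho^U_V$), and verification of consistency via Sisto's bounded-projection lemma for distinct peripherals, with partial realization reducing to a single coset and the HHG clause handled by equivariant choices. The only cosmetic difference is your appeal to Proposition~\ref{prop:new_hhs_defn}, whereas the paper verifies the axioms of Definition~\ref{defn:space_with_distance_formula} directly.
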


\begin{proof}
    We prove the statement about hierarchically hyperbolic groups; 
    the statement about spaces follows {\it a fortiori}. 
    
For each $P\in\mathcal P$, let $(P,\mathfrak S_P)$ be a hierarchically
hyperbolic group structure.  For convenience, assume that the
$P\in\mathcal P$ are pairwise non-conjugate (this will avoid
conflicting hierarchically hyperbolic structures).  For each $P$ and
each left coset $gP$, let $\mathfrak S_{gP}$ be a copy of $\mathfrak
S_P$ (with associated hyperbolic spaces and projections), so
that there is a hieromorphism $(P,\mathfrak S_P)\to(gP,\mathfrak
S_{gP})$, equivariant with respect to the conjugation
isomorphism $P\to P^g$.

Let $\widehat G$ be the usual hyperbolic space formed from $G$ by
coning off each left coset of each $P\in\mathcal P$.  Let $\mathfrak
S=\{\widehat G\}\cup\bigsqcup_{gP\in G\mathcal P}\mathfrak S_{gP}$.
The nesting, orthogonality, and transversality relations on each
$\mathfrak S_{gP}$ are as defined above; if $U,V\in\mathfrak
S_{gP},\mathfrak S_{g'P'}$ and $gP\neq g'P'$, then declare
$U\transverse V$.  Finally, for all $U\in\mathfrak S$, let
$U\nest\widehat G$.  The hyperbolic space $\fontact\widehat G$ is $\widehat G$, while the hyperbolic space $\fontact U$ associated to
each $U\in\mathfrak S_{gP}$ was defined above.

The projections are defined as follows: $\pi_{\widehat G}\co G\to\widehat
G$ is the inclusion, which is coarsely surjective and hence has quasiconvex image.  For 
each $U\in \mathfrak S_{gP}$, let
$\gate_{gP}\co G\to gP$ be the closest-point projection onto $gP$ and let
$\pi_{U}=\pi_U\circ\gate_{gP}$, to extend the domain of $\pi_U$ from
$gP$ to $G$.  Since each $\pi_U$ was coarsely Lipschitz on $U$ with quasiconvex image, and the closest-point projection is 
uniformly coarsely Lipschitz, the projection $\pi_U$ is uniformly coarsely Lipschitz and has quasiconvex image.  For each 
$U,V\in\mathfrak S_{gP}$, the coarse maps
$\rho_U^V$ and $\rho^U_V$ were already defined.  If $U\in\mathfrak
S_{gP}$ and $V\in\mathfrak S_{g'P'}$, then
$\rho^U_V=\pi_V(\gate_{g'P'}(gP))$, which is a uniformly bounded set
(here we use relative hyperbolicity, not just the weak relative
hyperbolicity that is all we needed so far).  Finally, for
$U\neq\widehat G$, we define $\rho^U_{\widehat G}$ to be the
cone-point over the unique $gP$ with $U\in\mathfrak S_{gP}$, and
$\rho^{\widehat G}_U\co \widehat G\to\fontact U$ is defined as follows:
for $x\in G$, let $\rho^{\widehat G}_U(x)=\pi_U(x)$.  If $x\in\widehat
G$ is a cone-point over $g'P'\neq gP$, let $\rho^{\widehat
G}_U(x)=\rho^{S_{g'P'}}_U$, where $S_{g'P'}\in\mathfrak S_{g'P'}$ is
$\nest$--maximal.  The cone-point over $gP$ may be sent anywhere in
$U$.

By construction, to verify that $(G,\mathfrak S)$ is a hierarchically
hyperbolic group structure, it suffices to verify that it satisfies
the remaining axioms for a hierarchically hyperbolic space given in
Definition~\ref{defn:space_with_distance_formula}, since the
additional $G$--equivariance conditions hold by construction.
Specifically, it remains to verify consistency, bounded geodesic image
and large links, partial realization, and uniqueness.

\textbf{Consistency:} The nested consistency inequality holds
automatically within each $\mathfrak S_{gP}$, so it remains to verify
it only for $U\in\mathfrak S_{gP}$ versus $\widehat G$, but this
follows directly from the definition: if $x\in G$ is far in $\widehat
G$ from the cone-point over $gP$, then $\rho^{\widehat
G}_U(x)=\pi_U(x)$, by definition.  To verify the transverse
inequality, it suffices to consider $U\in\mathfrak
S_{gP},V\in\mathfrak S_{g'P'}$ with $gP\neq g'P'$.  Let $x\in G$ and
let $z=\gate_{g'P'}(x)$.  Then, if $\dist_U(x,z)$ is sufficiently
large, then $\dist_{gP}(x,z)$ is correspondingly large, so that by
Lemma~1.15 of~\cite{Sisto-distformrelhyp}, $\gate_{g'P'}(x)$ and
$\gate_{g'P'}(gP)$ coarsely coincide, as desired.

The last part of the consistency axiom,
Definition~\ref{defn:space_with_distance_formula}.\eqref{item:dfs_transversal},
holds as follows.  Indeed, if $U\nest V$, then either $U=V$, and there
is nothing to prove.  Otherwise, if $U\nest V$ and either $V\propnest
W$ or $W\transverse V$, then either $U,V\in\mathfrak S_{gP}$ for some
$g,P$, or $U\in\mathfrak S_{gP}$ and $V=\widehat G$.  The latter
situation precludes the existence of $W$, so we must be in the former
situation.  If $W\in\mathfrak S_{gP}$, we are done since the axiom
holds in $\mathfrak S_{gP}$.  If $W=\widehat G$, then $U,V$ both
project to the cone-point over $gP$, so $\rho^U_W=\rho^V_W$.  In the
remaining case, $W\in\mathfrak S_{g'P'}$ for some $g'P'\neq gP$, in
which case $\rho^U_W,\rho^V_W$ both coincide with
$\pi_W(\gate_{g'P'}(gP))$.

\textbf{Bounded geodesic image:} Bounded geodesic image holds within
each $\mathfrak S_{gP}$ by construction, so it suffices to consider
the case of $U\in\mathfrak S_{gP}$ nested into $\widehat G$.  Let
$\hat\gamma$ be a geodesic in $\widehat G$ avoiding $gP$ and the cone
on $gP$.  Lemma~1.15 of~\cite{Sisto-distformrelhyp} ensures that any
lift of $\hat\gamma$ has uniformly bounded projection on $gP$, so
$\rho^{\widehat G}_U\circ\hat\gamma$ is uniformly bounded.

\textbf{Large links:} The large link axiom
(Definition~\ref{defn:space_with_distance_formula}.\eqref{item:dfs_large_link_lemma})
can be seen to hold in $(G,\mathfrak S)$ by combining the large link
axiom in each $gP$ with malnormality of $\mathcal P$ and Lemma~1.15
of~\cite{Sisto-distformrelhyp}.

\textbf{Partial realization:} This follows immediately from partial
realization within each $\mathfrak S_{gP}$ and the fact that no new
orthogonality was introduced in defining $(G,\mathfrak S)$, together
with the definition of $\widehat G$ and the definition of projection
between elements of $\mathfrak S_{gP}$ and $\mathfrak S_{g'P'}$ when
$gP\neq g'P'$.  More precisely, if $U\in\mathfrak S_{gP}$ and
$p\in\fontact U$, then by partial realization within $gP$, there
exists $x\in gP$ so that $\dist_U(x,p)\leq\alpha$ for some fixed
constant $\alpha$ and $\dist_V(x,\rho^U_V)\leq\alpha$ for all
$V\in\mathfrak S_{gP}$ with $U\nest V$ or $U\transverse V$.  Observe
that $\dist_{\widehat G}(x,\rho^U_{\widehat G})=1$, since $x\in gP$
and $\rho^U_{\widehat G}$ is the cone-point over $gP$.  Finally, if
$g'P'\neq gP$ and $V\in\mathfrak S_{g'P'}$, then
$\dist_V(x,\rho^U_V)=\dist_V(\pi_V(\gate_{g'P'}(x)),\pi_V(\gate_{g'P'}(gP)))=0$
since $x\in gP$.

\textbf{Uniqueness:} If $x,y$ are uniformly close in $\widehat G$,
then either they are uniformly close in $G$, or they are uniformly
close to a common cone-point, over some $gP$, whence the claim follows
from the uniqueness axiom in $\mathfrak S_{gP}$.
\end{proof}

\begin{rem}
    The third author established a characterization of relative
    hyperbolicity in terms of projections in \cite{Sisto-distformrelhyp}. 
    Further, there it was proven that like mapping class groups, 
    there was a natural way to compute distances in 
    relatively hyperbolic groups from certain related spaces, namely:
    if $(G,\mathcal
    P)$ is relatively hyperbolic, then distances in $G$ are coarsely
    obtained by summing the corresponding distance in the coned-off
    Cayley graph $\widehat G$ together with the distances between
    projections in the various $P\in\mathcal P$ and their cosets. 
    We recover a  
    new proof of Sisto's formula as a consequence of Theorem~\ref{thm:rel_hyp} 
    and Theorem~\ref{thm:distance_formula}.
\end{rem}

Theorem~\ref{thm:rel_hyp} will be used in our analysis of $3$--manifold groups in Section~\ref{sec:hier_hyp_3_manifolds}.  However, there is a more general statement in the context of metrically relatively hyperbolic spaces (e.g., what Drutu--Sapir call 
asymptotically tree-graded \cite{DrutuSapir:TreeGraded}, or spaces 
that satisfy the 
equivalent condition on projections formulated in \cite{Sisto:metric_rel_hyp}).  For instance, arguing exactly as in the proof of Theorem~\ref{thm:rel_hyp} shows that if the space $\cuco X$ is hyperbolic relative to a collection of uniformly hierarchically hyperbolic spaces, then $\cuco X$ admits a hierarchically hyperbolic structure (in which each peripheral subspace embeds hieromorphically).  

More generally, let the geodesic metric space $\cuco X$ be hyperbolic relative to a collection $\mathcal P$ of subspaces, and let $\widehat{\cuco X}$ be the hyperbolic space obtained from $\cuco X$ by coning off each $P\in\mathcal P$.  Then we can endow $\cuco X$ with a hierarchical space structure as follows:
\begin{itemize}
 \item the index-set $\mathfrak S$ consists of $\mathcal P$ together with an additional index $S$;
 \item for all $P,Q\in\mathcal P$, we have $P\transverse Q$, while $P\propnest S$ for all $P\in\mathcal P$ (the orthogonality relation is empty and there is no other nesting);
 \item for each $P\in\mathcal P$, we let $\fontact P=P$;
 \item we declare $\fontact S=\widehat{\cuco X}$;
 \item the projection $\pi_S:\cuco X\to\widehat{\cuco X}$ is the inclusion;
 \item for each $P\in\mathcal P$, let $\pi_P:\cuco X\to P$ be the closest-point projection onto $P$ (which is surjective);
 \item for each $P\in\mathcal P$, let $\rho^P_S$ be the cone-point in $\widehat{\cuco X}$ associated to $P$;
 \item for each $P\in\mathcal P$, let $\rho^S_P:\widehat{\cuco X}\to P$ be defined by $\rho^S_P(x)=\pi_P(x)$ for $x\in\cuco X$, while $\rho^S_P(x)=\pi_P(Q)$ whenever $x$ lies in the cone on $Q\in\mathcal P$.
 \item for distinct $P,Q\in\mathcal P$, let $\rho^P_Q=\pi_Q(P)$ (which is uniformly bounded since $\cuco X$ is hyperbolic relative to $\mathcal P$).
\end{itemize}

The above definition yields:

\begin{thm}\label{thm:rel_hyp_space}
Let the geodesic metric space $\cuco X$ be hyperbolic relative to the collection $\mathcal P$ of subspaces.  Then, with $\mathfrak S$ as above, we have that $(\cuco X,\mathfrak S)$ is a hierarchical space, and is moreover relatively hierarchically hyperbolic.
\end{thm}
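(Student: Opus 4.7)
The plan is to verify the axioms of Definition~\ref{defn:hierarchical_space} directly, using the following classical facts about a space $\cuco X$ that is hyperbolic relative to $\mathcal P$: (a) the coned-off space $\widehat{\cuco X}$ is $\delta$-hyperbolic; (b) each closest-point projection $\pi_P \co \cuco X \to P$ is uniformly coarsely Lipschitz; (c) \emph{bounded coset projection}: $\diam_P(\pi_P(Q))$ is uniformly bounded for distinct $P, Q \in \mathcal P$; and (d) Sisto's lemma \cite[Lemma~1.15]{Sisto:proj_rel_hyp}, that a $\widehat{\cuco X}$-geodesic avoiding a neighborhood of the cone point $c_P$ lifts to a path in $\cuco X$ with uniformly bounded projection to $P$. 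The argument mirrors Theorem~\ref{thm:rel_hyp} but is simpler, since we do not need to splice together HHS structures on the peripherals.

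First I would dispose of the structural axioms, which are essentially free: the complexity is $2$ because nesting chains have the form $P \propnest S$; the orthogonality axiom \eqref{item:dfs_orthogonal} holds vacuously since $\orth = \emptyset$; large links is vacuous for $W = P \in \mathcal P$ (as $P$ is $\nest$-minimal), and for $W = S$ it follows from (d): only linearly many peripherals $P$ (in $\dist_{\widehat{\cuco X}}(x,x')$) can have $\dist_P(x,x') \geq E$ along a $\widehat{\cuco X}$-geodesic, and for each such $P$ the cone point $c_P = \rho^P_S$ lies $\widehat{\cuco X}$-close to that geodesic. Partial realization reduces to a single element since orthogonality is empty: for $p \in P$ the point $x = p$ satisfies $\dist_S(p, c_P) \leq 1$ and $\pi_Q(p) \in \pi_Q(P) = \rho^P_Q$ for $Q \neq P$; for $p \in \widehat{\cuco X}$, take $p$ itself when $p \in \cuco X$ or any point of $Q$ when $p = c_Q$, and there is nothing to check strictly above or transverse to $S$.

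The heart of the argument is consistency and bounded geodesic image. Transverse consistency for distinct $P, Q \in \mathcal P$ is exactly (c) together with the standard observation that $\pi_Q(x)$ coarsely coincides with $\pi_Q(P)$ whenever $\dist_P(x, \pi_P(Q))$ is large. Nested consistency for $P \propnest S$ is essentially built into the definition of $\rho^S_P$: when $\pi_S(x) = x$ is far from $c_P = \rho^P_S$ in $\widehat{\cuco X}$, then by construction $\rho^S_P(x) = \pi_P(x)$, so the $P$-term vanishes. The last clause of Definition~\ref{defn:space_with_distance_formula}.\eqref{item:dfs_transversal} is trivial since $\nest$-chains have length at most $2$. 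For bounded geodesic image with $P \propnest S$, one must show that a $\widehat{\cuco X}$-geodesic $\gamma$ avoiding an $E$-neighborhood of $c_P$ has $\diam_P(\rho^S_P(\gamma)) \leq E$: for $y \in \gamma \cap \cuco X$ we have $\rho^S_P(y) = \pi_P(y)$, while for a cone point $c_Q \in \gamma$ we have $\rho^S_P(c_Q) = \pi_P(Q)$, and applying (d) to the natural desingularization of $\gamma$ in $\cuco X$ yields the uniform bound.

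Finally, uniqueness follows immediately from Sisto's distance formula for relatively hyperbolic spaces (referenced after Theorem~\ref{thm:rel_hyp}): $\dist_{\widehat{\cuco X}}(x,y)$ and the various $\dist_P(x,y)$ together coarsely determine $\dist_{\cuco X}(x,y)$. The ``relatively hierarchically hyperbolic'' conclusion is then automatic, since the only non-$\nest$-minimal element is $S$, with $\fontact S = \widehat{\cuco X}$ hyperbolic. I expect the main obstacle to be the careful verification of bounded geodesic image across the cones in $\gamma$: one must confirm that passing through a cone $c_Q$ with $Q \neq P$ contributes only a bounded amount to $\rho^S_P$, which in turn requires a clean application of (d) to the desingularization of $\gamma$ in $\cuco X$.
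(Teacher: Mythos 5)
Your proof is correct and follows essentially the same route as the paper: the paper proves this theorem by a one-line reduction to the argument of Theorem~\ref{thm:rel_hyp}, and your write-up is exactly a spelled-out version of that argument, with the peripheral HHS structures stripped away. The only place I would urge a touch more care is the large link axiom for $W = S$: you need not just that every $P$ with $\dist_P(x,x') \geq E$ has its cone point near a $\widehat{\cuco X}$-geodesic from $x$ to $x'$ (that is Sisto's lemma), but also that the \emph{number} of such $P$ is linear in $\dist_{\widehat{\cuco X}}(x,x')$; this is a standard consequence of the relatively hyperbolic projection axioms but is worth flagging as a separate step rather than subsuming it under (d).
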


\begin{proof}
By definition, for each $U\in\mathfrak S$, we have that either $U=S$ and $\fontact S=\widehat{\cuco X}$ is hyperbolic, or $U$ is $\nest$--minimal.  The rest of the conditions of Definition~\ref{defn:space_with_distance_formula} are verified as in the proof of Theorem~\ref{thm:rel_hyp}.
\end{proof}

\section{Hierarchical hyperbolicity of 3-manifold groups}\label{sec:hier_hyp_3_manifolds}

In this section we show that fundamental groups of most  
$3$--manifolds admit hierarchical 
hyperbolic structures. More precisely, we prove:

\begin{thm}[$3$--manifolds are hierarchically hyperbolic]\label{thm:3mflds}
    Let $M$ be a closed $3$--manifold. 
    Then $\pi_{1}(M)$ is a hierarchically hyperbolic space if and 
    only if $M$ does not have a Sol or Nil component in its prime decomposition.
\end{thm}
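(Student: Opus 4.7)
The plan is to treat the two implications separately, using the combination theorems of Section~\ref{sec:combination} and Section~\ref{sec:hyperbolicity_rel_HHS} together with the Dehn-function obstruction of Corollary~\ref{cor:HHG_quadratic_Dehn_function}.

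For the ``only if'' direction, suppose $M$ has a Sol or Nil component $M_0$ in its prime decomposition, so that $\pi_1(M)$ splits as a free product with $\pi_1(M_0)$ as one of the factors. In particular $\pi_1(M_0)$ is a retract of $\pi_1(M)$, and the Dehn function of a retract is dominated by that of the ambient group. Since a Sol $3$-manifold has fundamental group with exponential Dehn function and a Nil $3$-manifold has fundamental group virtually isomorphic to the integral Heisenberg group and hence with cubic Dehn function, $\pi_1(M)$ has super-quadratic Dehn function. By Corollary~\ref{cor:HHG_quadratic_Dehn_function} this rules out $\pi_1(M)$ being a hierarchically hyperbolic space.

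For the ``if'' direction, assume no prime component of $M$ is Sol or Nil. Prime decomposition gives $\pi_1(M)=G_1\ast\cdots\ast G_k$ where each $G_i$ is either $\mathbb{Z}$ (from an $S^2\times S^1$ summand) or the fundamental group of an irreducible prime factor. A free product is hyperbolic relative to its factors, so by Theorem~\ref{thm:rel_hyp} it suffices to prove each $G_i$ is an HHS, which reduces us to the case that $M$ itself is a closed prime $3$-manifold whose geometric decomposition avoids Sol and Nil. We then split into three subcases:
\begin{itemize}
\item \emph{Geometric case.} If $M$ is modeled on $S^3$ or $S^2\times\mathbb{R}$, then $\pi_1(M)$ is finite or two-ended, hence (trivially) HHS. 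If $M$ is modeled on $\mathbb{E}^3$, then $\pi_1(M)$ is virtually $\mathbb{Z}^3$ and is HHS by Corollary~\ref{cor:nilpotent}. If $M$ is modeled on $\mathbb{H}^3$, then $\pi_1(M)$ is hyperbolic. If $M$ is modeled on $\mathbb{H}^2\times\mathbb{R}$, then $\pi_1(M)$ is virtually a product of a surface group with $\mathbb{Z}$ and is HHS by Proposition~\ref{prop:product_HHS}. If $M$ is modeled on $\widetilde{\mathrm{SL}_2\mathbb{R}}$, then $\pi_1(M)$ is quasi-isometric to such a product, so HHS by Proposition~\ref{prop:qie}.
\item \emph{Mixed case.} If $M$ is non-geometric and its JSJ decomposition contains a hyperbolic piece, then by a theorem of Dahmani, $\pi_1(M)$ is hyperbolic relative to the fundamental groups of the maximal graph-manifold submanifolds of $M$. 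Theorem~\ref{thm:rel_hyp} reduces the question to the graph-manifold case.
\item \emph{Graph-manifold case.} After passing to a finite cover we may arrange that every JSJ piece is a product $\Sigma_v\times S^1$ with $\Sigma_v$ a compact hyperbolic surface with boundary. The universal cover is then a tree of spaces whose Bass-Serre tree $T$ has vertex spaces $\widetilde{\Sigma_v}\times\mathbb{R}$ (each HHS by Proposition~\ref{prop:product_HHS}) and edge spaces copies of $\mathbb{R}^2$. We apply Theorem~\ref{thm:combination} to this tree of HHSs, and then Corollary~\ref{cor:combination_theorem_for_HHG} gives the hierarchically hyperbolic group structure on $\pi_1(M)$.
\end{itemize}

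The main obstacle is verifying the four hypotheses of Theorem~\ref{thm:combination} in the graph-manifold case. Hierarchical quasiconvexity of an edge $\mathbb{R}^2$ in the adjacent vertex HHS $\widetilde{\Sigma_v}\times\mathbb{R}$ follows once one checks that the peripheral boundary torus maps quasiconvexly in each factor of the product HHS structure. Fullness of the edge hieromorphisms requires that we equip each edge $\mathbb{R}^2$ with an HHS structure whose two indexing domains project onto the peripheral-circle factor in $\widetilde{\Sigma_v}$ and onto the fiber $\mathbb{R}$ respectively, and similarly for the neighbouring vertex across the JSJ torus; this is where the geometry of gluing maps enters, since the two vertex products identify the ``fiber'' and ``base'' directions of the edge $\mathbb{R}^2$ differently on either side. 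Bounded supports holds because an equivalence class of domains propagates across an edge only through the two-dimensional edge structure, and the non-commensurability of fibers across JSJ tori prevents this propagation from going beyond the two adjacent vertices. Finally, the non-orthogonality condition on the $\nest$-maximal element of each edge HHS must be arranged by an explicit choice of product structure on the edge HHS so that its $\nest$-maximal element projects non-trivially to both factors in each adjacent vertex product; this is compatible with Proposition~\ref{prop:product_HHS} and with the observation that the peripheral $\mathbb{R}^2$ is never contained in a single factor of either $\widetilde{\Sigma_v}\times\mathbb{R}$ on either side of a JSJ torus.
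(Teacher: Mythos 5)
Your proof is correct and essentially matches the paper's approach in the ``only if'' direction (Dehn-function obstruction), the geometric case, and the mixed case (relative hyperbolicity to the maximal graph-manifold pieces, then Theorem~\ref{thm:rel_hyp}). The gap is in the graph-manifold case.

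You pass to a finite cover so that every JSJ piece is a product $\Sigma_v\times S^1$. This is not enough: a finite cover does not in general produce a \emph{flip} graph manifold, i.e., one in which the gluing along each JSJ torus exchanges the fiber direction of one piece with the base-boundary direction of the other. The paper instead invokes Kapovich--Leeb's theorem that the fundamental group of any irreducible non-geometric graph manifold is quasi-isometric to that of a flip graph manifold, then uses QI-invariance (Proposition~\ref{prop:qie}). The flip condition is exactly what makes your verification of the combination-theorem hypotheses go through: with a general (non-flip) gluing matrix, the $\mathbb{R}^2$ edge space inherits two different product decompositions from its two adjacent vertex products, and these decompositions are not carried to one another by the edge identification. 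Concretely, one cannot equip the edge HHS with two domains that map, under both edge hieromorphisms, onto factor domains in the adjacent product HHS structures $\widetilde{\Sigma_v}\times\mathbb{R}$ and $\widetilde{\Sigma_w}\times\mathbb{R}$; the fullness condition of Definition~\ref{defn:quasiconvex_full_hieromorphism} then fails. Your last paragraph hints at this (``the geometry of gluing maps enters'') but does not resolve it, and a finite cover won't resolve it either.

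A second, related error: you conclude by invoking Corollary~\ref{cor:combination_theorem_for_HHG} to deduce that $\pi_1(M)$ is a hierarchically hyperbolic \emph{group}. That corollary requires the tree of spaces data to satisfy the hypotheses equivariantly, which fails once you have replaced $M$ by a quasi-isometric but non-$\pi_1$-equivariant flip model. Accordingly the theorem asserts, and its proof produces, only a hierarchically hyperbolic \emph{space} structure on $\pi_1(M)$; the paper's Remark~\ref{rem:graph_HHS_cube} conjectures that graph-manifold groups are HHGs precisely when they act freely and cocompactly on CAT(0) cube complexes, which is believed to be a genuine restriction. So you should downgrade the conclusion to HHS and route the graph-manifold case through Kapovich--Leeb plus Proposition~\ref{prop:qie}.

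One further small point you should make explicit: in the vertex HHS structures $\widetilde{\Sigma_v}\times\mathbb{R}$ one should not use the trivial (complexity-$1$) HHS structure on the hyperbolic space $\widetilde{\Sigma_v}$, but the relatively hyperbolic one from Theorem~\ref{thm:rel_hyp}, with a domain for each lift of a boundary curve; otherwise fullness and hierarchical quasiconvexity of the edge inclusions again fail.
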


\begin{proof}
    It is well known that for a closed irreducible $3$--manifold $N$ the Dehn function of 
    $\pi_{1}(N)$ is linear if $N$ is hyperbolic, 
    cubic if $N$ is Nil, exponential if $N$ is Sol, and quadratic 
    in all other cases. Hence by 
    Corollary~\ref{cor:HHG_quadratic_Dehn_function}, if $\pi_{1}(M)$ 
    is a hierarchically hyperbolic space, then $M$ does not contain Nil or 
    Sol manifolds in its prime decomposition. It remains to prove the converse.
    
    Since the fundamental group of any reducible $3$--manifold is the
    free product of irreducible ones, the reducible case immediately
    follows from the irreducible case by Theorem~\ref{thm:rel_hyp}.
    
    When $M$ is geometric and not Nil or Sol, then 
    $\pi_{1}(M)$ is quasi-isometric to one of the following: 
    \begin{itemize}
    \item $\reals^3$ is hierarchically hyperbolic by Proposition~\ref{prop:product_HHS};
    \item    $\hyperbolic^3$, $\mathbb S^3$, $\mathbb S^2\times\reals$ are (hierarchically) hyperbolic; 
    \item $\hyperbolic^{2}\times\reals$ and $PSL_2(\reals)$: the first is hierarchically hyperbolic by Proposition~\ref{prop:product_HHS}, whence the second is also since it is quasi-isometric to the first by~\cite{Rieffel:H2crossR}.
    \end{itemize}
        
    We may now assume $M$ is not geometric. 
    Our main step is to show 
    that any irreducible non-geometric graph manifold group is 
    a hierarchically hyperbolic space.
    
    Let $M$ be an irreducible non-geometric graph manifold. 
    By \cite[Theorem
    2.3]{KapovichLeeb:3manifolds}, by replacing $M$ by a manifold 
    whose fundamental group is quasi-isometric to that of $M$, we may assume that 
    our manifold is a \emph{flip 
    graph manifold}, i.e., each Seifert fibered space component is a trivial
    circle bundles over a surfaces of genus at least 2 and each pair 
    of adjacent Seifert fibered spaces are glued by flipping the 
    base and fiber directions.

    Let $X$ be the universal cover of $M$.  
    The decomposition of $M$ into geometric
    components induces a decomposition of $X$ into subspaces $\{S_v\}$,
    one for each vertex $v$ of the Bass-Serre tree $T$ of $M$.  Each such
    subspace $S_v$ is bi-Lipschitz homeomorphic to the product of a copy
    $R_v$ of the real line with the universal cover $\Sigma_v$ of a
    hyperbolic surface with totally geodesic boundary, and there are
    maps $\phi_v\co S_v\to\Sigma_v$ and $\psi_v\co S_v\to R_v$. Notice that $\Sigma_v$ is hyperbolic, and in particular hierarchically hyperbolic. However, for later purposes, we endow $\Sigma_v$ with the hierarchically hyperbolic structure originating from the fact that $\Sigma_v$ is hyperbolic relative to its boundary components, see Theorem \ref{thm:rel_hyp}.
    
    By Proposition~\ref{prop:product_HHS} each $S_{v}$ is a 
    hierarchically hyperbolic space and thus we have a tree of hierarchically hyperbolic spaces. 
    Each edge space is a product  $\partial_{0}\Sigma_v 
    \times R_{v}$, where $\partial_{0}\Sigma_v$ is a particular 
    boundary component of $\Sigma_{v}$ determined by the adjacent 
    vertex. Further, since the graph manifold is flip, we also 
    have that for each vertices $v,w$ of the tree, 
    the edge-hieromorphism between $S_{v}$ and $S_{w}$ sends 
    $\partial_{0}\Sigma_v$ to $R_{w}$ and $R_{v}$ to $\partial_{0}\Sigma_w$.

    We now verify the hypotheses of Theorem \ref{thm:combination}.
    The first hypothesis is that 
    there exists $k$ so that 
    each edge-hieromorphism is $k$--hierarchically quasiconvex. This is easily seen 
    since the edge-hieromorphisms have the 
    simple form described above. The second hypothesis of 
    Theorem~\ref{thm:combination}, fullness of edge-hieromorphisms, also follows immediately from the explicit description of 
    the edges here and the simple hierarchically hyperbolic structure of the 
    edge spaces.
    
    The third hypothesis of Theorem~\ref{thm:combination} requires that the 
    tree has bounded supports.  We can assume that the product regions $S_v$ are maximal in the sense that each edge-hieromorphism sends the fiber direction $R_v$ to $\partial_0\Sigma_w$ in each adjacent $S_w$.  It follows that the support of each $\sim$--class (in the language of Theorem~\ref{thm:combination}) consists of at most $2$ vertices.  The last hypothesis of Theorem~\ref{thm:combination} is about 
    non-orthogonality of maximal elements and again follows directly 
    from the explicit hierarchically hyperbolic structure.  Moreover, the part of the hypothesis about edge-spaces 
coinciding coarsely with standard product regions in vertex spaces follows from the explicit hierarchically hyperbolic 
structure.
    
    All the hypotheses of Theorem~\ref{thm:combination} are 
    satisfied, so $\pi_1M$ (with any word metric) is a hierarchically hyperbolic space for all irreducible 
    non-geometric graph manifolds $M$.
    
    The general case that the fundamental
    group of any non-geometric $3$--manifold is an
    hierarchically hyperbolic space now follows immediately 
    by Theorem~\ref{thm:rel_hyp},
    since any $3$--manifold group is hyperbolic relative to
    its maximal graph manifold subgroups.
\end{proof}

\begin{rem}[(Non)existence of HHG structures for 3--manifold groups]\label{rem:graph_HHS_cube}
The proof of Theorem~\ref{thm:3mflds} shows that for many $3$--manifolds $M$, the group $\pi_1M$ is not merely a hierarchically hyperbolic space (when endowed with the word metric arising from a finite generating set), but is actually a hierarchically hyperbolic group.  Specifically, if $M$ is virtually compact special, then $\pi_1M$ acts freely and cocompactly on a CAT(0) cube complex $\cuco X$ that is the universal cover of a compact special cube complex.  Hence $\cuco X$ contains a $\pi_1M$--invariant factor system (see~\cite[Section~8]{BehrstockHagenSisto:HHS_I}) consisting of a $\pi_1M$--finite set of convex subcomplexes.  This yields a hierarchically hyperbolic structure $(\cuco X,\mathfrak S)$ where $\pi_1M\leq\Aut(\mathfrak S)$ acts cofinitely on $\mathfrak S$ and geometrically on $\cuco X$, i.e., $\pi_1M$ is a hierarchically hyperbolic group.

The situation is quite different when $\pi_1M$ is not virtually \textbf{compact} special.  Indeed, when $M$ is a nonpositively-curved graph manifold, $\pi_1M$ virtually acts freely, but not necessarily cocompactly, on a CAT(0) cube complex $\cuco X$, and the quotient is virtually special; this is a result of Liu~\cite{Liu:graph_man} which was also shown to hold in the case where $M$ has nonempty boundary by Przytycki and Wise~\cite{PrzytyckiWise:graph_man}.  Moreover, $\pi_1M$ acts with finitely many orbits of hyperplanes.  Hence the $\pi_1M$--invariant factor system on $\cuco X$ from~\cite{BehrstockHagenSisto:HHS_I} yields a $\pi_1M$--equivariant HHS structure $(\cuco X,\mathfrak S)$ with $\mathfrak S$ $\pi_1M$--finite.  However, this yields an HHG structure on $\pi_1M$ only if the action on $\cuco X$ is cocompact.  In~\cite{HagenPrzytycki:graph}, the second author and Przytycki showed that $\pi_1M$ virtually acts freely and cocompactly on a CAT(0) cube complex, with special quotient, only in the very particular situation where $M$ is \emph{chargeless}.  This essentially asks that the construction of the hierarchically hyperbolic structure on $\widetilde M$ from the proof of Theorem~\ref{thm:3mflds} can be done $\pi_1M$--equivariantly.  In general, this is impossible: recall that we passed from $\widetilde M$ to the universal cover of a flip manifold using a (nonequivariant) quasi-isometry.  Motivated by this observation and the fact that the range of possible HHS structures on the universal cover of a JSJ torus is very limited, we conjecture that $\pi_1M$ is a hierarchically hyperbolic group if and only if $\pi_1M$ acts freely and cocompactly on a CAT(0) cube complex. 
\end{rem}

\section{A new proof of the distance formula for mapping class 
groups}\label{sec:MCG_HHS}

We now describe the hierarchically hyperbolic structure of mapping 
class groups. In \cite{BehrstockHagenSisto:HHS_I} we gave a proof of 
this result using several of the main results of \cite{Behrstock:asymptotic, 
BKMM:consistency, MasurMinsky:I, MasurMinsky:II}. Here we give an elementary proof which is independent 
of the Masur--Minsky ``hierarchy machinery.''  
One consequence of this is a new and concise proof of the celebrated 
Masur--Minsky distance formula \cite[Theorem 6.12]{MasurMinsky:II}, 
which we obtain  
by combining Theorem \ref{thm:MCGhh} and Theorem \ref{thm:distance_formula}.

\begin{enumerate}
 \item Let $S$ be closed connected oriented surface of finite type and let $\mathcal M(S)$ be its marking complex.
 \item Let $\mathfrak S$ be the collection of isotopy classes of essential subsurfaces of $S$, and for each $U\in\mathfrak S$ let $\fontact U$ be its curve complex.
 \item The relation $\nest$ is nesting, $\perp$ is disjointness and $\transverse$ is overlapping.
 \item For each $U\in\mathfrak S$, let $\pi_U\co\mathcal 
 M(S)\to\fontact U$ be the (usual) subsurface projection. For 
 $U,V\in\mathfrak S$ satisfying either $U\nest V$ or $U\transverse V$,
 denote $\rho^U_V=\pi_V(\partial U)\in \fontact V$, while for $V\nest
 U$ let $\rho^U_V\co\fontact U\to 2^{\fontact V}$ be the subsurface
 projection.
\end{enumerate}

\begin{thm}\label{thm:MCGhh}
 Let $S$ be closed connected oriented surface of finite type.  Then,
 $(\mathcal M(S),\mathfrak S)$ is a
 hierarchically hyperbolic space, for $\mathfrak S$ as above.  In particular the mapping class
 group $\MCG(S)$ is a hierarchically hyperbolic group.
\end{thm}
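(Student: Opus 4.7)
\medskip

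The plan is to verify each axiom of Definition~\ref{defn:space_with_distance_formula} in turn for the data $(\mathcal M(S),\mathfrak S)$ described in the statement, invoking classical results about subsurface projections where they exist and supplying elementary arguments for the remaining axioms (principally partial realization and uniqueness). Most of the axioms are in fact theorems in the Masur--Minsky framework, so the novelty is not in discovering them but in organising them into our streamlined axiomatic package and, crucially, handling the uniqueness axiom without appealing to the distance formula or hierarchy paths. Once $(\mathcal M(S),\mathfrak S)$ is shown to be an HHS, the HHG conclusion is automatic: $\MCG(S)$ acts geometrically on $\mathcal M(S)$ and permutes isotopy classes of essential subsurfaces with finitely many orbits (by the change-of-coordinates principle), so the action induces a cofinite action on $\mathfrak S$ by hieromorphisms.

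The classical inputs handle the majority of the axioms. Hyperbolicity of the curve graphs $\fontact U$ with a uniform constant is the Masur--Minsky theorem (for which there are now elementary proofs, e.g.\ Hensel--Przytycki--Webb, Bowditch, Clay--Rafi--Schleimer). The projections $\pi_U$ are coarsely Lipschitz with uniformly bounded image-diameter by standard facts about subsurface projections of markings. Nesting, orthogonality, and transversality on $\mathfrak S$ come from containment, disjointness, and overlapping of subsurfaces; the partial-order axioms, the condition involving a ``container'' $W$ for subsurfaces orthogonal to $U$ (provided by the complementary subsurface of $U$ inside a given container), and finite complexity (bounded by $\xi(S)$) are all immediate. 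The consistency inequality for transverse $V,W$ is the Behrstock inequality from \cite{Behrstock:asymptotic}, and the inequality for nested pairs follows from the same reasoning combined with bounded geodesic image. Bounded geodesic image itself is the Masur--Minsky BGI theorem (with modern elementary proofs, e.g.\ Webb), and the large link axiom is a direct consequence of BGI plus the standard argument bounding the number of subsurfaces supporting large projections between a fixed pair of markings.

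The two axioms requiring the most care are partial realization and uniqueness. Partial realization is straightforward in this setting: given pairwise disjoint subsurfaces $V_1,\dots,V_k$ and points $p_j\in\fontact V_j$, realise each $p_j$ by a curve $\gamma_j\subset V_j$, extend $\bigcup\partial V_j\cup\bigcup\gamma_j$ to a pants decomposition of $S$, and endow it with any system of transversals; the resulting marking projects near $p_j$ in $V_j$, near $\rho^{V_j}_W$ in any $W$ transverse to or containing some $V_j$ (by the definition of those relative projections via $\pi_W(\partial V_j)$), and the control of projections to containers follows from the construction.

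The main obstacle, and the only place where one must work substantively, is the uniqueness axiom: if $d_V(x,y)\le\kappa$ for all $V\in\mathfrak S$, one must bound $d_{\mathcal M(S)}(x,y)$ without appeal to the distance formula. The plan is an induction on $\xi(S)$, exploiting the explicit structure of markings as pants decompositions plus transversals. The base case (annuli and once-punctured tori, where $\mathcal M(S)$ is essentially $\fontact S$) is trivial. For the inductive step, one first uses boundedness of $d_S(x,y)$ together with a finite-neighbourhood argument in the curve complex to reduce to the case that $x,y$ share a non-separating curve $\alpha$; then one gates into the standard product region $P_\alpha\cong\mathcal M(S\setminus\alpha)\times\fontact\alpha$ and applies the inductive hypothesis to the factor markings, using that projections to subsurfaces disjoint from $\alpha$ and to the annulus of $\alpha$ control the factor coordinates. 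The uniform control of the number of pants decompositions within bounded curve-complex distance, together with the fact that annular projections control transversals up to bounded ambiguity, closes the induction. Combined with Theorem~\ref{thm:distance_formula}, this yields a proof of the Masur--Minsky distance formula that bypasses the hierarchy machinery entirely.
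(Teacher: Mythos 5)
Your overall organization matches the paper's: cite the classical and elementary results (Masur--Minsky hyperbolicity, Behrstock inequality, BGI, large links from BGI), observe that partial realization is immediate from the construction of markings with prescribed base and transversals, and then concentrate the real work in the uniqueness axiom via an induction on complexity. The HHG conclusion and the remark that this bypasses the hierarchy machinery also match.

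The gap is in your treatment of the uniqueness axiom, which is precisely the step you correctly flag as the main obstacle. You propose to ``use boundedness of $d_S(x,y)$ together with a finite-neighbourhood argument in the curve complex to reduce to the case that $x,y$ share a non-separating curve $\alpha$.'' There is no such finite-neighbourhood argument: curve graphs of surfaces with $\xi(S)\geq 2$ have infinite valence, so a ball of radius $\kappa$ around $\pi_S(x)$ contains infinitely many vertices and infinitely many candidate markings $y$. The reduction to shared base curves is therefore not a soft cardinality step --- it is exactly where the content lies. The paper handles this with a \emph{second}, nested induction on $d_{\fontact S}(\mathrm{base}(x),\mathrm{base}(y))$: one chooses a tight geodesic $\sigma_0,\ldots,\sigma_{n+1}$ from a base curve of $x$ to a base curve of $y$, realizes (via Theorem~\ref{thm:realization} applied in $S\setminus(\sigma_0\cup\sigma_1)$) an intermediate marking $\hat x$ whose base contains $\sigma_0\cup\sigma_1$, and uses tightness plus bounded geodesic image to verify that $\hat x$ is both boundedly far from $x$ in $\mathcal M(S)$ and has all subsurface projections $\kappa'$-close to those of $y$. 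This drops $d_{\fontact S}(\mathrm{base}(\cdot),\mathrm{base}(\cdot))$ by one and lets the inner induction close. Your gating-into-$P_\alpha$ step corresponds roughly to the paper's base case ($d_{\fontact S}=0$), where a shared multicurve is already present; but without a replacement for the tight-geodesic induction your proof has no mechanism to ever reach that case.

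A secondary slip: the base case of the complexity induction is $\xi(S)\leq 1$, which includes four-holed spheres, not only ``annuli and once-punctured tori''; this is cosmetic, but worth getting right since the induction is the spine of the argument.
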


\begin{proof}
Hyperbolicity of curve graphs is the main result of
\cite{MasurMinsky:I}; more recent proofs of this were 
found in 
\cite{Aougab:uniform,Bowditch:uniform,ClayRafiSchleimer:uniform,
HenselPrzytyckiWebb:unicorn,PrzytyckiSisto:universe}, some of which are elementary.

 Axioms \ref{item:dfs_curve_complexes}, \ref{item:dfs_nesting}, \ref{item:dfs_orthogonal} and 
\ref{item:dfs_complexity} are clear (an elementary exposition of the Lipschitz condition for subsurface projections is provided in \cite[Lemma 
2.5]{MasurMinsky:II}, and the projections have quasiconvex image because they are coarsely surjective).  Both parts of axiom
\ref{item:dfs_transversal} can be found in
 \cite{Behrstock:asymptotic}.  The nesting part is elementary, and a
 short elementary proof in the overlapping case was obtained by 
 Leininger and can be found in \cite{Mangahas:UU}.
 
 Axiom \ref{item:dfs:bounded_geodesic_image} was proven in \cite{MasurMinsky:II}, and an elementary proof is available in \cite{Webb:BGI}. In fact, in the aforementioned papers it is proven that there exists a constant $C$ so that for any subsurface $W$, markings $x,y$ and geodesic from $\pi_W(x)$ to $\pi_W(y)$ the following holds. If $V\nest W$ and $V\neq W$ satisfies $d_V(x,y)\geq C$ then some curve along the given geodesic does not intersect $\partial V$. This implies Axiom \ref{item:dfs_large_link_lemma}, since we can take the $T_i$ to be the complements of curves appearing along the aforementioned geodesic.

Axiom \ref{item:dfs_partial_realization} follows easily from the
following statement, which clearly holds:  For any given collection of
disjoint subsurfaces and curves on the given subsurfaces, there exists
a marking on $S$ that contains the given curves as base curves (or, up
to bounded error, transversals in the case that the corresponding
subsurface is an annulus).

Axiom \ref{item:dfs_uniqueness} is hence the only delicate one. We 
are finished modulo this last axiom which we verify below in 
Proposition~\ref{prop:mcg_uniqueness_axiom} (see also~\cite[Proposition 5.11]{BBF:quasi_tree}).
\end{proof}

\begin{prop}\label{prop:mcg_uniqueness_axiom}
 $(\mathcal M(S),\mathfrak S)$ satisfies the uniqueness axiom, i.e., for each $\kappa\geq 0$, there exists $\theta_u=\theta_u(\kappa)$ such that if $x,y\in\mathcal M(S)$ satisfy $\dist_U(x,y)\leq \kappa$ for each $U\in\mathfrak S$ then $\dist_{\mathcal M(S)}(x,y)\leq\theta_u$.
\end{prop}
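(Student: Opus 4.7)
The plan is to prove the uniqueness axiom for $\mathcal M(S)$ by induction on the complexity $\zeta(S) := 3g(S) + p(S) - 3$ of $S$, where $g(S)$ and $p(S)$ are the genus and number of punctures.  The base case is $S$ an annulus: here $\mathcal M(S)$ is canonically identified with the annular curve graph $\fontact S$, so the hypothesis $\dist_S(x,y) \leq \kappa$ directly gives $\dist_{\mathcal M(S)}(x,y) \leq \kappa$.  Surfaces with $\zeta(S) \leq 0$ (pairs of pants) have finite (indeed trivial) marking complex, so uniqueness is immediate there as well.

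For the inductive step, assume the result for all proper essential subsurfaces.  Given $x,y$ with $\dist_U(x,y) \leq \kappa$ for all $U \in \mathfrak S$, the first goal is to reduce, at bounded cost in $\mathcal M(S)$, to the case where $x$ and $y$ share a base curve $\alpha$ together with its transversal.  Since $\dist_S(\pi_S(x),\pi_S(y)) \leq \kappa$, I would fix a geodesic $\alpha_0, \dots, \alpha_k$ in $\fontact S$ of length $k \leq \kappa$ with $\alpha_0 \in \mathrm{base}(x)$ and $\alpha_k \in \mathrm{base}(y)$, and then construct markings $x = m_0, m_1, \dots, m_k$, with $\alpha_i \in \mathrm{base}(m_i)$, satisfying $\dist_{\mathcal M(S)}(m_i, m_{i+1}) \leq C(S)$: since consecutive $\alpha_i, \alpha_{i+1}$ are disjoint, they extend to a common pants decomposition, and one modifies $m_i$ by altering the pants decomposition and transversals inside $S \smallsetminus (\alpha_i \cup \alpha_{i+1})$ to incorporate $\alpha_{i+1}$.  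The cost is bounded because $\MCG(S)$ acts cofinitely on the set of pants decompositions containing a prescribed disjoint pair of curves.  After at most $C\kappa$ moves we may assume $x$ and $y$ share a base curve $\alpha$, and since $\dist_\alpha(x,y) \leq \kappa$ bounds the difference of the two transversals to $\alpha$ by at most $\kappa$ Dehn twists, $\kappa$ further marking moves aligns the transversal as well.

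With $x,y$ now agreeing on $\alpha$ and its transversal, they restrict to markings $x_\alpha, y_\alpha$ on each component of $S \smallsetminus \alpha$, each of which has complexity strictly less than $\zeta(S)$.  For every $U \nest S \smallsetminus \alpha$ we have $\pi_U(x_\alpha) = \pi_U(x)$ and $\pi_U(y_\alpha) = \pi_U(y)$, hence $\dist_U(x_\alpha, y_\alpha) \leq \kappa$, and the induction hypothesis (applied componentwise) gives $\dist_{\mathcal M(S \smallsetminus \alpha)}(x_\alpha, y_\alpha) \leq \theta_u^\prime(\kappa)$.  Marking moves in $\mathcal M(S \smallsetminus \alpha)$ lift to marking moves in $\mathcal M(S)$ that fix $\alpha$ and its transversal, producing the desired bound on $\dist_{\mathcal M(S)}(x,y)$.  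The principal obstacle is the first phase of the inductive step, namely exhibiting uniformly bounded-cost transitions $m_i \leadsto m_{i+1}$ between markings carrying successive curves of a curve-graph geodesic; this requires a careful argument using cofiniteness of the $\MCG$-action on pants decompositions meeting a prescribed disjoint pair, and must be engineered without invoking the Masur--Minsky distance formula, whose independent proof is the aim of this section.
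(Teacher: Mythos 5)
Your overall schema---induct on complexity, and push the base of the marking along a curve-graph geodesic toward $y$, reducing to the case of a shared base curve and then passing to a lower-complexity subsurface---is essentially the one the paper uses (phrased there as a nested induction on complexity and on $\dist_{\fontact S}(\mathrm{base}(x),\mathrm{base}(y))$). But the crux of your argument, the assertion that one can choose $m_{i+1}$ with $\alpha_{i+1}\in\mathrm{base}(m_{i+1})$ and $\dist_{\mathcal M(S)}(m_i,m_{i+1})\le C(S)$, is false as stated, and the appeal to cofiniteness of the $\MCG(S)$-action on pants decompositions containing $\alpha_i\cup\alpha_{i+1}$ does not repair it. Concretely: if $U$ is any subsurface disjoint from $\alpha_i$ but crossed by $\alpha_{i+1}$, then every marking $m'$ with $\alpha_{i+1}\in\mathrm{base}(m')$ has $\pi_U(m')$ coarsely equal to $\pi_U(\alpha_{i+1})$, while $\pi_U(m_i)$ can be arbitrarily far from $\pi_U(\alpha_{i+1})$ (e.g.\ take $m_i=\phi^N m_0$ with $\phi$ a pseudo-Anosov supported on $U$ and $m_0$ a marking with $\alpha_i$ in its base). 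Since $\pi_U$ is coarsely Lipschitz on $\mathcal M(S)$, no such $m'$ can be at bounded distance from $m_i$. Cofiniteness only tells you there are finitely many orbits of pants decompositions through the prescribed pair; a single such orbit carries markings at unbounded marking distance from $m_i$, so no distance bound follows. Your sentence about ``altering the pants decomposition and transversals inside $S\smallsetminus(\alpha_i\cup\alpha_{i+1})$'' also presupposes that the base curves of $m_i$ other than $\alpha_i$ are disjoint from $\alpha_{i+1}$, which generically fails.

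Of course, in your situation $m_0=x$, and you do have the hypothesis $\dist_U(x,y)\le\kappa$ for all $U$, which is exactly what should preclude the bad twisting above --- but your sketch never invokes it in the construction of $m_{i+1}$, and making it do so is the entire content of the proof. This is what the paper's argument supplies: it takes a \emph{tight} geodesic $\sigma_0,\dots,\sigma_{n+1}$ so that the bounded geodesic image axiom (already verified) applies to show that $\pi_U(\sigma_1)$ coarsely coincides with $\pi_U(y)$, and hence with $\pi_U(x)$, for every $U$ whose boundary meets $\sigma_1$; it constructs the replacement marking $\hat x$ by applying the \emph{realization theorem} (available in proper subsurfaces by the complexity induction) inside $S\smallsetminus(\sigma_0\cup\sigma_1)$ with target projections taken from $y$, together with a careful choice of transversals; and then --- this is the step your proposal has no analogue of --- it converts the resulting per-subsurface bounds $\dist_U(x,\hat x)\le O(\kappa)$ for $U\subseteq S\smallsetminus c_x$ into a bound on $\dist_{\mathcal M(S)}(x,\hat x)$ by invoking the \emph{uniqueness axiom in the lower-complexity surface} $S\smallsetminus c_x$, i.e.\ the complexity induction hypothesis. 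Without that last conversion step there is simply no mechanism that turns ``close in all $\fontact U$'' into ``close in $\mathcal M(S)$'' short of assuming the very statement being proved.
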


\begin{proof} 
    Note that when the complexity (as measured by the quantity $3g+p-3$
    where $g$ is the genus and $p$ the number of punctures) is less 
    than 2 then $\mathcal M(S)$ is hyperbolic and thus the axiom 
    holds. We will proceed by inducting on complexity: thus we will
    fix $S$ to have complexity at least 2 and assume that all the axioms
    for a hierarchically hyperbolic space, including the uniqueness 
    axiom, hold for each proper subsurface
    of $S$.

Now, having fixed our surface $S$, the proof is by induction on $\dist_{\fontact S}(base(x),base(y))$.

If $\dist_{\fontact S}(base(x),base(y))=0$, then $x$ and $y$ share
some non-empty multicurve $\sigma=c_1\cup\dots\cup c_k$.  For $x',y'$
the restrictions of $x,y$ to $S- \sigma$ we have that, by
induction, $\dist_{\mathcal M(S- \sigma)}(x',y')$ is bounded
in terms of $\kappa$.  We then take the markings in a geodesic
in $\mathcal M(S- \sigma)$ from $x'$ to $y'$ and extend these 
all in the same way 
to obtain markings in $\mathcal M(S)$ which yield a path in $\mathcal M(S)$ from $x$ to $\hat{y}$ whose length
is bounded in terms of $\kappa$, where $\hat{y}$ is the marking for 
which:

\begin{itemize}
 \item $\hat{y}$ has the same base curves as $y$,
 \item the transversal for each $c_i$ is the same as the corresponding transversal for $x$, and
 \item the transversal for each curve in $base(y)- \{c_i\}$ is the same as the corresponding transversal for $y$.
\end{itemize}

Finally, it is readily seen that $\dist_{\mathcal M(S)}(\hat{y},y)$ is
bounded in terms of $\kappa$ because the transversals of each $c_i$ in
the markings $x$ and $y$ are within distance $\kappa$ of each other.
This completes the proof of the base case of the Proposition.

Suppose now that that the statement holds whenever $\dist_{\fontact
S}(base(x),base(y))\leq n$, and let us prove it in the case
$\dist_{\fontact S}(base(x),base(y))= n+1$.  Let $c_x\in base(x)$ and
$c_y\in base(y)$ satisfy $\dist_{\fontact S}(c_x,c_y)=n+1$.  Let
$c_x=\sigma_0,\dots,\sigma_{n+1}=c_y$ be a tight geodesic (hence, each
$\sigma_i$ is a multicurve).  Let $\sigma$ be the union of $\sigma_0$
and $\sigma_1$.  Using the realization theorem in the subsurface $S- \sigma$
we can find a marking $x'$ in $S- \sigma$ whose projections
onto each $\fontact U$ for $U\subseteq S- \sigma$ coarsely
coincide with $\pi_U(y)$.  Let $\hat x$ be the marking for which: 
\begin{itemize}
 \item $base(\hat x)$ is the union of $base(x')$ and $\sigma$,
 \item the transversal in $\hat x$ of curves in $base(\hat x)\cap base(x')$ are the same as those in $x'$,
 \item the transversal of $c_x$ in $\hat x$ is the same as the one in $x$,
 \item the transversal in $\hat x$ of a curve $c$ in $\sigma_1$ is $\pi_{A_c}(y)$, where $A_c$ is an annulus around $c$.
\end{itemize}
Note that $\dist_{\fontact S}(base(\hat x),base(y))=n$. Hence, the following two claims conclude the proof.
\renewcommand{\qedsymbol}{$\blacksquare$}

\par\medskip
{\bf Claim 1.} $\dist_{\mathcal M(S)}(x,\hat x)$ is bounded in terms of $\kappa$.
\par\medskip

\begin{proof}
It suffices to bound $\dist_{\fontact U}(x,\hat x)$ in terms of $\kappa$ for each $U\subseteq S- c_x$. In fact, once we do that, by induction on complexity we know that we can bound $\dist_{\mathcal M(S- c_x)}(z,\hat{z})$, where $z,\hat{z}$ are the restrictions of $x,\hat{x}$ to $S- c_x$, whence the conclusion easily follows.

If $U$ is contained in $S- \sigma$, then the required bound follows since $\pi_U(\hat{x})$ coarsely coincides with $\pi_U(x')$ in this case.

If instead $\partial U$ intersects $\sigma_1$, then $\pi_U(\hat{x})$ coarsely coincides with $\pi_U(\sigma_1)$.

At this point, we only have to show that $\pi_U(\sigma_1)$ coarsely coincides with $\pi_U(y)$, and in order to do so we observe that we can apply the bounded geodesic image theorem to the geodesic $\sigma_1,\dots,\sigma_{n+1}$. In fact, $\sigma_1$ intersects $\partial U$ by hypothesis and $\sigma_i$ intersects $\partial U$ for $i\geq 3$ because of the following estimate that holds for any given boundary component $c$ of $\partial U$
$$\dist_{\fontact(S)}(\sigma_i,c)\geq \dist_{\fontact(S)}(\sigma_i,\sigma_0)-\dist_{\fontact(S)}(\sigma_0,c)\geq i-1>1.$$
Finally, $\sigma_2$ intersects $\partial U$ because of the definition of tightness: $\partial U$ intersects $\sigma_1$, hence it must intersect $\sigma_0\cup\sigma_2$. However, it does not intersect $\sigma_0$, whence it intersects $\sigma_2$.
\end{proof}

\par\medskip
{\bf Claim 2.} There exists $\kappa'$, depending on $\kappa$, so that for each subsurface $U$ of $S$ we have $\dist_{\fontact U}(\hat x,y)\leq \kappa'$.
\par\medskip

\begin{proof} If $\sigma_0$ intersects $\partial U$, then $\pi_{U}(\hat x)$ coarsely coincides with $\pi_U(\sigma_0)$. In turn, $\pi_U(\sigma_0)$ coarsely coincides with $\pi_U(x)$, which is $\kappa$--close to $\pi_U(y)$.

On the other hand, if $U$ does not intersect $\sigma$, then we are done by the definition of $x'$.

Hence, we can assume that $U$ is contained in $S- \sigma_0$ and that $\sigma_1$ intersects $\partial U$. In particular, $\pi_U(\hat x)$ coarsely coincides with $\pi_U(\sigma_1)$. But we showed in the last paragraph of the proof of Claim 1 that $\pi_U(\sigma_1)$ coarsely coincides with $\pi_U(y)$, so we are done.
\end{proof}

\renewcommand{\qedsymbol}{$\Box$}
As explained above, the proofs of the above two claims complete the proof.
\end{proof}

\bibliographystyle{alpha}
\bibliography{hier_hyp_II}
\end{document}